\newcommand{\R}{\mathbb{R}}
\newcommand{\E}{\mathbb{E}}
\newcommand{\esssup}{\mathop{\mathrm{ess\,sup}}}
\theoremstyle{definition}
\newtheorem{defn}{Definition}[section]
\newtheorem{rmk}[defn]{Remark}
\newtheorem{assumptions}[defn]{Assumptions}
\theoremstyle{plain}
\newtheorem{thm}[defn]{Theorem}
\newtheorem*{thm*}{Theorem}
\newtheorem{lemma}[defn]{Lemma}
\newtheorem{coroll}[defn]{Corollary}
\newtheorem{prop}[defn]{Proposition}
\title{Parameter dependent rough SDEs with applications to rough PDEs}
\author{Fabio Bugini\thanks{Technische Universit\"at Berlin. Email: fabio.bugini@tu-berlin.de} , Peter K. Friz\thanks{Technische Universit\"at Berlin and Weierstrass Institut, Berlin, Germany. Email: friz@math.tu-berlin.de} , Wilhelm Stannat\thanks{Technische Universit\"at Berlin, Berlin, Germany. Email: stannat@math.tu-berlin.de}}
\begin{document}
\maketitle

\begin{abstract}
Rough stochastic differential equations (rough SDEs), recently introduced by Friz, Hocquet and L\^e in \cite{FHL21} have emerged as a versatile tool to study ``doubly'' SDEs under partial conditioning (with motivation from pathwise filtering and control, volatility modelling in finance and mean-field stochastic dynamics with common noise \dots).  While the full dynamics may be highly non-Markovian, the conditional dynamics often are. In natural (and even linear) situations, the resulting stochastic PDEs can be beyond existing technology. The present work then tackles a key problem in this context, which is the well-posedness of regular solution to the rough Kolmogorov backward equation. To this end, we study parameter dependent rough SDEs in sense of $\mathscr{L}$-
differentiability (as in Krylov, 2008). 
In companion works (\cite{BFLZ25p}, \cite{BFS25p}), we will show how this removes dimension-dependent regularity assumptions for well-posedness of the Zakai, Kushner--Stratonovich and nonlinear Fokker--Planck stochastic equations. 

\end{abstract}

\smallskip
\noindent \textbf{Keywords.} Rough paths, rough stochastic differential equations, rough partial differential equations, Feynman--Kac formula.

\smallskip
\noindent \textbf{MSC (2020).} 60H10, 60H15, 60L20, 60L50.

\setcounter{tocdepth}{2}
\tableofcontents

\section{Introduction}
Consider the following parameter-dependent stochastic equation in $\R^d$,
\begin{equation} \label{eq:RSDEwithparameter_intro}
    dX^\zeta_t =  b_t(\zeta,X^\zeta_t) dt + \sigma_t(\zeta,X_t^\zeta) \ dB_t + \beta_t (\zeta,X_t^\zeta) \ d\mathbf{W}_t \qquad t \in [0,T], 
\end{equation} 
where $\zeta$ denotes a parameter in a certain parameter space $U$, typically endowed with a Banach space structure.  
Here $B=(B_t)_{t \in [0,T]}$ is an $m$-dimensional $\{\mathcal{F}_t\}_t$-Brownian motion, given on a complete filtered probability space $(\Omega,\mathcal{F},\{\mathcal{F}_t\}_{t \in [0,T]},\mathbb{P})$, and $\mathbf{W}=(W,\mathbb{W})$ is a deterministic $\alpha$-H\"older rough path in $\R^n$, with $\alpha \in (\frac{1}{3},\frac{1}{2}]$.
The coefficients of the equation are progressively measurable mappings \begin{equation*} \begin{aligned}
    b : \Omega \times [0,T] \times U \times \R^d \to \R^d, \quad \sigma:\Omega \times [0,T] \times U \times \R^d \to \R^{d\times m}, \quad  \beta:\Omega \times [0,T] \times U \times \R^d \to \R^{d\times n}.
\end{aligned}
\end{equation*}
At this stage \eqref{eq:RSDEwithparameter_intro} is completely formal, and it is called a rough stochastic differential equation (rough SDE or RSDE).
In the first part of this work we introduce and develop a concept of continuity  and differentiability of the solution $X$ with respect to the parameter $\zeta$ in the spirit Krylov's treatise of parameter-dependent SDEs (cf.\ \cite[Section 1.2.8]{KRYLOV}) and allows us to deduce some continuity and differentiability properties of the function \begin{equation} \label{eq:functional_intro}
    U \ni \zeta \longmapsto J(\zeta) := \E(\phi(X^\zeta)) \in \R. 
\end{equation}
Here $\phi(\cdot)$ denotes some observable of $X^\zeta$; typical examples are a final time evaluation $\phi(X^\zeta)=g(X^\zeta_T)$ for some $g:\R^d \to \R$, or some integral functional of the form $\phi(X^\zeta)=\int_0^T \mu(X^\zeta_r) dr + \int_0^T \nu(X_r^\zeta) d\mathbf{W}_r$
for some $\mu:\R^d \to \R$ and $\nu:\R^d \to \R^{1\times n}$. \\

The second main goal of this paper is to provide an existence-and-uniqueness theory for a certain class of linear rough PDEs, by proving a rough path version of the well-known Feynman--Kac formula. 
We consider the following purely deterministic terminal value problem \begin{equation} \label{eq:roughPDE_intro}
    \begin{cases}
        -\partial_t u(t,x) &= L_t u_t(x) dt + \Gamma_t u_t (x) d\mathbf{W}_t \qquad t \in [0,T], \ x \in \R^d \\
        u(T,\cdot) &= g
    \end{cases},
\end{equation}
where $\mathbf{W}=(W,\mathbb{W})$ is an $\alpha$-H\"older geometric rough path in $\mathbb{R}^n$, with $\alpha \in (\frac{1}{3},\frac{1}{2}]$, and $\ g:\mathbb{R}^d \to \mathbb{R}$. 
The solution is a map $u:[0,T]\times\mathbb{R}^d \to \mathbb{R}$,  and $L_t,\Gamma_t$ are the time-dependent (second and first order, respectively) differential operators defined by \begin{equation*}
    \begin{aligned}
        L_t u := \frac{1}{2}\text{tr}\left(\sigma(t,\cdot)\sigma(t,\cdot)^\top D_{xx}^2 u\right) + b(t,\cdot) \cdot D_x u + c(t,\cdot) u  \qquad \text{and} \qquad
        \Gamma_t u := \beta(t,\cdot) \cdot D_x u + \gamma(t,\cdot) u.
    \end{aligned}
\end{equation*}
Here $\sigma=(\sigma^1,\dots,\sigma^m), \ \beta_t=(\beta^1,\dots,\beta^n)$ and $b$ are time-dependent (deterministic) vector fields over $\mathbb{R}^d$, while $c$ and $\gamma=(\gamma^1,\dots,\gamma^n)$ are time-dependent scalar functions. 
In the smooth case (i.e.\ when $d\mathbf{W}_t$ is replaced by $\dot{W}_t dt$), a standard It\^o calculus result states that any bounded $C^{1,2}$ solution to \eqref{eq:roughPDE_intro} is given by the Feynman--Kac formula \begin{equation} \label{eq:FeynmanKacsolution_intro}
    u(t,x) := \mathbb{E} \Big( g(X_T^{t,x}) \exp \Big\{ \int_t^Tc_r(X_r^{t,x})dr + \int_t^T \gamma_r(X_r^{t,x})\dot{W}_r dr \Big\}  \Big), 
\end{equation}
being $X^{t,x}$ the solution to the following SDE: \begin{equation} \label{eq:roughSDE_FeynmanKac}
    dX^{t,x}_s = b(s,X^{t,x}_s) ds +\sigma(s,X^{t,x}_s) dB_s + \beta(s,X^{t,x}_s) \dot{W}_s ds \quad s \in [t,T], \quad X^{t,x}_t=x,
\end{equation}
for any given Brownian motion $B$. 
Notice how the functional defined in \eqref{eq:FeynmanKacsolution_intro} is an example of the one we introduced in \eqref{eq:functional_intro}, where initial time and initial state play the role of parameters. 
For a proof of the Feynman--Kac formula in the smooth case see \cite[Theorem 10]{KRYLOV}, which also shows how parameter-dependence results for SDEs are applied.
One of the key ingredients is indeed the understanding of the continuity of \begin{equation*}
    (t,x) \longmapsto \mathbb{E} \Big( g(X_T^{t,x}) \exp \Big\{ \int_t^Tc_r(X_r^{t,x})dr + \int_t^T \gamma_r(X_r^{t,x}) \dot{W}_r dr \Big\}  \Big), 
\end{equation*}
and of the differentiability of the previous map with respect to $x$, for fixed $t$.
In our work, we generalize this classical result to the rough case (that is, when $W$ is no longer time-differentiable and we lift it to a rough path). 
Equation \eqref{eq:roughSDE_FeynmanKac} becomes a rough SDE and the integral with respect to $W$ in the exponential term in \eqref{eq:FeynmanKacsolution_intro} becomes a rough stochastic integral. In this sense, one can appreciate how well-posedness of rough PDEs via a rough Feynman--Kac formula constitutes an application of parameter-dependent rough SDEs theory.
Understanding integrability such as \begin{equation*}
    \E \left( e ^{\int_t^T \gamma(X_r) d\mathbf{W}_r} \right) < +\infty
\end{equation*}
when $\mathbf{W}$ is a genuine rough path is a non-trivial task. In previous approaches (cf. \cite{DOR13, DFS17}) this was achieved with Gaussian isoperimetry methods, not applicable in the present generality. Instead we employ John-Nirenberg type inequalities  \cite{LE22JohnNirenberg}, perfectly adapted to our hybrid rough martingale approach, see \cref{section:exponentialterm}. \\

\textbf{What are rough SDEs? }
Rough SDEs are simultaneous generalizations of It\^o's SDEs and Lyons' rough differential equations (RDEs). 
First introduced in \cite{FHL21}, they unify classical stochastic calculus and rough path theory within a single framework. 
This gives intrinsic meaning to continuous adapted solutions $X=X^\mathbf{W}(\omega)$ to equations simultaneously driven by a Brownian motion $B$ and a deterministic rough path $\mathbf{W}$, of the form \begin{equation*}
    dX_t = b_t(X_t) \, dt + \sigma_t(X_t) \, dB_t + (\beta_t, \beta'_t)(X_t) \, d\mathbf{W}_t .
\end{equation*}
Coefficients can be random in the sense of progressive measurability.
As is typical for well-posedness in rough path theory, $\beta$ needs to have $C^{2+}$ space regularity and satisfy  a Taylor-like expansion of the form $\beta_t(\cdot) - \beta_s(\cdot) \approx \beta'_s(\cdot) (W_t - W_s)$.
In contrast, $b$ and $\sigma$ are only required to be progressively measurable in $(t,\omega)$ and Lipschitz continuous in space, as is standard in It\^o theory to have well-posedness. 
In \cite{FHL21}, all the coefficients are assumed to be globally bounded, as is common in rough path theory. The linear (unbounded) case is treated in \cite[Section 3]{BCN24}, also using 
stochastic sewing tools \cite{LE20}.
A brief yet rich and self-contained introduction to rough SDEs can be found below in \cref{section:compendiumonRSDEs}. Examples of rough SDEs can be found in the 
discussion below; for a more general overview of the theory and its motivating applications we refer to \cite{FHL21}. \\

\textbf{Why rough PDEs? } Rough PDEs of the form \eqref{eq:roughPDE_intro} arise in a variety of situations. 
A selection of these is presented below to highlight how our work can serve as a key ingredient, as well as to illustrate the versatility and applicability of the theory of (parameter-dependent) rough SDEs.
In all the examples below $B$ and $B^\perp$ are independent Brownian motions on a given filtered probability space $(\Omega, \mathcal{F}, \{\mathcal{F}_t\}_{t \in [0,T]}, \mathbb{P})$. \\

The class of regular backward equations we are dealing with serves as a key tool for establishing uniqueness results for both linear and nonlinear measure-valued Fokker--Planck equations by duality methods. 
Building on the approaches in \cite{CP24} and \cite{CG19}, our work prepares a systematic way to get rid of dimension-dependent regularity assumptions used in those papers, due to their use of backward stochastic PDEs (BSPDEs) and Sobolev embeddings. More precisely: \\

1. \ In stochastic filtering (see, for instance, \cite{CP24} for a general overview) one is interested in estimating $\varsigma_t(\varphi) := \E(\varphi(X_t) \mid Y_r, \ 0 \le r \le t)$ for a sufficiently large class of test functions $\varphi$, where
\begin{align*}
    dX_t &= b_t(X_t) \, dt + \sigma_t(X_t) \, dB_t + \beta_t(X_t) \, dY_t \quad t \in [0,T] &\text{Observation} \\
    dY_t &= \gamma_t(X_t) \, dt + dB^\perp_t . &\text{Signal}
\end{align*}
By Girsanov's theorem, the process $Y$ is a Brownian motion under a new measure $d\tilde{\mathbb{P}} \mid_{\mathcal{F}_t} = e^{I_t} \, d\mathbb{P} \mid_{\mathcal{F}_t}$ with $I_t=\int_0^t \gamma_t(X_t)^T \, dY_t - \frac{1}{2} \int_0^t |\gamma_t(X_t)|^2 \, dt $ . In particular, it admits an (It\^o) rough path lift $\mathbf{Y}(\omega) := (Y(\omega), (\int \int dY dY)(\omega))$. 
From the Kallianpur--Striebel formula (see e.g.\ \cite[Proposition 3.5]{CP24}), 
\begin{equation*}
    \varsigma_t (\varphi) (\omega) = \frac{\tilde{\E}(\varphi(X_t) e^{I_t} \mid Y_r, \ 0 \le r \le T)(\omega)}{\tilde{\E}(e^{I_t} \mid Y_r, \ 0 \le r \le T)(\omega)} =: \frac{\pi_t(\varphi)(\omega)}{\pi_t(1)(\omega)} \qquad \text{$\tilde {\mathbb{P}} (\mathbb{P})$-a.s.}
\end{equation*}
and one can show that $\pi_t$ is a measure-valued solution to the so-called Zakai stochastic PDE. One of the focuses of \cite{CP24} is in showing uniqueness for that equation, via duality arguments involving a certain class of BSPDEs, seemingly unavoidable if observation coefficients depend more generally on $(X_t,Y_t)$. 
The adoption of a rough path perspective on the stochastic filtering problem is the focus of \cite{BFLZ25p}; we briefly outline the main ideas here. For any given rough path $\mathbf{W}$, one can define \begin{equation*}
    \pi^\mathbf{W}_t (\varphi) := \tilde{\E}(\varphi(X^\mathbf{W}_t) e^{I^\mathbf{W}_t}) \qquad t \in [0,T],  
\end{equation*}
where $dX^\mathbf{W}_t = b_t(X^\mathbf{W}_t) \, dt + \sigma_t(X^\mathbf{W}_t) \, dB_t + \beta_t(X^\mathbf{W}_t) \, d\mathbf{W}_t$ is a rough SDE and $I^\mathbf{W}_t=\int_0^t \gamma_t(X^\mathbf{W}_t)^T \, d\mathbf{W}_t - \frac{1}{2} \int_0^t |\gamma_t(X^\mathbf{W}_t)|^2 \, dt $. From \cite[Lemma 3.1]{DFS17}, one has that $\pi^\mathbf{W}$ solves a measure-valued rough PDE of the form \begin{equation} \label{eq:roughZakai_intro}
    d \pi^\mathbf{W}_t(\varphi) = \pi^\mathbf{W}_t(L_t \varphi) \, dt + \pi^\mathbf{W}_t (\Gamma_t \varphi) \, d\mathbf{W}_t \quad t \in [0,T], \quad \pi^\mathbf{W}_0 = \text{Law}(X^\mathbf{W}_0)
\end{equation}
where $L_t \varphi = \frac{1}{2} tr(\sigma_t \sigma^T_t D^2_{xx}\varphi) + b_t D_x\varphi - \frac{1}{2}|\gamma_t|^2 \varphi$ and $\Gamma_t \varphi = \beta_t D_x\varphi + \gamma_t \varphi$.
By a duality argument (see \cite[Lemma 4.15]{DFS17} for a rough PDEs version), uniqueness for the forward equation \eqref{eq:roughZakai_intro} follows from the existence of a solution $u^\mathbf{W}$ to the backward equation \eqref{eq:roughPDE_intro}.
Within the framework of our paper, one appreciates how uniqueness for \eqref{eq:roughZakai_intro} therefore holds under assumptions that are not dimension-dependent. (The rough path approach works immediately with observation coefficients depending on $(X_t,Y_t)$.)
Going back to the original filtering problem, one can perform a randomisation argument as explained in \cite{FLZ25} to conclude that
\begin{equation*}
    \tilde{\E}(\varphi(X_t) e^{I_t} \mid Y_r, \ 0 \le r \le T)(\omega) =  \pi^\mathbf{W}_t (\varphi)\big|_{\mathbf{W}=\mathbf{Y}(\omega)} \qquad \text{$\tilde {\mathbb{P}} (\mathbb{P})$-a.s.} 
\end{equation*} 

\

2. \ Consider a McKean--Vlasov SDE with common noise of the form \begin{equation*} \begin{cases}
    dX_t = b_t(X_t, \mu_t) \, dt + \sigma_t(X_t,\mu_t) \, dB_t + \beta_t(X_t, \mu_t) \, dB^\perp_t \quad t \in [0,T]\\
    \mu_t(\omega) = \mathbb{P}(X_t \in \cdot \mid B^\perp_r, \ 0 \le r \le t) (\omega)
\end{cases} ,
\end{equation*}
where $\mu$ solves the following nonlinear measure-valued Fokker--Planck stochastic PDE (see, for instance, \cite[p. 110]{CARMONADELARUE_volumeII}): \begin{equation} \label{eq:stochasticFokkerPlanck_intro}
    d \mu_t(\varphi)(\omega) = \mu_t(L_t[\mu_t] \varphi)(\omega) \, dt + \mu_t (\Gamma_t[\mu_t] \varphi)(\omega) \, dB^\perp_t(\omega) \qquad t \in [0,T],
\end{equation}
with $L_t[\mu]\varphi = \frac{1}{2} tr((\sigma_t \sigma^T_t)(\cdot,\mu) D^2_{xx}\varphi) + b_t(\cdot,\mu) D_x\varphi$ and $\Gamma_t[\mu] \varphi = \beta_t(\cdot,\mu) D_x\varphi$.
In \cite{CG19} the authors prove uniqueness of \eqref{eq:stochasticFokkerPlanck_intro} via a duality argument and BSPDEs, under dimension-dependent regularity assumptions, which are due to some Sobolev embeddings. The adoption of a rough path point of view towards nonlinear Fokker--Planck equations is the object of \cite{BFS25p}; we briefly outline here the main ideas. Given any rough path $\mathbf{W}$, one can define $$\mu_t^\mathbf{W}(\varphi) := \E(\varphi(X_t^\mathbf{W})) \qquad t \in [0,T],$$ where 
$dX^\mathbf{W}_t = b_t(X^\mathbf{W}_t, \mu^\mathbf{W}_t) \, dt + \sigma_t (X^\mathbf{W}_t, \mu^\mathbf{W}_t) \, dB_t + f_t (X^\mathbf{W}_t, \mu^\mathbf{W}_t) \, d\mathbf{W}_t, \  \mu^\mathbf{W}_t = \mathbb{P}(X^\mathbf{W}_t \in \cdot \ )$
is a McKean--Vlasov SDE with rough common noise. A solution theory for the previous class of rough SDEs can be found in \cite{FHL25p}; while \cite{BFHL25p} contains some propagation of chaos results. 
In \cite{CN21} the authors have shown that, if $\beta_t(x,\cdot)$ is linear on the space of probability measures, then $\mu^\mathbf{W}$ is the (unique) solution to the following nonlinear Fokker--Planck rough PDE: \begin{equation*}
    d \mu^\mathbf{W}_t(\varphi) = \mu^\mathbf{W}_t(L_t[\mu_t^\mathbf{W}] \varphi) \, dt + \mu^\mathbf{W}_t (\Gamma_t[\mu_t^\mathbf{W}] \varphi) \, d\mathbf{W}_t \quad t \in [0,T] .
\end{equation*}
 Their argument relies on the well-posedness of the linearized (i.e.\ without measure-dependent coefficients) equation. 
Within the framework of our paper and in a similar fashion to what we discussed in 1., it is possible to obtain a similar well-posedness result under more general assumptions on $\beta$, provided that existence for equation \eqref{eq:roughPDE_intro} is known. Upon randomisation (see \cite{FLZ25}), one also notes that \begin{equation*}
    \mathbb{P}(X_t \in \cdot \ ) (\omega) = \mu_t^\mathbf{W} \big|_{\mathbf{W} = \mathbf{B}^\perp(\omega)} \qquad \text{$\mathbb{P}$-a.s.}
\end{equation*}  
being $\mathbf{B}^\perp(\omega)$ the (It\^o) rough path lift of the common noise $B^\perp$.

\

3. Interestingly, there is a third use of rough SDEs/PDEs in the context of conditional option pricing, offering an intrinsic description of the rough PDEs of 
\cite{BBFP25} (treated therein via joint lift of Brownian motion and rough path). Specifically, it is seen that PDEs of the form
\eqref{eq:roughPDE_intro} can be useful in pricing options for local stochastic volatility models of the form \begin{equation*}
    dY_{t}(\omega) = l(t,Y_t(\omega)) v_t(\omega) \left(\sqrt{1-\rho^2} \, dB_{t}(\omega) + \rho \, d{B}^\perp_{t}(\omega) \right) \in \R \quad t \in [0,T] 
\end{equation*}
for some ``leverage'' function $l$, stochastic volatility process $v$, measurable only with respect to $B^\perp$, and correlation parameter $\rho \in (-1,1)$. No Markovian structure of $(Y,v)$ is assumed, thereby also covering popular ``rough'' (local) volatility models.

The interest is then in computing the time-$T$ payoff $\psi$ of $Y$ conditional on $B^\perp$, that is
    \begin{equation*}
        u(t,y,\omega) := \E^{t,y} \big(\psi(Y_T) \mid {B}^\perp_r, \ 0 \le r \le T\big)(\omega) 
    \end{equation*}
with $\mathbb{P}^{t,y}(Y_t=y) =1$.
Rough path meaning $\mathbf{M}$ is given to $M_t(\omega) := (\int_0^t v_r \, dB^\perp_r)(\omega)$, so that \begin{equation*}
    u(t,y,\omega) = \E^{t,y} (\psi(Y^\mathbf{W}_T)) \big|_{\mathbf{W}=\mathbf{M}_\cdot(\omega)} =: u^\mathbf{W}(t,y) \big|_{\mathbf{W}=\mathbf{M}_\cdot(\omega)} \qquad \text{$\mathbb{P}$-a.s.}
\end{equation*} 
where $dY^\mathbf{W}_{t}(\omega) = l(t,Y^\mathbf{W}_t(\omega)) (v^\mathbf{W}_t\sqrt{1-\rho^2} \, dB_{t}(\omega) + \rho \, d\mathbf{W}_t )$ can be seen as an instance of a rough SDE. 
Here $\mathbf{v}^\mathbf{W}_t$ is the square-root of the derivative of $[\mathbf{M}]$, which coincides with $v_t(\omega)$ upon randomization $\mathbf{W}=\mathbf{M}_\cdot(\omega)$.
Within the framework of our paper, one can characterize
$u^\mathbf{W}$ as the unique solution to \eqref{eq:roughPDE_intro} with $L_t = \frac{1}{2} (l(t,\cdot)\mathbf{v}^\mathbf{W}_t)^2 (1-\rho^2) D^2_{xx}$ and $\Gamma_t  = l(t,\cdot) \rho D_x $. Remark that the corresponding stochastic PDE terminal value problem, obtained from \eqref{eq:roughPDE_intro} by (local martingale) randomization, is to the best of the authors' knowledge, impossible to treat with classical methods (since $M$ run backwards, i.e. $t \mapsto M_{T-t}$, may not even be a semimartingale). The situation is actually quite similar in the filtering framework of \cite{CP24}, discussed above, forcing the authors to resort to BSPDE methods. \\

\textbf{Previously on rough PDEs. } 
Deriving a rough Feynman--Kac formula to give a functional integral representation of solutions to rough PDEs was one of the main objectives of \cite{DFS17}.
\cref{section:backwardroughPDEs} of our paper offers 
some 
important 
improvements concerning 
regular solutions for backward rough PDEs. Building on the
framework of
\cite{FHL21}, 
\begin{itemize}
    \item we decrease the needed space-regularity of the coefficients in the equation. Specifically, \cite[Theorem 9]{DFS17}, required $\sigma,\beta,\gamma \in C^6_b$ and $b,c,g \in C^4_b$ to prove regular well-posedness of \eqref{eq:roughPDE_intro}. In \cref{thm:existenceRPDE} below, those assumptions are significantly reduced (especially on the diffusion coefficient $\sigma$) to $\beta,\gamma \in C^5_b$ and $b,\sigma,c,g \in C^3_b$;
    \item we allow (rough, controlled) time-dependence for the coefficients of \eqref{eq:roughPDE_intro}. Time continuity is a sufficient assumption for our well-posedness result, improving \cite{BBFP25} where the time variable is essentially treated as the space variable;
     \item we give an intrinsic meaning to the representation for the solution (in contrast to previous works which approximate the rough driver of the equation via a sequence of smooth (rough) paths); we note that such intrinsic understanding is key to analyze numerical solution methods (cf. \cite{BBFP25}).
     \item we give a direct and self-contained (``rough It\^o calculus'') proof for uniqueness%
     \footnote{We could have used a duality argument, which however does not lead to different assumption of \cref{thm:uniqueness} below.}
     of \eqref{eq:roughPDE_intro}, advertizing the convenience of ``rough stochastic calculus''.
\end{itemize}

\textbf{Content of the paper. } 
\cref{section:compendiumonRSDEs} recalls the main definitions and results on rough SDEs, including the general theory from \cite{FHL21} and the linear case from \cite[Section 3]{BCN24}.
\cref{section:parameterdependence} focuses on parameter-dependent rough SDEs: \cref{section:continuityforRSDEs} establishes a continuity result with respect to parameters; \cref{section:continuityforlinearRSDEs} and \cref{section:differentiabilityforlinearRSDEs} address linear rough SDEs, proving continuity and differentiability results; \cref{section:differentiabilityforRSDEs} applies these to prove differentiability of solutions to rough SDEs with respect to a parameter.  
\cref{section:compositionandintegration_technicalresults} collects key technical results on continuity and differentiability under composition and integration, essential for the analysis of (linear and nonlinear) rough SDEs. 
\cref{section:backwardroughPDEs} turns to rough PDEs, proving existence and uniqueness via a rough Feynman–Kac formula, with supporting auxiliary results on exponential estimates for rough SDEs and functional representations of solutions to rough SDEs in \cref{section:exponentialterm} and \cref{section:functionalsandmarkovproperty}, respectively. \\

\textbf{Acknowledgement. } FB is supported by Deutsche Forschungsgemeinschaft (DFG) - Project-ID 410208580 - IRTG2544 (”Stochastic Analysis in Interaction”). PKF acknowledges support by the Deutsche Forschungsgemeinschaft (DFG, German Research Foundation)
under Germany’s Excellence Strategy – The Berlin Mathematics Research Center MATH+ (EXC-2046/1, project ID: 390685689). PKF and WS acknowledge support from DFG CRC/TRR 388 ``Rough Analysis, Stochastic Dynamics and Related Fields", Projects A02, A07, A10, B04 and B09. 

\medskip

\textbf{Frequently used notation. }  We write $a \lesssim b$ to express that there exists a non-negative constant $C\ge 0$ such that $a \le Cb$. If such a constant depends on some parameters $\theta_1,\dots,\theta_N$ we write $a \lesssim_{\theta_1,\dots,\theta_N} b$.
We denote by $\mathcal{L}(V;\bar{V})$ the space of linear and continuous functions from $V$ to $\bar{V}$ Banach spaces.
The latter is a Banach space, if endowed with the norm $|T|_{\mathcal{L}} := \sup_{x \in V, |x|_V\le 1} |Tx|_{\bar{V}}$. 
We identify $\mathcal{L}(\R^m;\R^n)$ and the tensor product $\R^n \otimes \R^m$ with the matrix space $\R^{n \times m}$. 
The following general identifications also hold: $\mathcal{L}(V \otimes \bar{V};\tilde{V}) \equiv \mathcal{L}(V;\mathscr{L}(\bar{V};\tilde{V})) \equiv \mathcal{L}^2(V \times \bar{V};\tilde{V}),$
where $\mathcal{L}^2(V \times \bar{V};\tilde{V})$ denotes the space of bilinear and continuous maps from $V \times \bar{V}$ to $\tilde{V}$.
We denote by $\mathcal{B}(V;\bar V)$ the space of bounded functions from $V$ to $\bar V$ and for any $F \in \mathcal{B}(V;\bar V)$, $|F|_\infty := \sup_{x \in V} |F(x)|_{\bar{V}}$.
We denote by $DF:V \to \mathcal{L}(V,\bar{V})$ the first Fréchet derivative of $F$. If not otherwise specified, in this paper we always refer to differentiability in Fréchet sense simply as differentiability.  
Given $n \in \mathbb{N}_{\ge 1}$, we denote by ${C}^n_b(V;\bar{V})$ the space of bounded and continuous functions from $V$ to $\bar{V}$ which are $n$-times continuously differentiable with bounded derivatives. 
The space ${C}^n_b(V;\bar{V})$ is a Banach space if endowed with the norm $|F|_{{C}_b^n} := |F|_\infty + |DF|_\infty + \dots + |D^nF|_\infty$,
where $D^nF$ denotes the $n$-th derivative of $F$. 
Any function $F \in {C}^n_b(V,\bar{V})$ satisfies the following - often referred as mean value theorem - for any $x,h \in V$: \begin{equation*}
    F(x+h) - F(x) - \sum_{k=1}^{n-1} \frac{1}{k !} D^kF(x) h^{\otimes k}  = \frac{1}{(n-1)!} \int_0^1 (1-\theta)^{n-1} D^nF(x+\theta h) d\theta \  h^{\otimes n}.
\end{equation*}  
Given $\alpha \in (0,1]$, we define the $\alpha$-H\"older seminorm of $F$ as \begin{equation*}
    [F]_\alpha := \sup_{x,y \in V, x \ne y} \frac{|F(x)-F(y)|_{\bar{V}}}{|x-y|_V^\alpha}.
\end{equation*}
If $[F]_\alpha < +\infty$, we say that $F$ belongs to $C^\alpha(V;\bar{V})$. In case $\alpha = 1$, we say that $F$ belongs to $Lip(V; \bar V)$. 
The space $C^{n+\alpha}_b(V;\bar{V})$ is the space of $C^n_b$-functions such that $D^nF \in C^\alpha$, and we write $|F|_{C^n_b} := |F|_{C^n_b} + [D^nF]_\alpha$.

Given a probability space $(\Omega,\mathcal{F},\mathbb{P})$ and an integrability exponent $p\in [1,\infty)$, we denote by $\|\cdot\|_{p;V}$ - or simply by $\|\cdot\|_p$, if clear from the context - the usual norm on the Lebesgue space $L^p(\Omega;V)$ of $V$-valued random variables. The norm $\|\cdot\|_\infty$ denotes the essential supremum norm.
A $V$-valued stochastic process $X=(X_t)_t$ is said to be $L^p$-integrable if any $X_t$ belongs to $L^p(\Omega;V)$. If $V=\R$, we sometimes write $L^p(\Omega)$ instead of $L^p(\Omega;\R)$. For a given sub-$\sigma$-field $\mathcal{G}\subseteq \mathcal{F}$, the space $L^p(\Omega,\mathcal{G};V)$ is the space of $\mathcal{G}$-measurable random variables in $L^p(\Omega;V)$.

For any $T>0$ we define $\Delta_{[0,T]} := \{ (s,t) \in [0,T]^2 \ | \ s\le t\}$.
For any Banach space valued path $[0,T]\ni t \mapsto X_t \in E$, we define its increment as the two parameter function
$\delta X_{s,t} := X_t - X_s$ for $ (s,t) \in \Delta_{[0,T]}$.
Similarly, given a two-parameter function  $A:\Delta_{[0,T]} \to E$, $(s,t) \mapsto A_{s,t}$, we define $\delta A_{s,u,t}:= A_{s,t} - A_{s,u} - A_{u,t}$, for $s \le u \le t$.


\section{A compendium on rough SDEs} \label{section:compendiumonRSDEs}

In this section we recall the main definitions and results on rough stochastic analysis and rough SDEs contained in \cite{FHL21} and \cite[Section 3]{BCN24}. 
Even if the reader is familiar with the topic, we recommend giving a look to \cref{def:stochasticcontrolledroughpaths} (for the progressive measurability and integrability assumptions we add on stochastic controlled rough paths), to \cref{def:stochasticcontrolledvectorfield} (for a slightly generalized definition of stochastic controlled vector field), and to \cref{rmk:interpolation} (since interpolation inequalities in H\"older spaces are frequently used in the paper). \\

Let $T >0$ and let $(\Omega,\mathcal{F},\{\mathcal{F}_t\}_{t \in [0,T]},\mathbb{P})$ be a complete filtered probability space.
Let $p\in [2,\infty)$ and $\ q \in [p,\infty]$, and let $V,\bar{V},\tilde{V}$ be finite-dimensional Banach spaces. 
Let $\mathcal{G}\subseteq\mathcal{F}$ be a sub-$\sigma$-field. 
Given  a $V$-valued random variable $\xi$, we define - if it exists - its conditional $L^p$-norm with respect to  $\mathcal{G}$ as the (unique) $\mathcal{G}$-measurable $\R$-valued random variable given by \begin{equation*}
    \|\xi|\mathcal{G}\|_{p;V} := \E\left(|\xi|_V^p|\mathcal{G}\right)^{\frac{1}{p}}.
\end{equation*} 
We often abbreviate the notation as $\|\xi|\mathcal{G}\|_p$.  
In particular, the mixed $L_{p,q}$-norm $\|\|\xi |\mathcal{G}\|_p\|_q$ denotes the $L^q(\Omega)$-norm of $\|\xi|\mathcal{G}\|_p$.
We denote by $\E_t(\cdot) = \E( \, \cdot \mid \mathcal{F}_t)$ for any $ t\ in [0,T]$. 
Let $\gamma,\gamma',\delta,\delta' \in [0,1]$ and let $W \in {C}^\alpha([0,T];\R^n)$ be an $\alpha$-Hölder continuous deterministic path, for some $\alpha \in (0,1]$.

\begin{defn} \label{def:stochasticcontrolledroughpaths} (see \cite[Definition 3.1]{FHL21}) 
A pair $(X,X')$ is said to be a \textit{stochastic controlled rough path} of $(p,q)$-integrability and  $(\gamma,\gamma')$-Hölder regularity with values in $V$ if 
\begin{enumerate} \renewcommand{\labelenumi}{\alph{enumi})}
\item $X=(X_t)_{t \in [0,T]}$ is a progressively measurable $V$-valued stochastic processes such that 
\begin{equation*}
    \sup_{t \in [0,T]} \|X_t\|_p < +\infty \footnote{This condition is not present in \cite[Definition 3.1]{FHL21}, and therefore it is not necessary to develop a meaningful theory for rough SDEs. However, in \cref{section:parameterdependence} we are considering stochastic controlled rough paths for which it is necessary to assume $\|X_0\|_p < +\infty$. This is the main reason why we include this requirement in the definition of stochastic controlled rough path.  } \qquad \text{and} \qquad
    \|\delta X\|_{\gamma;p,q} := \sup_{0\le s < t\le T} \frac{\|\|\delta X_{s,t}|\mathcal{F}_s\|_p\|_q}{|t-s|^\gamma} < +\infty;
\end{equation*} 
\item $X'=(X'_t)_{t \in [0,T]}$ is a progressively measurable $\mathscr{L}(\R^n,V)$-valued stochastic processes such that
\begin{equation*}
    \sup_{t \in [0,T]} \|X'_t\|_q < +\infty \qquad \text{and} \qquad \|\delta X'\|_{\gamma';p,q} := \sup_{0\le s < t\le T} \frac{\|\|\delta X'_{s,t}|\mathcal{F}_s\|_p\|_q}{|t-s|^{\gamma'}} < +\infty;
\end{equation*}
\item denoting by $R^X_{s,t}=\delta X_{s,t} - X'_s \delta W_{s,t}$ for $(s,t)\in\Delta_{[0,T]}$, it holds that 
\begin{equation*}
    \|\E_\cdot R^X\|_{\gamma+\gamma';q}:=\sup_{0\le s < t \le T} \frac{\|\E_s(R^X_{s,t})\|_q}{|t-s|^{\gamma+\gamma'}} < + \infty.
\end{equation*} 
\end{enumerate}
We write $(X,X') \in \mathbf{D}_W^{\gamma,\gamma'} L_{p,q}(V)$ or simply $(X,X') \in \mathbf{D}_W^{\gamma,\gamma'} L_{p,q}$, if clear from the context. 
The process $X'$ is called the (stochastic) Gubinelli derivative of $X$ (with respect to $W$). In the case $p=q$, we simply write $\mathbf{D}_W^{\gamma,\gamma'} L_{p}$ and $\|\cdot\|_{\gamma;p}$ instead of $\mathbf{D}_W^{\gamma,\gamma'} L_{p,p}$ and $\|\cdot\|_{\gamma;p,p}$, respectively.
Whereas, if $\gamma=\gamma'$, we write $\mathbf{D}_W^{2\gamma} L_{p,q}$ instead of $\mathbf{D}_W^{\gamma,\gamma} L_{p,q}$.
\end{defn}
\noindent For a process $(X,X') \in \mathbf{D}_W^{\gamma,\gamma'} L_{p,q}$ its seminorm is defined by\footnote{This definition is consistent with \cite{FHL21}, hence does not include the $\sup_t \| X_t \|_p$ that we included in our previous definition.}
\begin{equation*}
    \|(X,X')\|_{\mathbf{D}_W^{\gamma,\gamma'}L_{p,q}} :=  \|\delta X\|_{\gamma;p,q} + \sup_{t \in [0,T]}\|X'_t\|_q + \|\delta X'\|_{\gamma';p,q} + \|\E_\cdot R^X\|_{\gamma+\gamma';q}.
\end{equation*}
Given another stochastic controlled rough path $(\bar{X},\bar{X}') \in \mathbf{D}_{\bar{W}}^{\gamma,\gamma'} L_{p,q}$ for some $\bar{W} \in C^\alpha([0,T];\R^n)$, we define their distance as 
\begin{multline*}
    \|X,X';\bar{X},\bar{X}'\|_{W,\bar{W};\gamma,\gamma';p} := \sup_{t \in [0,T]}\|X_t-\bar{X}_t\|_p + \|\delta (X-\bar{X})\|_{\gamma;p}  + \sup_{t \in [0,T]} \|X_t' - \bar{X}_t'\|_p + \\
    + \|\delta (X'-\bar{X}')\|_{\gamma';p} + \|\E_\cdot R^X - \E_\cdot \bar{R}^{\bar{X}}\|_{\gamma,\gamma';p}, 
\end{multline*}
where $\bar{R}^{\bar{X}}_{s,t}=\delta \bar{X}_{s,t} - \bar{X}'_s \delta \bar{W}_{s,t}$.
If $\bar{W}=W$, we often write $\|\ \cdot \ ; \ \cdot \ \|_{W;\gamma,\gamma';p}$ instead of $\|\ \cdot \ ; \ \cdot \|_{W,W;\gamma,\gamma';p}$.
The space $\mathbf{D}_W^{\gamma,\gamma'} L_{p,q}$ endowed with the distance $d((X,X'), (\bar{X},\bar{X}')):= \|X,X';\bar{X},\bar{X}'\|_{W;\gamma,\gamma';p}$ is a Banach space.

\begin{defn} \label{def:stochasticboundedandLipschitzvectorfield}
    (see \cite[Definition 4.1]{FHL21})
    We say that $g:\Omega\times[0,T]\times V\to \bar{V}$ is a \textit{random bounded vector field} from $V$ to $\bar{V}$ if $(\omega,t) \mapsto g_t(\omega,\cdot)$ is a progressively strongly measurable stochastic process taking values in $C^0_b(V;\bar{V})$ and there exists a deterministic constant, denoted by $\|g\|_\infty$, such that
				\[
					\sup_{t \in [0,T]} \esssup_\omega \sup_{x \in  V}|g_t(\omega,x)|\le\|g\|_\infty.
				\]
	We say that $g$ is a \textit{random bounded Lipschitz vector field} from $V$ to $\bar{V}$ if it is a random bounded continuous vector field, strongly measurable from $\Omega \times [0,T] \to Lip(V;\bar{V})$,  and there exists a deterministic constant, denoted by $\|g\|_{Lip}$, such that
				\[
					\sup_{t\in [0,T]}\esssup_\omega \sup_{x,\bar x \in V, x \ne \bar x}  \frac{|g(\omega,t,x)-g(\omega,t,\bar x)|}{|x-\bar x|}\le \|g\|_{Lip}.
				\]
\end{defn}

\begin{defn} \label{def:stochasticcontrolledvectorfield}
    (generalization of \cite[Definition 3.8]{FHL21})
    Let $\kappa > 1$ and denote by $\lceil \kappa \rceil \in \mathbb{N}_{\ge 2}$ its ceiling part.
    A \textit{stochastic controlled vector field} in $\mathbf{D}_W^{2\delta} L_{p,q} C_b^\kappa(V;\bar{V})$ - or simply in $\mathbf{D}_W^{2\delta} L_{p,q} C_b^\kappa$ - is a pairing $(\beta,\beta')$ satisfying the following: \begin{enumerate}  \renewcommand{\labelenumi}{\alph{enumi})}
        \item $\beta: \Omega \times [0,T] \times V \to \bar{V}$ is progressively measurable such that, for $\mathbb{P}$-almost every $\omega$ and for any $t \in [0,T]$, $\beta_t(\cdot) := \beta(\omega,t,\cdot) \in C^\kappa_b(V;\bar{V})$ and
        \begin{equation*}
            \sup_{t \in [0,T]} \| |\beta_t(\cdot)|_{C^\kappa_b}\|_q := \sup_{t \in [0,T]} \| |\beta_t(\cdot)|_{C_b^{\lfloor \kappa \rfloor}} + [D^{\lfloor \kappa \rfloor}_{x\dots x} \beta_t(\cdot)]_{\kappa-\lfloor \kappa \rfloor} \|_q  < +\infty;
        \end{equation*}
        \item $\beta': \Omega \times [0,T] \times V \to \mathcal{L}(\R^n;\bar{V})$ is progressively measurable such that, for $\mathbb{P}$-almost every $\omega$ and for any $t \in [0,T]$, $\beta_t'(\cdot) := \beta'(\omega,t,\cdot) \in C^{\kappa-1}_b(V;\mathcal{L}(\R^n;\bar{V}))$ and \begin{equation*}
            \sup_{t \in [0,T]} \| |\beta'_t(\cdot)|_{C^{\kappa-1}_b}\|_q < +\infty;
        \end{equation*}
        \item denoting by $R^\beta_{s,t}(\cdot) := \beta_t(\cdot) - \beta_s(\cdot) - \beta_s'(\cdot) \delta W_{s,t}$ and letting $\llbracket g \rrbracket_{\delta;p,q} := \sup_{0 \le s < t \le T} \frac{\| \| |g(\cdot)|_\infty \mid \mathcal{F}_s\|_p \|_q}{|t-s|^\delta}$, \begin{align*}
          \llbracket (\beta,\beta') \rrbracket_{W;2\delta;p,q;\kappa}  &:= \llbracket \delta \beta \rrbracket_{\delta;p,q} + \llbracket \delta D_x\beta \rrbracket_{\delta;p,q} + \dots + \llbracket \delta D_{x\dots x}^{\lceil \kappa \rceil-1}\beta \rrbracket_{\delta;p,q} + \llbracket \delta \beta' \rrbracket_{\delta;p,q} + \dots + \\
          &\quad + \llbracket \delta D_{x \dots x}^{\lceil \kappa \rceil-2} \beta' \rrbracket_{\delta;p,q} + \llbracket \E_\cdot R^\beta \rrbracket_{2\delta;q} +  \dots + \llbracket \E_\cdot R^{D^{\lceil \kappa \rceil-2}_{x \dots x}\beta} \rrbracket_{2\delta;q} < +\infty.
        \end{align*}
    \end{enumerate}
    We denote by \begin{equation*}
        \llbracket (\beta,\beta') \rrbracket_{\kappa;q} :=  \sup_{t \in [0,T]} \left( \| |\beta_t(\cdot)|_{C^\kappa_b}\|_q + \| |\beta'_t(\cdot)|_{C^{\kappa-1}_b}\|_q \right).
    \end{equation*} 
    Given another $\bar{W} \in C^\alpha([0,T];\R^n)$ and given $(\bar{\beta},\bar{\beta}') \in \mathbf{D}_{\bar{W}}^{2\delta}L_{p,q}C_b^\kappa(V;\bar{V})$, we denote by \begin{multline*}
            \llbracket \beta,\beta';\bar{\beta},\bar{\beta}' \rrbracket_{W,\bar{W};2\delta;p;\kappa}  := \llbracket \delta (\beta-\bar{\beta}) \rrbracket_{\delta;p}  + \dots + \llbracket (\delta D_{x\dots x}^{\lceil \kappa \rceil -1}\beta -  D_{x\dots x}^{\lceil \kappa \rceil -1}\bar{\beta}) \rrbracket_{\delta;p}  + \\
            + \llbracket \delta \beta' \rrbracket_{\delta;p,q} + \dots + \llbracket \delta (D_{x \dots x}^{\lceil \kappa \rceil-2} \beta'- D_{x \dots x}^{\lceil \kappa \rceil-2} \bar{\beta}') \rrbracket_{\delta;p,q} 
            + \llbracket \E_\cdot R^\beta \rrbracket_{2\delta;q} + \dots + \llbracket (\E_\cdot R^{D^{\lceil \kappa \rceil -2}_{x \dots x}\beta}- \E_\cdot \bar{R}^{D^{\lceil \kappa \rceil -2}_{x \dots x}\bar{\beta}}) \rrbracket_{2\delta;q}.
    \end{multline*}
    If $p=q$, we simply write $\mathbf{D}_W^{2\delta} L_{p} C_b^\kappa$ instead of $\mathbf{D}_W^{2\delta} L_{p,p} C_b^\kappa$.
    If $\beta$ and $\beta'$ are deterministic functions satisfying all the requirements above, we write $(\beta,\beta') \in \mathscr{D}_W^{2\delta}C_b^\kappa$, we say it is a deterministic controlled vector field and we drop the integrability exponents $p,q$ when denoting the corresponding norms.  
\end{defn}

\begin{rmk} \label{rmk:propertiesofstochasticcontrolledvectorfields}
    The previous definition is formulated such that by taking the derivative of a stochastic controlled vector we still obtain a stochastic controlled vector field. More precisely, given $\kappa \ge 2$ and $(\beta,\beta') \in \mathbf{D}_W^{2\delta}L_{p,q}C_b^\kappa(V;\bar{V})$, it holds that \begin{equation*}
            (D_x\beta, D_x\beta') \in \mathbf{D}_W^{2\delta}L_{p,q}C_b^{\kappa-1}(V;\mathcal{L}(V;\bar{V})).
    \end{equation*}
    This is useful when one needs to prove recursive properties, but the prize we pay is that we need to assume the same H\"older regularity $\delta$ on every object. (To wit, \cite{FHL21} would allows to replace the above exponent $2\delta$ by a suitable sum $\delta+\delta'$.)
    Let us also notice that, for any $\kappa >1$ and for any $(\beta,\beta') \in \mathbf{D}_W^{2\delta}L_{p,q}C_b^\kappa(V \times \tilde{V};\bar{V})$,   \begin{equation*}
            (\beta(\cdot,\tilde{x}), \beta'(\cdot,\tilde{x})) \in \mathbf{D}_W^{2\delta}L_{p,q}C_b^{\kappa-1}(V;\mathcal{L}(V;\bar{V})) \quad \text{for any $\tilde{x} \in \tilde{V}$},
        \end{equation*}
        and $(D_{\tilde{x}}\beta, D_{\tilde{x}}\beta') \in \mathbf{D}_W^{2\delta}L_{p,q}C_b^{\kappa-1}(V \times \tilde{V};\mathcal{L}(\tilde{V};\bar{V}))$. 
\end{rmk}

\begin{prop} \label{prop:compositionwithstochasticcontrolledvectorfields_compendium}
    (see \cite[Lemma 3.11]{FHL21} and \cite[Proposition 3.13]{FHL21}) \begin{itemize}
        \item Let $(\beta,\beta') \in \mathbf{D}_W^{2\delta}L_{p,\infty}C_b^\kappa(V;\bar{V})$ for some $\kappa \in (1,2]$ and let $(X,X') \in \mathbf{D}_W^{\gamma,\gamma'} L_{p,\infty}(V)$.  Define $(Z,Z'):= (\beta(X), D_x\beta(X)X' + \beta'(X))$. Then $(Z,Z') \in \mathbf{D}_W^{\eta,\eta'} L_{p,\infty}(\bar{V})$ with \begin{equation*}
            \|(Z,Z')\|_{\mathbf{D}_W^{\eta,\eta'} L_{p,\infty}} \lesssim_{T} ( \llbracket (\beta,\beta') \rrbracket_{{\kappa};\infty} + \llbracket (\beta,\beta') \rrbracket_{W;2\delta';p,\infty;{\kappa}} ) (1+\|(X,X')\|_{\mathbf{D}_W^{\gamma,\gamma'} L_{p,\infty}}^{\kappa}),
        \end{equation*}
        where $\eta=\min\{\delta,\gamma\}$ and $\eta'=\min\{(\kappa-1)\gamma,\gamma',\delta\}$.
        \item \textcolor{black}{Let $(X,X') \in \mathbf{D}_W^{\gamma,\gamma'} L_{p,\infty}(V)$ and $(\bar{X},\bar{X}') \in \mathbf{D}_{\bar{W}}^{\gamma,\gamma'} L_{p,\infty}(V)$, and let $M>0$ be any constant such that $\max \{\|(X,X')\|_{\mathbf{D}_W^{\gamma,\gamma'} L_{p,\infty}} , \|(\bar{X},\bar{X}')\|_{\mathbf{D}_{\bar{W}}^{\gamma,\gamma'} L_{p,\infty}} \} \le M$. 
        Let $\kappa \in (2,3]$. Let $(\beta,\beta') \in \mathbf{D}_W^{2\delta}L_{p,\infty}C_b^\kappa(V;\bar{V})$ and $(\bar{\beta},\bar{\beta}') \in \mathbf{D}_{\bar{W}}^{2\delta}L_{p,\infty}C_b^{\kappa-1}(V;\bar{V})$, and define $(Z,Z')$ and $(\bar{Z},\bar{Z}')$ as before. Then \begin{equation*}
            \|Z,Z';\bar{Z},\bar{Z}'\|_{W,\bar{W};\lambda,\lambda';p} \lesssim \|X,X';\bar{X},\bar{X}'\|_{W,\bar{W};\lambda,\lambda';p} + \llbracket(\beta-\bar{\beta}, \beta'-\bar{\beta}')\rrbracket_{\kappa-2;p} + \llbracket \beta,\beta';\bar{\beta},\bar{\beta}' \rrbracket_{W,\bar{W};\lambda,\lambda';p;\kappa-1}
        \end{equation*}
        where $\lambda=\min\{\gamma,\delta\}$,  $\lambda'=\min\{(\kappa-2)\gamma,\gamma',\delta\}$ and the implicit constant depends on $M, \llbracket (\beta,\beta') \rrbracket_{{\kappa};\infty}$ and $\llbracket (\beta,\beta') \rrbracket_{W;2\delta';p,\infty;{\kappa}}$. }
    \end{itemize} 
\end{prop}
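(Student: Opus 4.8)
The plan is to verify, one at a time, the four quantities in the definition of $\mathbf{D}_W^{\eta,\eta'}L_{p,\infty}$ for the pair $(Z,Z')=(\beta(X),\,D_x\beta(X)X'+\beta'(X))$, and then to rerun exactly the same estimates along two copies of the data to obtain the local Lipschitz bound of the second bullet. Everything takes place in the conditional norms $\|\,\cdot\,|\mathcal{F}_s\|_p$, with conditional Jensen/H\"older used to separate products and the choice $q=\infty$ keeping the outer norm harmless, together with free use of interpolation in H\"older spaces in the space variable. The easy pieces first: $\sup_t\|Z'_t\|_\infty\le\|D_x\beta\|_\infty\sup_t\|X'_t\|_\infty+\|\beta'\|_\infty$; for $\|\delta Z\|_{\eta;p,\infty}$ I split $\delta Z_{s,t}=\big(\beta_t(X_t)-\beta_t(X_s)\big)+\delta\beta_{s,t}(X_s)$, the first term being $O(|\beta_t|_{C^1_b}|\delta X_{s,t}|)=O(|t-s|^\gamma)$ and the second controlled by $\llbracket\delta\beta\rrbracket_{\delta;p,\infty}$, i.e.\ $O(|t-s|^\delta)$, giving $\eta=\min\{\delta,\gamma\}$; and $\delta Z'_{s,t}$ is handled by product/chain-rule bookkeeping, $\delta\big(D_x\beta(X)X'\big)_{s,t}=\delta\big(D_x\beta(X)\big)_{s,t}X'_t+D_x\beta_s(X_s)\delta X'_{s,t}$ plus the analogous term for $\beta'(X)$, each composition increment costing $|\delta X_{s,t}|$ — but only $|t-s|^{(\kappa-1)\gamma}$ once the linear part is removed, since $D_x\beta_s$ is just $(\kappa-1)$-H\"older — each vector-field increment costing $|t-s|^\delta$, and the $\delta X'_{s,t}$ factor costing $|t-s|^{\gamma'}$, which combine to $\eta'=\min\{(\kappa-1)\gamma,\gamma',\delta\}$.

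The crux is the conditional remainder. With $R^Z_{s,t}=\delta Z_{s,t}-Z'_s\delta W_{s,t}$ I would Taylor-expand to first order,
\begin{align*}
\delta Z_{s,t} &= \big(\beta_t(X_t)-\beta_s(X_t)\big) + \big(\beta_s(X_t)-\beta_s(X_s)\big) \\
&= \beta'_s(X_t)\,\delta W_{s,t} + R^\beta_{s,t}(X_t) + D_x\beta_s(X_s)\,\delta X_{s,t} + O\big(|\delta X_{s,t}|^\kappa\big),
\end{align*}
then substitute $\delta X_{s,t}=X'_s\delta W_{s,t}+R^X_{s,t}$ and $\beta'_s(X_t)=\beta'_s(X_s)+O(|\delta X_{s,t}|)$ and cancel $Z'_s\delta W_{s,t}$, leaving
\[
R^Z_{s,t} = D_x\beta_s(X_s)\,R^X_{s,t} + R^\beta_{s,t}(X_t) + \big(\beta'_s(X_t)-\beta'_s(X_s)\big)\delta W_{s,t} + O\big(|\delta X_{s,t}|^\kappa\big).
\]
Now apply $\mathbb{E}_s$: $D_x\beta_s(X_s)$ is $\mathcal{F}_s$-measurable and pulls out, so that term is bounded by $\|D_x\beta\|_\infty\,\|\mathbb{E}_\cdot R^X\|_{\gamma+\gamma';\infty}|t-s|^{\gamma+\gamma'}$; $\mathbb{E}_s R^\beta_{s,t}(X_t)$ is governed by the $\llbracket\mathbb{E}_\cdot R^\beta\rrbracket_{2\delta;\infty}$ part of the vector-field seminorm, giving $|t-s|^{2\delta}$; the cross term costs $|\delta X_{s,t}||\delta W_{s,t}|\lesssim|t-s|^{\gamma+\alpha}$ and the Taylor remainder $|t-s|^{\kappa\gamma}$. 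All of $\gamma+\gamma'$, $2\delta$, $\gamma+\alpha$, $\kappa\gamma$ dominate $\eta+\eta'$ (this is precisely what fixes $\eta,\eta'$), whence $\|\mathbb{E}_\cdot R^Z\|_{\eta+\eta';\infty}<\infty$; counting how often $(X,X')$ enters a composition shows the prefactor is polynomial of degree $\kappa$ in $\|(X,X')\|_{\mathbf{D}_W^{\gamma,\gamma'}L_{p,\infty}}$ and affine in $\llbracket(\beta,\beta')\rrbracket_{\kappa;\infty}+\llbracket(\beta,\beta')\rrbracket_{W;2\delta;p,\infty;\kappa}$, as claimed.

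For the second bullet I would repeat every decomposition on the differences, e.g.\ $Z_t-\bar Z_t=\big(\beta_t(X_t)-\beta_t(\bar X_t)\big)+\big(\beta_t(\bar X_t)-\bar\beta_t(\bar X_t)\big)$, the first bracket giving $\lesssim_M\|X_t-\bar X_t\|_p$ through $|\beta_t|_{C^2_b}$ — this is exactly where one extra derivative $\kappa\in(2,3]$ is needed, namely to Lipschitz-linearise the composition and the pieces $D_x\beta(X)X'$, $\beta'(X)$ in the difference — and the second giving $\lesssim\llbracket(\beta-\bar\beta,\beta'-\bar\beta')\rrbracket_{\kappa-2;p}$. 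The remainder difference $\mathbb{E}_\cdot R^Z-\mathbb{E}_\cdot\bar R^{\bar Z}$ is treated with the expansion above, pairing $D_x\beta(X)R^X$ against $D_x\bar\beta(\bar X)\bar R^{\bar X}$, $R^\beta(X)$ against $R^{\bar\beta}(\bar X)$, etc., each difference splitting into a ``difference of arguments'' piece controlled by $\|X,X';\bar X,\bar X'\|_{W,\bar W;\lambda,\lambda';p}$ (absorbed into the $M$-dependent constant) and a ``difference of vector fields'' piece controlled by $\llbracket(\beta-\bar\beta,\beta'-\bar\beta')\rrbracket_{\kappa-2;p}+\llbracket\beta,\beta';\bar\beta,\bar\beta'\rrbracket_{W,\bar W;\lambda,\lambda';p;\kappa-1}$. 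Because these distances are affine rather than $\kappa$-homogeneous, the output exponents lose one power of $\gamma$ relative to the first bullet: $\lambda=\min\{\gamma,\delta\}$, $\lambda'=\min\{(\kappa-2)\gamma,\gamma',\delta\}$.

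The main obstacle is the bookkeeping in $\mathbb{E}_\cdot R^Z$: one must organise the first-order Taylor expansion so that every term of a priori insufficient time-order — the full $D_x\beta_s(X_s)R^X_{s,t}$ contribution and the remnant $R^\beta_{s,t}(X_t)$ of $\delta\beta_{s,t}(X_t)$ — appears only inside $\mathbb{E}_s$, where it picks up the missing power of $|t-s|$ from $\|\mathbb{E}_\cdot R^X\|$ or $\|\mathbb{E}_\cdot R^\beta\|$; verify the $\mathcal{F}_s$-measurability that lets $D_x\beta_s(X_s)$ and its analogues leave the conditional expectation; and keep the two-index $L_{p,q}$ norms straight whenever two ``rough'' factors multiply, which is exactly where conditional H\"older and the choice $q=\infty$ come in. Everything else reduces to the mean value theorem and H\"older interpolation, and the distance estimates of the second bullet differ from the bounds of the first only in replacing each factor by the corresponding increment of a difference.
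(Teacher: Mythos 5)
The paper itself offers no proof of this proposition (it is imported from \cite[Lemma 3.11, Proposition 3.13]{FHL21}), so your proposal must be measured against those proofs, and your overall architecture — checking the four seminorm pieces one by one with conditional H\"older and $q=\infty$, a first-order Taylor expansion for the remainder, then rerunning everything on differences with the extra derivative of $\kappa\in(2,3]$ paying for the linearisation and the drop from $(\kappa-1)\gamma$ to $(\kappa-2)\gamma$ — is indeed the standard one. There is, however, one step that fails as written. You expand the time increment of $\beta$ at the random point $X_t$ and then claim that $\E_s\big(R^\beta_{s,t}(X_t)\big)$ is controlled by $\llbracket \E_\cdot R^\beta\rrbracket_{2\delta}\,|t-s|^{2\delta}$. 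That seminorm controls $\big\|\,|\E_s(R^\beta_{s,t}(\cdot))|_\infty\,\big\|_q$, i.e.\ the conditional expectation taken at a \emph{frozen} spatial argument; since $X_t$ is only $\mathcal{F}_t$-measurable, $\E_s\big(R^\beta_{s,t}(X_t)\big)$ is not $(\E_s R^\beta_{s,t})(X_t)$, and the $2\delta$-gain is simply not available for this term: without the conditioning gain $R^\beta_{s,t}$ is only of order $\min\{\delta,\alpha\}$, which is insufficient. The repair is the recentering $R^\beta_{s,t}(X_t)=R^\beta_{s,t}(X_s)+\big(R^\beta_{s,t}(X_t)-R^\beta_{s,t}(X_s)\big)$: the first term has the $\mathcal{F}_s$-measurable argument $X_s$ and is genuinely controlled by $\llbracket\E_\cdot R^\beta\rrbracket_{2\delta}$, while in the second the $\beta'$-part cancels exactly against the cross term $(\beta'_s(X_t)-\beta'_s(X_s))\delta W_{s,t}$ that you already carry, leaving $\delta\beta_{s,t}(X_t)-\delta\beta_{s,t}(X_s)$, which is bounded by $|D_x\delta\beta_{s,t}|_\infty|\delta X_{s,t}|$, hence of order $\delta+\gamma$, via $\llbracket\delta D_x\beta\rrbracket_{\delta;p,\infty}$ (available since $\kappa>1$) and conditional Cauchy--Schwarz. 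After this correction your identity collapses to the standard decomposition
\begin{equation*}
\E_s\big(R^Z_{s,t}\big)=D_x\beta_s(X_s)\,\E_s\big(R^X_{s,t}\big)+\big(\E_s R^\beta_{s,t}\big)(X_s)+\E_s\big(\delta\beta_{s,t}(X_t)-\delta\beta_{s,t}(X_s)\big)+\E_s\big(O(|\delta X_{s,t}|^\kappa)\big),
\end{equation*}
whose exponents $\gamma+\gamma'$, $2\delta$, $\delta+\gamma$, $\kappa\gamma$ all dominate $\eta+\eta'$, and the claimed bound then follows.

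Two smaller points. Since $\beta'_s\in C_b^{\kappa-1}$ with $\kappa-1\le 1$, the increment $\beta'_s(X_t)-\beta'_s(X_s)$ is $O(|\delta X_{s,t}|^{\kappa-1})$, not $O(|\delta X_{s,t}|)$, so your cross term is of order $(\kappa-1)\gamma+\alpha$ rather than $\gamma+\alpha$; this is harmless in the regime $\gamma,\delta\le\alpha$ in which the paper uses the result, and moot after the cancellation above, but it should be stated correctly. The same recentering caveat applies in your second bullet when you pair $R^\beta(X)$ against $\bar R^{\bar\beta}(\bar X)$ in the difference of remainders: each vector-field remainder must be evaluated at the time-$s$ value of its own argument before conditioning, and only then can $\llbracket \beta,\beta';\bar\beta,\bar\beta'\rrbracket_{W,\bar W;2\delta;p;\kappa-1}$ and $\|X,X';\bar X,\bar X'\|_{W,\bar W;\lambda,\lambda';p}$ be brought in. With these corrections your proposal matches the cited arguments.
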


\begin{defn}\label{def:stochasticlinearvectorfields}
(see \cite[Definition 3.1]{BCN24})
We say that $Q:\Omega \times [0,T] \times V \to \bar{V}$ is a \textit{stochastic linear vector field} from $V$ to $\bar{V}$ if the mapping $(\omega,t) \mapsto Q_t(\omega,t,\cdot) \in \mathcal{L}(V;\bar{V})$ is  progressively strongly measurable and there is a deterministic constant $\|Q\|_{\infty} >0$ such that \begin{equation*}
       \sup_{t \in [0,T]} \esssup_{\omega} | Q_t(\omega,\cdot) |_{\mathcal{L}(V,\bar{V})} \le \|Q\|_\infty <+ \infty.  \end{equation*} 
We denote by $\|Q\|_{\mathcal{L};p} := \sup_{t \in [0,T]} \| | Q_t |_{\mathcal{L}(V,\bar{V})} \|_p$, where $|\cdot|_{\mathcal{L}(V,\bar{V})}$ denotes the standard linear operator norm. If clear from the context we simply denote such a norm by $|\cdot|_\mathcal{L}$.

\end{defn}

\begin{defn} \label{def:stochasticcontrolledlinearvectorfields} 
(see \cite[Definition 3.2]{BCN24})
A pair $(f,f')$ is said to be a \textit{stochastic controlled linear vector field} from $V$ to $\bar{V}$ of $(p,q)$-integrability and $(\delta,\delta')$-H\"older regularity - and we write $(f,f') \in \mathbf{D}^{\delta,\delta'}_W L_{p,q} \mathcal{L} (V,\bar{V})$ or simply $(f,f') \in \mathbf{D}^{\delta,\delta'}_W L_{p,q} \mathcal{L}$ - if \begin{itemize}
    \item[a)] $f:\Omega \times [0,T] \times V \to \bar{V}$ is a stochastic linear vector field with \begin{equation*} \label{eq:normofcontrolledvectorfields1}
        \|\delta f\|_{\delta;p,q}:=\sup_{0\le s<t\le T} \frac{\|\||f_t-f_s|_{\mathcal{L}(V,\bar{V})}|\mathcal{F}_s\|_p\|_{q}}{|t-s|^{\delta}}<+\infty;
    \end{equation*}
    \item[b)] $f':\Omega \times [0,T] \times V \to \mathcal{L}(\R^n, \bar{V})$ is a stochastic linear vector field with \begin{equation*} \label{eq:normofcontrolledvectorfields2}
        \|\delta f'\|_{\delta';p,q}:=\sup_{0\le s<t\le T} \frac{\|\||f'_t-f'_s|_{\mathcal{L}(V,\mathcal{L}(\R^n,\bar{V}))}|\mathcal{F}_s\|_p\|_{q}}{|t-s|^{\delta'}}<+\infty;
    \end{equation*}
    \item[c)] denoting by $R_{s,t}^f:=f_t-f_s-f'_s\delta Z_{s,t}$ for any $(s,t)\in \Delta_{[0,T]}$, it holds \begin{equation*} \label{eq:remainderofstochasticcontrolledlinearvectorfields} \|\E_{\cdot}R^f\|_{\delta+\delta';q}:=\sup_{0\le s<t\le T} \frac{\||\E_s(R_{s,t}^f)|_{\mathcal{L}(V,\bar{V})}\|_{q}}{|t-s|^{\delta+\delta'}}<+\infty.
    \end{equation*}  
\end{itemize} 
\end{defn}

\noindent Given $(f,f') \in \mathbf{D}^{\delta,\delta'}_W L_{p,q} \mathcal{L}$ we define \begin{equation*} \begin{aligned}
    \llbracket (f,f') \rrbracket_{\mathcal{L};q} &:= \sup_{t\in [0,T]} \||f_t|_{\mathcal{L}}\|_q + \sup_{t\in [0,T]} \||f'_t|_{\mathcal{L}}\|_q \\
    \llbracket (f,f') \rrbracket_{W;\delta,\delta';p,q} &:= \|\delta f\|_{\delta;p,q} + \|\delta f'\|_{\delta';p,q} + \|E_\cdot R^f\|_{\delta+\delta';q}.
\end{aligned}
\end{equation*}
Given another $\bar{W}\in C^\alpha([0,T];\R^n)$ and given $(\bar{f},\bar{f}') \in \mathbf{D}^{\delta,\delta'}_{\bar{W}} L_{p,q} \mathcal{L}$, we denote by
\begin{equation*}
    \llbracket f,f';\bar{f},\bar{f}' \rrbracket_{W,\bar{W};\delta,\delta';p;\mathcal{L}} := \|\delta(f-\bar{f})\|_{\delta;p} + \|\delta (f'-\bar{f}')\|_{\delta';p} + \|E_\cdot R^f - \E_\cdot \bar{R}^{\bar{f}}\|_{\delta+\delta';p},   
\end{equation*}
where $\bar{R}^{\bar{f}}_{s,t} = \delta \bar{f}_{s,t} - \bar{f}_s' \delta \bar{W}_{s,t}$. 

\begin{prop} \label{prop:stambilityundercompositionlinearvectorfields} 
\begin{enumerate}
    \item Let $(Y,Y') \in \mathbf{D}_W^{\gamma,\gamma'}L_{p}(V)$ 
    and let $(f,f')\in  \mathbf{D}^{\delta,\delta'}_W L_{p,\infty}\mathcal{L}(V,\bar{V})$. 
    Define $(f,f')Y := (f Y,f'Y+fY')$. Then $(f,f')Y \in \mathbf{D}_W^{\eta,\eta'}L_{p}(\bar{V})$ and \begin{equation*}
    \|(f,f')Y\|_{\mathbf{D}_W^{\eta,\eta'}L_p} \le \Big(\sup_{t \in [0,T]} \|Y_t\|_p + \|(Y,Y')\|_{\mathbf{D}_W^{\eta,\eta'}L_{p}}\Big) \left(\llbracket(f,f')\rrbracket_{\mathcal{L};\infty} + \llbracket(f,f')\rrbracket_{\mathbf{D}_W^{\eta,\eta'}L_{p,\infty} \mathcal{L}}\right),
    \end{equation*}
    where $\eta=\min\{\gamma,\delta\}$ and $\eta'=\min\{\gamma,\delta,\gamma',\delta'\}$. 
    \item Let $(Y,Y') \in \mathbf{D}_W^{\gamma,\gamma'}L_{2p}(\R^d)$ 
    and let $(f,f') \in \mathbf{D}_W^{\delta,\delta'}L_{2p}\mathcal{L}(\R^d;V)$.
    Let $(\bar{Y},\bar{Y}') \in \mathbf{D}_{\bar{W}}^{\gamma,\gamma'}L_{2p}(\R^d)$ 
    and let $(\bar{f},\bar{f}') \in \mathbf{D}_{\bar{W}}^{\delta,\delta'}L_{2p}\mathcal{L}(\R^d;V)$.
    Let $M>0$ be any constant such that \begin{multline*}
        \llbracket (f,f') \rrbracket_{\mathcal{L};2p} + \llbracket (f,f') \rrbracket_{W;\delta,\delta';2p;\mathcal{L}} + \llbracket (\bar{f},\bar{f}') \rrbracket_{\mathcal{L};2p} + \llbracket (\bar{f},\bar{f}') \rrbracket_{\bar{W};\delta,\delta';2p;\mathcal{L}} + \\
        + \sup_{t \in [0,T]}\|Y_t\|_{2p} + \|(Y,Y')\|_{\mathbf{D}_W^{\gamma,\gamma'}L_{2p}} + \sup_{t \in [0,T]}\|\bar{Y}_t\|_{2p} +\|(\bar{Y},\bar{Y}')\|_{\mathbf{D}_{\bar{W}}^{\gamma,\gamma'}L_{2p}} \le M.
    \end{multline*}
    Denote by $(Z,Z')=(fY,f'Y+fY')$ and, similarly, $(\bar{Z},\bar{Z}')=(\bar{f}\bar{Y},\bar{f}'\bar{Y}+\bar{f}\bar{Y}')$.
    Then $(Z,Z') \in \mathbf{D}_W^{\lambda,\lambda'}L_p(V)$, $ (\bar{Z},\bar{Z}') \in \mathbf{D}_{\bar{W}}^{\lambda,\lambda'}L_p(V)$, and
    \begin{equation*}
        \|Z,Z';\bar{Z},\bar{Z}'\|_{W,\bar{W};\lambda,\lambda';p} \lesssim_M  \llbracket f,f';\bar{f},\bar{f}' \rrbracket_{\mathcal{L};2p} + \llbracket f,f';\bar{f},\bar{f}' \rrbracket_{W,\bar{W};\lambda,\lambda';2p} + 
        \|Y,Y';\bar{Y},\bar{Y}'\|_{W,\bar{W};\lambda,\lambda';p},
    \end{equation*}
    where $\lambda=\min\{\gamma,\delta\}$ and $\lambda' = \min\{\gamma,\gamma',\delta,\delta'\}$.
\end{enumerate}
\end{prop}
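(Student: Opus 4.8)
The plan is to run the standard ``product rule'' for stochastic controlled rough paths, which in the present \emph{linear} situation is lighter than \cref{prop:compositionwithstochasticcontrolledvectorfields_compendium} because $f$ is linear in the space variable, so no Taylor expansion beyond first order is needed (cf.\ the linear theory of \cite{BCN24}). \textbf{Item 1.} Set $Z:=fY$ and $Z':=f'Y+fY'$. First I would record the two elementary identities
\begin{equation*}
    \delta Z_{s,t}=f_t\,\delta Y_{s,t}+\delta f_{s,t}\,Y_s,\qquad R^Z_{s,t}:=\delta Z_{s,t}-Z'_s\,\delta W_{s,t}=f_s\,R^Y_{s,t}+R^f_{s,t}\,Y_s+\delta f_{s,t}\,\delta Y_{s,t},
\end{equation*}
the latter by writing $\delta Z_{s,t}=f_s\delta Y_{s,t}+\delta f_{s,t}Y_s+\delta f_{s,t}\delta Y_{s,t}$ and inserting $\delta Y_{s,t}=Y'_s\delta W_{s,t}+R^Y_{s,t}$, $\delta f_{s,t}=f'_s\delta W_{s,t}+R^f_{s,t}$ (using $\mathcal{F}_s$-measurability of $Y'_s,f'_s$); one also expands $\delta Z'_{s,t}=\delta f'_{s,t}Y_t+f'_s\delta Y_{s,t}+\delta f_{s,t}Y'_t+f_s\delta Y'_{s,t}$. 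Then I would verify the four quantities in $\|\cdot\|_{\mathbf{D}_W^{\eta,\eta'}L_p}$, together with $\sup_t\|Z_t\|_p\le\|f\|_\infty\sup_t\|Y_t\|_p$, one term at a time, using that $f,f'$ are \emph{a.s.\ bounded} uniformly in $(\omega,t)$ (they are stochastic linear vector fields, so $\|f\|_\infty,\|f'\|_\infty\lesssim\llbracket(f,f')\rrbracket_{\mathcal{L};\infty}$) and that for $p=q$ the mixed norm $\|\|\cdot\,|\mathcal{F}_s\|_p\|_p$ collapses to the ordinary $L^p(\Omega)$-norm. Thus each non-bilinear contribution is estimated by conditioning on $\mathcal{F}_s$ whenever the remaining factor is $\mathcal{F}_s$-measurable (as for $Y_s,Y'_s$) — e.g.\ $\|\delta f_{s,t}Y_s\|_p\le\|\,|Y_s|\,\E_s(|\delta f_{s,t}|^p)^{1/p}\,\|_p\le\|\delta f\|_{\delta;p,\infty}\,|t-s|^\delta\,\|Y_s\|_p$ — and by pulling out $|f_t|_{\mathcal L}\le\|f\|_\infty$ otherwise; tracking the H\"older orders of the factors ($\gamma,\gamma'$ for $\delta Y,\delta Y'$; $\delta,\delta'$ for $\delta f,\delta f'$; $\gamma+\gamma'$, $\delta+\delta'$ for $\E_\cdot R^Y,\E_\cdot R^f$) delivers the exponents $\eta=\min\{\gamma,\delta\}$ and $\eta'=\min\{\gamma,\gamma',\delta,\delta'\}$.

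The one genuinely bilinear term is $\delta f_{s,t}\,\delta Y_{s,t}$ in $R^Z_{s,t}$, and this is the only step needing real care. I would bound its conditional expectation by conditional H\"older with exponents $(p,\tfrac{p}{p-1})$ and conditional Jensen ($\tfrac{p}{p-1}\le p$),
\begin{equation*}
    \big|\E_s(\delta f_{s,t}\,\delta Y_{s,t})\big|\le\E_s(|\delta f_{s,t}|^p)^{1/p}\,\E_s(|\delta Y_{s,t}|^{p/(p-1)})^{(p-1)/p}\le\|\delta f\|_{\delta;p,\infty}\,|t-s|^\delta\,\E_s(|\delta Y_{s,t}|^p)^{1/p},
\end{equation*}
so that $\|\E_\cdot(\delta f\,\delta Y)\|_p\lesssim\|\delta f\|_{\delta;p,\infty}\,\|\delta Y\|_{\gamma;p}\,|t-s|^{\delta+\gamma}$; the a.s.\ conditional bound on $\delta f$ used in the first inequality is exactly where the exponent $q=\infty$ on $f$ enters. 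Since $\eta+\eta'\le 2\min\{\gamma,\delta\}\le\gamma+\delta$, while $f_sR^Y$ and $R^fY_s$ are of orders $\gamma+\gamma'$ and $\delta+\delta'$, the remainder $\E_\cdot R^Z$ is of order $\ge\eta+\eta'$ (up to a power of $T$). Collecting all estimates and factoring $\sup_t\|Y_t\|_p+\|(Y,Y')\|$ against $\llbracket(f,f')\rrbracket_{\mathcal{L};\infty}+\llbracket(f,f')\rrbracket_{W;\delta,\delta';p,\infty}$ yields the claimed inequality.

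\textbf{Item 2.} I would run the computation above simultaneously for $(Z,Z')$ and $(\bar Z,\bar Z')$ and subtract, using $fY-\bar f\bar Y=f(Y-\bar Y)+(f-\bar f)\bar Y$ and the analogous splittings of the identities above. Every resulting term is then either a \emph{difference factor} (one of $f-\bar f$, $f'-\bar f'$, $R^f-\bar R^{\bar f}$, $Y-\bar Y$, $Y'-\bar Y'$, $R^Y-\bar R^{\bar Y}$) times a factor bounded by $M$, or a differenced bilinear term $(\delta f-\delta\bar f)\,\delta Y$ or $\delta\bar f\,(\delta Y-\delta\bar Y)$; the former are handled as in Item 1 with the bounded factors absorbed into $\lesssim_M$. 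For the latter no $L^\infty$ factor is available, so I would use conditional Cauchy--Schwarz together with the uniform $L_{2p}$-control, e.g.
\begin{equation*}
    \big\|\E_s\big(|\delta f_{s,t}-\delta\bar f_{s,t}|\,|\delta Y_{s,t}|\big)\big\|_p\le\big\|\,\|\delta f_{s,t}-\delta\bar f_{s,t}\,|\mathcal{F}_s\|_{2p}\,\big\|_{2p}\;\big\|\,\|\delta Y_{s,t}\,|\mathcal{F}_s\|_{2p}\,\big\|_{2p}\lesssim_M\|\delta(f-\bar f)\|_{\lambda;2p}\,|t-s|^{\lambda+\gamma},
\end{equation*}
and symmetrically for the other piece — this is precisely why Item 2 is stated with $L_{2p}$-integrability on all four objects. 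Using $\lambda+\lambda'\le\gamma+\delta$ on the differenced bilinear terms and summing the finitely many contributions produces the stated $\lesssim_M$-bound. Modulo this (lengthy but routine) term-by-term bookkeeping, the only substantive obstacle throughout is extracting the full order $|t-s|^{\eta+\eta'}$ (resp.\ $|t-s|^{\lambda+\lambda'}$ with a factor $M$) from the bilinear remainder $\delta f\,\delta Y$ while respecting the asymmetric conditional integrabilities ($L_{p,\infty}$ on $f$ versus $L_{p,p}$ on $Y$ in Item 1, $L_{2p}$ everywhere in Item 2), so that the conditional H\"older / Cauchy--Schwarz inputs apply and the constants land in the advertised multiplicative form.
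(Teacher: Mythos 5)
Your proposal is correct and follows essentially the same route as the paper: item 1 is the product-rule argument generalizing \cite[Proposition 3.3]{BCN24} (which the paper simply cites), and item 2 is the same difference splitting $fY-\bar f\bar Y=f(Y-\bar Y)+(f-\bar f)\bar Y$ with the bilinear remainder term $\delta f\,\delta(Y-\bar Y)$ handled by H\"older with exponent $2p$ on each factor — exactly the paper's sample estimate, your conditional Cauchy--Schwarz/Jensen version being equivalent to the paper's unconditional H\"older after $\E_s$ since $p=q$ there. No gaps beyond the routine term-by-term bookkeeping you already acknowledge.
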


\begin{proof}
    \textit{1. } Straightforward generalization of \cite[Proposition 3.3]{BCN24}. \\

    \textit{2. } We just prove the last assertion; the fact that both $(Z,Z')$ and $(\bar{Z},\bar{Z}')$ are $L_p$-integrable stochastic controlled rough path can be proved by using similar techniques as in \textit{1}. 
    Let $(s,t) \in \Delta_{[0,T]}$. 
    For sake of compact notation we denote by $\tilde{Y}=Y-\bar{Y}$ and $\tilde{f}=f-\bar{f}$.
    One trivially has that \begin{equation*}
        f_tY_t -\bar{f}_t\bar{Y}_t =  f_t\tilde{Y}_t + \tilde{f}_t\bar{Y}_t
    \end{equation*} and \begin{equation*}
        f_t'Y_t+f_tY_t' - \bar{f}_t'\bar{Y}_t-\bar{f}_t\bar{Y}'_t = f_t' \tilde{Y}_t + \tilde{f}'_t \bar{Y}_t + f_t \tilde{Y}'_t + \tilde{f}_t \bar{Y}_t.
    \end{equation*}
    Moreover, we can write \begin{equation*}
        \delta (f Y-\bar{f} \bar{Y})_{s,t} = \delta (f \tilde{Y}+\tilde{f} \bar{Y})_{s,t} =  f_t \delta \tilde{Y}_{s,t} + \delta f_{s,t}\tilde{Y}_s+\tilde{f}_t\delta \bar{Y}_{s,t}+\delta \tilde{f}_{s,t}\bar{Y}_s
    \end{equation*}
    and \begin{equation*}
        \begin{aligned}
        \delta (f' Y + f Y' - \bar{f}' \bar{Y} - \bar{f} \bar {Y}' )_{s,t} &= \delta (f' \tilde{Y} +  \tilde{f}'\bar{Y}+ f \tilde{Y}' + \tilde{f} \bar{Y}')_{s,t} = \\ 
        &= f'_t\delta\tilde{Y}_{s,t} + \delta f'_{s,t}\tilde{Y}_s+\tilde{f}_t'\delta \bar{Y}_{s,t} + \delta \tilde{f}'_{s,t}\bar{Y}_s+f_t\delta\tilde{Y}'_{s,t}+\delta f_{s,t}\tilde{Y}'_s + \tilde{f}_t \delta \bar{Y}_{s,t}' + \delta \tilde{f}_{s,t} \bar{Y}'_s.
    \end{aligned}
    \end{equation*}
    By linearity, it is possible to write \begin{equation*}
        \begin{aligned}
            {R}_{s,t}^{{f} {Y}} - \bar{R}_{s,t}^{\bar{f} \bar{Y}} &= \delta (f Y)_{s,t} - (f'_sY_s+f_sY'_s)\delta{W}_{s,t} - \delta (\bar{f} \bar{Y})_{s,t} + (\bar{f}_s\bar{Y}_s+\bar{f}_s\bar{Y}'_s)\delta{\bar{W}}_{s,t} = \\ 
            &= f_t\delta\tilde{Y}_{s,t} + \delta f_{s,t} \tilde{Y}_s + \tilde{f}_t \delta \bar{Y}_{s,t}  + \delta \tilde{f}_{s,t}\bar{Y}_s- f_sY_s'\delta W_{s,t} + \bar{f}_s\bar{Y}'_s \delta \bar{W}_{s,t} - f'_sY_s \delta W_{s,t} + \bar{f}'_s\bar{Y}_s \delta \bar{W}_{s,t} = \\
            &= \delta f_{s,t} \delta\tilde{Y}_{s,t} + f_s (R^Y_{s,t}-\bar{R}^{\bar{Y}}_{s,t}) + (R^f_{s,t} - \bar{R}^{\bar{f}}_{s,t}) \bar{Y}_s + R^f_{s,t} \tilde{Y}_s + \tilde{f}_t \delta \bar{Y}_{s,t} - \tilde{f}_s \bar{Y}'_s \delta \bar{W}_{s,t} = \\
            &= \delta f_{s,t} \delta\tilde{Y}_{s,t} + f_s (R^Y_{s,t}-\bar{R}^{\bar{Y}}_{s,t}) + (R^f_{s,t} - \bar{R}^{\bar{f}}_{s,t}) \bar{Y}_s + R^f_{s,t} \tilde{Y}_s + \delta \tilde{f}_{s,t} \delta \bar{Y}_{s,t} + \tilde{f}_s \bar{R}^{\bar{Y}}_{s,t}.
        \end{aligned}
    \end{equation*}
    The conclusion follows by applying H\"older's inequality to all the previous identities. As an example, we notice that \begin{equation*}
        \|\E_s(\delta f_{s,t} \delta\tilde{Y}_{s,t})\|_p \le \|\delta f_{s,t}\|_{2p} \|\delta\tilde{Y}_{s,t}\|_{2p} \lesssim_M \|Y,Y';\bar{Y},\bar{Y}'\|_{W,\bar{W};\lambda,\lambda';2p} |t-s|^{\lambda+\lambda'}.
    \end{equation*}
\end{proof}

\begin{defn} \label{def:roughpath} 
(see \cite[Definition 2.1, Definition 2.4 and Section 2.2]{FRIZHAIRER})
Let $\alpha \in (\frac{1}{3},\frac{1}{2}]$.
An $\alpha$-\textit{rough path} on $[0,T]$ with values in $\R^n$ is a pair $\mathbf{W}=(W,\mathbb{W})$ such that \begin{enumerate}
    \item[a)] $W:[0,T] \to \R^n$ with $|\delta W|_\alpha:= \sup_{s \ne t} \frac{|W_t-W_s|_{\R^n}}{|t-s|^\alpha}< +\infty$ ;
    \item[b)] $\mathbb{W}:[0,T]^2 \to \R^n \otimes \R^n$ with $|\mathbb{W}|_{2\alpha} := \sup_{s\ne t}\frac{|\mathbb{W}_{s,t}|_{\R^n \otimes \R^n}}{|t-s|^{2\alpha}}< +\infty$;
    \item[c)] (Chen's relation) for any $s,u,t \in [0,T]$, it holds \begin{equation*}
        \mathbb{W}_{s,t}-\mathbb{W}_{s,u}-\mathbb{W}_{u,t} = \delta W_{s,u}\otimes\delta W_{u,t}
    \end{equation*} or, equivalently, $\mathbb{W}_{s,t}^{ij}-\mathbb{W}_{s,u}^{ij}-\mathbb{W}_{u,t}^{ij} = \delta W_{s,u}^i \delta W_{u,t}^j$ for any $i,j=1,\dots,n$.
\end{enumerate}
We write $\mathbf{W}\in \mathscr{C}^\alpha([0,T];\R^n)$, and we define $|\mathbf{W}|_\alpha := |\delta W|_\alpha + |\mathbb{W}|_{2\alpha}$.
Given any pair of rough paths $\mathbf{W}=(W,\mathbb{W}), \ \bar{\mathbf{W}}=(\bar{W},\bar{\mathbb{W}}) \in \mathscr{C}^\alpha([0,T];\R^n)$,  we define their distance as \begin{equation*} \begin{aligned}
    \rho_{\alpha}(\mathbf{W},\bar{\mathbf{W}}) &:= |\delta (W -\bar{W})|_\alpha + |\mathbb{W}-\bar{\mathbb{W}}|_{2\alpha}. 
\end{aligned}
\end{equation*} 
An $\alpha$-rough path $\mathbf{W}=(W,\mathbb{W})$ is said to be \textit{weakly geometric} if, for any $s,t \in [0,T]$ \begin{equation*}
Sym(\mathbb{W}_{s,t}):=\frac{\mathbb{W}_{s,t}+\mathbb{W}_{s,t}^\top}{2} = \frac{1}{2}\delta W_{s,t} \otimes \delta W_{s,t}.
\end{equation*}
or, equivalently, $\frac{1}{2}(\mathbb{W}^{ij}+\mathbb{W}^{ji}) = \frac{1}{2}\delta W^i \delta W^j$ for any $i,j=1,\dots,n$. 
We write $\mathbf{W}\in \mathscr{C}^\alpha_g([0,T];\R^n)$.
\end{defn}

\begin{prop}\label{prop:roughstochastiintegral_compendium}
(see \cite[Theorem 3.5]{FHL21})
Let $\mathbf{W}=(W,\mathbb{W}) \in \mathscr{C}^{\alpha}([0,T];\mathbb{R}^n)$ be an $\alpha$-rough path with $\alpha \in (\frac{1}{3},\frac{1}{2}]$ and let $(\phi,\phi')\in \mathbf{D}^{\gamma,\gamma'}_W L_{p,q}(\mathcal{L}(\R^n;V))$ be a stochastic controlled rough path with \begin{equation*}
    \alpha + \gamma > \frac{1}{2}  \qquad \text{and} \qquad \alpha + \min\{\alpha,\gamma\} + \gamma' >1.
    \end{equation*} 
    Then there exists a unique continuous and adapted $V$-valued stochastic process $\int_0^\cdot (\phi_r,\phi'_r) d\mathbf{W}_r$, called the {rough stochastic integral} of $(\phi,\phi')$ against $\mathbf{W}$, such that \begin{equation*}
        \|\|\int_s^t (\phi_r,\phi'_r) d\mathbf{W}_r -\phi_s\delta W_{s,t} - \phi'_s \mathbb{W}_{s,t}|\mathcal{F}_s\|_p \|_q \lesssim |t-s|^{\frac{1}{2}+\varepsilon}
    \end{equation*} and \begin{equation*} \|\E_s(\int_s^t (\phi_r,\phi'_r) d\mathbf{W}_r -\phi_s\delta W_{s,t} - \phi'_s \mathbb{W}_{s,t}) \|_q \lesssim |t-s|^{1+\varepsilon}
    \end{equation*}
    for any $(s,t) \in \Delta_{[0,T]}$ and for some $\varepsilon>0$. Here $\int_s^t (\phi_r,\phi'_r) d\mathbf{W}_r = \int_0^t (\phi_r,\phi'_r) d\mathbf{W}_r - \int_0^s (\phi_r,\phi'_r) d\mathbf{W}_r $. 
    Moreover, \begin{equation*}
        \Big(\int_0^\cdot(X_r,X_r') d\mathbf{W}_r, X \Big) \in \mathbf{D}^{\alpha,\min\{\alpha,\gamma\}}_W L_{p,q}(V).
    \end{equation*}
\end{prop}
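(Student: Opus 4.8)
The plan is to construct the integral by the stochastic sewing lemma in its mixed $L_{p,q}$ form (cf.\ \cite{LE20}, \cite{FHL21}), applied to the second-order germ $A_{s,t} := \phi_s\,\delta W_{s,t} + \phi'_s\,\mathbb{W}_{s,t}$ for $(s,t)\in\Delta_{[0,T]}$, the natural compensated Riemann-sum increment. First I would record that $A_{s,t}$ is $\mathcal F_t$-measurable (progressive measurability of $\phi,\phi'$ and measurability of the deterministic $W,\mathbb W$) and that $\|\|A_{s,t}|\mathcal F_s\|_p\|_q + \|\E_s A_{s,t}\|_q \lesssim |t-s|^\alpha$, using $\sup_t\|\phi_t\|_q,\sup_t\|\phi'_t\|_q<\infty$ and $|\delta W|_\alpha,|\mathbb W|_{2\alpha}<\infty$. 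The key algebraic step is the Chen-type identity: inserting Chen's relation $\mathbb W_{s,t}=\mathbb W_{s,u}+\mathbb W_{u,t}+\delta W_{s,u}\otimes\delta W_{u,t}$ and $\delta\phi_{s,u}=R^\phi_{s,u}+\phi'_s\,\delta W_{s,u}$, the two second-order terms cancel and one is left with
\[
\delta A_{s,u,t} \;=\; -\,R^\phi_{s,u}\,\delta W_{u,t}\;-\;\delta\phi'_{s,u}\,\mathbb W_{u,t}, \qquad s\le u\le t .
\]

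Next I would verify the two hypotheses of the stochastic sewing lemma. For the ``pathwise'' bound, estimating $R^\phi_{s,u}$ by $\delta\phi_{s,u}$ together with $\phi'_s\,\delta W_{s,u}$ gives $\|\|R^\phi_{s,u}|\mathcal F_s\|_p\|_q\lesssim_T |u-s|^{\min\{\alpha,\gamma\}}$, so that $\|\|\delta A_{s,u,t}|\mathcal F_s\|_p\|_q\lesssim |t-s|^{\alpha+\min\{\alpha,\gamma\}}+|t-s|^{2\alpha+\gamma'}$; both exponents exceed $\tfrac12$ since $\alpha+\min\{\alpha,\gamma\}=\min\{2\alpha,\alpha+\gamma\}>\tfrac12$ (using $\alpha>\tfrac13$ and $\alpha+\gamma>\tfrac12$) and $2\alpha>\tfrac12$. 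For the ``conditional expectation'' bound the crucial observation is that $\delta W_{u,t}$ and $\mathbb W_{u,t}$ are deterministic and hence pass through $\E_s$, whence
\[
\|\E_s\delta A_{s,u,t}\|_q \le \|\E_s R^\phi_{s,u}\|_q\,|\delta W_{u,t}| + \|\E_s\delta\phi'_{s,u}\|_q\,|\mathbb W_{u,t}| \lesssim |t-s|^{\alpha+\gamma+\gamma'}+|t-s|^{2\alpha+\gamma'} ,
\]
where the first summand uses the controlled-rough-path remainder bound $\|\E_\cdot R^\phi\|_{\gamma+\gamma';q}<\infty$ and the second uses conditional Jensen $\|\E_s\delta\phi'_{s,u}\|_q\le\|\|\delta\phi'_{s,u}|\mathcal F_s\|_p\|_q\lesssim|u-s|^{\gamma'}$ (valid as $p\ge1$). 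The resulting exponent is $\alpha+\min\{\alpha,\gamma\}+\gamma'>1$. Choosing $\varepsilon>0$ below both excesses (over $\tfrac12$ and over $1$) produces exactly the local estimates in the statement.

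The stochastic sewing lemma then delivers a continuous adapted $V$-valued process, which I define to be $\int_0^\cdot(\phi_r,\phi'_r)\,d\mathbf W_r$ (normalised to vanish at $0$), satisfying the two displayed bounds. Uniqueness is the uniqueness half of the sewing lemma: if $\mathcal I,\tilde{\mathcal I}$ both obey these bounds, their difference $D$ is a genuine path increment, so $\delta D_{s,u,t}=0$ while $\|\|\delta D_{s,t}|\mathcal F_s\|_p\|_q\lesssim|t-s|^{1/2+\varepsilon}$ and $\|\E_s\delta D_{s,t}\|_q\lesssim|t-s|^{1+\varepsilon}$, which forces $D$ constant, hence $\equiv0$. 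For the final claim, keeping the notation $(X,X')=(\phi,\phi')$ of the statement and writing $\mathcal I_t=\int_0^t(X_r,X'_r)\,d\mathbf W_r$, one reads off from $\delta\mathcal I_{s,t}=X_s\,\delta W_{s,t}+X'_s\,\mathbb W_{s,t}+O(|t-s|^{1/2+\varepsilon})$ that $\|\|\delta\mathcal I_{s,t}|\mathcal F_s\|_p\|_q\lesssim_T|t-s|^\alpha$ (since $\tfrac12+\varepsilon\ge\alpha$) and $\sup_t\|\mathcal I_t\|_p<\infty$; the Gubinelli derivative is $X$, whose $\gamma$-regularity downgrades freely to $\min\{\alpha,\gamma\}$ on $[0,T]$ and with $\sup_t\|X_t\|_q<\infty$; and $R^{\mathcal I}_{s,t}=X'_s\,\mathbb W_{s,t}+O(|t-s|^{1/2+\varepsilon})$ gives $\|\E_s R^{\mathcal I}_{s,t}\|_q\lesssim|t-s|^{2\alpha}+|t-s|^{1+\varepsilon}\lesssim_T|t-s|^{\alpha+\min\{\alpha,\gamma\}}$, using $2\alpha\ge\alpha+\min\{\alpha,\gamma\}$ and $\alpha+\min\{\alpha,\gamma\}\le 2\alpha\le1$. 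Hence $(\mathcal I,X)\in\mathbf D_W^{\alpha,\min\{\alpha,\gamma\}}L_{p,q}(V)$.

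The step I expect to be the real work is the exponent bookkeeping in the two sewing bounds: seeing that $\alpha+\gamma>\tfrac12$ (together with $\alpha>\tfrac13$) is precisely what pushes the pathwise bound past $\tfrac12$, while $\alpha+\min\{\alpha,\gamma\}+\gamma'>1$ is precisely what pushes the conditional-expectation bound past $1$ — and that the latter works only because determinism of $W$ and $\mathbb W$ lets one invoke the stronger remainder estimate $\|\E_\cdot R^\phi\|_{\gamma+\gamma';q}$ rather than the weaker conditional $L_{p,q}$-bound there. A minor but real point is that one needs the $L_{p,q}$-version of the stochastic sewing lemma with an $\mathcal F_t$-measurable germ, not merely Lê's original $L_p$ statement.
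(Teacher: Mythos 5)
Your argument is correct and follows essentially the same route as the paper's source for this statement: the paper offers no proof beyond the citation of \cite{FHL21}, and the proof there is exactly your mixed $L_{p,q}$ stochastic sewing applied to the germ $A_{s,t}=\phi_s\,\delta W_{s,t}+\phi'_s\,\mathbb{W}_{s,t}$, with the same Chen-relation cancellation $\delta A_{s,u,t}=-R^\phi_{s,u}\,\delta W_{u,t}-\delta\phi'_{s,u}\,\mathbb{W}_{u,t}$, the same use of determinism of $W,\mathbb{W}$ to access $\|\E_\cdot R^\phi\|_{\gamma+\gamma';q}$, and the same exponent bookkeeping. The one point you treat implicitly is continuity of the constructed process, which is not produced by the moment bounds alone but is part of the conclusion of the $L_{p,q}$ sewing statement invoked in \cite{FHL21}, so nothing essential is missing.
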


Let $B=(B_t)_{t\in [0,T]}$ be an $m$-dimensional Brownian motion on $(\Omega,\mathcal{F},\{\mathcal{F}_t\}_{t\in [0,T]},\mathbb{P})$ and let $\mathbf{W}=(W,\mathbb{W})\in \mathscr{C}^\alpha([0,T];\R^n)$ with $\alpha\in (\frac{1}{3},\frac{1}{2}]$. 
Let $b:\Omega \times [0,T] \times \R^d \to \R^d$, $\sigma:\Omega \times [0,T] \times \R^d \to \mathcal{L}(\R^m;\R^d)$, $\beta:\Omega \times [0,T] \times \R^d \to \mathcal{L}(\R^n;\R^d)$, $\beta':\Omega \times [0,T] \times \R^d \to \mathscr{L}(\R^n \otimes \R^n;\R^d)$, $F:\Omega \times [0,T] \times \R^d \to \R^d$ and $F':\Omega \times [0,T] \times \R^d \to \mathscr{L}(\R^n;\R^d)$  be progressively measurable. Assume $x \mapsto \beta(\omega,t,x)$ to be (Fréchet) differentiable. 
We define solutions to
\begin{equation} \label{eq:RSDE_compendium}
    dX_t = dF_t + b_t(X_t) \, dt+\sigma_t(X_t) \, dB_t+(\beta_t,\beta'_t)(X_t) \, d\mathbf{W}_t
\end{equation} 
according to the following (cfr.\ \cite[Definition 4.2]{FHL21} and \cite[Definition 3.6]{BCN24})

\begin{defn} \label{def:solutionRSDEs_compendium}
    An $L_{p,q}$-integrable solution of \eqref{eq:RSDE_compendium} over $[0,T]$ is a continuous and adapted $\R^d$-valued stochastic process $X$ such that the following conditions are satisfied:
		\begin{enumerate}\renewcommand{\labelenumi}{\alph{enumi})}
			\item $\int_0^T |b_r(X_r)|dr$ and $\int_0^T |(\sigma \sigma^\top)_r(X_r)|dr$ are finite $\mathbb{P}$-a.s.;
			\item $(\beta(X),D_x\beta(X)(\beta(X)+F')+\beta'(X))$ belongs to $\mathbf{D}_W^{\bar\alpha,\bar\alpha'}L_{p,q}(\mathcal{L}(\R^n,\R^d))$, 
			for some $\bar\alpha,\bar\alpha'\in(0,1]$ such that $\alpha+\min\{\alpha,\bar\alpha\}>\frac12$ and $\alpha+\min\{\alpha,\bar\alpha\}+\bar\alpha'>1$;
			\item $X$ satisfies the following stochastic Davie-type expansion for every $(s,t)\in \Delta_{[0,T]}$: \begin{multline*} 
			    \delta X_{s,t} = \delta F_{s,t} + \int_s^t b_r(X_r)dr + \int_s^t \sigma_r(X_r)dB_r + \\
                                 + \beta_s(X_s) \delta W_{s,t} + (D_x\beta_s(X_s)(\beta_s(X_s) + F'_s)+\beta'_s(X_s))\mathbb{W}_{s,t} + X^\natural_{s,t},
			\end{multline*} where $\|\|X^\natural_{s,t}|\mathcal{F}_s\|_p\|_q=o(|t-s|^\frac{1}{2})$ and $\|\E_s(X^\natural_{s,t})\|_q=o(|t-s|)$ as $|t-s| \to 0$. 
		\end{enumerate}
		When the starting position $X_0=\xi$ is specified, we say that $X$ is a solution starting from $\xi$. 
        Equivalently, c) can be replaced by \begin{itemize}
            \item[c')] $\mathbb{P}$-almost surely and for any $t \in [0,T]$, \begin{equation*}
                X_t = \xi + F_t + \int_0^t b_r(X_r) dr + \int_0^t \sigma_r(X_r) dB_r + \int_0^t (\beta_r(X_r),D_x\beta_r(X_r)(\beta_r(X_r) + F'_r) + \beta'_r(X_r)) d\mathbf{W}_r
            \end{equation*} 
        \end{itemize}
    and one has that $\|\|X^\natural_{s,t}|\mathcal{F}_s\|_p\|_q \lesssim |t-s|^{\alpha + \min\{\alpha, \bar \alpha\}}$ and $\|\E_s(X^\natural_{s,t})\|_q\lesssim |t-s|^{\alpha + \min\{\alpha, \bar \alpha\} + \bar \alpha'}$. We often denote by $(\beta_t,\beta_t')(X_t) := (\beta_t(X_t),D_x\beta_t(X_t)\beta_t(X_t) + \beta'_t(X_t))$.
    Notice that, if a solution exists then necessarily $(X,\beta(X) + F') \in \mathbf{D}_W^{\alpha,\min\{\alpha,\bar{\alpha}\}} L_{p,q}(\R^d)$. 
    We are interested in two different situations: \begin{enumerate}
    \item \textit{rough SDEs with bounded coefficients} (or simply \textit{rough SDEs}), as introduced in \cite{FHL21}, in which case $F=F'=0$, with solution theory that crucially requires $p < q =\infty$.
    \item \textit{rough SDEs with linear coefficients} (or \textit{linear rough SDEs}), as studied in \cite[Section 3]{BCN24}. In this case, we allow for general $F,F'$ and we assume $b,\sigma,\beta$ and $\beta'$ to be linear in $x$. It follows that $D_x\beta=\beta$, and b) can be directly deduced from c). In this case, the solution theory is possible for $p=q < \infty$. 
\end{enumerate}
\end{defn}

\begin{thm}\label{thm:wellposednessandaprioriestimatesforRSDEs}
    (see \cite[Theorem 4.6 and Proposition 4.5]{FHL21})
    Assume for simplicity $\delta \in [0,\alpha]$ and let $\kappa \in (\frac{1}{\alpha},3]$ such that $\alpha + (\kappa-1)\delta > 1$.  
    Let $b$ and $\sigma$ be two random bounded Lipschtiz vector fields from $\R^d$ to $\R^d$ and $\mathcal{L}(\R^m;\R^d)$, respectively, and let $(\beta, \beta') \in \mathbf{D}_W^{2\delta} L_{p,\infty} C_b^\kappa$ be a stochastic controlled vector field (in the sense of Definition \ref{def:stochasticcontrolledvectorfield}).
    Then, for any $\xi \in L^p(\Omega,\mathcal{F}_0;\R^d)$, there exists a unique solution $X$ to \begin{equation*}
        X_t = \xi  + \int_0^t b_r(X_r) dr + \int_0^t \sigma_r(X_r) dB_r + \int_0^t (\beta_r(X_r),D_x\beta_r(X_r)\beta_r(X_r) + \beta'_r(X_r)) d\mathbf{W}_r \qquad t \in [0,T]. 
    \end{equation*}
    Moreover, for any constant $M>0$ such that $\llbracket (\beta,\beta')  \rrbracket_{\kappa-1;\infty}+\llbracket(\beta,\beta')\rrbracket_{W;2\delta;p,\infty}\le M$, \begin{equation*}
        \|(X,X')\|_{\mathbf{D}_W^{\alpha,\delta}L_{p,\infty}} \lesssim_{T,p,\alpha,\delta,\kappa} (1+\|b\|_\infty+\|\sigma\|_\infty+M |\mathbf{W}|_\alpha)^\frac{1}{(\kappa-2)\delta}
    \end{equation*}
\end{thm}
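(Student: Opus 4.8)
Since the assertion is exactly \cite[Theorem 4.6 and Proposition 4.5]{FHL21}, the plan is to follow those two proofs; the only genuinely new point is the (harmless) bookkeeping forced by the extra integrability requirement in \cref{def:stochasticcontrolledroughpaths} and by the uniform Hölder exponent in \cref{def:stochasticcontrolledvectorfield}. I outline how I would organise the argument.

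\emph{Local well-posedness and globalisation.} On a short interval $[0,h]$ I would run a Picard iteration on a closed ball of $\mathbf{D}_W^{\alpha,\delta}L_{p,\infty}([0,h];\R^d)$ with prescribed initial value $\xi$, mapping $(Y,Y')$ to the pair made of
\begin{equation*}
  \xi + \int_0^\cdot b_r(Y_r)\,\rd r + \int_0^\cdot \sigma_r(Y_r)\,\rd B_r + \int_0^\cdot \big(\beta_r(Y_r),\, D_x\beta_r(Y_r)Y'_r + \beta'_r(Y_r)\big)\,\rd\mathbf{W}_r
\end{equation*}
and of its $W$-Gubinelli derivative $\beta_\cdot(Y_\cdot)$. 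This is well defined: \cref{rmk:propertiesofstochasticcontrolledvectorfields} and \cref{prop:compositionwithstochasticcontrolledvectorfields_compendium} (applied at regularity two for the size bound, in full for the stability bound) give $(\beta(Y),D_x\beta(Y)Y'+\beta'(Y))\in\mathbf{D}_W^{\delta,\delta}L_{p,\infty}$ — here $\delta\le\alpha$ and $\kappa>1/\alpha$ are used — and the exponent conditions of \cref{prop:roughstochastiintegral_compendium}, namely $\alpha+\delta>\tfrac12$ and $\alpha+2\delta>1$, follow from $\alpha+(\kappa-1)\delta>1$ together with $\kappa\le 3$; the drift and the Itô term are handled by the usual $L^p$ and Burkholder--Davis--Gundy estimates, $q=\infty$ being convenient in order to place one factor of each bilinear term in $L^\infty_\omega$. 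Feeding the quantitative bounds of \cref{prop:compositionwithstochasticcontrolledvectorfields_compendium} and \cref{prop:roughstochastiintegral_compendium} into this map yields a self-mapping property on a ball of radius comparable to $1+\|b\|_\infty+\|\sigma\|_\infty+M|\mathbf{W}|_\alpha$ and a contraction property in the metric $\|\,\cdot\,;\,\cdot\,\|_{W;\alpha,\delta;p}$, both once $h$ is chosen small depending on $M,\|b\|_\infty,\|\sigma\|_\infty,|\mathbf{W}|_\alpha$ but not on $\xi$ nor on the interval's position. Banach's fixed point theorem then gives a unique local solution inside the ball, and a standard argument (any $L_{p,\infty}$-solution is forced into such a ball) makes uniqueness unconditional. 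Since $h$ is uniform, $[0,T]$ is split into $\lceil T/h\rceil$ subintervals, solved one after another with the previous endpoint value — which lies in the relevant $L^p$-space — as new initial datum; concatenation produces the global solution, and local uniqueness on each piece gives global uniqueness.

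\emph{The a priori estimate.} For an existing solution I would insert the stochastic Davie expansion of \cref{def:solutionRSDEs_compendium} into each of the four summands of $\|(X,X')\|_{\mathbf{D}_W^{\alpha,\delta}L_{p,\infty}}$ and estimate, on a generic subinterval $J$, to obtain a self-improving inequality roughly of the shape
\begin{equation*}
  \|(X,X')\|_{\mathbf{D}_W^{\alpha,\delta}L_{p,\infty}(J)} \ \lesssim\ 1+\|b\|_\infty+\|\sigma\|_\infty+M|\mathbf{W}|_\alpha \ +\ |J|^{(\kappa-2)\delta}\,P\big(\|(X,X')\|_{\mathbf{D}_W^{\alpha,\delta}L_{p,\infty}(J)}\big),
\end{equation*}
for a fixed polynomial $P$, the second, super-linear term originating from the $C^\kappa_b$-Taylor remainder of $\beta$ which enters the stochastic-sewing bound of \cref{prop:roughstochastiintegral_compendium}. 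A continuation (bootstrap) argument pins the local seminorm to the first term as soon as $|J|^{(\kappa-2)\delta}$ times the data is below a fixed threshold; choosing a greedy partition $0=\tau_0<\dots<\tau_N=T$ with this property and reassembling the pieces on $[0,T]$ via super-additivity of a suitable control produces the claimed bound, the exponent $1/((\kappa-2)\delta)$ — finite since $\kappa>1/\alpha\ge 2$ and $\delta>0$ (the latter forced by $\alpha+(\kappa-1)\delta>1$ and $\kappa\le 3$) — emerging exactly as in \cite[Proposition 4.5]{FHL21}.

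\emph{Main obstacle.} The delicate step is this last one: one must match the $|J|^{(\kappa-2)\delta}$-scaling of the Taylor-remainder error with the number of intervals of a greedy partition so that the two effects combine into precisely the sharp exponent $1/((\kappa-2)\delta)$ rather than a larger one, while keeping every implicit constant independent of $\xi$. The remaining steps are routine, if somewhat lengthy, adaptations of the corresponding arguments in \cite{FHL21}.
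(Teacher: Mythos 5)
Your proposal is correct and follows essentially the same route as the paper, which establishes this statement purely by citation to \cite{FHL21}: a Picard fixed-point argument on small intervals in $\mathbf{D}_W^{\alpha,\delta}L_{p,\infty}$ (invariant ball in the strong norm, contraction in the weaker $p$-distance) concatenated over $[0,T]$, together with the greedy-partition bootstrap yielding the a priori bound with exponent $\tfrac{1}{(\kappa-2)\delta}$, is exactly the argument of \cite[Theorem 4.6 and Proposition 4.5]{FHL21} that the paper invokes. Your verification that $\alpha+(\kappa-1)\delta>1$ with $\kappa\le 3$ forces $\alpha+2\delta>1$, $\alpha+\delta>\tfrac12$ and $\delta>0$ is also accurate, so no genuine gap remains beyond the routine adaptation you describe.
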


\begin{thm} \label{thm:stabilityforRSDEs_compendium}
    (see \cite[Theorem 4.9]{FHL21}) 
    In the setting of \cref{thm:wellposednessandaprioriestimatesforRSDEs}, let $\bar{\mathbf{W}}=(\bar{W},\bar{\mathbb{W}}) \in \mathscr{C}^\alpha([0,T];\R^n)$, let $\bar{b}$ and $\bar{\sigma}$ be two random bounded Lipschtiz vector fields from $\R^d$ to $\R^d$ and $\mathcal{L}(\R^m;\R^d)$, respectively, and let $(\bar{\beta}, \bar{\beta'}) \in \mathbf{D}_{\bar{W}}^{2\delta} L_{p,\infty} C_b^\kappa$.
    Denote by $\bar{X}=(\bar{X}_t)_{t \in [0,T]}$ the solution of the rough SDE starting from $\bar{\xi} \in L^p(\Omega,\mathcal{F}_0;\R^d)$ and driven by $\bar{\mathbf{W}}$, with coefficients $\bar{b}, \bar{\sigma}$ and $(\bar{\beta}, \bar{\beta'})$. Then \begin{multline*} 
        \|X,\beta(X);\bar{X},\bar{\beta}(\bar{X})\|_{W,\bar{W};\alpha,\delta;p} \lesssim_{M,\alpha,\delta,p,T} \|\xi-\bar{\xi}\|_p + \rho_\alpha(\mathbf{W},\bar{\mathbf{W}}) + \sup_{t \in [0,T]} \| |b_t(\cdot)-\bar{b}_t(\cdot)|_\infty \|_p + \\
         + \sup_{t \in [0,T]} \| |\sigma_t(\cdot)-\bar{\sigma}_t(\cdot)|_\infty \|_p + \llbracket(\beta-\bar{\beta},\beta'-\bar{\beta}') \rrbracket_{\kappa-1;p} + \llbracket \beta,\beta';\bar{\beta},\bar{\beta}' \rrbracket_{W,\bar{W};2\delta;p;\kappa-1},
    \end{multline*}
    for any constant $M>0$ such that $|\mathbf{W}|_\alpha + |\bar{\mathbf{W}}|_\alpha + \|b\|_{Lip} + \|\sigma\|_{Lip} + \llbracket(\beta,\beta') \rrbracket_{\kappa;\infty} + \llbracket (\beta,\beta') \rrbracket_{W;2\delta;p,\infty;\kappa} \le M$. 
\end{thm}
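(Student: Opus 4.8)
The plan is to follow the strategy of \cite[Theorem 4.9]{FHL21}: localise in time, establish a stability estimate that is contractive on short intervals, and then patch the local estimates together. Throughout, abbreviate by
\[
\mathfrak{D} := \rho_\alpha(\mathbf{W},\bar{\mathbf{W}}) + \sup_{t \in [0,T]}\| \,|b_t-\bar b_t|_\infty \,\|_p + \sup_{t \in [0,T]}\| \,|\sigma_t-\bar\sigma_t|_\infty \,\|_p + \llbracket \beta-\bar\beta,\beta'-\bar\beta' \rrbracket_{\kappa-1;p} + \llbracket \beta,\beta';\bar\beta,\bar\beta' \rrbracket_{W,\bar W;2\delta;p;\kappa-1}
\]
the ``data distance'' occurring on the right-hand side of the claimed inequality apart from the initial-datum term. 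By \cref{thm:wellposednessandaprioriestimatesforRSDEs} the solutions $X$ and $\bar X$ exist, and $\|(X,\beta(X))\|_{\mathbf{D}_W^{\alpha,\delta}L_{p,\infty}}$ and $\|(\bar X,\bar\beta(\bar X))\|_{\mathbf{D}_{\bar W}^{\alpha,\delta}L_{p,\infty}}$ are bounded by a constant depending only on $T,p,\alpha,\delta,\kappa$ and $M$; enlarging $M$ we may assume this bound is again $M$. By \cref{prop:compositionwithstochasticcontrolledvectorfields_compendium} (first part) the pair $(Z,Z'):=(\beta(X),D_x\beta(X)\beta(X)+\beta'(X))$ and its barred analogue are stochastic controlled rough paths with norms $\lesssim_M 1$, and by \cref{def:solutionRSDEs_compendium}(c') one has $X=\xi+\int_0^\cdot b_r(X_r)\,dr+\int_0^\cdot\sigma_r(X_r)\,dB_r+\int_0^\cdot(Z_r,Z_r')\,d\mathbf{W}_r$ and likewise for $\bar X$.

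Next I fix a subinterval $[s_0,t_0]\subseteq[0,T]$ and write $\mathcal{N}_{[s_0,t_0]}$ for the restriction to $[s_0,t_0]$ of $\|X,\beta(X);\bar X,\bar\beta(\bar X)\|_{W,\bar W;\alpha,\delta;p}$. Subtracting the two integral equations, for $s_0\le s\le t\le t_0$ one decomposes $\delta(X-\bar X)_{s,t}$ into a drift increment $\int_s^t(b_r(X_r)-\bar b_r(\bar X_r))\,dr$, an It\^o increment $\int_s^t(\sigma_r(X_r)-\bar\sigma_r(\bar X_r))\,dB_r$, and a rough increment $\int_s^t(Z_r,Z_r')\,d\mathbf{W}_r-\int_s^t(\bar Z_r,\bar Z_r')\,d\bar{\mathbf{W}}_r$. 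Every coefficient difference is split as ``new coefficient evaluated at the common argument'' plus ``old coefficient, Lipschitz in the argument'', for instance $b_r(X_r)-\bar b_r(\bar X_r)=(b_r-\bar b_r)(X_r)+(\bar b_r(X_r)-\bar b_r(\bar X_r))$, the second summand being $\lesssim\|\bar b\|_{Lip}|X_r-\bar X_r|\le M|X_r-\bar X_r|$. The drift and It\^o contributions are then estimated by conditional Jensen and conditional Burkholder--Davis--Gundy inequalities (cf.\ \cref{def:stochasticcontrolledroughpaths}), which --- together with the analogous estimates for the difference of the Davie remainders $X^\natural-\bar X^\natural$ --- produce a factor $|t_0-s_0|^{\theta}$, with $\theta>0$ coming from the sub-criticality of the H\"older exponents of \cref{def:solutionRSDEs_compendium}, in front of $\mathcal{N}_{[s_0,t_0]}$, plus a multiple of $\mathfrak{D}$. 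For the rough increment, \cref{prop:compositionwithstochasticcontrolledvectorfields_compendium} (second part, $\kappa\in(2,3]$) controls $\|Z,Z';\bar Z,\bar Z'\|_{W,\bar W;\lambda,\lambda';p}$ by $\mathcal{N}_{[s_0,t_0]}$ plus $\llbracket\beta-\bar\beta,\beta'-\bar\beta'\rrbracket_{\kappa-2;p}+\llbracket\beta,\beta';\bar\beta,\bar\beta'\rrbracket_{W,\bar W;\lambda,\lambda';p;\kappa-1}$, and then a stability version of \cref{prop:roughstochastiintegral_compendium} --- obtained by expanding both rough stochastic integrals against their local germs $Z_s\delta W_{s,t}+Z_s'\mathbb{W}_{s,t}$ and $\bar Z_s\delta\bar W_{s,t}+\bar Z_s'\bar{\mathbb{W}}_{s,t}$ and applying the stochastic sewing lemma to their difference --- bounds the rough increment and the associated remainder by $\rho_\alpha(\mathbf{W},\bar{\mathbf{W}})$, $\mathfrak{D}$ and $|t_0-s_0|^{\theta}\mathcal{N}_{[s_0,t_0]}$.

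Collecting these estimates (re-running the sewing lemma of \cref{prop:roughstochastiintegral_compendium} to reconstruct the full stochastic controlled rough path norm $\mathcal{N}_{[s_0,t_0]}$, not merely the increments) yields, on every $[s_0,t_0]$, an inequality of the form $\mathcal{N}_{[s_0,t_0]}\le C(M)\big(\|X_{s_0}-\bar X_{s_0}\|_p+\mathfrak{D}\big)+C(M)|t_0-s_0|^{\theta}\mathcal{N}_{[s_0,t_0]}$. Choosing $h>0$ with $C(M)h^{\theta}\le\tfrac12$ --- note $h$ depends on the data only through $M,\alpha,\delta,p,T$ --- one absorbs the last term to get $\mathcal{N}_{[s_0,s_0+h]}\le 2C(M)(\|X_{s_0}-\bar X_{s_0}\|_p+\mathfrak{D})$, and since $\|X_{s_0}-\bar X_{s_0}\|_p\le\|\xi-\bar\xi\|_p+\mathcal{N}_{[0,s_0]}$, iterating over the $\lceil T/h\rceil$ intervals gives the global bound with the stated right-hand side. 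The genuinely delicate ingredient is the rough increment in the second step: keeping track of the differences of the Gubinelli derivatives $Z'-\bar Z'$ and of the second-level remainders $\E_\cdot R^Z-\E_\cdot\bar R^{\bar Z}$ is exactly what consumes the $C^\kappa_b$-type ($\kappa\in(2,3]$) space regularity and the $2\delta$ controlled regularity of $(\beta,\beta')$, and it is where the mismatch $W\ne\bar W$ --- hence the terms involving $\mathbb{W}-\bar{\mathbb{W}}$ and the differing Chen corrections --- must be bookkept with care; by contrast the drift and It\^o pieces are routine once conditional Burkholder--Davis--Gundy is in hand. $\qed$
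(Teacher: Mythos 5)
This theorem is stated in the paper's compendium section without proof: the paper simply recalls it from \cite{FHL21} (Theorem 4.9), so there is no in-paper argument to compare against. Your sketch reconstructs essentially the strategy of the cited proof — a priori bounds from \cref{thm:wellposednessandaprioriestimatesforRSDEs}, stability of the composition $(\beta,\beta')(X)$ via \cref{prop:compositionwithstochasticcontrolledvectorfields_compendium}, a stochastic-sewing stability estimate for the difference of the two rough stochastic integrals, and a small-interval absorption/iteration — and as an outline it is sound; the only point to flag is that the quantitative heart (the sewing estimate for $\int(Z,Z')\,d\mathbf{W}-\int(\bar Z,\bar Z')\,d\bar{\mathbf{W}}$ and the fact that the sup-norm pieces of the controlled distance, which carry no factor $h^{\theta}$, must be routed through $\|X_{s_0}-\bar X_{s_0}\|_p$ and the coefficient differences) is delegated to the machinery of \cite{FHL21} rather than carried out.
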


\begin{thm} \label{thm:wellposednessandaprioriestimatesforlinearRSDEs}
    (see \cite[Theorem 3.11]{BCN24} and \cite[Theorem 3.10]{BCN24})
    Assume, for simplicity, $\gamma \in [0,\alpha]$, and let  $\alpha + \gamma > \frac{1}{2}$ and $\alpha + \gamma + \min\{\gamma,\gamma'\} >1$.
    Let $G$ and $S$ be two stochastic linear vector fields from $V$ to $V$ and $\mathcal{L}(\R^m;V)$, respectively, and let $(f,f') \in \mathbf{D}^{\gamma,\gamma'}_W L_{p,\infty}\mathscr{L}(V,\mathcal{L}(\R^n,V))$ be a stochastic controlled linear vector field. 
    We also consider a stochastic controlled rough path $(F,F') \in \mathbf{D}_W^{\alpha,\gamma} L_p(V)$ such that $F$ is $\mathbb{P}$-a.s.\ continuous. Such a path is often called the forcing term. 
    Then, for any $\xi \in L^p(\Omega,\mathcal{F}_{0};V)$, there exists a unique $L_p$-integrable solution $Y$ to the following linear rough SDE: \begin{equation*}
        Y_t = \xi + F_t + \int_0^t G_r Y_r dr + \int_0^tS_r Y_r dB_r + \int_0^t (f_rY_r, f_r(f_r Y_r + F'_r) + f'_r Y_r ) d\mathbf{W}_r \qquad t \in [0,T]. 
    \end{equation*}
    Moreover, \begin{equation*}
        \sup_{t \in [0,T]}\|Y_t\|_p + \|(Y,fY+F')\|_{\mathbf{D}_W^{\alpha,\gamma}L_p} \lesssim_{M,\alpha,\gamma,\gamma',p,T} \|\xi\|_p + \|(F,F')\|_{\mathbf{D}^{\alpha,\gamma}_W L_{p}}
    \end{equation*} where $M>0$ is any constant satisfying $\llbracket G \rrbracket_{\mathcal{L};\infty} + \llbracket S \rrbracket_{\mathcal{L};\infty} + \llbracket (f,f') \rrbracket_{\mathcal{L};\infty} + \llbracket (f,f') \rrbracket_{W;\gamma,\gamma';p,\infty} + |\mathbf{W}|_\alpha  \le M.$ 
\end{thm}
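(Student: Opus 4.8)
The plan is to obtain existence and uniqueness by a Banach fixed-point argument in the space of $L_p$-integrable stochastic controlled rough paths, carried out first on a short time interval and then globalised by concatenation; the crucial structural feature is that linearity of the coefficients makes the length of the short interval independent of the data $(\xi,F)$, so that the local a priori bound can be iterated over a uniform partition of $[0,T]$ to yield the global estimate.

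First I would fix the solution map. For $(Y,Y') \in \mathbf{D}_W^{\alpha,\gamma} L_p(V)$ with $Y_0 = \xi$, set $\mathcal{M}(Y,Y') := (Z,\, fY + F')$, where
$$Z_t := \xi + F_t + \int_0^t G_r Y_r\,dr + \int_0^t S_r Y_r\,dB_r + \int_0^t \big(f_r Y_r,\ f_r(f_r Y_r + F'_r) + f'_r Y_r\big)\,d\mathbf{W}_r .$$
The drift term is estimated trivially using $\|G\|_\infty$ and $\sup_t\|Y_t\|_p$; the It\^o term is handled by a Burkholder--Davis--Gundy estimate in the conditional $L_p$-norms of \cref{def:stochasticcontrolledroughpaths}, using $\|S\|_\infty$ and the stochastic-sewing bounds for stochastic integrals as in the proof of \cref{thm:wellposednessandaprioriestimatesforRSDEs}; and for the rough term I would invoke \cref{prop:stambilityundercompositionlinearvectorfields}(1) to see that $\big(fY,\ f(fY+F') + f'Y\big)$ is a stochastic controlled rough path in $\mathbf{D}_W^{\gamma,\min\{\gamma,\gamma'\}}L_p$, and then \cref{prop:roughstochastiintegral_compendium}, whose hypotheses reduce exactly to the standing assumptions $\alpha+\gamma>\tfrac12$ and $\alpha+\gamma+\min\{\gamma,\gamma'\}>1$, to conclude that the rough integral together with its integrand lies in $\mathbf{D}_W^{\alpha,\gamma}L_p$. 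Collecting these, $\mathcal{M}$ maps the affine subspace of $\mathbf{D}_W^{\alpha,\gamma}L_p(V)$ with prescribed value $\xi$ at time $0$ into itself, and the solutions of the linear rough SDE in the sense of \cref{def:solutionRSDEs_compendium} are exactly the fixed points of $\mathcal{M}$.

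Next I would show that on a short interval $[0,T_0]$ the map $\mathcal{M}$ is a contraction for the distance $d$ of \cref{def:stochasticcontrolledroughpaths}: the difference $\mathcal{M}(Y,Y') - \mathcal{M}(\bar Y,\bar Y')$ has the same form with vanishing $\xi$- and $F$-contributions and the integrands replaced by their differences, so the drift and It\^o parts are estimated directly while the rough part is controlled via the difference estimate \cref{prop:stambilityundercompositionlinearvectorfields}(2) and the continuity of the rough stochastic integral. The point is that each term acquires a strictly positive power of $T_0$ — from $\alpha+\gamma>\tfrac12$, from $\alpha+\gamma+\min\{\gamma,\gamma'\}>1$, and from the $t^{1/2}$-scaling of the Brownian integral — so that for $T_0 = T_0(M,\alpha,\gamma,\gamma',p)$ the Lipschitz constant is at most $\tfrac12$. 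Since $T_0$ does not depend on $\xi$ or $F$, one partitions $0 = t_0 < t_1 < \dots < t_N = T$ with $t_{k+1}-t_k \le T_0$ and $N \lesssim_M T$, solves on each $[t_k,t_{k+1}]$ with initial datum $Y_{t_k} \in L^p(\Omega,\mathcal{F}_{t_k};V)$ and forcing the restriction of $(F,F')$, and concatenates the resulting controlled rough paths (matching of the Gubinelli derivative at the $t_k$ being built into $\mathcal{M}$); local uniqueness on each piece gives global uniqueness. For the a priori estimate I would read off from the contraction construction, on each sub-interval,
$$\sup_{t\in[t_k,t_{k+1}]}\|Y_t\|_p + \|(Y,fY+F')\|_{\mathbf{D}_W^{\alpha,\gamma}L_p([t_k,t_{k+1}])}\ \lesssim_{M,\alpha,\gamma,\gamma',p}\ \|Y_{t_k}\|_p + \|(F,F')\|_{\mathbf{D}^{\alpha,\gamma}_W L_p([t_k,t_{k+1}])},$$
and then iterate this discrete Gr\"onwall inequality over the $N$ sub-intervals, summing the forcing contributions, to obtain the global bound with constant depending on $M,\alpha,\gamma,\gamma',p,T$.

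I expect the main obstacle to be the bookkeeping in the contraction step: one must track precisely how each of the four seminorms defining $\|\cdot\|_{\mathbf{D}_W^{\gamma,\gamma'}L_p}$, and each error term appearing in \cref{prop:roughstochastiintegral_compendium} and \cref{prop:stambilityundercompositionlinearvectorfields}, rescales when the horizon shrinks, and verify that the standing inequalities on $\alpha,\gamma,\gamma'$ leave a net positive power of $T_0$ in \emph{every} term — the delicate one being the remainder $\|\E_\cdot R^Z\|_{\alpha+\gamma;p}$, which mixes the rough and the stochastic scales. A secondary subtlety is keeping the whole argument inside the regime $p=q<\infty$, so that the BDG-type estimates for the It\^o term close, in contrast to the $q=\infty$ theory used for bounded coefficients in \cref{thm:wellposednessandaprioriestimatesforRSDEs}.
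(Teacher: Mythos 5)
The paper does not actually prove this statement---it is recalled verbatim from \cite{BCN24} (Theorems 3.10 and 3.11 there)---and your argument, a Banach fixed point for the solution map on a short interval whose length depends only on $M$ (thanks to linearity), followed by concatenation and iteration of the local bound to get the global a priori estimate, is essentially the route taken in that reference. The checkpoints you flag are the right ones: the integrand $(fY,\,f(fY+F')+f'Y)$ is controlled via \cref{prop:stambilityundercompositionlinearvectorfields}, the rough stochastic integral closes under exactly $\alpha+\gamma>\tfrac12$ and $\alpha+\gamma+\min\{\gamma,\gamma'\}>1$ by \cref{prop:roughstochastiintegral_compendium}, the contraction gains positive powers of $T_0$ because the difference of two candidates vanishes at the left endpoint and $f,f'$ are bounded in $\omega$ (the $L_{p,\infty}$ hypothesis), and the whole scheme stays in the $p=q<\infty$ regime, so I see no gap.
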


\begin{thm} \label{thm:stabilityforlinearRSDEs_compendium}
    In the setting of \cref{thm:wellposednessandaprioriestimatesforlinearRSDEs}, assume $\alpha \ne \frac{1}{2}$ and $p> \frac{1}{1-2\alpha}$. 
    Assume also $(f,f') \in \mathbf{D}_W ^{\gamma,\gamma'}L_{2p,\infty}\mathscr{L}(V,\mathscr{L}(\R^n,V))$ and $(F,F')\in \mathbf{D}^{\alpha,\gamma}_W L_{2p}(V)$. 
    Moreover, let $\bar{\mathbf{W}}=(\bar{W},\bar{\mathbb{W}}) \in \mathscr{C}^\alpha([0,T];\R^n)$, let $\bar{G}$, $\bar{S}$ be two stochastic linear vector fields from $V$ to $V$ and $\mathscr{L}(\R^m,V)$, respectively. 
    Let us also consider $(\bar{f},\bar{f}') \in \mathbf{D}^{\gamma,\gamma'}_{\bar{W}} L_{2p,\infty}\mathscr{L}(V,\mathscr{L}(\R^n,V))$, and  $(\bar{F},\bar{F}') \in \mathbf{D}^{\alpha,\gamma}_{\bar{W}} L_{2p}(V)$ such that $\bar{F}$ is $\mathbb{P}$-a.s.\ continuous. Let $\bar{\xi}\in L^{2p}(\Omega,\mathcal{F}_0;V)$, and let us denote by $\bar{Y}=(\bar{Y}_t)_{t \in [0,T]}$ the $L_{2p}$-integrable solution to the linear rough SDE driven by $\bar{\mathbf{W}}$, with coefficients $\bar{G},\bar{S},(\bar{f},\bar{f}')$ and forcing term $\bar{F}$, starting from $\bar{\xi}$. 
    Then \begin{align*} 
    &\|Y,fY+F';\bar{Y},\bar{f}\bar{Y}+\bar{F}'\|_{W,\bar{W};\alpha,\gamma;p} \lesssim_{M_p,\alpha,\gamma,\gamma',p,T} \|\xi - \bar{\xi}\|_p + \rho_{\alpha}(\mathbf{W},\bar{\mathbf{W}}) + \E \Big(\int_0^T |G_r-\Bar{G}_r|_\mathcal{L}^{2p} dr \Big)^\frac{1}{2p}+ \\
    & \quad + \E \Big(\int_0^T |S_r-\Bar{S}_r|_\mathcal{L}^{2p} dr \Big)^\frac{1}{2p} +  \llbracket (f-\bar{f},f'-\bar{f}') \rrbracket_{\mathcal{L};2p} + \llbracket f,f'; \bar{f},\bar{f}' \rrbracket_{W,\bar{W};\gamma,\gamma';2p} + \|F,F';\bar{F},\bar{F}'\|_{W,\bar{W};\alpha,\gamma;p}.
\end{align*}
 where $M_p$ is any positive constant such that \begin{equation*} 
    \begin{aligned}
     &\|G\|_\infty + \|S\|_\infty + \llbracket(f,f')\rrbracket_{\mathcal{L};\infty} + \llbracket(f,f')\rrbracket_{W;\gamma,\gamma';2p,\infty} + \|(F,F')\|_{\mathbf{D}^{\alpha,\gamma}_W L_{2p}}  +\|\bar{G}\|_\infty + \|\bar{S}\|_\infty +  \llbracket(\bar{f},\bar{f}')\rrbracket_{\mathcal{L};\infty} + \\
     & \quad + \llbracket(\bar{f},\bar{f}')\rrbracket_{W;\gamma,\gamma';2p,\infty} + \|(\bar{F},\bar{F}')\|_{\mathbf{D}^{\alpha,\gamma}_W L_{2p}} + |\mathbf{W}|_\alpha + |\bar{\mathbf{W}}|_\alpha + \|\xi\|_{2p}  + \|\bar{\xi}\|_{2p} \le M_p.
    \end{aligned}
\end{equation*} 
\end{thm}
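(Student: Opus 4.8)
The plan is to reduce the stability estimate to a fixed-point / Grönwall-type argument on the difference $\tilde Y := Y - \bar Y$, exactly mirroring the a priori estimate of \cref{thm:wellposednessandaprioriestimatesforlinearRSDEs} but now tracking the perturbations of \emph{all} the data ($\xi$, $\mathbf{W}$, $G$, $S$, $f$, $f'$, $F$, $F'$). First I would write down the equation satisfied by $\tilde Y$: subtracting the two linear rough SDEs and collecting terms, one gets a linear rough SDE for $\tilde Y$ driven by $\mathbf{W}$, with the \emph{same} coefficients $G$, $S$, $(f,f')$ as for $Y$, but with a new forcing term that is an explicit stochastic controlled rough path built from the differences $G-\bar G$, $S-\bar S$, $f-\bar f$, $f'-\bar f'$, $\mathbb{W}-\bar{\mathbb W}$, $\delta(W-\bar W)$, and $(F-\bar F, F'-\bar F')$, all evaluated against the already-controlled solution $\bar Y$ (which is $L_{2p}$-integrable by hypothesis). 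Schematically,
\[
d\tilde Y_t = d\tilde F_t^{\mathrm{err}} + G_t \tilde Y_t\, dt + S_t \tilde Y_t\, dB_t + \bigl(f_t \tilde Y_t,\, f_t(f_t\tilde Y_t + \tilde F_t^{\mathrm{err},\prime}) + f'_t \tilde Y_t\bigr)\, d\mathbf{W}_t,
\]
where $\tilde F^{\mathrm{err}}$ absorbs terms such as $\int_0^\cdot (G_r-\bar G_r)\bar Y_r\, dr$, $\int_0^\cdot (S_r-\bar S_r)\bar Y_r\, dB_r$, the rough integral of $((f-\bar f)\bar Y, \dots)$ against $\mathbf{W}$, the rough integral of $\bar f \bar Y$ against the rough-path difference $\mathbf{W}-\bar{\mathbf W}$, and $F-\bar F$.

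The key steps, in order, are: (i) verify that $\tilde F^{\mathrm{err}}$ together with its Gubinelli derivative lies in $\mathbf{D}_W^{\alpha,\gamma} L_p(V)$ and estimate its norm by the right-hand side of the claimed inequality — this is where \cref{prop:stambilityundercompositionlinearvectorfields} (part 2, the stability of products of controlled linear vector fields with controlled rough paths, at integrability $2p$) and \cref{prop:roughstochastiintegral_compendium} (stability of the rough stochastic integral, including its dependence on the driver $\mathbf W$ through $\delta W$ and $\mathbb W$) are invoked; the $L_{2p}$-assumptions on $(f,f')$, $(F,F')$ and on the solutions are exactly what is needed to apply Hölder's inequality in the cross terms, as in the proof of \cref{prop:stambilityundercompositionlinearvectorfields}. (ii) Apply the a priori estimate of \cref{thm:wellposednessandaprioriestimatesforlinearRSDEs} to the $\tilde Y$-equation with zero initial datum and forcing term $\tilde F^{\mathrm{err}}$, yielding
\[
\sup_{t}\|\tilde Y_t\|_p + \|(\tilde Y, f\tilde Y + \tilde F^{\mathrm{err},\prime})\|_{\mathbf{D}_W^{\alpha,\gamma}L_p} \lesssim_M \|(\tilde F^{\mathrm{err}}, \tilde F^{\mathrm{err},\prime})\|_{\mathbf{D}_W^{\alpha,\gamma}L_p}.
\]
(iii) Combine (i) and (ii), and finally rewrite $f\tilde Y + \tilde F^{\mathrm{err},\prime}$ in terms of the quantity $fY + F' - (\bar f\bar Y + \bar F')$ that appears on the left-hand side of the theorem, paying attention to the extra term $(f-\bar f)\bar Y$, which is controlled again by part 2 of \cref{prop:stambilityundercompositionlinearvectorfields}; the condition $p > \tfrac{1}{1-2\alpha}$ and $\alpha \neq \tfrac12$ enter here to guarantee the relevant rough-integral remainder exponents $\tfrac12 + \varepsilon$, $1 + \varepsilon$ are strictly admissible and that the Besov/Hölder embeddings used in controlling $\mathbb W - \bar{\mathbb W}$ contributions go through at exponent $p$ (rather than $\infty$).

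The main obstacle I expect is bookkeeping step (i): correctly identifying $\tilde F^{\mathrm{err}}$ as a genuine element of $\mathbf{D}_W^{\alpha,\gamma}L_p$ with its Gubinelli derivative, and checking that \emph{every} one of its constituent pieces — in particular the rough integral $\int \bar f\bar Y\,(d\mathbf{W} - d\bar{\mathbf W})$, whose stability requires simultaneously perturbing the integrand and the rough-path driver — can be estimated by $\rho_\alpha(\mathbf W,\bar{\mathbf W})$ and the coefficient differences, uniformly in the constant $M_p$. This is the analogue of the delicate decomposition carried out in the proof of \cref{prop:stambilityundercompositionlinearvectorfields} part~2, now compounded with the rough-integral stability of \cref{prop:roughstochastiintegral_compendium}; once that term is under control, the rest is a routine application of the linear a priori bound. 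The doubled integrability ($2p \to p$) throughout is the standard device that makes the quadratic cross terms $\delta f \cdot \delta \tilde Y$ (and similar) linear in the relevant norms via Cauchy–Schwarz.
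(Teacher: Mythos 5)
There is a genuine gap in your step (i)/(ii): the reduction of the stability estimate to a single linear rough SDE for $\tilde Y = Y-\bar Y$ driven by $\mathbf{W}$ does not go through when $\mathbf{W}\neq\bar{\mathbf{W}}$. When you subtract the two equations, the rough-integral contribution is $\int_0^\cdot (fY,\dots)\,d\mathbf{W} - \int_0^\cdot(\bar f\bar Y,\dots)\,d\bar{\mathbf{W}}$, and to recast it as $\int_0^\cdot (f\tilde Y,\dots)\,d\mathbf{W}$ plus a forcing error you are forced to include either "$\int \bar f\bar Y\,(d\mathbf{W}-d\bar{\mathbf{W}})$" or "$\int f\bar Y\,d\mathbf{W} - \int\bar f\bar Y\,d\bar{\mathbf{W}}$" in $\tilde F^{\mathrm{err}}$. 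Neither object fits the framework: $\mathbf{W}-\bar{\mathbf{W}}$ is not a rough path (Chen's relation fails for differences), and $\bar f\bar Y$ (resp.\ $f\bar Y$) is a stochastic controlled rough path with respect to $\bar W$, not $W$ — its Gubinelli derivative involves $\bar f\bar Y+\bar F'$ relative to $\delta\bar W$ — so its rough stochastic integral against $\mathbf{W}$ is simply not defined by \cref{prop:roughstochastiintegral_compendium}, and $(\tilde F^{\mathrm{err}},(\tilde F^{\mathrm{err}})')$ is not an element of $\mathbf{D}_W^{\alpha,\gamma}L_p(V)$. Consequently the a priori bound of \cref{thm:wellposednessandaprioriestimatesforlinearRSDEs}, which requires the forcing term to be controlled by $W$, cannot be invoked in step (ii). Since the statement to be proved contains $\rho_\alpha(\mathbf{W},\bar{\mathbf{W}})$ (and is used later precisely with varying drivers, e.g.\ in \cref{prop:continuity_linearRSDEs}), this is not a removable technicality.

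Your strategy is correct in the special case $\bar{\mathbf{W}}=\mathbf{W}$, where $\bar Y$ is controlled by $W$, the error forcing term is estimated by \cref{prop:stambilityundercompositionlinearvectorfields} and \cref{prop:roughstochastiintegral_compendium}, and the linear a priori estimate applies (note also that the initial datum of the $\tilde Y$-equation is $\xi-\bar\xi$, not zero, though this only adds the harmless term $\|\xi-\bar\xi\|_p$). For the general two-driver case, the paper does not subtract equations at all: it proceeds as in the proof of \cite[Theorem 3.13]{BCN24}, slightly rearranged as in \cref{prop:continuityanddifferentiability_forcingterm}, i.e.\ one estimates the mixed distance $\|Y,fY+F';\bar Y,\bar f\bar Y+\bar F'\|_{W,\bar W;\alpha,\gamma;p}$ directly, by comparing the Davie expansions of the two solutions, applying the stochastic sewing estimates to the difference of the local approximations (this is where the product estimates at integrability $2p$ and the thresholds $\alpha\neq\tfrac12$, $p>\tfrac1{1-2\alpha}$ enter, exactly as in the Lebesgue/It\^o bounds of \cref{prop:continuityanddifferentiability_forcingterm}), and closing the estimate on small intervals by a Gr\"onwall-type iteration. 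You would need to replace your steps (i)--(ii) by such a direct two-driver sewing argument to obtain the stated inequality.
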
 

\begin{proof}
    By slightly rearranging the proof of \cite[Theorem 3.13]{BCN24}, as in \cref{prop:continuityanddifferentiability_forcingterm}.
\end{proof}

We recall a classical interpolation result for H\"older spaces. This kind of statements can be applied to stochastic controlled rough paths and to stochastic controlled vector fields, possibly linear. For definiteness, we explicitly state the result in the former case in \cref{rmk:interpolation}; analogous statements can be deduced for the latter.  

\begin{prop} \label{prop:interpolation}
    \textnormal{(Interpolation)}
    Let $Z:[0,T] \to V$ be an $\alpha$-H\"older continuous path, for some $\alpha \in (0,1]$, and let us denote by $[Z]_\alpha$ its H\"older seminorm. 
    Then, for any $\gamma \in (0,\alpha)$, \begin{equation*}
        [Z]_\gamma \le [Z]_\alpha^\frac{\gamma}{\alpha} \left(\sup_{0 \le s \le t \le T} |\delta Z_{s,t}|\right)^{1-\frac{\gamma}{\alpha}}.
    \end{equation*}
    In particular, if $(Z^n)_n$ is a sequence of $\alpha$-H\"older continuous paths such that $\sup_{n} [Z^n]_\alpha < +\infty$ and $Z^n \to Z^\infty$ uniformly on $[0,T]$, then, for any $\gamma \in (0,\alpha)$, $Z^\infty$ is $\gamma$-H\"older continuous and $[Z^n - Z^\infty]_\gamma \to 0$ as $n \to +\infty$.
\end{prop}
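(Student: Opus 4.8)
The plan is to establish the pointwise inequality by a one-line interpolation of the exponent $\gamma$ between $0$ and $\alpha$, and then to read off the convergence statement by applying that inequality to the differences $Z^n-Z^\infty$.

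First I would fix $s,t\in[0,T]$ with $s<t$ (the endpoint case $s=t$ is vacuous, since then $\delta Z_{s,t}=0$) and split off a power:
\begin{equation*}
\frac{|\delta Z_{s,t}|}{|t-s|^\gamma}
= \left(\frac{|\delta Z_{s,t}|}{|t-s|^\alpha}\right)^{\!\gamma/\alpha}\,|\delta Z_{s,t}|^{\,1-\gamma/\alpha}.
\end{equation*}
Since $0<\gamma<\alpha\le 1$, both exponents $\gamma/\alpha$ and $1-\gamma/\alpha$ are strictly positive, so the first factor is bounded by $[Z]_\alpha^{\gamma/\alpha}$ and the second by $\bigl(\sup_{0\le s\le t\le T}|\delta Z_{s,t}|\bigr)^{1-\gamma/\alpha}$ (this supremum is finite because $Z$ is continuous on the compact interval $[0,T]$, being $\alpha$-H\"older). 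Taking the supremum over all $s<t$ on the left gives exactly the asserted estimate. Equivalently this can be phrased as an instance of Young's inequality, or simply as monotonicity of $r\mapsto r^\theta$ for $\theta\ge 0$.

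For the "in particular" part I would set $C:=\sup_n[Z^n]_\alpha<+\infty$. Uniform convergence $Z^n\to Z^\infty$ gives $\delta Z^n_{s,t}\to\delta Z^\infty_{s,t}$ for each fixed $(s,t)$, so letting $n\to\infty$ in $|\delta Z^n_{s,t}|\le C|t-s|^\alpha$ shows $[Z^\infty]_\alpha\le C$; in particular $Z^\infty\in C^\gamma([0,T];V)$. Then I apply the inequality just proved to the path $Z^n-Z^\infty$, whose $\alpha$-H\"older seminorm is at most $[Z^n]_\alpha+[Z^\infty]_\alpha\le 2C$, to get
\begin{align*}
[Z^n-Z^\infty]_\gamma
&\le (2C)^{\gamma/\alpha}\,\Bigl(\sup_{0\le s\le t\le T}|\delta (Z^n-Z^\infty)_{s,t}|\Bigr)^{1-\gamma/\alpha} \\
&\le (2C)^{\gamma/\alpha}\,\bigl(2\sup_{t\in[0,T]}|Z^n_t-Z^\infty_t|\bigr)^{1-\gamma/\alpha},
\end{align*}
and the right-hand side tends to $0$ as $n\to\infty$ because $1-\gamma/\alpha>0$.

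As for difficulties: there is no substantial obstacle here. The only points to keep track of are that $\gamma<\alpha$ is what makes the exponent $1-\gamma/\alpha$ positive (needed both for the factorization above and for the power of the uniform norm to vanish in the limit), and that the supremum of increments is finite, which is immediate from compactness of $[0,T]$ and continuity of $Z$. I would include this statement mainly because it is invoked repeatedly to trade a little H\"older regularity for convergence in a weaker H\"older norm, both for stochastic controlled rough paths and for (linear) controlled vector fields, as announced in \cref{rmk:interpolation}.
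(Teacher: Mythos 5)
Your proof is correct and is exactly the standard interpolation argument; the paper states this proposition as a recalled classical fact without supplying a proof, and your factorization of the increment ratio together with the application to $Z^n-Z^\infty$ (using $[Z^n-Z^\infty]_\alpha\le 2\sup_n[Z^n]_\alpha$ and $1-\gamma/\alpha>0$) is precisely the intended argument.
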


\begin{coroll} \label{rmk:interpolation}
    Let $(W^n)_{n \ge 0}$ be a sequence of $\alpha$-H\"older continuous paths, for some $\alpha \in (0,1]$. For any $n \ge 0$, let $(X^n,(X^n)') \in \mathbf{D}_{W^n}^{\gamma,\gamma'} L_{p,q}$ and assume $\|X^n, (X^n)'; X^0, (X^0)'\|_{W^n,W^0;\gamma,\gamma';p,q} \to 0$ as $n \to +\infty, n \ge 1$. 
    Assume there exist $\lambda \in (\gamma,1]$ and $\lambda' \in (\gamma',1]$ such that $$\sup_{n\ge1} \|(X^n,(X^n)')\|_{\mathbf{D}_{W^n}^{\lambda,\lambda'}L_{p,q}} < +\infty. $$
    Then $(X^0,(X^0)') \in \mathbf{D}_{W^0}^{\eta,\eta'} L_{p,q}$ and $\|X^n, (X^n)'; X^0, (X^0)'\|_{W^n,W^0;\eta,\eta';p,q} \to 0$ as $n \to +\infty, n \ge 1$, for any $\eta \in (\gamma,\lambda)$, $\eta' \in (\gamma',\lambda')$. 
\end{coroll}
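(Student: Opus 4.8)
The plan is to apply, componentwise, the elementary interpolation inequality underlying \cref{prop:interpolation}: for $0\le\mu_0<\mu<\mu_1$ and any nonnegative function $h$ on $\Delta_{[0,T]}$, setting $\theta:=(\mu_1-\mu)/(\mu_1-\mu_0)\in(0,1)$, the pointwise factorisation $h_{s,t}|t-s|^{-\mu}=(h_{s,t}|t-s|^{-\mu_0})^\theta(h_{s,t}|t-s|^{-\mu_1})^{1-\theta}$ yields $\sup_{s<t}h_{s,t}|t-s|^{-\mu}\le(\sup_{s<t}h_{s,t}|t-s|^{-\mu_0})^\theta(\sup_{s<t}h_{s,t}|t-s|^{-\mu_1})^{1-\theta}$. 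The distance $\|X^n,(X^n)';X^0,(X^0)'\|_{W^n,W^0;\eta,\eta';p,q}$ is a sum of five nonnegative terms: the two $\sup_t$-quantities built from $X^n-X^0$ and $(X^n)'-(X^0)'$, which carry no Hölder exponent and already tend to $0$ by hypothesis, and three Hölder-type terms of the form $\sup_{s<t}h^n_{s,t}|t-s|^{-\mu}$, with $h^n$ the conditional-$L^{p,q}$ size of $\delta(X^n-X^0)$, of $\delta((X^n)'-(X^0)')$, and of $\E_\cdot R^{X^n}-\E_\cdot\bar R^{X^0}$ (where $R^{X^n}$ is formed against $W^n$ and $\bar R^{X^0}_{s,t}=\delta X^0_{s,t}-(X^0)'_s\delta W^0_{s,t}$ against $W^0$), with exponents $\mu=\eta,\eta',\eta+\eta'$ respectively.

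I would first establish that $(X^0,(X^0)')\in\mathbf{D}_{W^0}^{\lambda,\lambda'}L_{p,q}$, with every seminorm contribution bounded by $M:=\sup_{n\ge1}\|(X^n,(X^n)')\|_{\mathbf{D}_{W^n}^{\lambda,\lambda'}L_{p,q}}$; since $T<\infty$ and $\eta<\lambda$, $\eta'<\lambda'$, this in particular gives $(X^0,(X^0)')\in\mathbf{D}_{W^0}^{\eta,\eta'}L_{p,q}$, the first assertion. This is a lower-semicontinuity argument: fixing $(s,t)$, the convergence $\|X^n,(X^n)';X^0,(X^0)'\|_{W^n,W^0;\gamma,\gamma';p,q}\to0$ forces $\|\|\delta(X^n-X^0)_{s,t}|\mathcal{F}_s\|_p\|_q\to0$, $\|\|\delta((X^n)'-(X^0)')_{s,t}|\mathcal{F}_s\|_p\|_q\to0$ and $\|\E_s(R^{X^n}_{s,t}-\bar R^{X^0}_{s,t})\|_q\to0$, so the corresponding $X^0$-quantities are the limits of the $X^n$-ones and are bounded by $M|t-s|^\lambda$, $M|t-s|^{\lambda'}$, $M|t-s|^{\lambda+\lambda'}$; taking suprema over $(s,t)$ gives $\|\delta X^0\|_{\lambda;p,q},\ \|\delta(X^0)'\|_{\lambda';p,q},\ \|\E_\cdot\bar R^{X^0}\|_{\lambda+\lambda';q}\le M$, while $\sup_t\|(X^0)'_t\|_q\le M$ (and $\sup_t\|X^0_t\|_p<\infty$) follows by passing to an a.s.-convergent subsequence and combining Fatou's lemma with the uniform bound.

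With this in hand I would finish by interpolating each Hölder-type term between the ``low'' exponent coming from the hypothesis and the ``high'' exponent coming from the uniform bound. For the term built from $\delta(X^n-X^0)$, with $(\mu_0,\mu,\mu_1)=(\gamma,\eta,\lambda)$ and $\theta=(\lambda-\eta)/(\lambda-\gamma)$, one gets $\|\delta(X^n-X^0)\|_{\eta;p,q}\le\|\delta(X^n-X^0)\|_{\gamma;p,q}^\theta\,\|\delta(X^n-X^0)\|_{\lambda;p,q}^{1-\theta}$, where the first factor is $\le\|X^n,(X^n)';X^0,(X^0)'\|_{W^n,W^0;\gamma,\gamma';p,q}^\theta\to0$ and the second is $\le(2M)^{1-\theta}$ by the conditional triangle inequality and the bounds above; hence this term tends to $0$. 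The terms built from $\delta((X^n)'-(X^0)')$ and from $\E_\cdot R^{X^n}-\E_\cdot\bar R^{X^0}$ are handled identically with $(\mu_0,\mu,\mu_1)=(\gamma',\eta',\lambda')$ and $(\gamma+\gamma',\eta+\eta',\lambda+\lambda')$ (note $\eta+\eta'\in(\gamma+\gamma',\lambda+\lambda')$ because $\eta\in(\gamma,\lambda)$ and $\eta'\in(\gamma',\lambda')$). Summing the five vanishing contributions yields $\|X^n,(X^n)';X^0,(X^0)'\|_{W^n,W^0;\eta,\eta';p,q}\to0$.

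The only genuinely delicate point is the lower-semicontinuity step for the limiting controlled rough path $(X^0,(X^0)')$ — in particular keeping straight which reference path ($W^n$ versus $W^0$) each remainder is formed against, and upgrading the assumed $L^p$-type convergences to the $L^q$-bounds needed for the $\mathbf{D}^{\lambda,\lambda'}$-seminorm — whereas the remainder of the proof is a routine bookkeeping exercise around the interpolation inequality.
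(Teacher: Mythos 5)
Your proof is correct and follows the same route the paper intends: the corollary is exactly the elementary interpolation inequality of \cref{prop:interpolation} applied term by term to the (semi)norm quantities in the distance, interpolating each Hölder-type term between the vanishing low exponent $(\gamma,\gamma',\gamma+\gamma')$ and the uniformly bounded high exponent $(\lambda,\lambda',\lambda+\lambda')$, after the routine lower-semicontinuity step showing $(X^0,(X^0)')$ inherits the $\lambda,\lambda'$ bounds. No gaps; the bookkeeping about which remainder is formed against $W^n$ versus $W^0$ is handled correctly since the hypothesis already controls the difference $\E_\cdot R^{X^n}-\E_\cdot\bar R^{X^0}$.
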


\section{Parameter-dependent rough SDEs} \label{section:parameterdependence}

Let $T>0$ and let $(\Omega,\mathcal{F},\{\mathcal{F}_t\}_{t \in [0,T]},\mathbb{P})$ be a complete filtered probability space and let $d \in \mathbb{N}_{\ge 1}$.
In \cite{KRYLOV} one can find the following notions of $\mathscr{L}$-\textit{continuity} and $\mathscr{L}$-\textit{differentiability} for a family of stochastic processes $\{X^\zeta,\ \zeta \in U\}$ taking values in $\R^d$ and indexed by a parameter.

\begin{defn} \label{def:stochasticprocessesinL}
    (cf.\ \cite[Definition 1.7.1]{KRYLOV})
     We denote by $\mathscr{L}(\R^d)$ the set of $\R^d$-valued progressively measurable stochastic processes belonging to $L^p(\Omega\times [0,T];\R^d)$ for any $p \in [1,\infty)$. For definiteness, $X \in \mathscr{L}(\R^d)$ if and only if it is progressively measurable and  \begin{equation*}
         \E\left(\int_0^T |X_r|^p dr\right) < +\infty \qquad \text{for any $p \in [1,\infty)$}.
     \end{equation*}
     If clear from the context, we simply write $\mathscr{L}$ instead of $\mathscr{L}(\R^d)$.
    With a little abuse of notation, $\mathscr{L}(\R^d)$ can also denote the set of $\R^d$-valued random variables belonging to $L^p(\Omega;\R^d)$ for any $p \in [1,\infty)$.
\end{defn}

\begin{defn} \label{def:continuityofstochasticprocesses}
    (cf.\ \cite[Definition 1.7.2]{KRYLOV})
    Let $(\mathcal{U},\rho)$ be a metric space and let $U \subseteq \mathcal{U}$ be non-empty. 
    Let $\{X^\zeta\}_{\zeta \in U}$ be a family of stochastic processes in $\mathscr{L}(\R^d)$ and let $\zeta^0 \in U$ be an accumulation point.
    We say that $\{X^\zeta\}_{\zeta \in U}$ admits limit in $\mathscr{L}(\R^d)$ as $\zeta \to \zeta^0$ if there is $Z \in \mathscr{L}(\R^d)$ such that, for any $(\zeta^n)_{n\ge1} \subseteq U \setminus \{\zeta^0\}$ with $\rho(\zeta^n,\zeta^0) \to_{n \to +\infty} 0$ and for any $p \in [1,\infty)$, there exists \begin{equation*}
        \lim_{n\to +\infty} \E \left( \int_0^T |X^{\zeta^n}_r - Z_r|^p dr \right)^\frac{1}{p} =0.
    \end{equation*}
    In such a case, we write $Z = \mathscr{L}\text{-}\lim_{\zeta \to \zeta^0} X^\zeta$.
    We say that $\{X^\zeta\}_{\zeta \in U}$ is $\mathscr{L}$-continuous at $\zeta^0 \in U$ if $\zeta^0$ is an isolated point of $U$, or if it is an accumulation point and $X^{\zeta^0} = \mathscr{L}\text{-}\lim_{\zeta \to \zeta^0} X^\zeta$. 
    We say that $\{X^\zeta\}_{\zeta \in U}$ is $\mathscr{L}$-continuous if it is $\mathscr{L}$-continuous at every $\zeta^0 \in U$.
\end{defn}

\begin{defn} \label{def:differentiabilityofstochasticprocesses}
    (cf.\ \cite[Definition 1.7.2]{KRYLOV})
    Let $(\mathcal{U},|\cdot|)$ be a normed vector space, let $U \subseteq \mathcal{U}$ be a non-empty open subset and let $\{X^\zeta\}_{\zeta \in U} \subseteq \mathscr{L}(\R^d)$. 
    We say that $\{X^\zeta\}_{\zeta \in U}$ is $\mathscr{L}$-differentiable at $\zeta^0 \in U$ in the direction $l \in \mathcal{U}, |l|=1$, if \begin{equation*}
            \left\{ \frac{1}{\varepsilon} (X^{\zeta^0 + \varepsilon l} - X^{\zeta^0}) \right\}_{\varepsilon \in \R\setminus \{0\}}
        \end{equation*}
        admits limit in $\mathscr{L}$ as $\varepsilon \to 0$. We denote such a limit by $\mathscr{L}\text{-}\frac{\partial}{\partial l} X^{\zeta^0}$.
        We say that $\{X^\zeta\}_{\zeta \in U}$ is ($\mathscr{L}$-continuously) $\mathscr{L}$-differentiable if it is $\mathscr{L}$-differentiable at any $\zeta^0 \in U$ in every direction $l \in \mathcal{U}, |l|=1$ (and, for any $l$, $\{ \mathscr{L}\text{-}\frac{\partial}{\partial l} X^{\zeta}\}_{\zeta \in U}$ is $\mathscr{L}$-continuous).
        Recursively on $k \in \mathbb{N}_{\ge 1}$, we say that $\{X^\zeta\}_{\zeta \in U}$ is $(k+1)$-times ($\mathscr{L}$-continuously) $\mathscr{L}$-differentiable if, for any $l \in \mathcal{U}, |l|=1$, $\{\mathscr{L}\text{-}\frac{\partial}{\partial l}X^{\zeta}\}_{\zeta \in U}$ is $k$-times ($\mathscr{L}$-continuously) $\mathscr{L}$-differentiable.
\end{defn}

\begin{rmk}
    In practice, the limits in \cref{def:continuityofstochasticprocesses} and \cref{def:differentiabilityofstochasticprocesses} can be required to hold for any $p \in [p_0,\infty)$ for some $1\le p_0 < \infty$, rather than all $p \in [1,\infty)$. 
    Since the time horizon $T$ is finite, the previous definitions also apply to random variables (i.e.\ constant-in-time stochastic processes). 
\end{rmk}

The previous framework accommodates where $\{X^\zeta\}_{\zeta \in U}$ is the family of solutions to a parameter-dependent SDE of the form $dX^\zeta_t = b_t(\zeta,X^\zeta_t) dt + \sigma_t(\zeta,X^\zeta_t) dB_t$. 
Indeed, every $X^\zeta$ can be seen as an element of $L^p(\Omega \times [0,T];\R^d)$. 
See \cite[Theorem 1.8.1]{KRYLOV} and \cite[Theorem 1.8.4]{KRYLOV} for relevant results. 
To convince the reader that the previous are good notions to understand continuity or the differentiability in mean for a family of stochastic processes, we immediately present the following result. 

\begin{prop} \label{prop:convergenceundermean}
    \begin{enumerate}
        \item Let $(\mathcal{U},\rho)$ be a metric space and let $U \subseteq \mathcal{U}$ be non-empty. Let $\{X^\zeta\}_{\zeta \in U}$ be a $\mathscr{L}(\R)$-continuous family of random variables and let $\phi:U \to \R$ be the map defined by $\phi(\zeta) := \E( X^\zeta)$.
        Then $\phi$ is continuous.
        \item Let $k \in \mathbb{N}_{\ge 1}$ and let $\{X^\zeta\}_{\zeta \in \R^d}$ be a family of $k$-times $\mathscr{L}(\R)$-differentiable random variables.
        Let us define $\phi(\zeta) := \E(X^\zeta)$ for $\zeta \in \R^d$.
        Then, for any $i_1,\dots,i_k \in \{1,\dots,d\}$, there exists $$\frac{\partial^k \phi}{\partial x^{i_k} \dots \partial x^{i_1}} (\zeta) = \E \left(\mathscr{L}\text{-}\frac{\partial}{\partial e_{i_k}} \dots \mathscr{L}\text{-}\frac{\partial}{\partial e_{i_1}} X^\zeta\right).$$ 
    \end{enumerate}
\end{prop}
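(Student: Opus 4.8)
The plan is to reduce everything to the elementary fact that $L^1(\Omega)$-convergence of random variables implies convergence of their expectations, after first recording the trivial observation that for a random variable $\xi$ regarded as a constant-in-time process one has $\E\big(\int_0^T |\xi|^p\,dr\big)^{1/p} = T^{1/p}\,\|\xi\|_{L^p(\Omega)}$. Hence, for random variables, the $\mathscr{L}(\R)$-notions of \cref{def:continuityofstochasticprocesses} and \cref{def:differentiabilityofstochasticprocesses} coincide with the usual $L^p(\Omega)$-notions for all $p$, and in particular they entail the corresponding $L^1(\Omega)$-statements, which is all I will use.

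\emph{Part (1).} Fix $\zeta^0 \in U$; if it is isolated in $U$ there is nothing to prove, so assume it is an accumulation point and take an arbitrary sequence $\zeta^n \to \zeta^0$ in $U\setminus\{\zeta^0\}$. By $\mathscr{L}(\R)$-continuity and the remark above, $\E\big(|X^{\zeta^n}-X^{\zeta^0}|\big)\to 0$, whence $|\phi(\zeta^n)-\phi(\zeta^0)| \le \E\big(|X^{\zeta^n}-X^{\zeta^0}|\big) \to 0$. Since the sequence was arbitrary, $\phi$ is continuous at $\zeta^0$, and hence on $U$.

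\emph{Part (2).} I would argue by induction on $k$. For $k=1$: fix $\zeta^0\in\R^d$ and $i\in\{1,\dots,d\}$; by linearity the difference quotient of $\phi$ equals $\E\big(\varepsilon^{-1}(X^{\zeta^0+\varepsilon e_i}-X^{\zeta^0})\big)$, and by $\mathscr{L}$-differentiability the random variable inside the expectation converges in $L^1(\Omega)$ (take $p=1$ in \cref{def:continuityofstochasticprocesses}) to $\mathscr{L}\text{-}\tfrac{\partial}{\partial e_i}X^{\zeta^0}$ as $\varepsilon\to 0$; therefore $\tfrac{\partial\phi}{\partial x^i}(\zeta^0)$ exists and equals $\E\big(\mathscr{L}\text{-}\tfrac{\partial}{\partial e_i}X^{\zeta^0}\big)$, at every $\zeta^0$. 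For the step from $k$ to $k+1$, assume $\{X^\zeta\}$ is $(k+1)$-times $\mathscr{L}(\R)$-differentiable, fix $i_1,\dots,i_{k+1}$ and set $Y^\zeta := \mathscr{L}\text{-}\tfrac{\partial}{\partial e_{i_1}}X^\zeta$. By the $k=1$ case the function $\psi(\zeta):=\E(Y^\zeta)$ coincides with $\tfrac{\partial\phi}{\partial x^{i_1}}$ on all of $\R^d$; by \cref{def:differentiabilityofstochasticprocesses} the family $\{Y^\zeta\}$ is $k$-times $\mathscr{L}(\R)$-differentiable with $\mathscr{L}$-derivatives equal to the iterated $\mathscr{L}$-derivatives of $\{X^\zeta\}$, so applying the inductive hypothesis to $\{Y^\zeta\}$ yields the existence of $\tfrac{\partial^k\psi}{\partial x^{i_{k+1}}\cdots\partial x^{i_2}}$ with value $\E\big(\mathscr{L}\text{-}\tfrac{\partial}{\partial e_{i_{k+1}}}\cdots\mathscr{L}\text{-}\tfrac{\partial}{\partial e_{i_1}}X^\zeta\big)$, which is precisely $\tfrac{\partial^{k+1}\phi}{\partial x^{i_{k+1}}\cdots\partial x^{i_1}}$. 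This closes the induction.

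I expect no genuine analytic obstacle; the only points needing care are bookkeeping ones in the inductive step — that $\{Y^\zeta\}_\zeta$ is again a legitimate family in $\mathscr{L}(\R)$ (it is, since $\mathscr{L}$-limits lie in $\mathscr{L}$ by definition), that $(k+1)$-times $\mathscr{L}$-differentiability does entail once-differentiability at every point (so that $\psi=\partial\phi/\partial x^{i_1}$ is a well-defined function on the open set $\R^d$, not merely a pointwise limit, and can legitimately be differentiated $k$ more times), and that the statement concerns the iterated partial derivative in a prescribed order, so no symmetry of mixed partials is invoked and iterating the one-variable argument suffices.
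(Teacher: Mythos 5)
Your proof is correct and follows essentially the same route as the paper: part (1) via the bound $|\phi(\zeta^n)-\phi(\zeta^0)|\le \E\big(|X^{\zeta^n}-X^{\zeta^0}|\big)$, and part (2) by induction, exchanging the difference quotient with the expectation using $L^1$-convergence from the $\mathscr{L}$-differentiability. The only (immaterial) difference is that you peel off the innermost derivative and apply the inductive hypothesis to $\{\mathscr{L}\text{-}\tfrac{\partial}{\partial e_{i_1}}X^\zeta\}_\zeta$, whereas the paper differentiates the $k_0$-th order formula once more in the outermost direction.
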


\begin{proof}
        \textit{1. } Let $\zeta^0 \in U$ be an accumulation point, and let $(\zeta^n)_{n \ge 1} \subseteq U$ be an arbitrary sequence such that $\rho(\zeta^n,\zeta^0) \to 0$ as $n \to +\infty$. 
        Then, for any $n \ge 1$, \begin{equation*} \begin{aligned}
             |\phi(\zeta^n) - \phi(\zeta^0)| &= |\E(X^{\zeta^n}) - \E(X^{\zeta^0})| \le \E(|X^{\zeta^n}-X^{\zeta^0}|) 
        \end{aligned}
        \end{equation*}
        and the right-hand side tends to 0 as $n \to +\infty$, by assumption. \\
        
        \textit{2. } We argue recursively on $k$, and we first consider the case $k=1$.
        Let $i \in \{1,\dots,d\}$. For any $\zeta \in \R^d$ and for any $\varepsilon \in \R \setminus \{0\}$ we can write \begin{equation*}
            \frac{\phi(\zeta+\varepsilon e_i) - \phi(\zeta)}{\varepsilon} = \E \left( \frac{1}{\varepsilon} (X^{\zeta + \varepsilon e_i} - X^\zeta) \right),
        \end{equation*}
        and it tends to $\E(\mathscr{L}\text{-}\frac{\partial}{\partial e_i} X^\zeta)$ as $\varepsilon \to 0$, by assumption. \\
        Assume now the conclusion holds for $k=k_0$ and that the assumptions are satisfied for $k=k_0+1$. Let $i_1,\dots,i_{k_0},i_{k_0+1} \in \{1,\dots,d\}$.
        For any $\zeta \in \R^d$ and for any $\varepsilon \in \R \setminus \{0\}$ we have \begin{equation*}
            \begin{aligned}
                &\frac{1}{\varepsilon} \left( \frac{\partial^{k_0} \phi}{\partial x^{k_0} \dots \partial x^1} (\zeta + \varepsilon e_{i_{k_0+1}}) - \frac{\partial^{k_0} \phi}{\partial x^{k_0} \dots \partial x^1} (\zeta) \right) = \\
                &= \E \left( \frac{1}{\varepsilon} \left( \mathscr{L}\text{-}\frac{\partial}{\partial e_{i_{k_0}}} \dots \mathscr{L}\text{-}\frac{\partial}{\partial e_{i_1}} X^{\zeta+\varepsilon e_{k_0+1}} - \mathscr{L}\text{-}\frac{\partial}{\partial e_{i_{k_0}}} \dots \mathscr{L}\text{-}\frac{\partial}{\partial e_{i_1}} X^\zeta \right)  \right)
            \end{aligned}
        \end{equation*}
        and it tends to $\E \left(\mathscr{L}\text{-}\frac{\partial}{\partial e_{i_{k_0+1}}}\left(\mathscr{L}\text{-}\frac{\partial}{\partial e_{i_{k_0}}} \dots \mathscr{L}\text{-}\frac{\partial}{\partial e_{i_1}} X^\zeta \right)\right)$ as $\varepsilon \to 0$, by assumption. 
\end{proof}

As explained in \cite{FHL21}, a solution $X^\zeta$ to a parameter-dependent rough SDE of the form $dX^\zeta_t = b_t(\zeta,X^\zeta_t) dt + \sigma_t(\zeta,X^\zeta_t) dB_t + \beta_t(\zeta,X^\zeta_t) d\mathbf{W}_t$ must be considered in a subset of $L^p(\Omega \times [0,T];\R^d)$, namely the space of stochastic controlled rough paths introduced in \cref{def:stochasticcontrolledroughpaths}. 
The norm in \cref{def:stochasticprocessesinL} is too weak to obtain meaningful stability estimates (see instead \cref{thm:stabilityforRSDEs_compendium}).
Moreover, the solution space itself depends on the rough path driving the equation. 
For this reason, continuity with respect to $\zeta$ is formulated below allowing $\mathbf{W}$ to vary with the parameter, while differentiability requires $\mathbf{W}$ to be fixed so that linear combinations of solutions remain well defined. 
In the following paragraphs we refine the notions of $\mathscr{L}$-continuity and $\mathscr{L}$-differentiability to extend Krylov's theory to rough SDEs. 

\subsection{Continuity for rough SDEs} \label{section:continuityforRSDEs}

Let $(\mathcal{U},\rho)$ be a metric space and let $U \subseteq \mathcal{U}$ be non-empty.

\begin{defn} \label{def:L-stochasticcontrolledroughpaths}
    Let $W:[0,T]\to \R^n$ be $\alpha$-H\"older continuous, with $\alpha \in (0,1]$, and let $\gamma,\gamma' \in [0,1]$.
    We denote by $\mathbf{D}_W^{\gamma,\gamma'}\mathscr{L}(\R^d)$ the set of $\R^d$-valued stochastic controlled rough paths belonging to $\mathbf{D}_W^{\gamma,\gamma'}L_p(\R^d)$ for any $p \in [2,\infty)$ (see \cref{def:stochasticcontrolledroughpaths}). 
    If clear from the context, we simply denote $\mathbf{D}_W^{\gamma,\gamma'} \mathscr{L}(\R^d)$ by $\mathbf{D}_W^{\gamma,\gamma'} \mathscr{L}$.  
\end{defn}

\begin{defn} \label{def:convergenceandcontinuityofstochastiscontrolledroughpaths}
    Let $\alpha,\gamma,\gamma' \in [0,1]$, with $\alpha \ne 0$.
    Let $\{W^\zeta\}_{\zeta \in \mathcal{U}}$ be a family of $\alpha$-H\"older continuous paths depending on a parameter and, for any $\zeta \in U$, let $(X^\zeta,(X^\zeta)') \in \mathbf{D}_{W^{\zeta}}^{\gamma,\gamma'}\mathscr{L}(\R^d)$. 
    Let $\zeta^0 \in U$ be an accumulation point.
    We say that $\{(X^\zeta,(X^\zeta)')\}_{\zeta \in U}$ admits limit in $\{\mathbf{D}_{W^\zeta}^{\gamma,\gamma'} \mathscr{L}\}_{\zeta \in U}$ as $\zeta \to \zeta^0$ if there is $(Z,Z') \in \mathbf{D}_{W^{\zeta^0}}^{\gamma,\gamma'}\mathscr{L}$ such that, for any $(\zeta^n)_{n\ge1} \subseteq U \setminus \{\zeta^0\}$ converging to $\zeta^0$ in $\mathcal{U}$ and for any $p \in [2,\infty)$, there exists \begin{equation*}
            \lim_{n \to +\infty} \|X^{\zeta^n},(X^{\zeta^n})';Z,Z'\|_{W^{\zeta^n},W^{\zeta^0};\gamma,\gamma';p} =0.
        \end{equation*}
    In such a case we write $$(Z,Z') = \{\mathbf{D}_{W^\zeta}^{\gamma,\gamma'}\mathscr{L}\}_\zeta  \ \text{-}\lim_{\zeta \to \zeta^0} (X^{\zeta},(X^{\zeta})').$$
    We say that $\{(X^\zeta,(X^\zeta)')\}_{\zeta \in U}$ is $\{\mathbf{D}_{W^\zeta}^{\gamma,\gamma'} \mathscr{L}\}_{\zeta}$-continuous at $\zeta^0 \in U$ if $\zeta^0$ is an isolated point of $U$, or if it is an accumulation point and $(X^{\zeta^0},(X^{\zeta^0})') = \{\mathbf{D}_{W^\zeta}^{\gamma,\gamma'}\mathscr{L}\}_\zeta \ \text{-}\lim_{\zeta \to \zeta^0} (X^{\zeta},(X^{\zeta})')$. 
    We say that $\{(X^\zeta,(X^\zeta)')\}_{\zeta \in U}$ is $
        \{\mathbf{D}_{W^\zeta}^{\gamma,\gamma'} \mathscr{L}\}_{\zeta}$-continuous if it is $\{\mathbf{D}_{W^\zeta}^{\gamma,\gamma'} \mathscr{L}\}_{\zeta}$-continuous at every $\zeta^0 \in U$. 
    If $W^\zeta =W$ for any $\zeta \in U$, we simply write $\mathbf{D}_{W}^{\gamma,\gamma'} \mathscr{L}$ instead of $\{\mathbf{D}_{W^\zeta}^{\gamma,\gamma'} \mathscr{L}\}_{\zeta}$.
\end{defn}

Let $(V,|\cdot|_V)$ an auxiliary Banach space, typically $V=\R^{d\times d_2}$. If clear from the context, $|\cdot|_V$ is simply denoted by $|\cdot|$.

\begin{defn} \label{def:convergenceforstochasticboundedcontinuousvectorfields}
    For any $\zeta \in U$, let $g^\zeta$ be a random bounded vector field from $\R^d$ to $V$ in the sense of \cref{def:stochasticboundedandLipschitzvectorfield}. 
    Let $\zeta^0 \in U$ be an accumulation point.
    We say that $\{g^\zeta\}_{\zeta \in U}$ admits limit in $\mathscr{L}C^0_b(\R^d;V)$ as $\zeta \to \zeta^0$ if there is a random bounded vector field $g$ from $\R^d$ to $V$ such that, for any $(\zeta^n)_{n\ge1} \subseteq U \setminus \{\zeta^0\}$ with $\rho(\zeta^n,\zeta^0) \to_{n \to +\infty} 0$ and for any $p \in [1,\infty)$, there exists \begin{equation*}
     \lim_{n \to +\infty} \sup_{t \in [0,T]} \||g^{\zeta^n}_t(\cdot) - g_t(\cdot)|_\infty\|_p =0. 
     \end{equation*}
    In such a case we write  $g = \mathscr{L}C^0_b(\R^d;V) \text{-} \lim_{\zeta \to \zeta^0} g^\zeta$.
    We say that $\{g^\zeta\}_{\zeta \in U}$ is $\mathscr{L}C^0_b$-continuous at $\zeta^0$ if $\zeta^0$ is an isolated point of $U$, or if it is an accumulation point and  $g^{\zeta^0}=\mathscr{L}C^0_b\text{-}\lim_{\zeta \to \zeta^0} g^{\zeta}$.
    We say that $\{g^\zeta\}_{\zeta \in U}$ is $\mathscr{L}C^0_b$-continuous if it is $\mathscr{L}C^0_b$-continuous at every $\zeta^0 \in U$.
    If clear from the context we simply write $\mathscr{L}C^0_b$ instead of $\mathscr{L}C^0_b(\R^d;V)$.
\end{defn}

\begin{defn}  \label{def:convergenceforstochasticcontrolledvetorfields} 
Let $\alpha, \delta \in [0,1]$, with $\alpha \ne 0$, and let $\kappa >1$. 

    \begin{itemize}
        \item Given an $\alpha$-H\"older continuous path $W:[0,T]\to \R^n$, we denote by $\mathbf{D}_W^{2\delta}\mathscr{L}C^\kappa_b(\R^d;V)$ the set of stochastic controlled vector fields from $\R^d$ to $V$ belonging to $\mathbf{D}_W^{2\delta}L_p C^\kappa_b(\R^d;V)$ for any $p \in [2,\infty)$ (see \cref{def:stochasticcontrolledvectorfield}). 
        If clear from the context, we simply write $\mathbf{D}_W^{2\delta}\mathscr{L} C^\kappa_b$ instead of $\mathbf{D}_W^{2\delta}\mathscr{L} C^\kappa_b(\R^d;V)$. 
        \item Let $\{W^\zeta\}_{\zeta \in \mathcal{U}} \subseteq {C}^{\alpha}([0,T];\R^n)$ be a family of $\alpha$-H\"older continuous paths depending on a parameter.
        For any $\zeta \in U$, consider $(\beta^\zeta,(\beta^\zeta)') \in \mathbf{D}_{W^{\zeta}}^{2\delta} \mathscr{L} C_b^\kappa(\R^d;V)$ and let $\zeta^0 \in U$ be an accumulation point.
        We say that $\{(\beta^\zeta,(\beta^\zeta)')\}_{\zeta \in U}$ admits limit in $\{\mathbf{D}_{W^\zeta}^{2\delta}\mathscr{L}C_b^\kappa\}_\zeta$ as $\zeta \to \zeta^0$ if there is $(\beta,\beta') \in \mathbf{D}_{W^{\zeta^0}}^{2\delta}\mathscr{L}C^\kappa_b$ such that, for any $(\zeta^n)_{n\ge1} \subseteq U \setminus \{\zeta^0\}$ with $\rho(\zeta^n,\zeta^0) \to_{n \to +\infty} 0$ and for any $p \in [2,\infty)$, there exists \begin{equation*}
        \lim_{n \to +\infty} \llbracket (\beta^{\zeta^n}-\beta, (\beta^{\zeta^n})' - \beta') \rrbracket_{\kappa;p} +  \llbracket \beta^{\zeta^n},(\beta^{\zeta^n})';\beta,\beta' \rrbracket_{W^{\zeta^n},W^{\zeta^0};2\delta;p;\kappa} =0.
    \end{equation*}
    In such a case we write $$(\beta,\beta') = \{\mathbf{D}_{W^\zeta}^{2\delta}\mathscr{L}C_b^\kappa\}_\zeta \text{-} \lim_{\zeta \to \zeta^0} (\beta^\zeta,(\beta^\zeta)').$$
    We say that $\{(\beta^\zeta,(\beta^\zeta)')\}_{\zeta \in U}$ is $\{\mathbf{D}_{W^\zeta}^{2\delta}\mathscr{L}C_b^\kappa\}_\zeta$-continuous at $\zeta^0 \in U$ if $\zeta^0$ is an isolated point of $U$, or if it is an accumulation point and $(\beta^{\zeta^0},(\beta^{\zeta^0})') = \{\mathbf{D}_{W^\zeta}^{2\delta}\mathscr{L}C_b^\kappa\}_\zeta \text{-} \lim_{\zeta \to \zeta^0} (\beta^\zeta,(\beta^\zeta)')$.
    We say that $\{(\beta^\zeta,(\beta^\zeta)')\}_{\zeta \in U}$ is $\{\mathbf{D}_{W^\zeta}^{2\delta}\mathscr{L}C_b^\kappa\}_\zeta$-continuous if it is $\{\mathbf{D}_{W^\zeta}^{2\delta}\mathscr{L}C_b^\kappa\}_\zeta$-continuous at any $\zeta^0 \in U$.
    If $W^\zeta = W$ for any $\zeta \in U$, we simply write $\mathbf{D}_{W}^{2\delta}\mathscr{L}C_b^\kappa$ instead of $\{\mathbf{D}_{W^\zeta}^{2\delta}\mathscr{L}C_b^\kappa\}_\zeta$.
    \end{itemize}
\end{defn}

\begin{defn} \label{def:convergenceforroughpaths}
    Let $\alpha \in (\frac{1}{3},\frac{1}{2}]$ and let $\{\mathbf{W}^\zeta\}_{\zeta \in U} \subseteq \mathscr{C}^{\alpha}([0,T];\R^n)$ be a family of $\alpha$-rough paths depending on a parameter.
    Let $\zeta^0 \in U$ be an accumulation point. 
    We say that $\{\mathbf{W}^\zeta\}_{\zeta \in U}$ admits limit in $\mathscr{C}^{\alpha}([0,T];\R^n)$ if there is $\mathbf{W} \in \mathscr{C}^{\alpha}([0,T];\R^n)$ such that, for any arbitrary sequence $(\zeta^n)_{n\ge1} \subseteq U \setminus \{\zeta^0\}$ converging to $\zeta^0$, there exists \begin{equation*}
        \lim_{n \to +\infty} \rho_{\alpha} (\mathbf{W}^{\zeta^n},\mathbf{W}) =0. 
    \end{equation*}
    In such a case we write $\mathbf{W}=\mathscr{C}^{\alpha}\text{-}\lim_{\zeta \to \zeta^0} \mathbf{W}^\zeta$. 
    We say that $\{\mathbf{W}^\zeta\}_{\zeta \in U}$ is $\mathscr{C}^\alpha$-continuous at $\zeta^0 \in U$ if $\zeta^0$ is an isolated point of $U$, or if it is an accumulation point and $\mathbf{W}^{\zeta^0}=\mathscr{C}^{\alpha}\text{-}\lim_{\zeta \to \zeta^0} \mathbf{W}^\zeta$.
    We say that $\{\mathbf{W}^\zeta\}_{\zeta \in U}$ is $\mathscr{C}^\alpha$-continuous if it is $\mathscr{C}^\alpha$-continuous at every $\zeta^0 \in U$.    
\end{defn}


\begin{thm} \label{thm:continuity_RSDEs}
    Let $\{\mathbf{W}^\zeta\}_{\zeta \in {U}}$ be a $\mathscr{C}^{\alpha}$-continuous family of $\alpha$-rough paths in $\R^n$, with $\alpha \in (\frac{1}{3},\frac{1}{2}]$, and let $\kappa \in (2,3]$ and $\delta \in [0,\alpha]$ such that $\alpha + (\kappa-1)\delta >1$.
    Let $\{\xi^{\zeta}\}_{\zeta \in U}$ be a $\mathscr{L}(\R^d)$-continuous family of $\mathcal{F}_0$-measurable random variables. 
    Let $\{b^\zeta\}_{\zeta \in U}$ and $\{\sigma^\zeta\}_{\zeta \in U}$ be two families of random bounded Lipschitz vector fields which are $\mathscr{L}C^0_b(\R^d;\R^d)$-continuous and $\mathscr{L}C^0_b (\R^d;\R^{d \times m})$-continuous, respectively.
    Let $\{(\beta^\zeta,(\beta^\zeta)')\}_{\zeta \in U}$ be a $\{\mathbf{D}_{W^\zeta}^{2\delta}\mathscr{L}C^{\kappa-1}_b\}_\zeta$-continuous family of stochastic controlled vector fields such that, for any $\zeta \in U$, \begin{equation*}
        (\beta^\zeta, (\beta^\zeta)') \in \bigcap_{p \in [2,\infty)} \mathbf{D}_{W^\zeta}^{2\delta}L_{p,\infty} C_b^\kappa(\R^d;\R^{d\times n}).
    \end{equation*} 
    For any $\zeta \in U$, let $X^\zeta$ be an $L_{p,\infty}$-integrable solution of the following rough SDE, for any $p \in [2,\infty)$: \begin{equation*}
        X^\zeta_t = \xi^\zeta + \int_0^t b_r^\zeta(X^\zeta_r)dr + \int_0^t \sigma_r(X^\zeta_r)dB_r + \int_0^t (\beta^\zeta_r(X^\zeta_r), D_x\beta^\zeta_r(X^\zeta_r)\beta^\zeta_r(X^\zeta_r) + (\beta^\zeta)'_r(X^\zeta_r)) d\mathbf{W}^\zeta_r \quad t \in [0,T].
    \end{equation*} 
    Then $\{X^\zeta,\beta^\zeta(X^\zeta)\}_{\zeta \in U}$ is $\{\mathbf{D}_{W^\zeta}^{\alpha,\delta}\mathscr{L}(\R^d)\}_\zeta$-continuous. 
\end{thm}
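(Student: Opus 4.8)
The plan is to obtain this statement as a direct consequence of the quantitative stability estimate \cref{thm:stabilityforRSDEs_compendium}, supplemented by the a priori bounds of \cref{thm:wellposednessandaprioriestimatesforRSDEs}; the proof then amounts to matching the hypotheses of that estimate and checking that each of the six error terms on its right-hand side vanishes along the parameter sequence. First I would fix an accumulation point $\zeta^0 \in U$ (if $\zeta^0$ is isolated there is nothing to prove), an arbitrary sequence $(\zeta^n)_{n \ge 1} \subseteq U \setminus \{\zeta^0\}$ with $\rho(\zeta^n,\zeta^0) \to 0$, and an exponent $p \in [2,\infty)$. By \cref{def:solutionRSDEs_compendium} and \cref{thm:wellposednessandaprioriestimatesforRSDEs}, for every $\zeta \in U$ the pair $(X^\zeta,\beta^\zeta(X^\zeta))$ is a stochastic controlled rough path with Gubinelli derivative $\beta^\zeta(X^\zeta)$, lying in $\mathbf{D}_{W^\zeta}^{\alpha,\delta} L_{q,\infty}$ for every $q \in [2,\infty)$, hence in $\mathbf{D}_{W^\zeta}^{\alpha,\delta}\mathscr{L}(\R^d)$. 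By \cref{def:convergenceandcontinuityofstochastiscontrolledroughpaths} it therefore suffices to prove
\[ \lim_{n \to +\infty} \big\| X^{\zeta^n}, \beta^{\zeta^n}(X^{\zeta^n}); X^{\zeta^0}, \beta^{\zeta^0}(X^{\zeta^0}) \big\|_{W^{\zeta^n}, W^{\zeta^0}; \alpha, \delta; p} = 0 . \]

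Next I would fix a uniform constant. Since $\{\mathbf{W}^\zeta\}$ is $\mathscr{C}^\alpha$-continuous, $\rho_\alpha(\mathbf{W}^{\zeta^n},\mathbf{W}^{\zeta^0}) \to 0$, so $\sup_n |\mathbf{W}^{\zeta^n}|_\alpha < +\infty$. The standing assumptions $\delta \in [0,\alpha]$ and $\alpha + (\kappa-1)\delta > 1$ give $\kappa\alpha \ge \alpha + (\kappa-1)\delta > 1$, hence $\kappa > 1/\alpha$ and $\kappa \in (1/\alpha,3]$, which is exactly the range required by \cref{thm:wellposednessandaprioriestimatesforRSDEs} and \cref{thm:stabilityforRSDEs_compendium}. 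Then I pick $M > 0$, independent of $n$, with
\[ |\mathbf{W}^{\zeta^0}|_\alpha + \sup_n |\mathbf{W}^{\zeta^n}|_\alpha + \|b^{\zeta^0}\|_{Lip} + \|\sigma^{\zeta^0}\|_{Lip} + \llbracket (\beta^{\zeta^0},(\beta^{\zeta^0})') \rrbracket_{\kappa;\infty} + \llbracket (\beta^{\zeta^0},(\beta^{\zeta^0})') \rrbracket_{W^{\zeta^0};2\delta;p,\infty;\kappa} \le M , \]
all of these quantities being finite by hypothesis.

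Now I would apply \cref{thm:stabilityforRSDEs_compendium} with the fixed data $(\mathbf{W}^{\zeta^0},\xi^{\zeta^0},b^{\zeta^0},\sigma^{\zeta^0},\beta^{\zeta^0},(\beta^{\zeta^0})')$ playing the role of the ``unbarred'' equation and $(\mathbf{W}^{\zeta^n},\xi^{\zeta^n},b^{\zeta^n},\sigma^{\zeta^n},\beta^{\zeta^n},(\beta^{\zeta^n})')$ the ``barred'' one. This choice is the one point requiring care: the implicit constant in \cref{thm:stabilityforRSDEs_compendium} controls only the Lipschitz and $C^\kappa_b$-type norms of the unbarred coefficients (precisely the $\zeta^0$-quantities bounded by $M$), while the controlled-rough-path distance $\| \,\cdot\, ; \,\cdot\, \|_{W,\bar W;\alpha,\delta;p}$ is symmetric in its two arguments, so the estimated quantity is exactly the one in the display above. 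Thus
\begin{multline*}
    \big\| X^{\zeta^n}, \beta^{\zeta^n}(X^{\zeta^n}); X^{\zeta^0}, \beta^{\zeta^0}(X^{\zeta^0}) \big\|_{W^{\zeta^n},W^{\zeta^0};\alpha,\delta;p} \lesssim_{M,\alpha,\delta,p,T} \|\xi^{\zeta^0}-\xi^{\zeta^n}\|_p + \rho_\alpha(\mathbf{W}^{\zeta^0},\mathbf{W}^{\zeta^n}) \\
    + \sup_{t \in [0,T]} \| |b^{\zeta^0}_t(\cdot)-b^{\zeta^n}_t(\cdot)|_\infty \|_p + \sup_{t \in [0,T]} \| |\sigma^{\zeta^0}_t(\cdot)-\sigma^{\zeta^n}_t(\cdot)|_\infty \|_p \\
    + \llbracket (\beta^{\zeta^0}-\beta^{\zeta^n}, (\beta^{\zeta^0})'-(\beta^{\zeta^n})') \rrbracket_{\kappa-1;p} + \llbracket \beta^{\zeta^0},(\beta^{\zeta^0})';\beta^{\zeta^n},(\beta^{\zeta^n})' \rrbracket_{W^{\zeta^0},W^{\zeta^n};2\delta;p;\kappa-1} .
\end{multline*}

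Finally, each term on the right tends to $0$ as $n \to +\infty$: the first by $\mathscr{L}(\R^d)$-continuity of $\{\xi^\zeta\}$ (\cref{def:continuityofstochasticprocesses}), the second by $\mathscr{C}^\alpha$-continuity of $\{\mathbf{W}^\zeta\}$ (\cref{def:convergenceforroughpaths}), the third and fourth by $\mathscr{L}C^0_b$-continuity of $\{b^\zeta\}$ and $\{\sigma^\zeta\}$ (\cref{def:convergenceforstochasticboundedcontinuousvectorfields}), and the last two by $\{\mathbf{D}_{W^\zeta}^{2\delta}\mathscr{L}C_b^{\kappa-1}\}_\zeta$-continuity of $\{(\beta^\zeta,(\beta^\zeta)')\}$ (\cref{def:convergenceforstochasticcontrolledvetorfields}), which is precisely the assertion that $(\beta^{\zeta^0},(\beta^{\zeta^0})')$ is the limit and that both the $C^{\kappa-1}_b$-norm distance and the remainder distance to $(\beta^{\zeta^n},(\beta^{\zeta^n})')$ vanish. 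Since $p$ and $(\zeta^n)$ were arbitrary, $\{X^\zeta,\beta^\zeta(X^\zeta)\}$ is $\{\mathbf{D}_{W^\zeta}^{\alpha,\delta}\mathscr{L}\}_\zeta$-continuous at $\zeta^0$, and since $\zeta^0$ was arbitrary the claim follows. I expect no genuine obstacle beyond this essentially mechanical bookkeeping: the real work went into \cref{thm:stabilityforRSDEs_compendium}, and the only delicate point here is the correct assignment of the fixed versus the moving data, which keeps the implicit stability constant bounded along the sequence while leaving on the right-hand side exactly the six quantities that the convergence hypotheses force to zero.
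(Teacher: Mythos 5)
Your proposal is correct and follows essentially the same route as the paper: fix an accumulation point and an arbitrary sequence, choose a constant $M$ bounding the $\zeta^0$-data together with $\sup_n|\mathbf{W}^{\zeta^n}|_\alpha$, invoke the local Lipschitz stability estimate of \cref{thm:stabilityforRSDEs_compendium}, and let the continuity hypotheses kill each term on the right-hand side. The extra care you take about which data play the ``unbarred'' role (so that the implicit constant stays uniform in $n$) is exactly the choice made, implicitly, in the paper's proof.
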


\begin{proof}
    Let $\zeta^0 \in U$ be an accumulation point and let $(\zeta^n)_{n \ge 1} \subseteq U \setminus \{\zeta^0\}$ be an arbitrary sequence such that $\rho(\zeta^n,\zeta^0) \to 0$ as $n \to +\infty$.
    Let $p \in [2,\infty)$ and let $M>0$ be any constant such that \begin{equation*}
        \begin{aligned}
            |\mathbf{W}^{\zeta^0}|_\alpha + \sup_{n\ge 1} |\mathbf{W}^{\zeta^n}|_\alpha +\|\beta^{\zeta^0}\|_{Lip} + \|\sigma^{\zeta^0}\|_{Lip} + \llbracket (\beta^{\zeta^0}, (\beta^{\zeta^0})') \rrbracket_{\kappa;\infty} + \llbracket (\beta^{\zeta^0}, (\beta^{\zeta^0})') \rrbracket_{W^{\zeta^0};2\delta;p,\infty;\kappa} \le M.
        \end{aligned}
    \end{equation*}
    From \cref{thm:stabilityforRSDEs_compendium} we conclude that, for any $n \ge 1$, \begin{equation*}
        \begin{aligned}
            &\|X^{\zeta^n},\beta^{\zeta^n}(X^{\zeta^n});X^{\zeta^0},\beta^{\zeta^0}(X^{\zeta^0})\|_{W^{\zeta^n},W^{\zeta^0};\alpha,\delta;p} \\
            &\quad \lesssim_{M,\alpha,\delta,\kappa,p,T} \|X^{\zeta^n}_0 - X^{\zeta^0}_0\|_p + \rho_{\alpha}(\mathbf{W}^{\zeta^n},\mathbf{W}^{\zeta^0}) + \sup_{t \in [0,T]} \| |b^{\zeta^n}_t-b^{\zeta^0}_t|_\infty\|_p + \sup_{t \in [0,T]} \| |\sigma^{\zeta^n}_t-\sigma^{\zeta^0}_t|_\infty\|_p  + \\
            & \quad \quad + \llbracket (\beta^{\zeta^n}-\beta^{\zeta^0},(\beta^{\zeta^n})'-(\beta^{\zeta^0})') \rrbracket_{\kappa-1;p} + \llbracket \beta^{\zeta^n},(\beta^{\zeta^n})'; \beta^{\zeta^0},(\beta^{\zeta^0})' \rrbracket_{W^{\zeta^n},W^{\zeta^0};2\alpha;p;\kappa-1}.
        \end{aligned}
    \end{equation*}
    By assumption, the right-hand side is going to zero as $n \to +\infty$, and the assertion is therefore proved.
\end{proof}


\subsection{Continuity for linear rough SDEs} \label{section:continuityforlinearRSDEs}

Let $(\mathcal{U},\rho)$ be a metric space and let $U \subseteq \mathcal{U}$ be non-empty.

\begin{defn}  \label{def:continuityforstochasticlinearvectorfields}
    Let $\{Q^\zeta\}_{\zeta \in U}$ be a parameter-dependent family of stochastic linear vector fields from $\R^d$ to $V$, in the sense of \cref{def:stochasticlinearvectorfields}. 
    Let $\zeta^0 \in U$ be an accumulation point.
    We say that $\{Q^\zeta\}_{\zeta \in U}$ admits limit in  $\mathscr{L}\mathcal{L}(\R^d;V)$ as $\zeta \to \zeta^0$ if there is a stochastic linear vector field $Q$ from $\R^d$ to $V$ such that, for any $(\zeta^n)_{n\ge1} \subseteq U \setminus \{\zeta^0\}$ with $\rho(\zeta^n,\zeta^0) \to_{n \to +\infty} 0$ and for any $p \in [1,\infty)$, there exists \begin{equation*}
       \lim_{n \to +\infty} \E \left(\int_0^T |Q^{\zeta^n}_r-Q_r|_{\mathcal{L}(\R^d;V)}^p dr \right) =0.
  \end{equation*}
  In such a case we write $$Q = \mathscr{L}\mathcal{L}(\R^d;V)\text{-}\lim_{\zeta \to \zeta^0} Q^\zeta. $$
  If clear from the context, we simply write $\mathscr{L}\mathcal{L}$ instead of $\mathscr{L}\mathcal{L}(\R^d;V)$.
  We say that $\{Q^\zeta\}_{\zeta \in U}$ is $\mathscr{L}\mathcal{L}$-continuous at $\zeta^0 \in U$ if $\zeta^0$ is an isolated point of $U$, or if it is an accumulation point and $Q^{\zeta^0}=\mathscr{L}\mathcal{L}\text{-}\lim_{\zeta \to \zeta^0} Q^\zeta$. 
  We say that $\{Q^\zeta\}_{\zeta \in U}$ is $\mathscr{L}\mathcal{L}$-continuous if it is $\mathscr{L}\mathcal{L}$-continuous at every $\zeta^0 \in U$. 
\end{defn}

\begin{defn} \label{def:convergenceandcontinuitystochasticcontrolledlinearvectorfields}
    Let $\alpha,\delta,\delta' \in [0,1]$, with $\alpha \ne 0$. \begin{itemize}
        \item Given any $\alpha$-H\"older continuous path $W:[0,T] \to \R^n$, we denote by $\mathbf{D}_W^{\delta,\delta'}\mathscr{L}\mathcal{L}(\R^d;V)$ the set of stochastic controlled linear vector fields belonging to $\mathbf{D}_W^{\delta,\delta'}L_p\mathcal{L}(\R^d;V)$ for any $p \in [2,\infty)$. 
        If clear from the context, we simply write $\mathbf{D}_W^{\delta,\delta'}\mathscr{L}\mathcal{L}$ instead of $\mathbf{D}_W^{\delta,\delta'}\mathscr{L}\mathcal{L}(\R^d;V)$.
        \item  Let $\{W^\zeta\}_{\zeta \in \mathcal{U}} \subseteq C^\alpha([0,T];\R^n)$ and assume that, for every $\zeta \in U$, there is $(f^\zeta,(f^\zeta)') \in \mathbf{D}_{W^\zeta}^{\delta,\delta'}\mathscr{L}\mathcal{L}$.
        Let $\zeta^0 \in U$ be an accumulation point.
        We say that $\{(f^\zeta,(f^\zeta)')\}_{\zeta \in U}$ admits limit in $\{\mathbf{D}_{W^\zeta}^{\delta,\delta'}\mathscr{L}\mathcal{L}\}_\zeta$ as $\zeta \to \zeta^0$ if there is $(f,f') \in \mathbf{D}_{W^{\zeta^0}}^{\delta,\delta'}\mathscr{L}\mathcal{L}$ such that, for any $(\zeta^n)_{n\ge1} \subseteq U \setminus \{\zeta^0\}$ with $\rho(\zeta^n,\zeta^0) \to 0$ and for any $p \in [2,\infty)$, there exists
        \begin{equation*}
        \lim_{n \to +\infty} \llbracket (f^{\zeta^n}-f^{\zeta^0}, (f^{\zeta^n})'-(f^{\zeta^0})') \rrbracket_{\mathcal{L};p} + \llbracket f^{\zeta^n},(f^{\zeta^n})';f, f' \rrbracket_{W^{\zeta^n},W^{\zeta^0};\delta,\delta';p} =0 \qquad \text{for any $p \in [2,\infty)$}. \end{equation*}
    In such a case we write $$(f,f') = \{\mathbf{D}_{W^\zeta}^{\delta,\delta'}\mathscr{L}\mathcal{L}\}_\zeta \text{-} \lim_{\zeta \to \zeta^0} (f^\zeta,(f^\zeta)'). $$ 
    We say that $\{(f^\zeta,(f^\zeta)')\}_{\zeta \in U}$ is $\{\mathbf{D}_{W^\zeta}^{\delta,\delta'}\mathscr{L}\mathcal{L}\}_\zeta$-continuous at $\zeta^0 \in U$ if $\zeta^0$ is an isolated point of $U$, or if it is an accumulation point and $(f^{\zeta^0},(f^{\zeta^0})') = \{\mathbf{D}_{W^\zeta}^{\delta,\delta'}\mathscr{L}\mathcal{L}\}_\zeta \text{-} \lim_{\zeta \to \zeta^0} (f^\zeta,(f^\zeta)')$.
    We say that $\{(f^\zeta,(f^\zeta)')\}_{\zeta \in U}$ is $\{\mathbf{D}_{W^\zeta}^{\delta,\delta'}\mathscr{L}\mathcal{L}\}_\zeta$-continuous if it is $\{\mathbf{D}_{W^\zeta}^{\delta,\delta'}\mathscr{L}\mathcal{L}\}_\zeta$-continuous at every $\zeta^0 \in U$. 
    If $W^\zeta = W$ for every $\zeta \in U$, we simply write $\mathbf{D}_{W^\zeta}^{\delta,\delta'}\mathscr{L}\mathcal{L}$ instead of $\{\mathbf{D}_{W^\zeta}^{\delta,\delta'}\mathscr{L}\mathcal{L}\}_\zeta$.
    \end{itemize}
\end{defn}


\begin{thm} \label{prop:continuity_linearRSDEs} 
    Let $\{\mathbf{W}^\zeta\}_{\zeta \in U}$ be a $\mathscr{C}^{\alpha}$-continuous family of rough paths, with $\alpha \in (\frac{1}{3},\frac{1}{2})$, and let $\delta \in [0,\alpha]$ and $\delta' \in [0,1]$ with $\alpha + \delta > \frac{1}{2}$ and $\alpha + \delta + \min\{\delta,\delta'\} >1$. 
    Let $\{\xi^{\zeta}\}_{\zeta \in U}$ be an $\mathscr{L}(V)$-continuous family of $\mathcal{F}_0$-measurable random variables.
    Let $\{G^\zeta\}_{\zeta \in U}$ be a $\mathscr{L}\mathcal{L}(V;V)$-continuous family of stochastic linear vector fields and let $\{S^\zeta\}_{\zeta \in U}$ be $\mathscr{L}\mathcal{L}(V;\mathcal{L}(\R^m;V))$-continuous. 
    Let  $\{f^\zeta,(f^\zeta)'\}_{\zeta \in U}$ be $\{\mathbf{D}_{W^\zeta
    }^{\delta,\delta'}\mathscr{L}\mathcal{L}(V;\mathcal{L}(\R^n;V))\}_\zeta$-continuous and let $\{(F^\zeta,(F^\zeta)')\}_{\zeta \in U}$ be $\{\mathbf{D}_{W^\zeta}^{\alpha,\delta}\mathscr{L}(V)\}_\zeta$-continuous. 
    Moreover, assume that for any $R>0$ and for any $p \in [2,\infty)$,
    \begin{equation*}
      \sup_{|\zeta|\le R} \left(\llbracket G^\zeta \rrbracket_\infty + \llbracket S^\zeta \rrbracket_\infty + \llbracket (f^\zeta,(f^\zeta)') \rrbracket_{\mathcal{L};\infty} + \llbracket (f^\zeta,(f^\zeta)') \rrbracket_{W^\zeta;\delta,\delta';p,\infty;\mathcal{L}} \right) <+\infty.
    \end{equation*}
    For any $\zeta \in U$, let $Y^\zeta$ be a $L_{p}$-integrable solution of the following linear RSDE, for any $p \in [2,\infty)$: \begin{equation*}
        Y_t^{\zeta} = \xi^{\zeta} + F_t^{\zeta} + \int_0^t G_r^\zeta Y_r^{\zeta}d r + \int_0^t S_r^\zeta Y_r^{\zeta} d B_r + \int_0^t (f^\zeta_r,(f^\zeta)'_r) Y^\zeta_r d\mathbf{W}^\zeta_r \qquad t \in [0,T].
    \end{equation*}
    where $(f^\zeta_t,(f^\zeta)'_t) Y^\zeta_t = (f_t^\zeta Y_t^\zeta,(f^\zeta)'_t Y_t^\zeta + f_t^\zeta (f_t^\zeta Y_t^\zeta + (F^\zeta)'_t ))$. 
    Then $\{(Y^{\zeta},f^\zeta Y^\zeta + (F^\zeta)')\}_{\zeta \in U}$ is $\{\mathbf{D}_{W^\zeta}^{\alpha,\delta}\mathscr{L}(V)\}_\zeta$-continuous. 
\end{thm}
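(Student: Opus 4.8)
The plan is to repeat, essentially verbatim, the strategy used for \cref{thm:continuity_RSDEs}, with the stability estimate for bounded-coefficient rough SDEs replaced by its linear analogue \cref{thm:stabilityforlinearRSDEs_compendium}. Since that estimate carries the standing assumptions $\alpha \ne \tfrac12$ and $p > \tfrac{1}{1-2\alpha}$, and controls the $L_p$-distance of the solutions in terms of $L_{2p}$-quantities of the data, the first move is a reduction in the integrability exponent. On the probability space $(\Omega,\mathcal{F},\mathbb{P})$ one has $\|\cdot\|_p \le \|\cdot\|_{p'}$ for $2 \le p \le p'$, and this monotonicity is inherited by every seminorm entering $\|\,\cdot\,;\,\cdot\,\|_{W^{\zeta^n},W^{\zeta^0};\alpha,\delta;p}$ (each being a supremum of ratios of (conditional) $L^p$-norms). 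Hence it suffices to prove the asserted $\{\mathbf{D}_{W^\zeta}^{\alpha,\delta}\mathscr{L}\}_\zeta$-convergence for all $p$ exceeding the threshold $\tfrac{1}{1-2\alpha}$; the case of general $p \in [2,\infty)$ then follows for free.

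Next, fix an accumulation point $\zeta^0 \in U$, an arbitrary sequence $(\zeta^n)_{n \ge 1} \subseteq U \setminus \{\zeta^0\}$ with $\rho(\zeta^n,\zeta^0) \to 0$, and $p > \tfrac{1}{1-2\alpha}$. One must produce a single constant $M_p$ simultaneously dominating all the $\zeta^n$- and $\zeta^0$-quantities appearing in the hypotheses of \cref{thm:stabilityforlinearRSDEs_compendium}. Since $(\zeta^n)_n$ is bounded, the uniform-in-$\zeta$ assumption on the coefficients gives $\sup_n \big(\llbracket G^{\zeta^n}\rrbracket_\infty + \llbracket S^{\zeta^n}\rrbracket_\infty + \llbracket (f^{\zeta^n},(f^{\zeta^n})')\rrbracket_{\mathcal{L};\infty} + \llbracket (f^{\zeta^n},(f^{\zeta^n})')\rrbracket_{W^{\zeta^n};\delta,\delta';2p,\infty;\mathcal{L}}\big) < \infty$; $\mathscr{C}^\alpha$-continuity of $\{\mathbf{W}^\zeta\}$ gives $\sup_n |\mathbf{W}^{\zeta^n}|_\alpha < \infty$; $\mathscr{L}$-continuity of $\{\xi^\zeta\}$ (used with exponent $2p$) gives $\sup_n \|\xi^{\zeta^n}\|_{2p} < \infty$; and $\{\mathbf{D}_{W^\zeta}^{\alpha,\delta}\mathscr{L}\}_\zeta$-continuity of $\{(F^\zeta,(F^\zeta)')\}$, combined with the triangle inequality and the finiteness of $\|(F^{\zeta^0},(F^{\zeta^0})')\|_{\mathbf{D}_{W^{\zeta^0}}^{\alpha,\delta}L_{2p}}$, gives $\sup_n \|(F^{\zeta^n},(F^{\zeta^n})')\|_{\mathbf{D}_{W^{\zeta^n}}^{\alpha,\delta}L_{2p}} < \infty$. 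Adjoining the corresponding $\zeta^0$-bounds, pick $M_p$ above all of these; note that the $\llbracket\cdot\rrbracket_{W^\zeta;\delta,\delta';2p,\infty;\mathcal{L}}$ bound in particular supplies the $L_{2p,\infty}$-controlled-vector-field structure required of $(f^\zeta,(f^\zeta)')$.

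With this $M_p$, apply \cref{thm:stabilityforlinearRSDEs_compendium} with the unbarred data equal to the $\zeta^n$-data and the barred data equal to the $\zeta^0$-data, obtaining for every $n$ an estimate of $\|Y^{\zeta^n},f^{\zeta^n}Y^{\zeta^n}+(F^{\zeta^n})';Y^{\zeta^0},f^{\zeta^0}Y^{\zeta^0}+(F^{\zeta^0})'\|_{W^{\zeta^n},W^{\zeta^0};\alpha,\delta;p}$ by a constant (depending on $M_p,\alpha,\delta,\delta',p,T$) times the sum of $\|\xi^{\zeta^n}-\xi^{\zeta^0}\|_p$, $\rho_\alpha(\mathbf{W}^{\zeta^n},\mathbf{W}^{\zeta^0})$, $\E\big(\int_0^T |G^{\zeta^n}_r-G^{\zeta^0}_r|_{\mathcal{L}}^{2p}\,dr\big)^{1/(2p)}$ and its analogue for $S$, $\llbracket (f^{\zeta^n}-f^{\zeta^0},(f^{\zeta^n})'-(f^{\zeta^0})')\rrbracket_{\mathcal{L};2p}$, $\llbracket f^{\zeta^n},(f^{\zeta^n})';f^{\zeta^0},(f^{\zeta^0})'\rrbracket_{W^{\zeta^n},W^{\zeta^0};\delta,\delta';2p}$, and $\|F^{\zeta^n},(F^{\zeta^n})';F^{\zeta^0},(F^{\zeta^0})'\|_{W^{\zeta^n},W^{\zeta^0};\alpha,\delta;p}$. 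Each of these tends to $0$ as $n \to \infty$: respectively by $\mathscr{L}$-continuity of $\{\xi^\zeta\}$, by $\mathscr{C}^\alpha$-continuity of $\{\mathbf{W}^\zeta\}$, by $\mathscr{L}\mathcal{L}$-continuity of $\{G^\zeta\}$ and $\{S^\zeta\}$ (used with exponent $2p$), by $\{\mathbf{D}_{W^\zeta}^{\delta,\delta'}\mathscr{L}\mathcal{L}\}_\zeta$-continuity of $\{(f^\zeta,(f^\zeta)')\}$, and by $\{\mathbf{D}_{W^\zeta}^{\alpha,\delta}\mathscr{L}\}_\zeta$-continuity of $\{(F^\zeta,(F^\zeta)')\}$. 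Hence the left-hand side vanishes in the limit, which is exactly $\{\mathbf{D}_{W^\zeta}^{\alpha,\delta}\mathscr{L}\}_\zeta$-continuity of $\{(Y^\zeta,f^\zeta Y^\zeta+(F^\zeta)')\}$ at $\zeta^0$; since $\zeta^0$ was an arbitrary accumulation point (isolated points being trivial by definition), the theorem follows.

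\textbf{Expected main difficulty.} There is no conceptually hard step: the argument is bookkeeping that channels each continuity hypothesis into the correct slot of \cref{thm:stabilityforlinearRSDEs_compendium}. The two points requiring care are (i) the mismatch between the ``$p$'' appearing in the conclusion and the ``$2p$'' appearing in the data-dependent bound of the stability estimate, together with its $p>\tfrac{1}{1-2\alpha}$ restriction, which is handled by the $L^p$-monotonicity reduction above; and (ii) verifying that the various convergence hypotheses genuinely furnish uniform-in-$n$ bounds, so that a single $M_p$ exists — this is immediate once one combines each convergence statement with the finiteness of the corresponding $\zeta^0$-data norms via the triangle inequality.
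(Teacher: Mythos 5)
Your proposal is correct and follows essentially the same route as the paper: the paper's proof simply invokes the local Lipschitz estimate of \cref{thm:stabilityforlinearRSDEs_compendium} with a uniform constant $M_p$ along the sequence and argues exactly as in \cref{thm:continuity_RSDEs}, which is what you do. Your extra remarks on the $p$ versus $2p$ mismatch and the reduction to $p>\tfrac{1}{1-2\alpha}$ via $L^p$-monotonicity are minor bookkeeping points that the paper leaves implicit, and they are handled correctly.
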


\begin{proof}
The assertion follows from the local Lipschitz estimates for linear rough SDEs in \cref{thm:stabilityforlinearRSDEs_compendium}, with a similar argument as in the proof of \cref{thm:continuity_RSDEs}. 
\end{proof}

\subsection{Differentiability for linear rough SDEs} \label{section:differentiabilityforlinearRSDEs}

Let $(\mathcal{U},|\cdot|)$ be a normed vector space and let $U \subseteq \mathcal{U}$ be a non-empty open subset.
We denote by $\mathbb{S}(\mathcal{U}) :=\{l \in \mathcal{U} \mid |l|=1 \}$ the set of all directions (along which we are interested to take derivatives).
Let $W:[0,T]\to \R^n$ be an $\alpha$-H\"older continuous path, with $\alpha \in (0,1]$, and let $(V,|\cdot|_V)$ be a finite-dimensional Banach space (think of $V=\R^{d_2}$ or $V=\R^{d \times d_2}$ for some $d_2 \in \mathbb{N}_{\ge 1}$).

\begin{defn} \label{def:differentiabilitystochasticcontrolledroughpaths}
    Let $\gamma,\gamma' \in [0,1]$.  
    Let $\{(X^\zeta,(X^\zeta)')\}_{\zeta \in U} \subseteq \mathbf{D}_W^{\gamma,\gamma'} \mathscr{L}$ be a family of stochastic controlled rough paths depending on a parameter. 
    We say that it is $\mathbf{D}_W^{\gamma,\gamma'} \mathscr{L}$-differentiable at $\zeta^0 \in U$ in the direction $l \in \mathbb{S}(\mathcal{U})$ if \begin{equation*}
            \left\{ \left(\frac{1}{\varepsilon}(X^{\zeta^0 + \varepsilon l}-X^{\zeta^0}),\frac{1}{\varepsilon}\big((X^{\zeta^0 + \varepsilon l})'-(X^{\zeta^0})'\big) \right) \right\}_{\varepsilon \in \R \setminus \{0\}}
    \end{equation*}
    admits limit in $\mathbf{D}_W^{\gamma,\gamma'} \mathscr{L}$ as $\varepsilon \to 0$, in the sense of \cref{def:convergenceandcontinuityofstochastiscontrolledroughpaths}.
    We denote such a limit as \begin{equation*}
            \left(\mathbf{D}_W^{\gamma,\gamma'} \mathscr{L} \text{-} \frac{\partial}{\partial l} X^{\zeta^0}, \mathbf{D}_W^{\gamma,\gamma'} \mathscr{L} \text{-} \frac{\partial}{\partial l} (X^{\zeta^0})' \right).
    \end{equation*}
    We say that $\{(X^\zeta,(X^\zeta)')\}_{\zeta \in U}$ is ($\mathbf{D}_W^{\gamma,\gamma'} \mathscr{L}$-continuously) $\mathbf{D}_W^{\gamma,\gamma'} \mathscr{L}$-differentiable if it is $\mathbf{D}_W^{\gamma,\gamma'} \mathscr{L}$-differentiable at any $\zeta^0 \in U$ in every direction $l \in \mathbb{S}(\mathcal{U})$ (and, for any $l \in \mathbb{S}(\mathcal{U})$, $\{(\mathbf{D}_W^{\gamma,\gamma'} \mathscr{L} \text{-} \frac{\partial}{\partial l} X^{\zeta}, \mathbf{D}_W^{\gamma,\gamma'} \mathscr{L} \text{-} \frac{\partial}{\partial l} (X^{\zeta})')\}_{\zeta \in U}$ is $\mathbf{D}_W^{\gamma,\gamma'} \mathscr{L}$-continuous).
    Recursively on $k \in \mathbb{N}_{\ge 1}$, we say that $\{(X^\zeta,(X^\zeta)')\}_{\zeta \in U}$ is $(k+1)$-times ($\mathbf{D}_W^{\gamma,\gamma'} \mathscr{L}$-continuously) $\mathbf{D}_W^{\gamma,\gamma'} \mathscr{L}$-differentiable if, for any $l \in \mathbb{S}(\mathcal{U})$, $\{(\mathbf{D}_W^{\gamma,\gamma'} \mathscr{L} \text{-} \frac{\partial}{\partial l} X^{\zeta}, \mathbf{D}_W^{\gamma,\gamma'} \mathscr{L} \text{-} \frac{\partial}{\partial l} (X^{\zeta})')\}_{\zeta \in U}$ is $k$-times ($\mathbf{D}_W^{\gamma,\gamma'} \mathscr{L}$-continuously) $\mathbf{D}_W^{\gamma,\gamma'} \mathscr{L}$-differentiable.
\end{defn}

\begin{defn} \label{def:differentiabilitystochasticlinearvectorfields}
  We say that the family of stochastic linear vector fields $\{Q^\zeta\}_{\zeta \in U}$ is $\mathscr{L}\mathcal{L}$-differentiable at $\zeta^0 \in U$ in the direction $l \in \mathbb{S}(\mathcal{U})$, if
  \begin{equation*}
      \left\{ \frac{1}{\varepsilon} \left(Q^{\zeta^0 + \varepsilon l} - Q^{\zeta^0} \right) \right\}_{\varepsilon \in \R \setminus \{0\}}
  \end{equation*}
  admits limit in $\mathscr{L}\mathcal{L}$ as $\varepsilon \to 0$, in the sense of \cref{def:continuityforstochasticlinearvectorfields}. 
  We denote such a limit as $\mathscr{L}\mathcal{L}\text{-}\frac{\partial}{\partial l}Q^{\zeta^0}$. We say that $\{Q^\zeta\}_{\zeta \in U}$ is $\mathscr{L}\mathcal{L}$-differentiable if it is $\mathscr{L}\mathcal{L}$-differentiable at every $\zeta^0 \in U$ and in every direction $l \in \mathbb{S}(\mathcal{U})$. 
  We say that it is $\mathscr{L}\mathcal{L}$-continuously $\mathscr{L}\mathcal{L}$-differentiable if, in addition, 
  $\{\mathscr{L}\mathcal{L}\text{-}\frac{\partial}{\partial l}Q^{\zeta}\}_{\zeta \in U}$ is $\mathscr{L}\mathcal{L}$-continuous for any $l \in \mathbb{S}(\mathcal{U})$.
  Recursively on $k \in \mathbb{N}_{\ge 1}$, we say that $\{Q^\zeta\}_{\zeta \in U}$ is $(k+1)$-times ($\mathscr{L}\mathcal{L}$-continuously) $\mathscr{L}\mathcal{L}$-differentiable if, for any $l \in \mathbb{S}(\mathcal{U})$, $\{\mathscr{L}\mathcal{L}\text{-}\frac{\partial}{\partial l}Q^{\zeta}\}_{\zeta \in U}$ is $k$-times ($\mathscr{L}\mathcal{L}$-continuously) $\mathscr{L}\mathcal{L}$-differentiable.
\end{defn}

\begin{defn}
    Let $\delta,\delta' \in [0,1]$.
    Let $\{(f^\zeta,(f^\zeta)')\}_{\zeta \in U} \subseteq \mathbf{D}_{W}^{\delta,\delta'}\mathscr{L}\mathcal{L}$ be a family of stochastic controlled linear vector fields. 
    We say that $\{(f^\zeta,(f^\zeta)')\}_{\zeta \in U}$ is $\mathbf{D}_W^{\delta,\delta'}\mathscr{L}\mathcal{L}$-differentiable at $\zeta^0 \in U$ in the direction $l \in \mathbb{S}(\mathcal{U})$ if \begin{equation*}
        \left\{  \left( \frac{1}{\varepsilon}(f^{\zeta^0 + \varepsilon l}-f^{\zeta^0}),\frac{1}{\varepsilon}\big((f^{\zeta^0 + \varepsilon l})'-(f^{\zeta^0})'\big) \right) \right\}_{\varepsilon \in \R \setminus \{0\}}
    \end{equation*} 
    admits limit in $\mathbf{D}_W^{\delta,\delta'}\mathscr{L}\mathcal{L}$ as $\varepsilon \to 0$, in the sense of \cref{def:convergenceandcontinuitystochasticcontrolledlinearvectorfields}.
    We denote such a limit as \begin{equation*}
        \left(\mathbf{D}_W^{\delta,\delta'}\mathscr{L}\mathcal{L}\text{-}\frac{\partial}{\partial l}f^{\zeta^0},\mathbf{D}_W^{\delta,\delta'}\mathscr{L}\mathcal{L}\text{-}\frac{\partial}{\partial l}(f^{\zeta^0})' \right).
    \end{equation*}
    We say that $\{(f^\zeta,(f^\zeta)')\}_{\zeta \in U}$ is $\mathbf{D}_W^{\delta,\delta'}\mathscr{L}\mathcal{L}$-differentiable if it is $\mathbf{D}_W^{\delta,\delta'}\mathscr{L}\mathcal{L}$-differentiable at every $\zeta^0 \in U$ in every direction $l \in \mathbb{S}(\mathcal{U})$. 
    We say that it is $\mathbf{D}_W^{\delta,\delta'}\mathscr{L}\mathcal{L}$-continuously $\mathbf{D}_W^{\delta,\delta'}\mathscr{L}\mathcal{L}$-differentiable if, in addition, $\{(\mathbf{D}_W^{\delta,\delta'}\mathscr{L}\mathcal{L}\text{-}\frac{\partial}{\partial l}f^\zeta,\mathbf{D}_W^{\delta,\delta'}\mathscr{L}\mathcal{L}\text{-}\frac{\partial}{\partial l}(f^\zeta)')\}_{\zeta \in U}$ is     
    $\mathbf{D}_W^{\delta,\delta'}\mathscr{L}\mathcal{L}$-continuous, for any $l \in \mathbb{S}(\mathcal{U})$.
    Recursively on $k \in \mathbb{N}_{\ge 1}$, $\{(f^\zeta,(f^\zeta)')\}_{\zeta \in U}$ is said to be $(k+1)$-times ($\mathbf{D}_W^{\delta,\delta'}\mathscr{L}\mathcal{L}$-continuously) $\mathbf{D}_W^{\delta,\delta'}\mathscr{L}\mathcal{L}$-differentiable if, for any $l \in \mathbb{S}(\mathcal{U})$,  $\{(\mathbf{D}_W^{2\alpha'}\mathscr{L}\mathcal{L}\text{-}\frac{\partial}{\partial l}f^\zeta,\mathbf{D}_W^{\delta,\delta'}\mathscr{L}\mathcal{L}\text{-}\frac{\partial}{\partial l}(f^\zeta)')\}_{\zeta \in U}$ is $k$-times ($\mathbf{D}_W^{\delta,\delta'}\mathscr{L}\mathcal{L}$-continuously) $\mathbf{D}_W^{\delta,\delta'}\mathscr{L}\mathcal{L}$-differentiable.
\end{defn}


\begin{thm} \label{thm:differentiability_linearRSDEs}
    Let $\mathbf{W}=(W,\mathbb{W}) \in \mathscr{C}^\alpha([0,T];\R^n)$ be an $\alpha$-rough path, with $\alpha \in (\frac{1}{3},\frac{1}{2})$, and let $\delta \in [0,\alpha]$, $\delta' \in [0,1]$ with $\alpha + \delta > \frac{1}{2}$ and $\alpha + \delta + \min\{\delta,\delta'\} >1$.     
    Let $k \in \mathbb{N}_{\ge 1}$. 
    Assume that $\{\xi^\zeta\}_{\zeta \in U}$ is a family of $k$-times ($\mathscr{L}(V)$-continuously) $\mathscr{L}(V)$-differentiable $\mathcal{F}_0$-measurable random variables, and that $\{(F^\zeta,(F^\zeta)')\}_{\zeta \in U}$ is $k$-times ($\mathbf{D}_W^{\alpha,\delta}\mathscr{L}(V)$-continuously) $\mathbf{D}_W^{\alpha,\delta}\mathscr{L}(V)$-differentiable.
    Assume $\{G^\zeta\}_{\zeta \in U}$ to be $k$-times ($\mathscr{L}\mathcal{L}(V;V)$-continuously) $\mathscr{L}\mathcal{L}(V;V)$-differentiable
    and $\{S^\zeta\}_{\zeta \in U}$ to be $k$-times ($\mathscr{L}\mathcal{L}(V;\mathcal{L}(\R^m;V))$-continuously) $\mathscr{L}\mathcal{L}(V;\mathcal{L}(\R^m;V))$-differentiable.
    Assume that $\{(f^\zeta,(f^\zeta)')\}_{\zeta \in U}$ is $k$-times ($\mathbf{D}_W^{\delta,\delta'}\mathscr{L}\mathcal{L}(V;\mathcal{L}(\R^n;V))$-continuously) $\mathbf{D}_W^{\delta,\delta'}\mathscr{L}\mathcal{L}(V;\mathcal{L}(\R^n;V))$-differentiable.
    For any $\zeta \in U$, let $Y^\zeta$ be a $L_{p}$-integrable solution of the following linear rough SDE, for any $p \in [2,\infty)$: \begin{equation*}
        Y_t^{\zeta} = \xi^{\zeta} + F_t^{\zeta} + \int_0^t G_r^\zeta Y_r^{\zeta}d r + \int_0^t S_r^\zeta Y_r^{\zeta} d B_r + \int_0^t (f_r^\zeta,(f^\zeta_r)')Y^\zeta_r d\mathbf{W}_r \qquad t \in [0,T],
    \end{equation*}
    and assume that, for any $R>0$ and for any $p \in [2,\infty)$,
    \begin{equation*}
      \sup_{|\zeta|\le R} \left(\|G^\zeta\|_\infty + \|S^\zeta\|_\infty + \llbracket (f^\zeta,(f^\zeta)') \rrbracket_{\mathcal{L};\infty} + \llbracket (f^\zeta,(f^\zeta)') \rrbracket_{W;\delta,\delta';p,\infty;\mathcal{L}} \right) <+\infty.
    \end{equation*}     
    Then $\{(Y^\zeta,f^\zeta Y^\zeta + (F^\zeta)')\}_{\zeta \in U}$ is $k$-times ($\mathbf{D}_W^{\alpha,\delta}\mathscr{L}(V)$-continuously) $\mathbf{D}_W^{\alpha,\delta}\mathscr{L}(V)$-differentiable. 
    In particular, given any $\zeta \in U$ and $l \in \mathbb{S}(\mathcal{U})$ - and denoting by $Z^{\zeta,l} = \mathbf{D}_W^{\alpha,\delta}\mathscr{L}\text{-}\frac{\partial}{\partial l} Y^\zeta$ - it holds that, for any $p \in [2,\infty)$, the latter is an $L_{p}$-integrable solution of the following linear RSDE on $[0,T]$ with coefficients defined as in \eqref{eq:coefficientsofZ^zeta}:
    \begin{equation*}
        Z_t^{\zeta,l} = \mathscr{L}\text{-}\frac{\partial}{\partial l} \xi^\zeta + \tilde{F}_t^\zeta + \int_0^t G_r^\zeta Z_r^{\zeta,l} dr + \int_0^t S_r^\zeta Z_r^{\zeta,l} dB_r + \int_0^t (f_r^\zeta,(f^\zeta)'_r) Z_r^{\zeta,l} d\mathbf{W}_r .
    \end{equation*}
\end{thm}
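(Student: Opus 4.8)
The plan is to proceed by induction on $k\in\mathbb{N}_{\ge1}$, the base case $k=1$ and the inductive step sharing the same mechanism: guess the derivative as the solution of the formally differentiated linear rough SDE, then show that the corresponding error process solves a linear rough SDE with vanishing forcing and conclude via the a priori estimate of \cref{thm:wellposednessandaprioriestimatesforlinearRSDEs}. Fix $\zeta^0\in U$ and a direction $l\in\mathbb{S}(\mathcal{U})$; since $U$ is open, $\zeta^0+\varepsilon l\in U$ for $|\varepsilon|$ small. Let $Z^{\zeta^0,l}$ be the (unique, for every $p\in[2,\infty)$) $L_p$-integrable solution, granted by \cref{thm:wellposednessandaprioriestimatesforlinearRSDEs}, of the linear rough SDE in the statement: same homogeneous coefficients $G^{\zeta^0},S^{\zeta^0},(f^{\zeta^0},(f^{\zeta^0})')$ as the equation for $Y^{\zeta^0}$, initial datum $\mathscr{L}\text{-}\frac{\partial}{\partial l}\xi^{\zeta^0}$, and forcing term $(\tilde F^{\zeta^0},(\tilde F^{\zeta^0})')$ obtained by differentiating the right-hand side of the equation for $Y^\zeta$ in $\zeta$ along $l$ with $Y^{\zeta^0}$ frozen, i.e.\ $\mathbf{D}_W^{\alpha,\delta}\mathscr{L}\text{-}\frac{\partial}{\partial l}F^{\zeta^0}$ plus the drift and It\^o integrals of $(\mathscr{L}\mathcal{L}\text{-}\frac{\partial}{\partial l}G^{\zeta^0})Y^{\zeta^0}$ and $(\mathscr{L}\mathcal{L}\text{-}\frac{\partial}{\partial l}S^{\zeta^0})Y^{\zeta^0}$ plus the rough integral of the controlled linear vector field assembled from $\mathbf{D}_W^{\delta,\delta'}\mathscr{L}\mathcal{L}\text{-}\frac{\partial}{\partial l}f^{\zeta^0}$ acting on $Y^{\zeta^0}$. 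That $(\tilde F^{\zeta^0},(\tilde F^{\zeta^0})')\in\mathbf{D}_W^{\alpha,\delta}\mathscr{L}(V)$, so that \cref{thm:wellposednessandaprioriestimatesforlinearRSDEs} applies, is where the technical machinery of \cref{section:compositionandintegration_technicalresults} is invoked: $Q\mapsto\int_0^\cdot Q_r Y^{\zeta^0}_r\,\rd r$ and its It\^o analogue send $\mathscr{L}\mathcal{L}$ into $\mathbf{D}_W^{\alpha,\delta}\mathscr{L}$, while \cref{prop:roughstochastiintegral_compendium} and the product estimate of \cref{prop:stambilityundercompositionlinearvectorfields} handle the rough-integral piece.

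\textbf{Error equation.} For $\varepsilon\ne0$ small put $D^\varepsilon:=Y^{\zeta^0+\varepsilon l}-Y^{\zeta^0}$. Subtracting the two copies of the equation and using, for each coefficient, the algebraic identity $G^{\zeta^0+\varepsilon l}Y^{\zeta^0+\varepsilon l}-G^{\zeta^0}Y^{\zeta^0}=G^{\zeta^0+\varepsilon l}D^\varepsilon+(G^{\zeta^0+\varepsilon l}-G^{\zeta^0})Y^{\zeta^0}$ (and likewise for $S$ and, after expanding the Gubinelli-derivative term $f(fY+F')$, for the rough driver), one sees that $\varepsilon^{-1}D^\varepsilon$ solves a linear rough SDE with homogeneous coefficients $G^{\zeta^0+\varepsilon l},S^{\zeta^0+\varepsilon l},(f^{\zeta^0+\varepsilon l},(f^{\zeta^0+\varepsilon l})')$ and an explicit forcing term. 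Set the error $R^\varepsilon:=\varepsilon^{-1}D^\varepsilon-Z^{\zeta^0,l}$; rewriting the equation for $Z^{\zeta^0,l}$ with $G^{\zeta^0+\varepsilon l},S^{\zeta^0+\varepsilon l},f^{\zeta^0+\varepsilon l}$ in place of the $\zeta^0$-coefficients (pushing the differences $(G^{\zeta^0}-G^{\zeta^0+\varepsilon l})Z^{\zeta^0,l}$ and the like into the forcing) and subtracting, $R^\varepsilon$ is an $L_p$-integrable solution of a linear rough SDE with the \emph{same} homogeneous coefficients $G^{\zeta^0+\varepsilon l},S^{\zeta^0+\varepsilon l},(f^{\zeta^0+\varepsilon l},(f^{\zeta^0+\varepsilon l})')$, initial datum $R^\varepsilon_0=\varepsilon^{-1}(\xi^{\zeta^0+\varepsilon l}-\xi^{\zeta^0})-\mathscr{L}\text{-}\frac{\partial}{\partial l}\xi^{\zeta^0}$, and forcing $(\mathcal{E}^\varepsilon,(\mathcal{E}^\varepsilon)')$ which is a finite sum of: (i) $\varepsilon^{-1}(F^{\zeta^0+\varepsilon l}-F^{\zeta^0})-\mathbf{D}_W^{\alpha,\delta}\mathscr{L}\text{-}\frac{\partial}{\partial l}F^{\zeta^0}$; (ii) the drift and It\^o integrals of $\big(\varepsilon^{-1}(G^{\zeta^0+\varepsilon l}-G^{\zeta^0})-\mathscr{L}\mathcal{L}\text{-}\frac{\partial}{\partial l}G^{\zeta^0}\big)Y^{\zeta^0}$ and its $S$-analogue; (iii) the rough integral of the controlled vector field built from $\varepsilon^{-1}(f^{\zeta^0+\varepsilon l}-f^{\zeta^0})-\mathbf{D}_W^{\delta,\delta'}\mathscr{L}\mathcal{L}\text{-}\frac{\partial}{\partial l}f^{\zeta^0}$ applied to $Y^{\zeta^0}$, together with cross-terms of the form $(f^{\zeta^0+\varepsilon l}-f^{\zeta^0})\cdot\varepsilon^{-1}(f^{\zeta^0+\varepsilon l}-f^{\zeta^0})\cdot Y^{\zeta^0}$ and $(f^{\zeta^0+\varepsilon l}-f^{\zeta^0})\cdot\varepsilon^{-1}((F^{\zeta^0+\varepsilon l})'-(F^{\zeta^0})')$ arising from the expansion of $f(fY+F')$; and (iv) the drift, It\^o and rough integrals of $(G^{\zeta^0}-G^{\zeta^0+\varepsilon l})Z^{\zeta^0,l}$, $(S^{\zeta^0}-S^{\zeta^0+\varepsilon l})Z^{\zeta^0,l}$ and the $f$-counterpart acting on $Z^{\zeta^0,l}$.

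\textbf{Passing to the limit, induction and main obstacle.} Now $R^\varepsilon_0\to0$ in $\mathscr{L}(V)$ by differentiability of $\zeta\mapsto\xi^\zeta$, and each summand of $(\mathcal{E}^\varepsilon,(\mathcal{E}^\varepsilon)')$ tends to $0$ in $\mathbf{D}_W^{\alpha,\delta}\mathscr{L}(V)$: the differentiability hypotheses on $F^\zeta,G^\zeta,S^\zeta,(f^\zeta,(f^\zeta)')$ give convergence of the relevant difference quotients of the data and, by definition, directional $\mathscr{L}/\mathscr{L}\mathcal{L}/\mathbf{D}_W$-continuity of the data through $\zeta^0$ (which kills the terms in (iv) and the cross-terms in (iii)), and these are transported through the drift-, It\^o- and rough-integration operators by the continuity and Lipschitz-continuity results of \cref{section:compositionandintegration_technicalresults}, \cref{prop:stambilityundercompositionlinearvectorfields} and \cref{prop:roughstochastiintegral_compendium}, the multipliers being controlled by the uniform bounds $\sup_{|\zeta|\le R}(\dots)<\infty$. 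Since the homogeneous-coefficient norms of the equation for $R^\varepsilon$ are bounded by a constant $M$ independent of $\varepsilon$ for $|\varepsilon|$ small, the a priori estimate of \cref{thm:wellposednessandaprioriestimatesforlinearRSDEs} yields $\sup_{t}\|R^\varepsilon_t\|_p+\|(R^\varepsilon,f^{\zeta^0+\varepsilon l}R^\varepsilon+(\mathcal{E}^\varepsilon)')\|_{\mathbf{D}_W^{\alpha,\delta}L_p}\lesssim_{M,\alpha,\delta,\delta',p,T}\|R^\varepsilon_0\|_p+\|(\mathcal{E}^\varepsilon,(\mathcal{E}^\varepsilon)')\|_{\mathbf{D}_W^{\alpha,\delta}L_p}\to0$ for every $p\in[2,\infty)$, which is precisely $\mathbf{D}_W^{\alpha,\delta}\mathscr{L}\text{-}\frac{\partial}{\partial l}Y^{\zeta^0}=Z^{\zeta^0,l}$ in the sense of \cref{def:differentiabilitystochasticcontrolledroughpaths} (convergence of the Gubinelli-derivative component to $f^{\zeta^0}Z^{\zeta^0,l}+(\tilde F^{\zeta^0})'$ follows from the same decomposition). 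This settles $k=1$. For general $k$ and the ``$\mathscr{L}$-continuously'' refinement one iterates: assuming the statement for $k-1$, $Y^\zeta$ is $(k-1)$-times $\mathbf{D}_W^{\alpha,\delta}\mathscr{L}$-differentiable, hence the forcing term $(\tilde F^\zeta,(\tilde F^\zeta)')$ of the equation for $Z^{\zeta,l}$ (an integral transform of $Y^\zeta$ and of the $\mathscr{L}$-derivatives of the coefficients) is $(k-1)$-times $\mathbf{D}_W^{\alpha,\delta}\mathscr{L}$-differentiable by the $\mathscr{L}$-differentiable versions of the composition/integration lemmas of \cref{section:compositionandintegration_technicalresults}; the inductive hypothesis applied to the (again linear) equation for $Z^{\zeta,l}$ shows $Z^{\zeta,l}$, hence $Y^\zeta$, is one order more differentiable, and $\mathscr{L}$-continuity of $\zeta\mapsto Z^{\zeta,l}$ is \cref{prop:continuity_linearRSDEs}. \emph{The main obstacle} is the controlled-rough-path bookkeeping in the error equation: one must check that the product and integral manipulations — in particular the expansion of the $f(fY+F')$-type Gubinelli derivative under the difference $Y^{\zeta^0+\varepsilon l}-Y^{\zeta^0}$ and the isolation of the vanishing contributions — are legitimate at the level of the strong $\mathbf{D}_W^{\alpha,\delta}L_p$-norm, and that the drift/It\^o/rough integration maps are continuous and $\mathscr{L}$-differentiable in $\zeta$ in that norm, which is exactly what \cref{section:compositionandintegration_technicalresults} is designed to supply.
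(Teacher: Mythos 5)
Your argument is correct and structurally matches the paper's proof: both identify the difference quotient $\varepsilon^{-1}(Y^{\zeta^0+\varepsilon l}-Y^{\zeta^0})$ as the solution of a linear rough SDE with coefficients frozen at $\zeta^0+\varepsilon l$ and an explicit forcing, and both take as candidate derivative the solution of the formally differentiated equation \eqref{eq:definitionofZ^zeta}--\eqref{eq:coefficientsofZ^zeta}, with the induction on $k$ run through the (again linear) equation for $Z^{\zeta,l}$ and the forcing handled by \cref{prop:continuityanddifferentiability_forcingterm} and the product estimates of \cref{prop:stambilityundercompositionlinearvectorfields}. The one place where you diverge is the limit passage: the paper treats $\varepsilon$ as a parameter and concludes directly from the continuity theorem for parameter-dependent linear rough SDEs, \cref{prop:continuity_linearRSDEs} (itself resting on the two-solution stability estimate \cref{thm:stabilityforlinearRSDEs_compendium}), whereas you subtract the candidate's equation from the difference-quotient equation, observe that the error $R^\varepsilon$ solves a linear rough SDE with the same homogeneous coefficients and vanishing initial datum and forcing, and invoke only the a priori bound of \cref{thm:wellposednessandaprioriestimatesforlinearRSDEs}. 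This amounts to re-deriving, in the special case needed here, the stability estimate the paper cites; it is legitimate (uniformity in $\varepsilon$ of the coefficient norms comes from the hypothesis $\sup_{|\zeta|\le R}(\cdots)<\infty$, and the doubled integrability exponents you need for the product terms in the forcing are available since everything is in $\mathscr{L}$), and it has the minor advantage of not requiring the $2p$-integrability and $p>\frac{1}{1-2\alpha}$ bookkeeping of \cref{thm:stabilityforlinearRSDEs_compendium}, at the cost of redoing by hand the decomposition that theorem packages. Either way the conclusion, including the identification of the Gubinelli derivative $f^{\zeta^0}Z^{\zeta^0,l}+(\tilde F^{\zeta^0})'$, follows as you state.
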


\begin{proof}
    We argue recursively on $k$. Let us first consider the case $k=1$.
    Let $\zeta^0 \in U$ and $l \in \mathbb{S}(\mathcal{U})$ be arbitrary.
    For any $\varepsilon \in \R \setminus \{0\}$, we define the following $V$-valued stochastic process: \begin{equation*}
        Z^\varepsilon := \frac{1}{\varepsilon} \left(Y^{\zeta^0 + \varepsilon l}-Y^{\zeta^0}\right).
    \end{equation*}
    The idea is to show that $(Z^\varepsilon_t)_t$ solves a linear rough SDE, and use the results on linear rough SDEs to prove its convergence as $\varepsilon \to 0$.
    Let us therefore show that all the requirements in \cref{def:solutionRSDEs_compendium} are satisfied. 
    For any $(s,t)\in \Delta_{[0,T]}$ and by considering the Davie-type expansions of both $Y^{\zeta^0+\varepsilon l}$ and $Y^{\zeta^0}$ as solutions to some linear rough SDEs, we have \begin{equation*}
        \begin{aligned}
            \delta Z_{s,t}^\varepsilon &= \frac{1}{\varepsilon  } (\delta Y_{s,t}^{\zeta^0 + \varepsilon  l}-Y_{s,t}^{\zeta^0}) = \notag \\
            &= \frac{1}{\varepsilon  } (\delta F^{\zeta^0 + \varepsilon  l}_{s,t}-\delta F_{s,t}^{\zeta^0}) + \frac{1}{\varepsilon  } \int_s^t G_r^{\zeta^0+\varepsilon   l} Y_r^{\zeta^0+\varepsilon  l}- G_r^{\zeta^0}Y_r^{\zeta^0} dr + \frac{1}{\varepsilon  } \int_s^t S_r^{\zeta^0+\varepsilon   l} Y_r^{\zeta^0+\varepsilon  l}- S_r^{\zeta^0} Y_r^{\zeta^0} dB_r +  \\
            &\quad + \frac{1}{\varepsilon  } \big(f_s^{\zeta^0+\varepsilon   l} Y_r^{\zeta^0+\varepsilon  l}- f_s^{\zeta^0}Y_r^{\zeta^0}\big) \delta W_{s,t} + \frac{1}{\varepsilon  } \big( f_s^{\zeta^0+\varepsilon  l}(f_s^{\zeta^0+\varepsilon  l} Y_s^{\zeta^0+\varepsilon  l}+(F^{\zeta^0+\varepsilon  l})'_s) - f_s^{\zeta^0}(f_s^{\zeta^0}Y_s^{\zeta^0}+(F^{\zeta^0})'_s \big) \mathbb{W}_{s,t} \\
            &\quad + \frac{1}{\varepsilon  } \big( (f^{\zeta^0+\varepsilon  l})'_s Y_s^{\zeta^0+\varepsilon  l} - (f^{\zeta^0})'_s Y_s^{\zeta^0} \big) \mathbb{W}_{s,t} + Z^{\varepsilon,\natural}_{s,t} 
        \end{aligned}
    \end{equation*} 
    where $Z^{\varepsilon,\natural}_{s,t}:= \frac{1}{\varepsilon  } (Y^{\zeta^0+\varepsilon  l,\natural}_{s,t} - Y^{\zeta^0,\natural}_{s,t})$ is such that $\|Z^{\varepsilon,\natural}_{s,t}\|_p \lesssim |t-s|^{\alpha+\delta}$ and $\|\E_s(Z^{\varepsilon,\natural}_{s,t})\|_p \lesssim |t-s|^{\alpha+\delta+\delta'}$ for any $p \in [2,\infty)$.
    By the linearity of the coefficients, we can rewrite all the terms on the right-hand side in a more convenient way. 
    For instance,
    \begin{equation*} \begin{aligned}
        \frac{1}{\varepsilon  } (G_r^{\zeta^0+\varepsilon   l} Y_r^{\zeta^0+\varepsilon  l}- G_r^{\zeta^0}Y_r^{\zeta^0}) 
        &= \frac{1}{\varepsilon  } (G_r^{\zeta^0+\varepsilon   l} Y_r^{\zeta^0+\varepsilon  l}- G_r^{\zeta^0+\varepsilon  l} Y_r^{\zeta^0}) + \frac{1}{\varepsilon  } (G_r^{\zeta^0+\varepsilon   l} Y_r^{\zeta^0}- G_r^{\zeta^0} Y_r^{\zeta^0}) = \\
        &= G_r^{\zeta^0+\varepsilon   l} Z_r^\varepsilon  +  \left( \frac{1}{\varepsilon  } (G_r^{\zeta^0+\varepsilon   l}- G_r^{\zeta^0}) \right) Y_r^{\zeta^0} =: G_r^{\zeta^0+\varepsilon   l} Z_r^\varepsilon + \mu_r^\varepsilon, 
    \end{aligned} 
    \end{equation*}
    and it is possible to prove - by a simple application of H\"older's inequality - that $\mu^\varepsilon  \in \mathscr{L}(V)$.
    By assumption we have that $\mathscr{L}\mathcal{L}(V;V)\text{-}\lim_{\varepsilon \to 0, \varepsilon \ne 0} G^{\zeta^0 + \varepsilon   l}=G^{\zeta^0}$ and we also deduce that $\mathscr{L}(V)\text{-}\lim_{\varepsilon\to 0, \varepsilon \ne 0}\mu^\varepsilon = (\mathscr{L}\mathcal{L}\text{-}\frac{\partial}{\partial l} G^{\zeta^0}) Y^{\zeta^0}$.
    Similarly, \begin{align*}
        \frac{1}{\varepsilon  } (S_r^{\zeta^0+\varepsilon   l} Y_r^{\zeta^0+\varepsilon  l}- S_r^{\zeta^0}Y_r^{\zeta^0}) &= S_r^{\zeta^0+\varepsilon   l} Z_r^\varepsilon  + \left(\frac{1}{\varepsilon  } (S_r^{\zeta^0+\varepsilon   l} - S_r^{\zeta^0})\right)Y_r^{\zeta^0} =: S_r^{\zeta^0+\varepsilon   l} Z_r^\varepsilon  + \nu_r^\varepsilon,
    \end{align*} 
    and we have that $\nu^\varepsilon  \in \mathscr{L}(\mathcal{L}(\R^m;V))$ and $S^{\zeta^0}=\mathscr{L}\mathcal{L}(V;\mathcal{L}(\R^m;V))\text{-}\lim_{\varepsilon \to 0, \varepsilon \ne 0} S^{\zeta^0 + \varepsilon l}$. 
    Again, a simple application of H\"older's inequality shows that $\mathscr{L}(\mathcal{L}(\R^m;V))\text{-}\lim_{\varepsilon\to 0,\varepsilon\ne 0}\nu^\varepsilon =  (\mathscr{L}\mathcal{L}\text{-}\frac{\partial}{\partial l} S^{\zeta^0})Y^{\zeta^0}$.
    The remaining terms can be written as        
    \begin{align*}
        \frac{1}{\varepsilon  } (f_s^{\zeta^0+\varepsilon   l} Y_s^{\zeta^0+\varepsilon  l}- f_s^{\zeta^0}Y_s^{\zeta^0}) &= f_s^{\zeta^0+\varepsilon   l} Z_s^\varepsilon  + \left(\frac{1}{\varepsilon  } (f_s^{\zeta^0+\varepsilon   l}- f_s^{\zeta^0})\right) Y_s^{\zeta^0} =: f_s^{\zeta^0+\varepsilon   l} Z_s^\varepsilon  + \phi_s^\varepsilon,  \\
        \frac{1}{\varepsilon  } ((f^{\zeta^0+\varepsilon   l})'_s Y_s^{\zeta^0+\varepsilon  l}- (f^{\zeta^0}Y_s^{\zeta^0})'_s) &= (f^{\zeta^0+\varepsilon   l})'_s Z_s^\varepsilon  + \left( \frac{1}{\varepsilon  } ((f^{\zeta^0+\varepsilon   l})'_s - (f^{\zeta^0})'_s)\right) Y_s^{\zeta^0} =: (f^{\zeta^0+\varepsilon   l})'_s Z_s^\varepsilon  + (\phi^\varepsilon_2)'_s, 
    \end{align*}
    and
    \begin{align*}
        &\frac{1}{\varepsilon  } ( f_s^{\zeta^0+\varepsilon  l} (f_s^{\zeta^0+\varepsilon  l} Y_s^{\zeta^0+\varepsilon  l}+(F^{\zeta^0+\varepsilon  l})'_s) - f_s^{\zeta^0}(f_s^{\zeta^0}Y_s^{\zeta^0}+(F^{\zeta^0})'_s ) = \\
        &= f_s^{\zeta^0+\varepsilon  l} f_s^{\zeta^0+\varepsilon  l} Z_s^\varepsilon  + f_s^{\zeta^0+\varepsilon  l} \left(\frac{1}{\varepsilon  } ( f_s^{\zeta^0+\varepsilon  l} - f_s^{\zeta^0}) \right) Y_s^{\zeta^0} + \left(\frac{1}{\varepsilon  } ( f_s^{\zeta^0+\varepsilon  l}  - f_s^{\zeta^0}) \right) f_s^{\zeta^0} Y_s^{\zeta^0} + \\
        &\quad + f_s^{\zeta^0+\varepsilon  l} \frac{1}{\varepsilon  } ((F^{\zeta^0+\varepsilon  l})'_s-(F^{\zeta^0})'_s) + \left( \frac{1}{\varepsilon  } (f_s^{\zeta^0+\varepsilon  l} - f_s^{\zeta^0})\right) (F^{\zeta^0})'_s \\
        &=: f_s^{\zeta^0+\varepsilon  l} \left(f_s^{\zeta^0+\varepsilon  l}Z_s^\varepsilon  + \frac{1}{\varepsilon  } ((F^{\zeta^0+\varepsilon  l})'_s-(F^{\zeta^0})'_s) \right)+ (\phi_1^\varepsilon )'_s.
    \end{align*}
    From \cite[Proposition 3.3]{BCN24} we see that $(\phi^\varepsilon ,(\phi^\varepsilon )':=(\phi_1^\varepsilon )' + (\phi_2^\varepsilon )') \in \mathbf{D}_W^{\delta,\delta'}\mathscr{L}(\mathcal{L}(\R^n;V))$, being the space of stochastic controlled linear vector fields a linear space.
    Moreover, by assumption we have that $$\mathbf{D}_W^{\delta,\delta'}\mathscr{L}\mathcal{L}(V;\mathcal{L}(\R^n;V))\text{-}\lim_{\varepsilon \to 0, \varepsilon \ne 0} (f^{\zeta^0 + \varepsilon   l},(f^{\zeta^0 + \varepsilon   l})')  = (f^{\zeta^0},(f^{\zeta^0})'),$$
    and from \cref{prop:stambilityundercompositionlinearvectorfields} below it is straightforward to deduce that $\{\phi^\varepsilon ,(\phi^\varepsilon )'\}_{\varepsilon \in \R \setminus \{0\}}$ tends to \begin{equation*}
         \left( (\mathbf{D}_W^{\delta,\delta'}\mathscr{L}\mathcal{L}\text{-}\frac{\partial}{\partial l} f^{\zeta^0})Y^{\zeta^0}, (\mathbf{D}_W^{\delta,\delta'}\mathscr{L}\mathcal{L}\text{-}\frac{\partial}{\partial l} (f^{\zeta^0})') Y^{\zeta^0} + \mathbf{D}_W^{\delta,\delta'}\mathscr{L}\mathcal{L}\text{-}\frac{\partial}{\partial l} f^{\zeta^0} \left((\mathbf{D}_W^{\delta,\delta'}\mathscr{L}\mathcal{L}\text{-}\frac{\partial}{\partial l} f^{\zeta^0})Y^{\zeta^0} + (F^{\zeta^0})'\right) \right) 
    \end{equation*} 
    in $\mathbf{D}_W^{\delta,\lambda}\mathscr{L}(\mathcal{L}(\R^n;V))$ as $\varepsilon \to 0$, where $\lambda=\min\{\delta,\delta'\}$. 
    Putting everything together, we have shown that $Z^\varepsilon $ solves the following linear rough SDE: \begin{equation*}
        Z_t^\varepsilon  = Z^\varepsilon _0 + \tilde{F}^\varepsilon _t + \int_0^t G_r^{\zeta^0+\varepsilon  l} Z_r^\varepsilon  dr + \int_0^t S_r^{\zeta^0+\varepsilon  l}Z_r^\varepsilon  dB_r + \int_0^t (f_r^{\zeta^0+\varepsilon  l},(f^{\zeta^0+\varepsilon  l})'_r) Z_r^\varepsilon  d\mathbf{W}_r \quad t \in [0,T]
    \end{equation*}
    with \begin{equation*} \begin{aligned}
        (\tilde{F}^\varepsilon , (\tilde{F}^\varepsilon )') &= \Big( \frac{1}{\varepsilon  } (F^{\zeta^0+\varepsilon  l}-F^{\zeta^0}) + \int_0^\cdot \mu_r^\varepsilon  dr + \int_0^\cdot \nu_r^\varepsilon  dB_r + \int_0^\cdot (\phi_r^\varepsilon ,(\phi^\varepsilon )'_r) d\mathbf{W}_r, \frac{1}{\varepsilon  } \big( (F^{\zeta^0+\varepsilon  l})'-(F^{\zeta^0})' \big) +\phi^\varepsilon  \Big).
    \end{aligned}
    \end{equation*}
    Let us now consider $(Z^\zeta)_{\zeta \in U}$ to be family of $L_p$-integrable solutions to the following (well-posed) linear rough SDE, for any $p \in [2,\infty)$: 
    \begin{equation} \label{eq:definitionofZ^zeta}
        \begin{aligned}
             Z_t^\zeta = \mathscr{L}\text{-}\frac{\partial}{\partial l} \xi^\zeta + \tilde{F}_t^\zeta + \int_0^t G_r^\zeta Z_r^\zeta dr + \int_0^t S_r^\zeta Z_r^\zeta dB_r + \int_0^t (f_r^\zeta,(f^\zeta)'_r) Z_r^\zeta d\mathbf{W}_r \qquad t \in [0,T]
        \end{aligned}
    \end{equation}
    with \begin{equation} \label{eq:coefficientsofZ^zeta}
    \begin{aligned}
            (\tilde{F}^{\zeta},(\tilde{F}^{\zeta})') &= \left(\mathbf{D}_W^{\alpha,\delta}\mathscr{L}\text{-}\frac{\partial}{\partial l} F^{\zeta} + \int_0^\cdot \mu_r^{\zeta} dr + \int_0^\cdot \nu^\zeta dB_r + \int_0^\cdot (\phi^\zeta,(\phi^\zeta)') d\mathbf{W}_r, \mathbf{D}_W^{\alpha,\delta}\mathscr{L}\text{-} \frac{\partial}{\partial l} (F^\zeta)' + \phi^\zeta \right) \\
            \mu^\zeta &= (\mathscr{L}\mathcal{L}\text{-}\frac{\partial}{\partial l} G^\zeta) Y^\zeta  \\
            \nu^\zeta &= (\mathscr{L}\mathcal{L}\text{-}\frac{\partial}{\partial l} S^{\zeta}) Y^{\zeta} \\
            \phi^\zeta &=  (\mathbf{D}_W^{\delta,\delta'}\mathscr{L}\mathcal{L}\text{-}\frac{\partial}{\partial l} f^{\zeta})Y^{\zeta} \\
            (\phi^\zeta)' &=  (\mathbf{D}_W^{\delta,\delta'}\mathscr{L}\mathcal{L}\text{-}\frac{\partial}{\partial l} (f^{\zeta})') Y^{\zeta} + \mathbf{D}_W^{\delta,\delta'}\mathscr{L}\mathcal{L}\text{-}\frac{\partial}{\partial l} f^{\zeta} \left((\mathbf{D}_W^{\delta,\delta'}\mathscr{L}\mathcal{L}\text{-}\frac{\partial}{\partial l} f^{\zeta})Y^{\zeta} + (F^{\zeta})'\right).
    \end{aligned}
    \end{equation}
    From \cref{prop:continuity_linearRSDEs} and  \cref{prop:continuityanddifferentiability_forcingterm} below, due to the previous considerations we have that \begin{equation*}
         (Z^{\zeta^0},f^{\zeta^0}Z^{\zeta^0} + (\tilde{F}^{\zeta^0})') = \mathbf{D}_W^{\alpha,\delta}\mathscr{L}(V) \text{-} \lim_{\varepsilon \to 0, \varepsilon \ne 0} (Z^\varepsilon ,f^{\zeta^0+\varepsilon^\varepsilon  l} Z^\varepsilon  + (\tilde{F}^\varepsilon )').
    \end{equation*}
    In other words, $\{(Z^\zeta,f^\zeta + (\tilde{F}^\zeta)')\}_{\zeta \in U}$ is the $\mathbf{D}_W^{\alpha,\delta}\mathscr{L}$-derivative of $\{(Y^\zeta,f^\zeta Y^\zeta + (F^\zeta)')\}_{\zeta \in U}$ in the direction $l$. 
    The $\mathbf{D}_W^{\alpha,\delta}\mathscr{L}$-continuity of $Z^\zeta$ easily follows from the assumptions, considering again \cref{prop:continuity_linearRSDEs}. \\

    Let us now assume the assertion is proved for $k=k_0$ and that the assumptions of the theorem are satisfied for $k=k_0+1$. It is sufficient to show that, for any arbitrary direction $l \in \mathbb{S}(\mathcal{U})$, the family $\{(Z^\zeta,f^\zeta Z^\zeta + (\tilde{F}^\zeta)')\}_{\zeta\in U}$ defined as in \eqref{eq:definitionofZ^zeta} is $k_0$-times ($\mathbf{D}_W^{\alpha,\delta}\mathscr{L}$-continuously) $\mathbf{D}_W^{\alpha,\delta}\mathscr{L}$-differentiable. 
    Indeed, $Z^{\zeta}$ itself solves a linear RSDE whose coefficients and initial condition are $k_0$-times differentiable.
    Moreover, from inductive assumptions and \cref{prop:continuityanddifferentiability_forcingterm}, it holds that $\{(\tilde{F}^{\zeta},(\tilde{F}^{\zeta})')\}_{\zeta \in U}$ is $k_0$-times ($\mathbf{D}_W^{\alpha,\delta}\mathscr{L}$-continuously) $\mathbf{D}_W^{\alpha,\delta}\mathscr{L}$-differentiable. 
    Being the assertion valid for $k=k_0$, the proof is concluded. 
\end{proof}

\subsection{Differentiability for rough SDEs} \label{section:differentiabilityforRSDEs}

Let $(\mathcal{U},|\cdot|)$ be a Banach space and let $U \subseteq \mathcal{U}$ be a non-empty open subset.
Let $\mathbf{W}=(W,\mathbb{W}) \in \mathscr{C}^\alpha([0,T];\R^n)$ be a rough path taking values in $\R^n$, with $\alpha \in (\frac{1}{3},\frac{1}{2})$ \footnote{The case $\alpha = \frac{1}{2}$ is not included because in the proof of \cref{thm:differentiability_RSDEs} we are applying \cref{thm:differentiability_linearRSDEs} which in turn is based on \cref{thm:stabilityforlinearRSDEs_compendium} and \cref{prop:continuityanddifferentiability_forcingterm}}.
We consider the following progressively measurable mappings: 
\begin{align*}
    b&: \Omega \times [0,T] \times U \times \R^d \to \mathbb{R}^d, \quad (\omega,t,\zeta,x) \mapsto b_t(\zeta,x)  \\
    \sigma&: \Omega \times [0,T] \times U \times \mathbb{R}^d \to \mathbb{R}^{d \times m}, \quad (\omega,t,\zeta,x) \mapsto \sigma_t(\zeta,x) \\
    \beta&: \Omega \times [0,T] \times U \times \R^d \to \R^{d\times n}, \quad (\omega,t,\zeta,x) \mapsto \beta_t(\zeta,x) \\
    \beta'&: \Omega \times [0,T] \times U \times \R^d \to \mathcal{L}(\R^n,\R^{d\times n}), \quad (\omega,t,\zeta,x) \mapsto \beta'_t(\zeta,x).
\end{align*} 
We assume that, for any $\zeta \in U$, there is a $L_{p,\infty}$-integrable solution $X^\zeta$ to the following rough SDE with $\mathcal{F}_0$-measurable initial condition $\xi^\zeta$, for any $p \in [2,\infty)$ (cf. \cref{def:solutionRSDEs_compendium}):
\begin{equation} \label{eq:RSDEwithparameter}
     X_t^{\zeta} = \xi^{\zeta} + \int_0^t b_r (\zeta, X_r^{\zeta}) dr + \int_0^t \sigma_r (\zeta, X_r^{\zeta}) dB_r + \int_0^t (\beta_r,\beta'_r)(\zeta,X_r^\zeta) d \mathbf{W}_r \qquad t \in [0, T],
\end{equation}
where $(\beta_t,\beta'_t)(\zeta,X^\zeta_t) := (\beta_t(\zeta,X^\zeta_t),D_x\beta(\zeta,X^\zeta_t)\beta_t(\zeta,X^\zeta_t) + \beta'_t(\zeta,X^\zeta_t))$.

\begin{thm} \label{thm:differentiability_RSDEs}
     Let $\delta \in [0,\alpha]$ such that $\alpha + \delta > \frac{1}{2}$ and $\alpha + 2\delta >1$, and let $k \in \mathbb{N}_{\ge 1}$.
     Assume that $\{\xi^{\zeta}\}_{\zeta \in U}$ is $k$-times ($\mathscr{L}(\R^d)$-continuously) $\mathscr{L}(\R^d)$-differentiable. Assume that, for $\mathbb{P}$-almost every $\omega$ and for any $t \in [0,T]$, \begin{equation*}
         b_t (\cdot, \cdot) \in C^k  (U \times \mathbb{R}^d ; \mathbb{R}^d) \quad \text{and} \quad \sigma_t (\cdot, \cdot) \in C^k  (U \times \mathbb{R}^d ; \mathbb{R}^{d \times m})
     \end{equation*} in Fréchet sense and the functions and all their (partial) derivatives up to order $k$ are uniformly bounded in $(\omega, t, \zeta, x)$.
     Moreover, assume \begin{equation} \label{eq:stochasticcontrolledvectorfieldintheRSDE}
         (\beta,\beta') \in \bigcap_{p \in [2,\infty)} \mathbf{D}_W^{2\delta} L_{p,\infty} C_b^{\kappa}(U \times \R^d; \R^{d\times n})
     \end{equation}
     \textcolor{black}{for some $\kappa \in (2+k,3+k]$,} in the sense of \cref{def:stochasticcontrolledvectorfield}.
     Then $\{(X^{\zeta},\beta(\zeta,X^\zeta)\}_{\zeta \in U}$ is $k$-times ($\mathbf{D}_W^{\alpha,\delta}\mathscr{L}(\R^d)$-continuously) $\mathbf{D}_W^{\alpha,\delta}\mathscr{L}(\R^d)$-differentiable.
     In particular, given $\zeta \in U$ and $l \in \mathbb{S}(\mathcal{U})$ and denoting by $Y^{\zeta,l}= \mathbf{D}_W^{\alpha,\delta}\mathscr{L}\text{-} \frac{\partial}{\partial l} X^\zeta$, it holds that, for any $p \in [2,\infty)$, the latter is an $L_{p}$-integrable solution of the following linear rough SDE on $[0,T]$ whose coefficients are defined as in \eqref{eq:coefficientsforBLderivative} : \begin{equation*}
         Y_t^{\zeta,l} = \mathscr{L}\text{-}\frac{\partial}{\partial l} X_0^\zeta + F^{\zeta}_t + \int_0^t G_r^\zeta Y_r^{\zeta,l} dr + \int_0^t S_r^\zeta Y_r^{\zeta,l} dB_r + \int_0^t (f_r^\zeta ,(f^\zeta)'_r) Y_r^{\zeta,l} d\mathbf{W}_r .
     \end{equation*}
\end{thm}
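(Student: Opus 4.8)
The plan is to argue by induction on $k$, at each stage reducing the problem to the linear theory, namely \cref{thm:differentiability_linearRSDEs} and \cref{prop:continuity_linearRSDEs}, in exactly the spirit of the proof of \cref{thm:differentiability_linearRSDEs}. Note first that the hypotheses for a given order $k$ contain those for every smaller order: $b_t,\sigma_t\in C^k$ implies $b_t,\sigma_t\in C^j$, a $k$-times $\mathscr{L}$-differentiable family is $j$-times $\mathscr{L}$-differentiable, and $(\beta,\beta')\in\mathbf{D}_W^{2\delta}L_{p,\infty}C_b^\kappa$ with $\kappa\in(2+k,3+k]$ also lies in $\mathbf{D}_W^{2\delta}L_{p,\infty}C_b^{\kappa'}$ for any $\kappa'\le\kappa$, in particular for a suitable $\kappa'\in(2+j,3+j]$ whenever $j<k$. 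Hence the statement at lower orders may be freely invoked inside the induction.

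\emph{Base case $k=1$.} Fix $\zeta^0\in U$ and $l\in\mathbb{S}(\mathcal{U})$, and put $Y^\varepsilon:=\tfrac{1}{\varepsilon}(X^{\zeta^0+\varepsilon l}-X^{\zeta^0})$ for $\varepsilon\ne0$. Subtract the stochastic Davie-type expansions (\cref{def:solutionRSDEs_compendium}) of $X^{\zeta^0+\varepsilon l}$ and $X^{\zeta^0}$ and divide by $\varepsilon$; each coefficient difference is linearised by the fundamental theorem of calculus (the mean value theorem recalled in the notation section), e.g.\ for the drift
\[
\tfrac{1}{\varepsilon}\big(b_r(\zeta^0+\varepsilon l,X^{\zeta^0+\varepsilon l}_r)-b_r(\zeta^0,X^{\zeta^0}_r)\big)=G^\varepsilon_rY^\varepsilon_r+\mu^\varepsilon_r,
\]
with $G^\varepsilon_r:=\int_0^1 D_xb_r(\zeta^0+\varepsilon l,X^{\zeta^0}_r+\theta\varepsilon Y^\varepsilon_r)\,d\theta$ and $\mu^\varepsilon_r:=\big(\int_0^1 D_\zeta b_r(\zeta^0+\theta\varepsilon l,X^{\zeta^0}_r)\,d\theta\big)l$; analogously for $\sigma$ (producing $S^\varepsilon$, $\nu^\varepsilon$), and for the controlled vector field $(\beta,\beta')$ one obtains an averaged stochastic controlled linear vector field $(f^\varepsilon,(f^\varepsilon)')$ built from $D_x\beta,D^2_{xx}\beta,D_x\beta'$ along the segment $X^{\zeta^0}+\theta\varepsilon Y^\varepsilon$, together with a forcing controlled vector field $(\phi^\varepsilon,(\phi^\varepsilon)')$ built from $D_\zeta\beta,D^2_{x\zeta}\beta,D_\zeta\beta'$. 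The remainder $Y^{\varepsilon,\natural}:=\tfrac1\varepsilon(X^{\zeta^0+\varepsilon l,\natural}-X^{\zeta^0,\natural})$ satisfies $\|Y^{\varepsilon,\natural}_{s,t}\|_p\lesssim|t-s|^{\alpha+\delta}$ and $\|\E_s Y^{\varepsilon,\natural}_{s,t}\|_p\lesssim|t-s|^{\alpha+2\delta}$ by \cref{def:solutionRSDEs_compendium}. Thus $Y^\varepsilon$ is an $L_p$-integrable solution, for every $p\in[2,\infty)$, of a \emph{linear} rough SDE with initial datum $\tfrac1\varepsilon(\xi^{\zeta^0+\varepsilon l}-\xi^{\zeta^0})$, coefficients $G^\varepsilon,S^\varepsilon,(f^\varepsilon,(f^\varepsilon)')$, and forcing term $\tilde F^\varepsilon$ assembled, exactly as in the proof of \cref{thm:differentiability_linearRSDEs}, from $\int_0^\cdot\mu^\varepsilon_r\,dr$, $\int_0^\cdot\nu^\varepsilon_r\,dB_r$, $\int_0^\cdot(\phi^\varepsilon_r,(\phi^\varepsilon)'_r)\,d\mathbf{W}_r$.

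\emph{Passage to the limit and the derivative equation.} The families $\{G^\varepsilon\},\{S^\varepsilon\},\{(f^\varepsilon,(f^\varepsilon)')\},\{(\tilde F^\varepsilon,(\tilde F^\varepsilon)')\}$ are bounded, uniformly for small $|\varepsilon|$, in $\mathscr{L}\mathcal{L}$, $\mathbf{D}_W^{\delta,\delta'}\mathscr{L}\mathcal{L}$ and $\mathbf{D}_W^{\alpha,\delta}\mathscr{L}$ respectively, and converge as $\varepsilon\to0$ to $G^{\zeta^0}=D_xb(\zeta^0,X^{\zeta^0})$, $S^{\zeta^0}=D_x\sigma(\zeta^0,X^{\zeta^0})$, $(f^{\zeta^0},(f^{\zeta^0})')=(D_x\beta(\zeta^0,X^{\zeta^0}),\,D^2_{xx}\beta(\zeta^0,X^{\zeta^0})(\beta(\zeta^0,X^{\zeta^0}),\cdot)+D_x\beta'(\zeta^0,X^{\zeta^0}))$, and to the forcing term $\tilde F^{\zeta^0}$ obtained by integrating $D_\zeta b(\zeta^0,X^{\zeta^0})l$, $D_\zeta\sigma(\zeta^0,X^{\zeta^0})l$ and $(D_\zeta\beta(\zeta^0,X^{\zeta^0})l,\,D^2_{x\zeta}\beta(\zeta^0,X^{\zeta^0})(\beta(\zeta^0,X^{\zeta^0}),l)+D_\zeta\beta'(\zeta^0,X^{\zeta^0})l)$. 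These convergences use the $\mathbf{D}_W^{\alpha,\delta}\mathscr{L}$-continuity $X^{\zeta^0+\varepsilon l}\to X^{\zeta^0}$ of \cref{thm:continuity_RSDEs} (hence $\theta\varepsilon Y^\varepsilon\to0$ there), the composition-stability estimates of \cref{prop:compositionwithstochasticcontrolledvectorfields_compendium} and the composition/integration results of \cref{section:compositionandintegration_technicalresults} applied along the moving segment, and dominated convergence in the $\theta$-integrals. Applying \cref{prop:continuity_linearRSDEs} to the linear rough SDEs solved by $Y^\varepsilon$ then gives $Y^\varepsilon\to Y^{\zeta^0,l}$ in $\mathbf{D}_W^{\alpha,\delta}\mathscr{L}$, where $Y^{\zeta^0,l}$ solves the asserted limiting linear rough SDE; thus $\mathbf{D}_W^{\alpha,\delta}\mathscr{L}\text{-}\tfrac{\partial}{\partial l}X^{\zeta^0}=Y^{\zeta^0,l}$, and since the coefficients of that equation depend on $\zeta$ only through $X^\zeta$ and smooth maps, a further application of \cref{thm:continuity_RSDEs} and \cref{prop:continuity_linearRSDEs} yields the $\mathbf{D}_W^{\alpha,\delta}\mathscr{L}$-continuity of $\zeta\mapsto Y^{\zeta,l}$. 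This settles $k=1$.

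\emph{Induction step.} Assume the theorem for $k_0$ and its hypotheses for $k_0+1$; by the monotonicity above $\{X^\zeta\}$ is then $k_0$-times $\mathbf{D}_W^{\alpha,\delta}\mathscr{L}$-differentiable, and by the base case $Y^{\zeta,l}$ solves the linear rough SDE with coefficients $G^\zeta=D_xb(\zeta,X^\zeta)$, $S^\zeta=D_x\sigma(\zeta,X^\zeta)$, $(f^\zeta,(f^\zeta)')$ as above, forcing term $\tilde F^\zeta$, and initial datum $\mathscr{L}\text{-}\tfrac{\partial}{\partial l}\xi^\zeta$. Since $b_t,\sigma_t\in C^{k_0+1}$ their $x$-derivatives are $C^{k_0}$, and $(\beta,\beta')\in\mathbf{D}_W^{2\delta}L_{p,\infty}C_b^\kappa$ with $\kappa-1>2+k_0$; combining the inductive hypothesis on $\{X^\zeta\}$ with the composition- and integration-differentiability lemmas of \cref{section:compositionandintegration_technicalresults} and with \cref{prop:continuityanddifferentiability_forcingterm}, one gets that $\{G^\zeta\},\{S^\zeta\}$ are $k_0$-times ($\mathscr{L}\mathcal{L}$-continuously) $\mathscr{L}\mathcal{L}$-differentiable, $\{(f^\zeta,(f^\zeta)')\}$ is $k_0$-times ($\mathbf{D}_W^{\delta,\delta'}\mathscr{L}\mathcal{L}$-continuously) $\mathbf{D}_W^{\delta,\delta'}\mathscr{L}\mathcal{L}$-differentiable, $\{(\tilde F^\zeta,(\tilde F^\zeta)')\}$ is $k_0$-times ($\mathbf{D}_W^{\alpha,\delta}\mathscr{L}$-continuously) $\mathbf{D}_W^{\alpha,\delta}\mathscr{L}$-differentiable, and $\{\mathscr{L}\text{-}\tfrac{\partial}{\partial l}\xi^\zeta\}$ is $k_0$-times $\mathscr{L}$-differentiable. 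Then \cref{thm:differentiability_linearRSDEs} with $k=k_0$ gives that $\{(Y^{\zeta,l},f^\zeta Y^{\zeta,l}+(\tilde F^\zeta)')\}$ is $k_0$-times ($\mathbf{D}_W^{\alpha,\delta}\mathscr{L}$-continuously) $\mathbf{D}_W^{\alpha,\delta}\mathscr{L}$-differentiable; as $l\in\mathbb{S}(\mathcal{U})$ was arbitrary, $\{(X^\zeta,\beta(\zeta,X^\zeta))\}$ is $(k_0+1)$-times ($\mathbf{D}_W^{\alpha,\delta}\mathscr{L}$-continuously) $\mathbf{D}_W^{\alpha,\delta}\mathscr{L}$-differentiable, closing the induction.

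\emph{Main obstacle.} The crux is the base case: one must show $Y^\varepsilon$ genuinely solves a linear rough SDE whose coefficients are bounded uniformly in $\varepsilon$, which requires controlling simultaneously the conditional-$L^p$ Hölder norm and the $\E_s$-bound of the remainder $Y^{\varepsilon,\natural}$, and verifying that the averaged object $(f^\varepsilon,(f^\varepsilon)')$ is a stochastic controlled linear vector field in $\mathbf{D}_W^{\delta,\delta'}L_{p,\infty}\mathcal{L}$ uniformly in $\varepsilon$; and then establishing convergence of the rough coefficient $(f^\varepsilon,(f^\varepsilon)')$ and of the rough forcing term, which rests on the composition-stability estimates applied to $D_x\beta$ (and its Gubinelli derivative) along the moving segment $X^{\zeta^0}+\theta\varepsilon Y^\varepsilon$. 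The loss of one spatial derivative per differentiation, while staying above the threshold $(2,3]$ required by the linear theory, is routine bookkeeping but must be tracked throughout the induction.
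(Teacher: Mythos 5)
Your proposal follows essentially the same route as the paper's proof: differentiate the Davie expansions, linearise the coefficient differences via the mean value theorem so that $Y^\varepsilon$ solves a linear rough SDE with averaged coefficients, pass to the limit using \cref{thm:continuity_RSDEs}, \cref{lemma:stochasticlinearvectorfields_Riemannintegral}, \cref{lemma:stochasticcontrolledlinearvectorfields_Riemannintegral}, \cref{prop:continuityanddifferentiability_forcingterm} and \cref{prop:continuity_linearRSDEs}, and then close the induction through \cref{thm:differentiability_linearRSDEs}. The only point treated more carefully in the paper is the exponent bookkeeping you flag as routine: the averaged rough coefficient converges only in $\mathbf{D}_W^{\delta,\lambda}\mathscr{L}\mathcal{L}$ for $\lambda<\delta$, and one uses $\alpha+2\delta>1$ to pick $\lambda$ with $\alpha+\delta+\lambda>1$ so the linear theory still applies.
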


\begin{proof} 
Notice that, for any $\zeta \in U$, $b (\zeta,\cdot)$ is a random bounded Lipschitz vector fields with $\sup_{\zeta \in U} \|b(\zeta,\cdot)\|_\infty <+\infty$.
The same holds replacing $b$ with $\sigma$.
From \cref{rmk:propertiesofstochasticcontrolledvectorfields}, we deduce $$(\beta(\zeta,\cdot),\beta'(\zeta,\cdot)) \in \bigcap_{p \in [2,\infty)} \mathbf{D}_W^{2\delta} L_{p,\infty} C_b^{\kappa}(\R^d; \R^{d\times n})$$ with \begin{equation*}
    \sup_{\zeta \in U} \left( \llbracket (\beta(\zeta,\cdot),\beta'(\zeta,\cdot)) \rrbracket_{\kappa;\infty} +  \llbracket (\beta(\zeta,\cdot),\beta'(\zeta,\cdot))\rrbracket_{W;2\delta;p,\infty;\kappa} \right) < +\infty, \quad \text{for any $p \in [2,\infty)$}.
\end{equation*}
From \cref{thm:wellposednessandaprioriestimatesforRSDEs} it follows that \begin{equation*}
    \sup_{\zeta \in U} \|(X^\zeta,\beta(\zeta,X^\zeta))\|_{\mathbf{D}_W^{\alpha,\delta}L_{p,\infty}} <+\infty. 
\end{equation*}
Moreover, the following facts hold true (recall that, by construction, $\kappa-1 \ge 2$):
\begin{itemize}
    \item[-] $(D_x\beta,D_x\beta') \in \bigcap_{p \in [2,\infty)} \mathbf{D}_W^{2\delta} L_{p,\infty} C_b^{\kappa-1}(U \times \R^d; \mathcal{L}(\R^d;\R^{d\times n}))$;
    \item[-] $(D_\zeta \beta,D_\zeta \beta') \in \bigcap_{p \in [2,\infty)} \mathbf{D}_W^{2\delta} L_{p,\infty} C_b^{\kappa-1}(U \times \R^d; \mathcal{L}(U;\R^{d\times n}))$.
\end{itemize}

It is straight-forward to see that, due to the assumptions on $b$, $\{b(\zeta,\cdot)\}_{\zeta \in U}$ is $\mathscr{L}C^0_b(\R^d;\R^d)$-continuous in the sense of \cref{def:convergenceforstochasticboundedcontinuousvectorfields}.
Similarly, we also have that $\{\sigma(\zeta,\cdot)\}_{\zeta \in U}$ is $\mathscr{L}C^0_b(\R^d;\R^{d \times m})$-continuous. 
One can also prove that $\{(\beta(\zeta,\cdot),\beta'(\zeta,\cdot))\}_{\zeta \in U}$ is $\textbf{D}_W^{2\delta} \mathscr{L} C_b^3(\R^d;\R^{d \times n})$-continuous in the sense of \cref{def:convergenceforstochasticcontrolledvetorfields} (notice that, even with $k=1$ we have $\kappa \in (3,4]$). That is, for any $\zeta^0 \in U$ accumulation point and for any $(\zeta^n)_{n\ge1} \subseteq U \setminus \{\zeta^0\}$ such that $|\zeta^n-\zeta^0| \to_{n \to +\infty} 0$,  \begin{equation} \label{eq:normsbeta_differentiability}
    \llbracket (\beta(\zeta^n,\cdot)-\beta(\zeta^0,\cdot),\beta'(\zeta^n,\cdot)-\beta'(\zeta^0,\cdot)) \rrbracket_{3;p} + \llbracket \beta(\zeta^n,\cdot),\beta'(\zeta^n,\cdot) ;  \beta(\zeta^0,\cdot),\beta'(\zeta^0,\cdot) \rrbracket_{W;2\delta;p;3} \to 0
\end{equation}
as $n \to +\infty$. For example, one has that uniformly in $x \in \R^d$ and $\mathbb{P}$-almost surely \begin{equation*}
        \begin{aligned}
            &|\E_s(\beta_t(\zeta^n,x) - \beta_s(\zeta^n,x) - \beta'_s(\zeta^n,x) \delta W_{s,t} - \beta_t(\zeta^0,x) - \beta_s(\zeta^0,x) - \beta'_s(\zeta^0,x) \delta W_{s,t} ) |  \\
            &= \Big| \int_0^1 \E_s \big( D_\zeta \beta_t((1-\theta)\zeta^0+ \theta \zeta^n,x) - D_\zeta \beta_s ((1-\theta)\zeta^0+ \theta \zeta^n,x) + \\
            & \qquad  - D_\zeta \beta_s'((1-\theta)\zeta^0+ \theta \zeta^n,x) \delta W_{s,t} \big) d\theta (\zeta^n-\zeta^0) \Big|  \le  \|\E_s(R_{s,t}^{D_\zeta \beta})\|_\infty |\zeta^n - \zeta^0|.
        \end{aligned}
    \end{equation*}
Similar computations apply for any other term in \eqref{eq:normsbeta_differentiability}. 
Being $\{\xi^\zeta\}_{\zeta \in U}$ $\mathscr{L}(\R^d)$-continuous by assumption and applying \cref{thm:continuity_RSDEs} with $\mathbf{W}^\zeta \equiv \mathbf{W}$, we deduce that $\{(X^\zeta,\beta(\zeta,X^\zeta))\}_{\zeta \in U}$ is $\mathbf{D}_W^{\alpha,\delta}\mathscr{L}(\R^d)$-continuous. \\

  We can now prove the statement of the theorem, and we argue recursively on $k$. 
  First, let $k = 1$. In this case $\kappa \in (3,4]$. 
  Let $\zeta^0 \in U$ and $l \in \mathbb{S}(\mathcal{U})$ be arbitrary.
  For any $\varepsilon \in \R \setminus \{0\}$, we define the following stochastic process \begin{equation*}
      Y^\varepsilon := \frac{1}{\varepsilon} \left(X^{\zeta^0 + \varepsilon l } - X^{\zeta^0} \right).
  \end{equation*}
  It is sufficient to show that $\{(Y^\varepsilon,\frac{1}{\varepsilon}(\beta(\zeta^0+\varepsilon l,X^{\zeta^0 + \varepsilon l}) - \beta(\zeta^0,X^{\zeta^0})))\}_{\varepsilon \in \R \setminus \{0\}}$ admits limit in $\mathbf{D}_W^{\alpha,\delta}\mathscr{L}$ (and the limit is $\mathbf{D}_W^{\alpha,\delta}\mathscr{L}$-continuous). 
  Let us first observe that $Y^\varepsilon$ is the solution of a linear RSDE. Indeed, for any $(s,t) \in \Delta_{[0,T]}$ we can write \begin{equation*}
      \begin{aligned}
          \delta Y_{s,t}^{\varepsilon} &= \frac{1}{\varepsilon} \left(\delta X_{s,t}^{\zeta^0 + \varepsilon l } - \delta X_{s,t}^{\zeta^0} \right) \\
          &= \frac{1}{\varepsilon} \int_s^t b_r(\zeta^0+\varepsilon l,X^{\zeta^0+\varepsilon l}_r) - b_r(\zeta^0,X_r^{\zeta^0}) dr  + \frac{1}{\varepsilon} \int_s^t \sigma_r(\zeta^0+\varepsilon l,X^{\zeta^0+\varepsilon l}_r) - \sigma_r(\zeta^0,X_r^{\zeta^0}) dB_r + \\
          & \quad + \frac{1}{\varepsilon} \left( \beta_s(\zeta^0+\varepsilon l,X^{\zeta^0+\varepsilon l}_s) - \beta_s(\zeta^0,X_s^{\zeta^0}) \right)\delta W_{s,t} + \frac{1}{\varepsilon} \Big(D_x\beta_s(\zeta^0+\varepsilon l,X^{\zeta^0+\varepsilon l}_s)\beta_s(\zeta^0+\varepsilon l,X^{\zeta^0+\varepsilon l}_s) + \\
          & \quad - D_x\beta_s(\zeta^0,X_s^{\zeta^0})\beta_s(\zeta^0,X_s^{\zeta^0}) + \beta_s'(\zeta^0+\varepsilon l,X^{\zeta^0+\varepsilon l}_s) - \beta_s'(\zeta^0,X_s^{\zeta^0}) \Big)\mathbb{W}_{s,t} + Y^{\varepsilon,\natural}_{s,t},
      \end{aligned}
  \end{equation*} 
  where $Y^{\varepsilon,\natural}_{s,t} = \frac{1}{\varepsilon}(X^{\zeta^0+\varepsilon l,\natural}_{s,t}-X^{\zeta^0,\natural}_{s,t})$ is such that $\|Y^{\varepsilon,\natural}_{s,t}\|_p \lesssim |t-s|^{\alpha+\delta}$ and $\|\E_s(Y^{\varepsilon,\natural}_{s,t})\|_p \lesssim |t-s|^{\alpha+\delta+\delta'}$ for any $p \in [2,\infty)$. 
  Every term in the right-hand side of the previous identity can be re-written due to the differentiability assumptions on the coefficients, by applying the mean value theorem; by doing so the linear structure of the equation is made visible. 
  For simplicity, for $\theta \in [0,1]$ we denote by $X^\varepsilon(\theta) = (1-\theta) X^{\zeta^0} + \theta X^{\zeta^0 +\varepsilon l}$. For instance, 
  \begin{equation*}
      \begin{aligned}
          &\frac{1}{\varepsilon} \int_s^t b_r(\zeta^0+\varepsilon l,X^{\zeta^0+\varepsilon l}_r) - b_r(\zeta^0,X_r^{\zeta^0}) dr = \\
          &= \int_s^t \int_0^1 D_{\zeta} b_r (\zeta^0 + \theta\varepsilon l, X^\varepsilon_r (\theta))l d\theta dr + \int_s^t \left(\int_0^1 D_x b_r (\zeta^0 + \theta \varepsilon l, X^\varepsilon_r(\theta)) d\theta \right) Y^{n}_r dr =: \int_s^t \mu_r^\varepsilon dr + \int_s^t G_r^\varepsilon Y_r^\varepsilon dr.
      \end{aligned}
  \end{equation*}
  Applying \cref{lemma:stochasticlinearvectorfields_Riemannintegral} to the $\mathscr{L}(\R^{d_1+d})$-continuous process $\{(\zeta^0+\varepsilon l,X^{\zeta^0 + \varepsilon l})\}_{\varepsilon \ne 0}$, we have that $G^\varepsilon$ is a stochastic linear vector field, and it approaches $G := D_xb (\zeta^0,X^{\zeta^0})$ in $\mathscr{L}\mathcal{L}(\R^d;\R^d)$, as $\varepsilon \to 0$. 
  Moreover, it trivially follows that there exists $\mathscr{L}(\R^d)\text{-}\lim_{\varepsilon\to 0}\mu^\varepsilon = \mu$, where $\mu := D_\zeta b(\zeta^0,X^{\zeta^0})l$. Similarly,
   \begin{multline*}
       \frac{1}{\varepsilon} \int_s^t \sigma_r(\zeta^0+\varepsilon l,X^{\zeta^0+\varepsilon l}_r) - \sigma_r(\zeta^0,X_r^{\zeta^0}) dB_r = \int_s^t \int_0^1 D_{\zeta} \sigma_r (\zeta^0 + \theta\varepsilon l, X^\varepsilon_r (\theta))l d\theta dB_r + \\ + \int_s^t \left(\int_0^1 D_x \sigma_r (\zeta^0 + \theta \varepsilon l, X^\varepsilon_r(\theta)) d\theta \right) Y^\varepsilon_r dB_r =: \int_s^t \nu_r^\varepsilon dB_r + \int_s^t S_r^\varepsilon Y_r^\varepsilon dB_r,
   \end{multline*}
   where $S^\varepsilon$ is a random bounded linear vector field approaching $S := D_x\sigma (\zeta^0,X^{\zeta^0})$ in $\mathscr{L}\mathcal{L}(\R^d;\R^{d\times m})$ as $\varepsilon \to 0$, and $\mathscr{L}(\R^{d \times m}) \text{-} \lim_{\varepsilon \to 0}\nu^\varepsilon =\nu$, with $\nu:=D_\zeta \sigma(\zeta^0,X^{\zeta^0})l$. The remaining terms can be written as
   \begin{align*}
       \frac{1}{\varepsilon} \left(\beta_s(\zeta^0+\varepsilon l,X^{\zeta^0+\varepsilon l}_s) - \beta_s(\zeta^0,X_s^{\zeta^0}) \right) &= \int_0^1 D_\zeta \beta_s(\zeta^0+\theta\varepsilon l,X^\varepsilon_r(\theta)) l d\theta + \left(\int_0^1 D_x\beta_s(\zeta^0+\theta\varepsilon l,X^\varepsilon_r(\theta)) d\theta \right) Y_s^\varepsilon \\
       &=: \phi^\varepsilon_s + f_s^\varepsilon Y_s^\varepsilon, \\        
       \frac{1}{\varepsilon} \left(\beta'_s(\zeta^0+\varepsilon l,X^{\zeta^0+\varepsilon l}_s) - \beta'_s(\zeta^0,X_s^{\zeta^0})\right) &= \int_0^1 D_\zeta \beta'_s(\zeta^0+\theta\varepsilon l,X^\varepsilon_s(\theta)) l d\theta + \left(\int_0^1 D_x\beta'_s(\zeta^0+\theta\varepsilon l,X^\varepsilon_s(\theta)) d\theta \right) Y_s^\varepsilon \\
       &=: (\phi^\varepsilon_2)'_s + (f^\varepsilon_2)'_s Y_s^\varepsilon
    \end{align*}
    and, applying the mean value theorem to the function $(x,y,z)\mapsto D_y\beta(x,y)z$,  \begin{equation*}
        \begin{aligned}
            &\frac{1}{\varepsilon} \left(D_x\beta_s(\zeta^0+\varepsilon l,X^{\zeta^0+\varepsilon l}_s)\beta_s(\zeta^0+\varepsilon l,X^{\zeta^0+\varepsilon l}_s) - D_x\beta_s(\zeta^0,X_s^{\zeta^0})\beta_s(\zeta^0,X_s^{\zeta^0})\right) = \\
            &= \int_0^1 D^2_{\zeta x} \beta_s(\zeta^0+\theta\varepsilon l,X_s^\varepsilon(\theta) ((1-\theta)\beta_s(\zeta^0,X_s^{\zeta^0}) + \theta \beta_s(\zeta^0+\varepsilon l,X_s^{\zeta^0+\varepsilon l}),l) d\theta + \\
            & \quad + \left(\int_0^1 D^2_{xx} \beta_s(\zeta^0+\theta\varepsilon l,X_s^\varepsilon(\theta) ((1-\theta)\beta_s(\zeta^0,X_s^{\zeta^0}) + \theta \beta_s(\zeta^0+\varepsilon l,X_s^{\zeta^0+\varepsilon l}),\cdot) d\theta \right) Y_s^\varepsilon + \\
            & \quad + \frac{1}{\varepsilon} \int_0^1 D_x\beta_s(\zeta^0+\theta\varepsilon l,X_s^\varepsilon(\theta))d\theta) (\beta_s(\zeta^0+\varepsilon l,X_s^{\zeta^0+\varepsilon l})-\beta_s(\zeta^0,X_s^{\zeta^0})) d\theta \\
            &=: (\phi_1^\varepsilon)'_s + (f_1^\varepsilon)'_s Y_s^\varepsilon + f_s^\varepsilon(\phi^\varepsilon_s + f_s^\varepsilon Y_s^\varepsilon).
        \end{aligned}
    \end{equation*}
    Notice that, as a general fact, we can define $\bar{X}^\zeta := (\zeta, X^\zeta) \in U \times \R^d$ and we get that $\delta \bar{X}^\zeta = (0,\delta X^\zeta)$.
    This implies that, from the point of view of stochastic sewing, we can identify $\bar{X}^\zeta$ as a $\R^d$-valued stochastic controlled rough path with $(\bar{X}^\zeta)' = (0,\beta(\zeta,X^\zeta))$. 
    With a little abuse of notation, we keep writing $((\zeta,X^\zeta),\beta(\zeta,X^\zeta))$ instead of $((\zeta,X^\zeta),(0,\beta(\zeta,X^\zeta)))$.
    Applying \cref{lemma:stochasticcontrolledlinearvectorfields_Riemannintegral} to the $\mathbf{D}_W^{\alpha,\delta}\mathscr{L}$-continuous family of stochastic controlled rough path $\{(\zeta^0 + \varepsilon l, X^{\zeta^0 + \varepsilon l}), \beta(\zeta^0 + \varepsilon l, X^{\zeta^0 + \varepsilon l})\}_{\varepsilon \ne 0}$, it follows that \begin{equation*}
        (f^\varepsilon, (f^\varepsilon)' := (f^\varepsilon_1)'+(f^\varepsilon_2)') \in \bigcap_{p \in [2,\infty)} \mathbf{D}_W^{2\delta}L_{p,\infty}\mathcal{L}(\R^d;\R^{d\times n}),
    \end{equation*} for any $n \in \mathbb{N}$, and it approaches $(f,f'):= (D_x\beta(\zeta^0,X^{\zeta^0}),D^2_{xx}\beta(\zeta^0,X^{\zeta^0})\beta(\zeta^0,X^{\zeta^0})+D_x\beta'(\zeta^0,X^{\zeta^0}))$ in $\mathbf{D}_W^{\delta,\lambda}\mathscr{L}\mathcal{L}$ as $\varepsilon \to 0$, for any $\lambda \in [0,\delta)$.
    In particular, being $\alpha + 2\delta > 1$, we can choose such $\lambda$ so that it satisfies $\alpha + \delta + \lambda > 1$.
    Moreover, it also follows that $(\phi^\varepsilon, (\phi^\varepsilon)' := (\phi_1^\varepsilon)' + (\phi^\varepsilon_2)') \in \mathbf{D}_W^{\delta,\delta'}\mathscr{L}(\R^{d \times n})$ and it tends to $(\phi,\phi')$ in $\mathbf{D}_W^{\delta,\lambda}\mathscr{L}$, where \begin{equation*}
        (\phi, \phi') := \left(D_\zeta \beta(\zeta^0,X^{\zeta^0})l, D^2_{x\zeta} \beta(\zeta^0,X^{\zeta^0}) (\beta(\zeta^0,X^{\zeta^0}),l) + D_\zeta \beta'(\zeta^0,X^{\zeta^0})l \right).
    \end{equation*}
    Therefore we have shown that $Y^\varepsilon$ solves \begin{equation*}
        Y^\varepsilon_t = Y^\varepsilon_0 + F^\varepsilon_t + \int_0^t G_r^\varepsilon Y_r^\varepsilon dr + \int_0^t S_r^\varepsilon Y_r^\varepsilon dB_r + \int_0^t (f^\varepsilon_r Y_r^\varepsilon,(f^\varepsilon)'_r Y_r^\varepsilon + f_r^\varepsilon (f_r^\varepsilon Y_r^\varepsilon + (F^\varepsilon)'_r) ) d\mathbf{W}_r
    \end{equation*}
    with $Y^\varepsilon_0 = \frac{1}{\varepsilon}(X_0^{\zeta^0 + \varepsilon l}-X_0^{\zeta^0})$ and $(F^\varepsilon,(F^\varepsilon)') = \left( \int_0^\cdot \mu_r^\varepsilon dr + \int_0^\cdot \nu_r^\varepsilon dB_r + \int_0^\cdot (\phi^\varepsilon_r, (\phi^\varepsilon)'_r) d\mathbf{W}_r,\phi^\varepsilon \right).$ 
    Let us now define, for any $\zeta \in U$, $Y^\zeta = (Y^\zeta_t)_{t \in [0, T]}$ to be the solution of the following (well-posed) linear SDE \begin{equation} \label{eq:equationfortheBLderivative}
      Y_t^\zeta = \mathscr{L}\text{-}\frac{\partial}{\partial l} X_0^\zeta + F^{\zeta}_t + \int_0^t G_r^\zeta Y_r^\zeta dr + \int_0^t S_r^\zeta Y_r^\zeta dB_r + \int_0^t (f_r^\zeta ,(f^\zeta)'_r) Y_r^\zeta d\mathbf{W}_r,
  \end{equation}
  with \begin{equation} \label{eq:coefficientsforBLderivative}
      \begin{aligned}
        G_r^\zeta  &= D_x b_r (\zeta, X_r^{\zeta}) \qquad \qquad  
       S_r^\zeta  = D_x \sigma_r (\zeta, X_r^{\zeta}) \\
       (f_r^\zeta,(f^\zeta)'_r) &= (D_x\beta_r(\zeta, X_r^{\zeta}), D^2_{xx}\beta_r(\zeta, X_r^{\zeta})\beta_r(\zeta, X_r^{\zeta}) + D_x\beta'_r(\zeta, X_r^{\zeta})) \\
       (F^{\zeta},(F^{\zeta})')  &= \left( \int_0^\cdot \mu^\zeta_r dr + \int_0^\cdot \nu^\zeta_r dB_t + \int_0^\cdot (\phi^\zeta_r, (\phi^\zeta)'_r) d\mathbf{W}_r, \phi^\zeta \right) \\
       \mu^\zeta_r &= D_{\zeta} b_r (\zeta, X_r^{\zeta}) l \qquad \qquad
       \nu^\zeta_r = D_{\zeta} \sigma_r (\zeta, X_r^{\zeta})  l \\
       (\phi^\zeta_r, (\phi^\zeta)'_r) &= (D_\zeta\beta_r(\zeta,X_r^{\zeta})l,D^2_{x\zeta}\beta_s(\zeta,X_r^{\zeta})(\beta_s(\zeta,X_s^{\zeta}),l) + D_\zeta \beta'_s(\zeta,X_s^{\zeta})l).
  \end{aligned}
  \end{equation} 
  By construction, we have that \begin{equation*}
      \sup_{\zeta \in U} \left(\|G^\zeta\|_\infty + \|S^\zeta\|_\infty + \llbracket (f^\zeta,(f^\zeta)') \rrbracket_{\mathcal{L};\infty} + \llbracket (f^\zeta,(f^\zeta)') \rrbracket_{W;\delta,\delta';p,\infty;\mathcal{L}} \right) <+\infty.
  \end{equation*} 
  The previous considerations and \cref{prop:continuity_linearRSDEs} allow us to conclude that 
  \begin{equation*}
         (Y^{\zeta^0},f^{\zeta^0}Y^{\zeta^0} + (F^{\zeta^0})') = \mathbf{D}_W^{\alpha,\delta}\mathscr{L}(\R^d) \text{-} \lim_{\varepsilon \to 0} (Y^\varepsilon,f^\varepsilon Y^\varepsilon + (F^\varepsilon)').
        \end{equation*} 
  In other words, due to the arbitrariety of $\zeta^0$, we have that $\{(Y^{\zeta},f^\zeta Y^\zeta + (F^{\zeta})')\}_{\zeta \in U}$ is the (family of) $\mathbf{D}_W^{\alpha,\delta}\mathscr{L}$-derivatives of $\{(X^\zeta,\beta(\zeta,X^\zeta)\}_{\zeta \in U}$ in the direction $l$. \\  
  Assume now that all the assumptions about the continuity of the derivatives hold. Being $\{(\zeta,X^\zeta)\}_{\zeta \in U}$ $\mathbf{D}_W^{\alpha,\delta}\mathscr{L}$-continuous by contruction, we apply again \cref{prop:continuity_linearRSDEs} to have that $\{(Y^\zeta,f^\zeta Y^\zeta + (F^\zeta)')\}_{\zeta \in U}$ is $\mathbf{D}_W^{\alpha,\delta}\mathscr{L}(\R^d)$-continuous. 
  Indeed, from \cref{prop:stochasticlinearvectorfields_properties} it follows that $\{G^\zeta\}_{\zeta \in U}, \{S^\zeta\}_{\zeta \in U}$ are $\mathscr{L}\mathcal{L}$-continuous, and $\{\mu^\zeta\}_{\zeta \in U},\{\nu^\zeta\}_{\zeta \in U}$ are $\mathscr{L}$-continuous.
  Moreover, from \cref{prop:stochasticcontrolledlinearvectorfields_properties} it follows that $\{(f^\zeta,(f^\zeta)')\}_{\zeta \in U}$ is $\mathbf{D}_W^{\delta,\lambda}\mathscr{L}\mathcal{L}$-continuous, and $\{(\phi^\zeta,(\phi^\zeta)')\}_{\zeta \in U}$ is $\mathbf{D}_W^{\delta,\lambda}\mathscr{L}$-continuous. 
  This in particular implies - taking into account \cref{prop:continuityanddifferentiability_forcingterm} - that $\{(F^\zeta,(F^\zeta)')\}_{\zeta \in U}$ is $\mathbf{D}_W^{\alpha,\delta}\mathscr{L}$-continuous. \\
  
  Suppose now the theorem has been proved for $k=k_0$ and that, in addition, all the assumptions are satisfied for $k=k_0+1$. In this case $\kappa-1 \in (2+k_0,3+k_0]$.
  It is enough to show that, for any $l \in \mathbb{S}(\mathcal{U})$, the family $\{(Y^\zeta,f^\zeta Y^\zeta + (F^\zeta)')\}_{\zeta \in U}$ of stochastic controlled rough paths, defined according to \eqref{eq:equationfortheBLderivative}, is $k_0$-times ($\mathbf{D}_W^{\alpha,\delta}\mathscr{L}$-continuously) $\mathbf{D}_W^{\alpha,\delta}\mathscr{L}$-differentiable. 
  By inductive assumption, $\{\mathscr{L}\text{-}\frac{\partial}{\partial l} X_0^\zeta \}_{\zeta \in U}$ is $k_0$-times ($\mathscr{L}$-continuously) $\mathscr{L}$-differentiable.
  Moreover, $\{((\zeta,X^\zeta),\beta(\zeta,X^\zeta))\}_{\zeta \in U}$ is $k_0$-times ($\mathbf{D}_W^{\alpha,\delta}\mathscr{L}(\R^d)$-continuously) $\mathbf{D}_W^{\alpha,\delta}\mathscr{L}(\R^d)$-differentiable as well.  
  Due to the assumptions on $b,\sigma$, applying \cref{prop:stochasticlinearvectorfields_properties} we have that $\{G^\zeta\}_{\zeta \in U}$ and $\{S^\zeta\}_{\zeta \in U}$ are $k_0$-times ($\mathscr{L}$-continuously) $\mathscr{L}$-differentiable.
  Thanks to the assumptions on $(\beta,\beta')$, from \cref{prop:stochasticcontrolledlinearvectorfields_properties} we can deduce that $\{(f^\zeta,(f^\zeta)')\}_{\zeta \in U}$ is $k_0$-times ($\mathbf{D}_W^{\delta,\lambda}\mathscr{L}\mathcal{L}$-continuously) $\mathbf{D}_W^{\delta,\lambda}\mathscr{L}\mathcal{L}$-differentiable, for any $\lambda\in [0,\lambda)$.
  A similar thing also holds for the $k_0$-times ($\mathbf{D}_W^{\alpha,\delta}\mathscr{L}\mathcal{L}$-continuous) $\mathbf{D}_W^{\alpha,\delta}\mathscr{L}\mathcal{L}$-differentiability of  $\{(F^\zeta,(F^\zeta)')\}_{\zeta \in U}$, taking into account \cref{prop:continuityanddifferentiability_forcingterm}. 
  The recursive assertion is therefore proved, due to \cref{thm:differentiability_linearRSDEs}.
\end{proof}

  \subsection{Continuity and differentiability of composition and integration} \label{section:compositionandintegration_technicalresults}

In this section we collect a series of auxiliary result, which are extensively used in the previous sections.
Applications also appear in \cref{section:backwardroughPDEs}. 
At the start of \cref{section:parameterdependence} we introduced suitable notions of continuity and differentiability for a family $\{X^\zeta\}_{\zeta \in U}$ of $\R^d$-valued stochastic processes depending on a parameter. 
These notions were later refined to accommodate the framework of rough stochastic analysis developed in \cite{FHL21}.
Before applying these concepts to solutions of parameter-dependent rough SDEs, we verify their compatibility with key operations—namely composition and integration—by addressing the following questions: \begin{itemize}
    \item[-]  If $\{X^\zeta\}_{\zeta \in U}$ suitably continuous (differentiable) with respect to $\zeta$, is $\{g(X^\zeta)\}_{\zeta \in U}$ still continuous (differentiable) for a given function $g$? Here $g$ represents a (possibly random) vector field. 
    If $\{X^\zeta\}_{\zeta} \subseteq \mathscr{L}$ in the sense of \cref{def:stochasticprocessesinL}, this is addressed in \cref{prop:stochasticlinearvectorfields_properties} and suitably generalized in \cref{lemma:stochasticlinearvectorfields_Riemannintegral} and \cref{prop:stochasticcontinuousvectorfields_properties}.  
    If instead $X^\zeta$ is a stochastic controlled rough path (cf.\ \cref{def:L-stochasticcontrolledroughpaths}) the relevant result is \cref{prop:stochasticcontrolledlinearvectorfields_properties}, where $g$ is assumed to be a stochastic controlled vector field. 
    Continuity and differentiability are also shown to be preserved under multiplication; see\cref{lemma:compositionwithfunctionsofpolynomialgrowth}. 
    \item[-] If $\{X^\zeta\}_{\zeta \in U}$ is suitably continuous (differentiable) with respect to $\zeta$, do the Lebesgue integral $\int_0^\cdot X^\zeta_r dr$, the It\^o integral $\int_0^\cdot X^\zeta_r dB_r$ and the rough stochastic integral $\int_0^\cdot X^\zeta_r d\mathbf{W}_r$ inherit the same regularity? This is answered in \cref{prop:continuityanddifferentiability_forcingterm}
\end{itemize}
Let $V$ be a finite-dimensional vector space (think of  , $V=\R^{d_2}$ for some $d_2 \in \mathbb{N}_{\ge 2}$).

\begin{prop} \label{prop:stochasticlinearvectorfields_properties}
(see \cite[Lemma 1.7.6]{KRYLOV} and \cite[Theorem 1.9.2 (b)]{KRYLOV})
Let $g:\Omega \times [0,T] \times \R^d \to V$ be a progressively measurable map such that, for any $t \in [0,T]$ and for $\mathbb{P}$-almost every $\omega$, $\R^d \ni x \mapsto g_t(x):=g(\omega,t,x) \in V$ is continuous. 
Moreover, assume that there exist $K>0, m \ge 1$ such that \begin{equation*}
      \esssup_{\omega} \sup_{t \in [0, T]} |g_t(x)| \le K (1+|x|^m) \quad \text{for any $x \in \R^d$}.
\end{equation*}
    \begin{enumerate}
        \item Let $(\mathcal{U},\rho)$ be a metric space and let $U \subseteq \mathcal{U}$ be non-empty.
        Let $\{X^\zeta\}_{\zeta \in U}$ be a family of $\mathscr{L}(\R^d)$-continuous stochastic processes. Then $\{g(X^\zeta)\}_{\zeta \in U}$ is $\mathscr{L}(V)$-continuous
        \item Let $(\mathcal{U},|\cdot|)$ be a normed vector space and let $U \subseteq \mathcal{U}$ be a non-empty open subset. 
        Let $k \in \mathbb{N}_{\ge 1}$ and let $\{X^\zeta\}_{\zeta \in U}$ be a family of $k$-times ($\mathscr{L}(\R^d)$-continuously) $\mathscr{L}(\R^d)$-differentiable stochastic processes.
        Assume that, for $\mathbb{P}$-almost every $\omega$ and for any $t \in [0,T]$, $g(\omega,t,\cdot) \in C^k(\R^d;V)$, and all the derivatives up to order $k$ do not exceed $K(1 + |x|^m)$. 
        Then $\{g(X^\zeta)\}_{\zeta \in P}$ is $k$-times ($\mathscr{L}$-continuously) $\mathscr{L}$-differentiable. 
        In particular, for any $l \in \mathbb{S}(\mathcal{U})$, \begin{equation*}
          \mathscr{L}\text{-}\frac{\partial}{\partial l} g(X^\zeta) = D_xg(X^\zeta) \mathscr{L}\text{-}\frac{\partial}{\partial l} X^\zeta .
      \end{equation*}
    \end{enumerate}
\end{prop}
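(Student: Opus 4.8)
The plan is to follow Krylov's arguments \cite[Lemma 1.7.6, Theorem 1.9.2]{KRYLOV}, handling the continuity statement (1) by a uniform-integrability argument and the differentiability statement (2) by the mean value theorem together with a Leibniz-type product rule; the whole of (2) will be built on (1). For (1), I would first note that $g(X^{\zeta^0})\in\mathscr{L}(V)$ — it is progressively measurable, being the composition of a progressively measurable vector field continuous in $x$ with a progressively measurable process, and $|g_r(X^{\zeta^0}_r)|\le K(1+|X^{\zeta^0}_r|^m)$ together with $X^{\zeta^0}\in\mathscr{L}$ gives $\E\int_0^T|g_r(X^{\zeta^0}_r)|^p\,\rd r<\infty$ for all $p$. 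Then, fixing $p$ and a sequence $\zeta^n\to\zeta^0$, I would argue by the subsequence principle: along any subsequence, $X^{\zeta^n}\to X^{\zeta^0}$ in $L^q(\Omega\times[0,T])$ for every $q$, so a further subsequence converges $(\mathbb{P}\otimes\rd r)$-a.e.; continuity of $g$ in $x$ gives $g_r(X^{\zeta^{n_k}}_r)\to g_r(X^{\zeta^0}_r)$ a.e.; and since $|g_r(X^{\zeta^{n_k}}_r)|^{2p}\lesssim 1+|X^{\zeta^{n_k}}_r|^{2pm}$ with $\sup_k\|X^{\zeta^{n_k}}\|_{L^{2pm}(\Omega\times[0,T])}<\infty$, the family $\{|g_r(X^{\zeta^{n_k}}_r)|^p\}_k$ is bounded in $L^2(\Omega\times[0,T])$, hence uniformly integrable, so Vitali's theorem promotes the a.e. convergence to $L^p(\Omega\times[0,T])$. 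Since the subsequence was arbitrary, the full sequence converges, which is (1).

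For (2) I would argue recursively on $k$, first settling $k=1$. Fix $\zeta^0\in U$, $l\in\mathbb{S}(\mathcal{U})$, and set $Y^\varepsilon:=\varepsilon^{-1}(X^{\zeta^0+\varepsilon l}-X^{\zeta^0})$ and $X^\varepsilon_r(\theta):=(1-\theta)X^{\zeta^0}_r+\theta X^{\zeta^0+\varepsilon l}_r$. The mean value theorem gives
\begin{equation*}
  \frac{1}{\varepsilon}\bigl(g_r(X^{\zeta^0+\varepsilon l}_r)-g_r(X^{\zeta^0}_r)\bigr)=\Bigl(\int_0^1 D_xg_r\bigl(X^\varepsilon_r(\theta)\bigr)\,\rd\theta\Bigr)Y^\varepsilon_r=:A^\varepsilon_r Y^\varepsilon_r .
\end{equation*}
By hypothesis $Y^\varepsilon\to Y^0:=\mathscr{L}\text{-}\frac{\partial}{\partial l}X^{\zeta^0}$ in $\mathscr{L}$, hence $\{Y^\varepsilon\}$ is $\mathscr{L}$-bounded and $X^\varepsilon(\theta)-X^{\zeta^0}=\theta\varepsilon Y^\varepsilon\to 0$ in $\mathscr{L}$ uniformly in $\theta$. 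Viewing $X^\varepsilon(\cdot)$ as a process on $\Omega\times[0,1]$ with $\mathbb{P}\otimes\rd\theta$ (and $g,D_xg$ extended trivially in the extra coordinate), I would apply (1) to the polynomially bounded vector field $D_xg$ to get $D_xg(X^\varepsilon(\cdot))\to D_xg(X^{\zeta^0})$ in $\mathscr{L}(\Omega\times[0,1])$, and then Jensen's inequality in $\theta$ to get $A^\varepsilon\to A^0:=D_xg(X^{\zeta^0})$ in $\mathscr{L}$. Finally, splitting $A^\varepsilon Y^\varepsilon-A^0Y^0=A^\varepsilon(Y^\varepsilon-Y^0)+(A^\varepsilon-A^0)Y^0$ and estimating each summand by Hölder's inequality with exponents $(2p,2p)$ — using $\mathscr{L}$-boundedness of $\{A^\varepsilon\}$, $\|Y^\varepsilon-Y^0\|_{2p}\to0$, $\|A^\varepsilon-A^0\|_{2p}\to0$ and $Y^0\in\mathscr{L}$ — gives $A^\varepsilon Y^\varepsilon\to A^0Y^0$ in $\mathscr{L}$, which is the claimed differentiability together with the formula $\mathscr{L}\text{-}\frac{\partial}{\partial l}g(X^\zeta)=D_xg(X^\zeta)\,\mathscr{L}\text{-}\frac{\partial}{\partial l}X^\zeta$. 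The $\mathscr{L}$-continuity of this derivative in $\zeta$ would follow by combining (1) (applicable because $\{X^\zeta\}$ is $\mathscr{L}$-continuous) with the same Hölder splitting.

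For the inductive step $k\to k+1$, I would invoke the first-order case to write $\mathscr{L}\text{-}\frac{\partial}{\partial l}g(X^\zeta)=D_xg(X^\zeta)\,\mathscr{L}\text{-}\frac{\partial}{\partial l}X^\zeta$; since $D_xg\in C^k$ with polynomially bounded derivatives and $\{X^\zeta\}$ is $k$-times ($\mathscr{L}$-continuously) $\mathscr{L}$-differentiable, the inductive hypothesis makes $\{D_xg(X^\zeta)\}$ $k$-times ($\mathscr{L}$-continuously) $\mathscr{L}$-differentiable, while $\{\mathscr{L}\text{-}\frac{\partial}{\partial l}X^\zeta\}$ is such by definition of $(k+1)$-times differentiability; a Leibniz rule for $\mathscr{L}$-differentiability (\cref{lemma:compositionwithfunctionsofpolynomialgrowth}), itself a chain of Hölder splittings, then makes the product $k$-times differentiable, so $\{g(X^\zeta)\}$ is $(k+1)$-times differentiable. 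The main obstacle is entirely in (1): because $g$ carries no uniform modulus of continuity, the proof must go through almost-everywhere convergence along subsequences, and the polynomial-growth bound has to supply precisely the $L^2$-in-$(\omega,r)$ control needed for uniform integrability. Once (1) is established, everything in (2) is routine manipulation of Hölder's inequality, the only care being to keep the auxiliary parameter $\theta$ under control — which is exactly what the passage to $\Omega\times[0,1]$ achieves.
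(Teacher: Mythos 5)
Your proof is correct and follows essentially the same route as the paper's: part (1) via convergence in measure plus uniform integrability (the paper uses the bounded-function/Cauchy--Schwarz trick where you use Vitali's theorem), and part (2) via the mean value theorem, a H\"older splitting of $A^\varepsilon Y^\varepsilon - A^0 Y^0$, and recursion on $k$ with the product handled through a polynomially bounded bilinear map. The only cosmetic differences are that the paper funnels the $\theta$-averaged derivative through \cref{lemma:stochasticlinearvectorfields_Riemannintegral} rather than your Fubini-in-$\theta$ application of part (1), and that for the Leibniz step it applies the proposition itself recursively to $h(x,y)=xy$, whereas your citation of \cref{lemma:compositionwithfunctionsofpolynomialgrowth} (the rough-path analogue) is not quite the right reference, though the H\"older-splitting argument you describe is exactly what is needed.
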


\begin{rmk} \label{rmk:identificationstochasticlinearvectorfields}
    The previous proposition provides a criterion to construct $\mathscr{L}\mathcal{L}$-continuous ($\mathscr{L}\mathcal{L}$-differentiable) stochastic linear vector fields, in the sense of \cref{def:continuityforstochasticlinearvectorfields} (\cref{def:differentiabilitystochasticlinearvectorfields}).  
    Indeed, if $g$ is a random bounded vector field from $\R^d$ to $V=\mathcal{L}(\R^{d_2};\bar{V})$, then \cref{prop:stochasticlinearvectorfields_properties} ensures that $\{g(X^\zeta)\}_{\zeta \in U}$ is $\mathscr{L}\mathcal{L}(\R^{d_2};\bar{V})$-continuous and/or $k$-times ($\mathscr{L}\mathcal{L}(\R^{d_2};\bar{V})$-continuously) $\mathscr{L}\mathcal{L}(\R^{d_2};\bar{V})$-differentiable.
\end{rmk} 

\begin{lemma} \label{lemma:stochasticlinearvectorfields_Riemannintegral}
    Let $g:\Omega \times [0,T] \times \R^d \to \mathcal{L}(\R^{d_2},V)$ be a random bounded continuous vector field. Let $(\mathcal{U},|\cdot|)$ be a normed vector space and let $U \subseteq \mathcal{U}$ be a non-empty open subset.
    Let $\{X^\zeta\}_{\zeta \in U}$ be $\mathscr{L}(\R^d)$-continuous. 
    Let $\zeta^0 \in U$ and $l \in \mathbb{S}(\mathcal{U})$ be fixed.
    For any $\varepsilon \in \R$ define \begin{align*}
        Q^\varepsilon &= \int_0^1 g(X^\varepsilon(\theta)) \, d\theta := \int_0^1 g((1-\theta) X^{\zeta^0} + \theta X^{\zeta^0 + \varepsilon l}) \, d\theta
    \end{align*}
    Then $\{Q^\varepsilon\}_{\varepsilon \in \R}$ is a family of stochastic linear vector fields from $\R^{d_2}$ to $V$, and it tends to $Q:= g(X^{\zeta^0})$ in $\mathscr{L}\mathcal{L}$ as $\varepsilon \to 0$.
\end{lemma}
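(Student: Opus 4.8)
The plan is to verify the two assertions of the lemma in turn: that every $Q^\varepsilon$ is a stochastic linear vector field, and that $Q^\varepsilon \to Q$ in $\mathscr{L}\mathcal{L}$ as $\varepsilon \to 0$ in the sense of \cref{def:continuityforstochasticlinearvectorfields} (with $\varepsilon$ playing the role of the parameter and $0$ the accumulation point; note that $\varepsilon=0$ gives $X^0(\theta)\equiv X^{\zeta^0}$, hence $Q^0 = g(X^{\zeta^0}) = Q$, so $Q$ itself belongs to the family).

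First I would check the structural claim. For fixed $\varepsilon$ and $\theta$, the affine interpolation $(\omega,t)\mapsto X^\varepsilon_t(\theta)=(1-\theta)X^{\zeta^0}_t+\theta X^{\zeta^0+\varepsilon l}_t$ is progressively measurable, since $X^{\zeta^0},X^{\zeta^0+\varepsilon l}\in\mathscr{L}(\R^d)$ are; composing with $g$, which is Carath\'eodory (progressively strongly measurable in $(\omega,t)$, continuous in $x$), yields a progressively measurable $\mathcal{L}(\R^{d_2};V)$-valued process $g_t(X^\varepsilon_t(\theta))$ with $|g_t(X^\varepsilon_t(\theta))|_{\mathcal{L}}\le\|g\|_\infty$. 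Since $\theta\mapsto X^\varepsilon_t(\theta)(\omega)$ is continuous and $\mathcal{L}(\R^{d_2};V)$ is finite-dimensional, $(\omega,t,\theta)\mapsto g_t(X^\varepsilon_t(\theta))$ is jointly measurable, so $Q^\varepsilon_t:=\int_0^1 g_t(X^\varepsilon_t(\theta))\,d\theta$ is a well-defined Bochner integral, progressively measurable by Fubini, and bounded by $\|g\|_\infty$. Viewed as the constant-in-$y$ linear map $y\mapsto Q^\varepsilon_t(\omega)\,y$, this exhibits $Q^\varepsilon$ as a stochastic linear vector field from $\R^{d_2}$ to $V$ with $\|Q^\varepsilon\|_\infty\le\|g\|_\infty$; the same computation with $\varepsilon=0$ shows $Q\in\mathscr{L}\mathcal{L}$.

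For the convergence I would fix an arbitrary sequence $\varepsilon^n\to0$, $\varepsilon^n\neq0$, and an exponent $p\in[1,\infty)$. The $\mathscr{L}(\R^d)$-continuity of $\{X^\zeta\}_\zeta$ at $\zeta^0$ gives $X^{\zeta^0+\varepsilon^n l}\to X^{\zeta^0}$ in $L^q(\Omega\times[0,T];\R^d)$ for every $q\in[1,\infty)$, and from $|X^{\varepsilon^n}_r(\theta)-X^{\zeta^0}_r|=\theta\,|X^{\zeta^0+\varepsilon^n l}_r-X^{\zeta^0}_r|\le|X^{\zeta^0+\varepsilon^n l}_r-X^{\zeta^0}_r|$ it follows that $X^{\varepsilon^n}(\cdot)\to X^{\zeta^0}$ in $L^q$, and in particular in measure, on the finite measure space $\Omega\times[0,T]\times[0,1]$. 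Then I apply the usual sub-subsequence device: along any subsequence one extracts a further subsequence converging a.e., on which $g_r(X^{\varepsilon^{n_k}}_r(\theta))\to g_r(X^{\zeta^0}_r)$ a.e.\ by continuity of $g_r(\omega,\cdot)$; since these differences are bounded by $2\|g\|_\infty$, dominated convergence yields $\E\int_0^T\int_0^1|g_r(X^{\varepsilon^{n_k}}_r(\theta))-g_r(X^{\zeta^0}_r)|_{\mathcal{L}}^p\,d\theta\,dr\to0$, and Jensen's inequality $|Q^{\varepsilon^{n_k}}_r-Q_r|_{\mathcal{L}}^p\le\int_0^1|g_r(X^{\varepsilon^{n_k}}_r(\theta))-g_r(X^{\zeta^0}_r)|_{\mathcal{L}}^p\,d\theta$ then gives $\E\int_0^T|Q^{\varepsilon^{n_k}}_r-Q_r|_{\mathcal{L}}^p\,dr\to0$. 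Since every subsequence of $(\varepsilon^n)$ admits such a sub-subsequence, the full sequence converges, which is exactly the assertion $Q^\varepsilon\to Q$ in $\mathscr{L}\mathcal{L}$. The only delicate point is passing the convergence in measure through the random continuous vector field $g$, handled by this extraction (the same device underlying \cref{prop:stochasticlinearvectorfields_properties}); everything else is routine because the measure space is finite and $g$ is uniformly bounded.
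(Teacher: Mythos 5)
Your proposal is correct and follows essentially the same route as the paper: well-definedness via progressive measurability and the uniform bound $\|g\|_\infty$, then Jensen to pull the $\theta$-integral out, convergence in measure of the interpolated processes pushed through the continuity of $g$ in $x$ (the paper invokes the continuous mapping theorem where you spell out the sub-subsequence extraction), and dominated convergence to conclude. The extra care you take with the Carath\'eodory nature of $g$ and the sub-subsequence device is just a more explicit rendering of the paper's argument, not a different one.
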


\begin{proof}
    Notice that $Q^\varepsilon=\int_0^1 g(X^\varepsilon(\theta)) d\theta$ is well-defined as a stochastic linear vector field from $\R^{d_2}$ to $V$ since $(g_t(X^\varepsilon_t(\theta)))_{t \in [0,T]}$ is progressively measurable, uniformly bounded and its dependence in $\theta$ is continuous (cf.\ \cref{rmk:identificationstochasticlinearvectorfields}). 
    The same holds for $Q$. 
    For any $p \in [1,\infty)$ and for any $(\varepsilon^n)_n \subseteq \R$ converging to 0 as $n \to +\infty$,
    \begin{equation*}
        \E \left( \int_0^T |Q^{\varepsilon^n}_r - Q_r|^p \right) \le \int_0^1 \E \left( \int_0^T |g(X^{\varepsilon^n}(\theta)) - g(X^{\zeta^0})|^p \right) d\theta. 
    \end{equation*}
    By continuous mapping theorem, $g(X^{\varepsilon^n}(\theta))$ converges to $g(X^{\zeta^0})$ in measure $\mathbb{P} \otimes \lambda$, as $n \to +\infty$, and being $g$ uniformly bounded in $(\omega,t,x)$, a double application of Lebesgue's dominated convergence shows that $Q = \mathscr{L}\mathcal{L}\text{-}\lim_{\varepsilon \to 0} Q^\varepsilon$.
\end{proof}

\begin{prop} \label{prop:stochasticcontinuousvectorfields_properties}
    Let $(\mathcal{U},\rho)$ be a metric space and let $U \subseteq \mathcal{U}$ be non-empty. 
    Let $\{X^\zeta\}_{\zeta \in U}$ be a $\mathscr{L}(\R^d)$-continuous family of stochastic processes and let $\{g^\zeta\}_{\zeta \in U}$ be a $\mathscr{L}C^0_b(\R^d;V)$-continuous family of random bounded vector fields. 
    Then $\{g^\zeta(X^\zeta)\}_{\zeta \in U}$ is $\mathscr{L}(V)$-continuous. 
\end{prop}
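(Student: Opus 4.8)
The plan is to verify the defining condition of \cref{def:continuityofstochasticprocesses} directly. I would fix an accumulation point $\zeta^0 \in U$ (the case of an isolated point being vacuous), an arbitrary sequence $(\zeta^n)_{n\ge1}\subseteq U\setminus\{\zeta^0\}$ with $\rho(\zeta^n,\zeta^0)\to 0$, and an exponent $p\in[1,\infty)$, and aim to show
\[
\E\Big(\int_0^T |g^{\zeta^n}_r(X^{\zeta^n}_r) - g^{\zeta^0}_r(X^{\zeta^0}_r)|^p\,dr\Big)\longrightarrow 0 \qquad (n\to\infty).
\]
First I would record that for any $\zeta,\zeta'\in U$ the process $(\omega,r)\mapsto g^{\zeta}_r(\omega,X^{\zeta'}_r(\omega))$ is progressively measurable (progressive strong measurability of $g^\zeta$ into $C^0_b$ composed with progressive measurability of $X^{\zeta'}$) and that all integrands appearing below are integrable, thanks to the deterministic bounds $|g^\zeta|\le\|g^\zeta\|_\infty$. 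The argument then rests on the elementary splitting
\[
g^{\zeta^n}_r(X^{\zeta^n}_r) - g^{\zeta^0}_r(X^{\zeta^0}_r) = \big(g^{\zeta^n}_r(X^{\zeta^n}_r) - g^{\zeta^0}_r(X^{\zeta^n}_r)\big) + \big(g^{\zeta^0}_r(X^{\zeta^n}_r) - g^{\zeta^0}_r(X^{\zeta^0}_r)\big),
\]
whose two terms I would handle by different mechanisms.

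For the first term the vector field varies while the argument is frozen at $X^{\zeta^n}$, so the pointwise estimate $|g^{\zeta^n}_r(X^{\zeta^n}_r) - g^{\zeta^0}_r(X^{\zeta^n}_r)| \le |g^{\zeta^n}_r(\cdot) - g^{\zeta^0}_r(\cdot)|_\infty$ gives
\[
\E\Big(\int_0^T |g^{\zeta^n}_r(X^{\zeta^n}_r) - g^{\zeta^0}_r(X^{\zeta^n}_r)|^p\,dr\Big) \le T\,\sup_{t\in[0,T]}\big\||g^{\zeta^n}_t(\cdot)-g^{\zeta^0}_t(\cdot)|_\infty\big\|_p^p,
\]
and the right-hand side tends to $0$ by the $\mathscr{L}C^0_b$-continuity of $\{g^\zeta\}_{\zeta\in U}$ (\cref{def:convergenceforstochasticboundedcontinuousvectorfields}). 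This part is routine.

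For the second term the vector field $g^{\zeta^0}$ is fixed and only the argument moves. Since $\{X^\zeta\}$ is $\mathscr{L}(\R^d)$-continuous, $X^{\zeta^n}\to X^{\zeta^0}$ in $L^p(\Omega\times[0,T];\R^d)$, hence in $\mathbb{P}\otimes\lambda$-measure, where $\lambda$ is Lebesgue measure on $[0,T]$. I would then invoke the subsequence principle together with the $\mathbb{P}$-a.s.\ continuity of $x\mapsto g^{\zeta^0}_t(\omega,x)$ for every $t$ — from an arbitrary subsequence extract a further one along which $X^{\zeta^n}\to X^{\zeta^0}$ holds $\mathbb{P}\otimes\lambda$-a.e., then compose — to conclude $g^{\zeta^0}(X^{\zeta^n})\to g^{\zeta^0}(X^{\zeta^0})$ in $\mathbb{P}\otimes\lambda$-measure. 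Since $|g^{\zeta^0}_r(X^{\zeta^n}_r)-g^{\zeta^0}_r(X^{\zeta^0}_r)|^p \le (2\|g^{\zeta^0}\|_\infty)^p$ is uniformly bounded on the finite measure space $(\Omega\times[0,T],\mathbb{P}\otimes\lambda)$, the bounded convergence theorem upgrades this to $L^p$-convergence, so the second term also goes to $0$. Combining the two estimates proves the claim. The one step that needs care is this last passage: because the ``test function'' $g^{\zeta^0}$ itself depends on $(\omega,t)$, the continuous mapping theorem cannot be applied verbatim and one must argue through $\mathbb{P}\otimes\lambda$-a.e.\ convergent subsequences — which is exactly where the $\mathbb{P}$-a.s.\ continuity and progressive measurability of $g^{\zeta^0}$ are used.
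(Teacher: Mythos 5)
Your proof is correct and follows essentially the same route as the paper: the same splitting into $\bigl(g^{\zeta^n}(X^{\zeta^n})-g^{\zeta^0}(X^{\zeta^n})\bigr)+\bigl(g^{\zeta^0}(X^{\zeta^n})-g^{\zeta^0}(X^{\zeta^0})\bigr)$, with the first term controlled by the $\mathscr{L}C^0_b$-continuity exactly as in the paper. For the second term the paper simply invokes \cref{prop:stochasticlinearvectorfields_properties}, whereas you inline the argument (convergence in $\mathbb{P}\otimes\lambda$-measure via subsequences, then bounded convergence using the uniform bound $\|g^{\zeta^0}\|_\infty$), which is precisely how that cited composition lemma is proved, so the two are substantively identical.
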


\begin{proof}
    By construction, $(g^\zeta_t(X^\zeta_t))_{t \in [0,T]}$ is a progressively measurable stochastic process. 
    Let $\zeta^0 \in U$ be an accumulation point, and let $(\zeta^n)_{n \ge 1} \subseteq U \setminus \{\zeta^0\}$ be any arbitrary sequence such that $\rho(\zeta^n,\zeta^0) \to 0$ as $n \to +\infty$. 
    For any $n \ge 1$ and for any $p \in [1,\infty)$, it holds that \begin{equation*}
        \begin{aligned}
            &\E \left( \int_0^T |g^{\zeta^n}_r(X^{\zeta^n}_r) - g^{\zeta^0}_r(X^{\zeta^0}_r)|^p dr \right) \le \E \left( \int_0^T |g^{\zeta^n}_r(\cdot) - g^{\zeta^0}_r(\cdot)|_\infty^p dr \right) + \E \left( \int_0^T |g^{\zeta^0}_r(X^{\zeta^n}_r) - g^{\zeta^0}_r(X^{\zeta^0}_r)|^p dr \right). 
        \end{aligned}
    \end{equation*}
    The first term on the right-hand side tends to 0 as $n \to +\infty$ by assumption. The second term tends to 0, by applying \cref{prop:stochasticlinearvectorfields_properties}. 
\end{proof}

\begin{prop} \label{lemma:compositionwithfunctionsofpolynomialgrowth}
    Let $W \in C^\alpha([0,T];\R^n)$, with $\alpha \in (0,1]$, and let $h:\R^d \to V$ be three-times differentiable such that there exist $K>0$ and $m \ge 1$ with \begin{equation*}
        |D_xh(x)| + |D^2_{xx}h(x)| + |D^3_{xxx}h(x)| \le K(1+|x|^m) \quad \text{for any $x \in \R^d$}.
    \end{equation*}
    Let $\gamma,\gamma' \in [0,1]$.
    \begin{enumerate}
        \item Let $(X,X') \in \mathbf{D}_W^{\gamma,\gamma'}\mathscr{L}(\R^d)$, with $X_0 \in \mathscr{L}(\R^d)$. Then $(h(X),D_x h(X)X') \in \mathbf{D}_W^{\gamma,\gamma'}\mathscr{L}(V)$. 
        \item Let $(\mathcal{U},\rho)$ be a metric space and let $U \subseteq \mathcal{U}$ be non-empty. Let $\{(X^\zeta,(X^\zeta)')\}_{\zeta \in U}$ be $\mathbf{D}_W^{\gamma,\gamma'}\mathscr{L}(\R^d)$-continuous.
        Then $\{(h(X^\zeta),D_x h(X^\zeta)(X^\zeta)')\}_{\zeta \in U}$ is $\mathbf{D}_W^{\gamma,\gamma'}\mathscr{L}(V)$-continuous.
        \item Let $(\mathcal{U},|\cdot|)$ be a normed vector space and let $U \subseteq \mathcal{U}$ be a non-empty open subset. 
        Let $k \in \mathbb{N}_{\ge 1}$ and assume $h \in C^{2+k}_p(E,\bar{E})$. 
        Let $\{(X^\zeta,(X^\zeta)'\}_{\zeta \in U}$ be $k$-times ($\mathbf{D}_W^{\alpha,\bar{\alpha}}\mathscr{L}$-continuously) $\mathbf{D}_W^{\alpha,\bar{\alpha}}\mathscr{L}$-differentiable.  Then $\{(h(X^\zeta),D_x h(X^\zeta)(X^\zeta)')\}_{\zeta \in U}$ is $k$-times ($\mathbf{D}_W^{\alpha,\bar{\alpha}}\mathscr{L}$-continuously) $\mathbf{D}_W^{\alpha,\bar{\alpha}}\mathscr{L}$-differentiable, for any $\bar{\alpha} \in (0,\alpha)$.
    \end{enumerate}
\end{prop}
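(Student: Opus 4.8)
The plan is to prove the three assertions in order, each relying on the previous one, with Taylor and mean‑value expansions of $h$ as the only real tool and with the polynomial‑growth hypothesis together with the fact that processes in $\mathscr{L}$ have finite moments of every order as the mechanism that keeps all estimates finite. For part~(1) I would verify directly that $(Z,Z'):=(h(X),D_xh(X)X')$ satisfies the three defining conditions of \cref{def:stochasticcontrolledroughpaths} in $\mathbf{D}_W^{\gamma,\gamma'}L_p$ for every $p\in[2,\infty)$. Writing $\delta Z_{s,t}=\big(\int_0^1 D_xh(X_s+\theta\,\delta X_{s,t})\,\rd\theta\big)\delta X_{s,t}$, then $\delta Z'_{s,t}=\big(\int_0^1 D^2_{xx}h(X_s+\theta\,\delta X_{s,t})\,\rd\theta\big)\delta X_{s,t}\,X'_t+D_xh(X_s)\,\delta X'_{s,t}$, and $R^Z_{s,t}=D_xh(X_s)R^X_{s,t}+\int_0^1(1-\theta)D^2_{xx}h(X_s+\theta\,\delta X_{s,t})\,\rd\theta\,\delta X_{s,t}^{\otimes 2}$, one estimates each factor $|D^j_{x\dots x}h(X_s+\theta\,\delta X_{s,t})|\lesssim 1+|X_s|^m+|\delta X_{s,t}|^m$ and then splits the conditional $L^p$‑norms by conditional Hölder into a factor carrying the Hölder decay of $\delta X$, $\delta X'$ or $\E_\cdot R^X$ (at a higher integrability exponent, which is harmless since $X\in\bigcap_r\mathbf{D}_W^{\gamma,\gamma'}L_r$ with uniform‑in‑time moment bounds) times polynomial factors in $\sup_t\|X_t\|_r$ and $\|\delta X_{s,t}\|_r\lesssim|t-s|^\gamma$. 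Since raising such a decaying factor to a power $\ge 1$ only improves the time decay on the bounded interval $[0,T]$, every term comes out $\lesssim|t-s|^\gamma$, $\lesssim|t-s|^{\gamma'}$, $\lesssim|t-s|^{\gamma+\gamma'}$ respectively. (Alternatively one may truncate, replacing $h$ by $h\,\psi(\cdot/N)$ for a fixed bump $\psi$, so that $(h_N,0)$ is a deterministic controlled vector field with vanishing Gubinelli derivative, apply the $L_p$‑analogue of \cref{prop:compositionwithstochasticcontrolledvectorfields_compendium} with the vector‑field Hölder exponent taken large, and let $N\to\infty$; the direct route avoids the $L_{p,\infty}$ versus $L_p$ mismatch.)

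For part~(2), given an arbitrary sequence $\zeta^n\to\zeta^0$ along which $\|X^{\zeta^n},(X^{\zeta^n})';X^{\zeta^0},(X^{\zeta^0})'\|_{W;\gamma,\gamma';p}\to 0$ for all $p$, I would first record that the $\mathbf{D}_W^{\gamma,\gamma'}L_p$‑norms and all moments of the $X^{\zeta^n}$ are bounded uniformly in $n$. One then estimates the difference of the two compositions using the same expansions as in~(1), but now applied to $h$ along the segment joining $X^{\zeta^n}$ and $X^{\zeta^0}$: after one further Taylor step the increments $\delta\big(h(X^{\zeta^n})-h(X^{\zeta^0})\big)$, $\delta\big(D_xh(X^{\zeta^n})(X^{\zeta^n})'-D_xh(X^{\zeta^0})(X^{\zeta^0})'\big)$ and the corresponding remainder difference all reduce to integrals against $D^2_{xx}h$ and $D^3_{xxx}h$ — this is exactly why $h$ is required to be three‑times differentiable with polynomially bounded derivatives up to order three. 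Each term is bounded by a uniform polynomial prefactor times one of the quantities entering $\|X^{\zeta^n},(X^{\zeta^n})';X^{\zeta^0},(X^{\zeta^0})'\|_{W;\gamma,\gamma';p}$, hence tends to $0$; convergence at isolated points of $U$ is trivial.

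For part~(3), I would argue by induction on $k$. For $k=1$, fix $\zeta^0$ and $l$, set $Y^\varepsilon:=\frac1\varepsilon(X^{\zeta^0+\varepsilon l}-X^{\zeta^0})$ and its analogue for the Gubinelli derivatives, and write, with $X^\varepsilon(\theta)=(1-\theta)X^{\zeta^0}+\theta X^{\zeta^0+\varepsilon l}$,
\[
\tfrac1\varepsilon\big(h(X^{\zeta^0+\varepsilon l})-h(X^{\zeta^0})\big)=\Big(\int_0^1 D_xh(X^\varepsilon(\theta))\,\rd\theta\Big)\,Y^\varepsilon .
\]
By the parameter‑dependent Riemann‑integral lemma for controlled vector fields (\cref{lemma:stochasticcontrolledlinearvectorfields_Riemannintegral}, the controlled analogue of \cref{lemma:stochasticlinearvectorfields_Riemannintegral}), the bracketed coefficient, together with the natural Gubinelli derivative built from $D^2_{xx}h$, is a stochastic controlled vector field converging to $D_xh(X^{\zeta^0})$ in $\mathbf{D}_W^{\alpha,\bar\alpha}\mathscr{L}\mathcal{L}$ as $\varepsilon\to0$, for any $\bar\alpha<\alpha$; the strict loss of Hölder exponent in the derivative slot is precisely why the statement is phrased with $\bar\alpha\in(0,\alpha)$. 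Since by hypothesis $\{(Y^\varepsilon,(Y^\varepsilon)')\}$ converges in $\mathbf{D}_W^{\alpha,\bar\alpha}\mathscr{L}$, the all‑$p$ version of the product‑stability statement \cref{prop:stambilityundercompositionlinearvectorfields}(2) yields convergence of the difference quotients in $\mathbf{D}_W^{\alpha,\bar\alpha}\mathscr{L}$ to $D_xh(X^{\zeta^0})\,\mathbf{D}_W^{\alpha,\bar\alpha}\mathscr{L}\text{-}\frac{\partial}{\partial l}X^{\zeta^0}$, with the Gubinelli derivative $D^2_{xx}h(X^{\zeta^0})(X^{\zeta^0})'(\cdot)+D_xh(X^{\zeta^0})(\cdot)'$; this identifies the $\mathbf{D}_W^{\alpha,\bar\alpha}\mathscr{L}$‑derivative, and its $\mathbf{D}_W^{\alpha,\bar\alpha}\mathscr{L}$‑continuity follows from part~(2) together with the product analogue of part~(2). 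For the inductive step one observes that the derivative family is again of product type: $D_xh(X^\zeta)$ is $(k-1)$‑times differentiable by the induction hypothesis applied to $D_xh\in C^{2+(k-1)}_p$, the family $\mathbf{D}_W^{\alpha,\bar\alpha}\mathscr{L}\text{-}\frac{\partial}{\partial l}X^\zeta$ is $(k-1)$‑times differentiable by hypothesis, and products of $\mathbf{D}_W^{\alpha,\bar\alpha}\mathscr{L}$‑differentiable families are $\mathbf{D}_W^{\alpha,\bar\alpha}\mathscr{L}$‑differentiable, so the claim for $k$ follows from the claim for $k-1$.

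The main obstacle, and where essentially all the work lies, is the bookkeeping forced by the polynomial growth: every expansion of $h$ produces prefactors $|X_s|^m$, $|\delta X_{s,t}|^m$, $|X^{\zeta^0}|^m$, and so on, and one must systematically distribute the conditional $L^p$‑norms so that these land on factors of unrestricted integrability (available because we work in $\mathscr{L}$) while the Hölder decay stays on the remaining factor. The second, milder delicacy is confined to part~(3): one must keep track of the unavoidable drop $\bar\alpha<\alpha$ in the Gubinelli‑derivative regularity and match it consistently across the Riemann‑integral lemma, the product‑stability lemma and the induction.
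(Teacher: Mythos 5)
Your proposal is correct and follows essentially the paper's own route: parts (1)–(2) by mean-value/Taylor expansions that extend the bounded-composition estimates to polynomially growing $h$ (for (1) the paper simply cites \cite[Proposition 2.13]{BCN24}, so your direct verification just reproves that reference), and part (3) via the decomposition $\tfrac1\varepsilon\big(h(X^{\zeta^0+\varepsilon l})-h(X^{\zeta^0})\big)=\big(\int_0^1 D_xh(X^\varepsilon(\theta))\,\rd\theta\big)Y^\varepsilon$ combined with \cref{lemma:stochasticcontrolledlinearvectorfields_Riemannintegral} and \cref{prop:stambilityundercompositionlinearvectorfields}, followed by induction on $k$. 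The only presentational difference is in the inductive step, where the paper makes your appeal to ``products of differentiable families are differentiable'' precise by rewriting the derivative as $(g(Y^\zeta),D_xg(Y^\zeta)(Y^\zeta)')$ with $g(y^1,y^2)=D_xh(y^1)y^2$ of polynomial growth and $Y^\zeta=(X^\zeta,\mathbf{D}_W\mathscr{L}\text{-}\frac{\partial}{\partial l}X^\zeta)$, so the recursive hypothesis applies directly.
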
 

\begin{proof}
    \textit{1. } It follows from \cite[Proposition 2.13]{BCN24}. \\
    
    \textit{2. } If $h \in C^3_b(\R^d;V)$, the result would follow from \cref{prop:compositionwithstochasticcontrolledvectorfields_compendium} because, for any $p \in [2,\infty)$, we would have that \begin{equation} \label{eq:stabilityestimateforh}
        \| h(X^{\zeta^n}), D_x h(X^{\zeta^n}) (X^{\zeta^n})' ; h(X^{\zeta^0}), D_x h(X^{\zeta^0}) (X^{\zeta^0})' \|_{W;\gamma, \gamma';p} \lesssim \| X^{\zeta^n}, (X^{\zeta^n})' ; X^{\zeta^0}, (X^{\zeta^0})' \|_{W;\gamma, \gamma';p}
    \end{equation}
    for any $\zeta^0 \in U$ accumulation point and any $(\zeta^n)_{n \ge 1} \subseteq U \setminus \{\zeta^0\}$ such that $\rho(\zeta^n,\zeta^0) \to 0$ as $n \to +\infty$, where the implicit constant is uniform in $n$. \cref{prop:compositionwithstochasticcontrolledvectorfields_compendium} can be easily generalized to vector fields of polynomial growth. One need to consider the following useful elementary estimate : given any $g \in C^2(V,\bar{V})$ such that $|D_xg(x)| + |D^2_{xx}g(x)| \le K(1+|x|)^m$ and given any arbitrary $a,b,c,d \in V$, an application of the mean value theorem shows that \begin{equation} \label{eq:elementaryestimate}
        |g(a)-g(b)-g(c)+g(d)| \le K (1+|a|+|b|+|c|+|d|)^m  \big((|a-c|+|b-d|)|c-d|+|a-b-c+d|\big).
    \end{equation}  
    For instance,  for any $(s,t) \in \Delta_{[0,T]}$ it follows that \begin{equation*}
        \|h(X^{\zeta^n}_t) - h(X^{\zeta^n}_s) - h(X^{\zeta^0}_t) + h(X^{\zeta^0}_s)\|_p \lesssim_{K,m,p,M(4mp)} (1+\|\delta X^{\zeta^0}\|_{\gamma;2p}) \|X^{\zeta^n},(X^{\zeta^n})';X^{\zeta^0},(X^{\zeta^0})'\|_{W;\gamma,\gamma';4p} |t-s|^\gamma
    \end{equation*}
    where $M=M(q) >0$ is any constant such that $\sup_{n \ge 0} \|(X^{\zeta^n},(X^{\zeta^n})')\|_{\mathbf{D}_W^{\gamma,\gamma'}L_q} \le M(q)$. 
    Estimates for the remaining quantities can be obtained by exploiting \eqref{eq:elementaryestimate} while retracing the proof of \cref{prop:compositionwithstochasticcontrolledvectorfields_compendium} for the deterministic controlled vector field $(h,0)$; it is thus possible to conclude that \eqref{eq:stabilityestimateforh} also holds when $h$ is of polynomial growth, with an implicit constant depending on $K,T,p,\gamma,\gamma'$ and $M(4mp)$. \\

    \textit{3. } We argue recursively on $k$. Let $\zeta^0 \in U$, $l \in \mathbb{S}(\mathcal{U})$ and $p \in [2,\infty)$ be arbitrary.
    For any $\varepsilon \in \R \setminus \{0\}$ we can write \begin{equation*}
        \begin{aligned}
            \left( \frac{1}{\varepsilon} (h(X^{\zeta^0 + \varepsilon l}) - h(X^{\zeta^0})) , \frac{1}{\varepsilon} \big(D_xh(X^{\zeta^0+\varepsilon l}) (X^{\zeta^0 + \varepsilon l})' - D_xh(X^{\zeta^0}) (X^{\zeta^0})'\big) \right) = (f^\varepsilon Y^\varepsilon, (f^\varepsilon)' Y^\varepsilon + f^\varepsilon (Y^\varepsilon)'),
        \end{aligned}
    \end{equation*}
    where, denoting by $(X^\varepsilon(\theta),X^\varepsilon(\theta)')$ as in \eqref{eq;definitionofX^varepsilon(theta)}, \begin{equation*} \begin{aligned}
        (f^\varepsilon,(f^\varepsilon)') &:= \left( \int_0^1 D_xh(X^\varepsilon(\theta)) d\theta, \int_0^1 D^2_{xx}h(X^\varepsilon(\theta)) (X^\varepsilon(\theta)',\cdot)  d\theta \right) \\
        (Y^\varepsilon,(Y^\varepsilon)') &:=  \left( \frac{1}{\varepsilon}(X^{\zeta^0 + \varepsilon l} - X^{\zeta^0}) , \frac{1}{\varepsilon} \big(X^{\zeta^0 + \varepsilon l})'- (X^{\zeta^0})' \big)  \right).
    \end{aligned} 
    \end{equation*}
    From the estimates in \cref{prop:stambilityundercompositionlinearvectorfields} and using a very similar argument as in \cref{lemma:stochasticcontrolledlinearvectorfields_Riemannintegral}, it is straightforward to deduce that $\{(f^\varepsilon Y^\varepsilon, (f^\varepsilon)' Y^\varepsilon + f^\varepsilon (Y^\varepsilon)')\}_{\varepsilon \in \R \setminus \{0\}}$ converges to \begin{equation*}
        \left(D_xh(X^{\zeta^0}) \mathbf{D}_W^{\gamma,\gamma'}\mathscr{L}\text{-}\frac{\partial}{\partial l} X^{\zeta^0}, D^2_{xx}h(X^{\zeta^0}) \left( \mathbf{D}_W^{\gamma,\gamma'}\mathscr{L}\text{-}\frac{\partial}{\partial l} (X^{\zeta^0})', \mathbf{D}_W^{\gamma,\gamma'}\mathscr{L}\text{-}\frac{\partial}{\partial l} X^{\zeta^0} \right) + D_xh(X^{\zeta^0}) \mathbf{D}_W^{\gamma,\gamma'}\mathscr{L}\text{-}\frac{\partial}{\partial l} (X^{\zeta^0})' \right).
    \end{equation*} 
    in $\mathbf{D}_W^{\gamma,\gamma'}\mathscr{L}(V)$ as $\varepsilon \to 0$. 
    For any $\zeta \in U$, we define \begin{equation*} \begin{aligned}
        Z^\zeta &:= D_xh(X^{\zeta}) \mathbf{D}_W^{\gamma,\gamma'}\mathscr{L}\text{-}\frac{\partial}{\partial l} X^{\zeta} \\
        (Z^\zeta)' &:= D^2_{xx}h(X^{\zeta}) \left( \mathbf{D}_W^{\gamma,\gamma'}\mathscr{L}\text{-}\frac{\partial}{\partial l} (X^{\zeta})', \mathbf{D}_W^{\gamma,\gamma'}\mathscr{L}\text{-}\frac{\partial}{\partial l} X^{\zeta} \right) + D_xh(X^{\zeta}) \mathbf{D}_W^{\gamma,\gamma'}\mathscr{L}\text{-}\frac{\partial}{\partial l} (X^{\zeta})'.
    \end{aligned}
    \end{equation*}
    It is straightformward to show that $\{(Z^\zeta,(Z^\zeta)')\}_{\zeta \in U}$ is $\mathbf{D}_W^{\gamma,\gamma'}\mathscr{L}$-continuous, and this proves the claim in the case $k=1$. \\
    Let us now assume that the assertion is proved for $k=k_0$, and that all the assumptions hold for $k=k_0+1$.
    By recursive hypothesis, it can be easily shown that $\{(Z^\zeta,(Z^\zeta)')\}_{\zeta \in U}$ is $k_0$-times ($\mathbf{D}_W^{\gamma,\gamma'}\mathscr{L}$-continuously) $\mathbf{D}_W^{\gamma,\gamma'}\mathscr{L}$-differentiable. 
    Indeed, we can always see $(Z^\zeta,(Z^\zeta)')$ as $(g(Y^\zeta),D_xg(Y^\zeta)(Y^\zeta)')$ where \begin{equation*}
        g(y) := D_xh(y^1) y^2 \qquad y=(y^1,y^2) \in \R^d \times \R^d
    \end{equation*} is still a function of polynomial growth, and $Y^\zeta = (X^{\zeta}, \mathbf{D}_W^{\gamma,\gamma'}\mathscr{L}\text{-}\frac{\partial}{\partial l} X^{\zeta})$ is the stochastic controlled rough path in $\mathbf{D}_W^{\gamma,\gamma'}\mathscr{L}(\R^d \times \R^d)$, whose Gubinelli derivative is defined through the (linear) action \begin{equation*}
        (Y^\zeta)'x := \left((X^{\zeta})'x, (\mathbf{D}_W^{\gamma,\gamma'}\mathscr{L}\text{-}\frac{\partial}{\partial l} X^{\zeta})x \right) \qquad x \in \R^n
    \end{equation*} 
\end{proof}

\begin{prop} \label{prop:stochasticcontrolledlinearvectorfields_properties}
Let by $\alpha,\gamma,\gamma',\delta \in [0,1]$, with $\alpha \ne 0$. Denote by $\lambda=\min\{\gamma,\delta\}$ and $\lambda' = \min\{\gamma,\gamma',\delta\}$.
    \begin{enumerate}
        \item \textcolor{black}{ Let $(\mathcal{U},\rho)$ be a metric space and let $o \in U \subseteq \mathcal{U}$.
        Let $\{{W}^\zeta\}_{\zeta \in U} \subseteq C^{\alpha}([0,T];\R^n)$ be a collection of $\alpha$-H\"older continuous paths in $\R^n$, and let $\{(X^\zeta,(X^\zeta)')\}_{\zeta \in U}$ be a family of $\{\mathbf{D}_{W^\zeta}^{\gamma,\gamma'}\mathscr{L}(\R^d)\}_\zeta$-continuous stochastic controlled rough paths such that \begin{equation*}
            \sup_{\zeta \in U: \ \rho(\zeta,o) \le R} \|(X^\zeta,(X^\zeta)')\|_{\mathbf{D}_W^{\gamma,\gamma'}L_{p,\infty}} <+\infty, \quad \text{for any $R>0$ and for any $p \in [2,\infty)$}.
        \end{equation*}
        Let $\kappa \in (2,3]$. For any $\zeta \in U$, let $(g^\zeta,(g^\zeta)') \in \mathbf{D}_{W^\zeta}^{2\delta}\mathscr{L}C_b^{\kappa}(\R^d;V)$ such that $\{(g^\zeta,(g^\zeta)')\}_{\zeta \in U}$ is a $\{\mathbf{D}_{W^\zeta}^{2\delta}\mathscr{L}C_b^{\kappa-1}(\R^d;V)\}_\zeta$-continuous family of stochastic controlled vector fields and  \begin{equation*}
            \sup_{\zeta \in U: \ \rho(\zeta,o) \le R} \left(\llbracket(g^\zeta,(g^\zeta)')\rrbracket_{\kappa-1;\infty} + \llbracket(g^\zeta,(g^\zeta)')\rrbracket_{W^\zeta;2\delta;p,\infty;\kappa-1} \right) <+\infty, 
        \end{equation*} 
        for any $R>0$ and for any $p \in [2,\infty)$.
        For any $\zeta \in U$, define \begin{equation} \label{eq:definitionoff^zeta}
            (Z^\zeta,(Z^\zeta)'):=(g^\zeta(X^\zeta),D_xg^\zeta(X^\zeta)(X^\zeta)' + (g^\zeta)'(X^\zeta)).
        \end{equation} 
        Then $\{(Z^\zeta,(Z^\zeta)')\}_{\zeta \in U}$ is $\{\mathbf{D}_{W^\zeta}^{\lambda,\eta'}\mathscr{L}(V)\}_\zeta$-continuous for any $\eta' \in [0, \lambda')$;
        while it is $\{\mathbf{D}_{W^\zeta}^{\lambda,\lambda'}\mathscr{L}(V)\}_\zeta$-continuous if $\kappa=3$. }
        \item Let $(\mathcal{U},|\cdot|)$ be a normed vector space and let $U \subseteq \mathcal{U}$ be a non-empty open subset. Let $W:[0,T] \to \R^n$ be an $\alpha$-H\"older continuous path. Let $k \in \mathbb{N}_{\ge 1}$. Assume that $\{(X^\zeta,(X^\zeta)')\}_{\zeta \in U}$ is $k$-times ($\mathbf{D}_W^{\gamma,\gamma'}\mathscr{L}$-continuously) $\mathbf{D}_W^{\gamma,\gamma'}\mathscr{L}$-differentiable with \begin{equation*}
            \sup_{|\zeta|\le R} \|(X^\zeta,(X^\zeta)')\|_{\mathbf{D}_W^{\gamma,\gamma'}L_{p,\infty}} <+\infty, \quad \text{for any $R>0$ and for any $p \in [2,\infty)$}.
        \end{equation*} 
        Assume \begin{equation*}
            (g,g') \in \bigcap_{p \in [2,\infty)} \mathbf{D}_W^{2\delta} L_{p,\infty} C^\kappa_b(\R^d;V),
        \end{equation*}  
        for some $\kappa \in (2+k,3+k]$.
        Then the family $\{(Z^\zeta,(Z^\zeta)')\}_{\zeta \in U}$ of stochastic controlled rough paths defined as in \eqref{eq:definitionoff^zeta} with $(g^\zeta,(g^\zeta)') = (g,g')$ is $(k-1)$-times ($\mathbf{D}_W^{\lambda,{\lambda}'}\mathscr{L}(V)$-continuously) $\mathbf{D}_W^{\lambda,{\lambda}'}\mathscr{L}(V)$-differentiable. Moreover, all its $(k-1)$-th $\mathbf{D}_W^{\lambda,{\lambda'}}\mathscr{L}$-derivatives are  ($\mathbf{D}_W^{\lambda,\eta'}\mathscr{L}(V)$-continuously) $\mathbf{D}_W^{\lambda,\eta'}\mathscr{L}(V)$-differentiable for any $\eta' \in [0,\lambda')$; such derivatives are ($\mathbf{D}_W^{\lambda,\lambda'}\mathscr{L}(V)$-continuously) $\mathbf{D}_W^{\lambda,\lambda'}\mathscr{L}(V)$-differentiable if $\kappa =3+k$.
        In particular, for any $\zeta \in U$ and for any $l \in \mathbb{S}(\mathcal{U})$, \begin{equation*} \begin{aligned}
            &\mathbf{D}_W^{\lambda,\eta'}\mathscr{L}\text{-}\frac{\partial}{\partial l} (Z^\zeta,(Z^\zeta)') = \\
            &= \left( D_xg(X^\zeta) \mathbf{D}_W^{\gamma,\gamma'}\mathscr{L}\text{-}\frac{\partial}{\partial l} X^\zeta , D^2_{xx}g(X^\zeta)\mathbf{D}_W^{\gamma,\gamma'}\mathscr{L}\text{-}\frac{\partial}{\partial l} X^\zeta\mathbf{D}_W^{\gamma,\gamma'}\mathscr{L}\text{-}\frac{\partial}{\partial l} X^\zeta + D_xg'(X^\zeta) \mathbf{D}_W^{\gamma,\gamma'}\mathscr{L}\text{-}\frac{\partial}{\partial l} (X^\zeta)' \right).
        \end{aligned} 
        \end{equation*} 
    \end{enumerate}
\end{prop}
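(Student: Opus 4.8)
The plan is to derive both parts from the composition estimate (Proposition~\ref{prop:compositionwithstochasticcontrolledvectorfields_compendium}) and the product estimate for linear vector fields (Proposition~\ref{prop:stambilityundercompositionlinearvectorfields}), together with the Riemann--integral Lemma~\ref{lemma:stochasticcontrolledlinearvectorfields_Riemannintegral}, Remark~\ref{rmk:propertiesofstochasticcontrolledvectorfields} and the interpolation Corollary~\ref{rmk:interpolation}, running an induction on $k$ in part~2. For part~1 fix an accumulation point $\zeta^0$, an arbitrary sequence $\zeta^n\to\zeta^0$, and $p\in[2,\infty)$; the assumed local uniform bounds supply a constant $M$ controlling $\|(X^\zeta,(X^\zeta)')\|_{\mathbf D_{W^\zeta}^{\gamma,\gamma'}L_{p,\infty}}$ and the relevant norms of $(g^\zeta,(g^\zeta)')$ for $\zeta$ in the tail of the sequence and for $\zeta^0$. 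Applying the second bullet of Proposition~\ref{prop:compositionwithstochasticcontrolledvectorfields_compendium} with $W\leftrightarrow W^{\zeta^n}$, $\bar W\leftrightarrow W^{\zeta^0}$, $(X,X')\leftrightarrow(X^{\zeta^n},(X^{\zeta^n})')$, $(\bar X,\bar X')\leftrightarrow(X^{\zeta^0},(X^{\zeta^0})')$, $(\beta,\beta')\leftrightarrow(g^{\zeta^n},(g^{\zeta^n})')$, $(\bar\beta,\bar\beta')\leftrightarrow(g^{\zeta^0},(g^{\zeta^0})')$ bounds $\|Z^{\zeta^n},(Z^{\zeta^n})';Z^{\zeta^0},(Z^{\zeta^0})'\|_{W^{\zeta^n},W^{\zeta^0};\lambda,\mu';p}$, with $\mu'$ the exponent furnished by that proposition, by $M$ times the sum of $\|X^{\zeta^n},\cdot\,;X^{\zeta^0},\cdot\|_{W^{\zeta^n},W^{\zeta^0};\lambda,\mu';p}$ and the vector--field distances $\llbracket g^{\zeta^n}-g^{\zeta^0},\cdot\rrbracket_{\kappa-2;p}+\llbracket g^{\zeta^n},\cdot\,;g^{\zeta^0},\cdot\rrbracket_{W^{\zeta^n},W^{\zeta^0};\lambda,\mu';p;\kappa-1}$. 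Since $\lambda\le\gamma$, $\mu'\le\gamma'$ and $\kappa-2\le\kappa-1$, each summand is dominated by the corresponding quantity measured with the $(\gamma,\gamma')$--exponents and hence tends to $0$ by the assumed $\{\mathbf D_{W^\zeta}^{\gamma,\gamma'}\mathscr L\}_\zeta$-- and $\{\mathbf D_{W^\zeta}^{2\delta}\mathscr L C_b^{\kappa-1}\}_\zeta$--continuity. This yields $\{\mathbf D_{W^\zeta}^{\lambda,\mu'}\mathscr L\}_\zeta$--continuity; for $\kappa=3$ one has $\mu'=\lambda'$ and we are done, while for $\kappa<3$ one combines the $\mathbf D_W^{\lambda,\mu'}$--convergence with a uniform a priori bound in $\mathbf D_W^{\lambda,\lambda'}L_{p,\infty}$ coming from the first bullet of Proposition~\ref{prop:compositionwithstochasticcontrolledvectorfields_compendium} and invokes Corollary~\ref{rmk:interpolation} to reach $\{\mathbf D_{W^\zeta}^{\lambda,\eta'}\mathscr L\}_\zeta$--continuity for every $\eta'\in[0,\lambda')$.

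\textbf{Part~2, base case $k=1$.} Here $\kappa\in(3,4]$. First, $\mathbf D_W^{\lambda,\lambda'}\mathscr L$--continuity of $\{(Z^\zeta,(Z^\zeta)')\}$ follows from part~1 applied to the constant (hence trivially continuous) family $g^\zeta\equiv g$ with $\kappa$ lowered to $3$. For differentiability, fix $\zeta^0$ and $l\in\mathbb S(\mathcal U)$ and consider $\frac1\varepsilon\big((Z^{\zeta^0+\varepsilon l},(Z^{\zeta^0+\varepsilon l})')-(Z^{\zeta^0},(Z^{\zeta^0})')\big)$. Writing $X^\varepsilon(\theta):=(1-\theta)X^{\zeta^0}+\theta X^{\zeta^0+\varepsilon l}$ and recalling $Z^\zeta=g(X^\zeta)$, $(Z^\zeta)'=D_xg(X^\zeta)(X^\zeta)'+g'(X^\zeta)$, the mean value theorem expresses this quotient as a finite sum of products $Q^\varepsilon R^\varepsilon$, where each $Q^\varepsilon$ is a stochastic controlled linear vector field obtained by integrating $D_xg$, $D^2_{xx}g$ or $D_xg'$ against $\rd\theta$ along $X^\varepsilon(\theta)$ (or by evaluating such a derivative at $X^{\zeta^0+\varepsilon l}$)---these being stochastic controlled vector fields one degree less regular by Remark~\ref{rmk:propertiesofstochasticcontrolledvectorfields}---and each $R^\varepsilon$ is a component of $(Y^\varepsilon,(Y^\varepsilon)'):=\frac1\varepsilon\big((X^{\zeta^0+\varepsilon l},(X^{\zeta^0+\varepsilon l})')-(X^{\zeta^0},(X^{\zeta^0})')\big)$ or of $(X^{\zeta^0},(X^{\zeta^0})')$. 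By Lemma~\ref{lemma:stochasticcontrolledlinearvectorfields_Riemannintegral} and the $\mathbf D_W^{\gamma,\gamma'}\mathscr L$--continuity of $\{X^\zeta\}$ (differentiability implies continuity), each $Q^\varepsilon$ converges in $\mathbf D_W^{\delta,\eta'}\mathscr L\mathcal L$ to the corresponding derivative at $X^{\zeta^0}$, with $\eta'$ the exponent from Proposition~\ref{prop:compositionwithstochasticcontrolledvectorfields_compendium} (equal to $\lambda'$ when $\kappa=4$, recovered up to $\lambda'$ by interpolation otherwise), while $(Y^\varepsilon,(Y^\varepsilon)')\to\mathbf D_W^{\gamma,\gamma'}\mathscr L\text{-}\frac\partial{\partial l}(X^{\zeta^0},(X^{\zeta^0})')$ in $\mathbf D_W^{\gamma,\gamma'}\mathscr L$. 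Proposition~\ref{prop:stambilityundercompositionlinearvectorfields}(2) then turns these into convergence of each product $Q^\varepsilon R^\varepsilon$ in $\mathbf D_W^{\lambda,\eta'}\mathscr L$, so the difference quotient converges, its limit being the chain--rule expression $\big(D_xg(X^{\zeta^0})Y^0,\ D^2_{xx}g(X^{\zeta^0})(Y^0,(X^{\zeta^0})')+D_xg(X^{\zeta^0})(Y^0)'+D_xg'(X^{\zeta^0})Y^0\big)$ with $(Y^0,(Y^0)')=\mathbf D_W^{\gamma,\gamma'}\mathscr L\text{-}\frac\partial{\partial l}(X^{\zeta^0},(X^{\zeta^0})')$, i.e. the asserted formula. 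Continuity of the derivative in $\zeta$ follows in the same way from part~1 and Proposition~\ref{prop:stambilityundercompositionlinearvectorfields}(2).

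\textbf{Part~2, inductive step.} Suppose the statement holds at all levels $\le k$ and the hypotheses hold for $k+1$, so $\kappa\in(3+k,4+k]$ and $\{X^\zeta\}$ is $(k+1)$--times $\mathbf D_W^{\gamma,\gamma'}\mathscr L$--differentiable. By the base case $\{(Z^\zeta,(Z^\zeta)')\}$ is $\mathbf D_W^{\lambda,\lambda'}\mathscr L$--differentiable with first derivative $(\hat Z^{\zeta,l},(\hat Z^{\zeta,l})')$ as above; it remains to show this family is $(k-1)$--times ($\mathbf D_W^{\lambda,\lambda'}\mathscr L$--continuously) $\mathbf D_W^{\lambda,\lambda'}\mathscr L$--differentiable with its $(k-1)$--th derivatives once more differentiable with the stated loss. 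Now $(\hat Z^{\zeta,l},(\hat Z^{\zeta,l})')$ is the product, in the sense of Proposition~\ref{prop:stambilityundercompositionlinearvectorfields}, of the stochastic controlled linear vector field $(h^\zeta,(h^\zeta)'):=\big(D_xg(X^\zeta),\ D^2_{xx}g(X^\zeta)(X^\zeta)'(\cdot)+D_xg'(X^\zeta)\big)$---which is exactly the composition \eqref{eq:definitionoff^zeta} with $g$ replaced by the $C_b^{\kappa-1}$ stochastic controlled vector field $(D_xg,D_xg')$---with the stochastic controlled rough path $\mathbf D_W^{\gamma,\gamma'}\mathscr L\text{-}\frac\partial{\partial l}(X^\zeta,(X^\zeta)')$, which is $k$--times $\mathbf D_W^{\gamma,\gamma'}\mathscr L$--differentiable by hypothesis. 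Since $\kappa-1\in(2+k,3+k]$ and $\{X^\zeta\}$ is in particular $k$--times differentiable, part~2 at level $k$ (valid by the induction hypothesis) shows $\{(h^\zeta,(h^\zeta)')\}$ is $(k-1)$--times $\mathbf D_W^{\lambda,\lambda'}\mathscr L\mathcal L$--differentiable with $(k-1)$--th derivatives once more differentiable, the loss vanishing exactly when $\kappa-1=3+k$. Differentiating the product by the Leibniz rule---each difference quotient splitting into two products whose convergence is controlled by Proposition~\ref{prop:stambilityundercompositionlinearvectorfields}(2) applied to the already established difference quotients of $(h^\zeta,(h^\zeta)')$ and of $\mathbf D_W^{\gamma,\gamma'}\mathscr L\text{-}\frac\partial{\partial l}X^\zeta$---one concludes that $(\hat Z^{\zeta,l},(\hat Z^{\zeta,l})')$ is $\min\{k-1,k\}=(k-1)$--times $\mathbf D_W^{\lambda,\lambda'}\mathscr L$--differentiable with $(k-1)$--th derivatives once more differentiable, inheriting the exponent loss from the $(h^\zeta)$--factor; equivalently $\{(Z^\zeta,(Z^\zeta)')\}$ is $k$--times $\mathbf D_W^{\lambda,\lambda'}\mathscr L$--differentiable with $k$--th derivatives once more differentiable (no loss exactly when $\kappa=4+k$), which closes the induction.

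\textbf{Expected main obstacle.} The conceptual skeleton is merely a sequence of reductions to the composition and product estimates; the genuinely delicate part is the bookkeeping of H\"older exponents in the Gubinelli--derivative direction through every composition, mean--value linearization, Riemann integration and product, making precise the interpolation step that recovers the full range $\eta'\in[0,\lambda')$ (including pinning down the a priori bound to interpolate against), and supplying a fully rigorous ``stochastic controlled linear vector field times stochastic controlled rough path'' Leibniz rule---only implicit in Proposition~\ref{prop:stambilityundercompositionlinearvectorfields}---at every induction level and for each target space involved.
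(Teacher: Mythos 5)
Your proposal is correct and follows essentially the same route as the paper: part 1 via the composition estimate of \cref{prop:compositionwithstochasticcontrolledvectorfields_compendium} plus the interpolation of \cref{rmk:interpolation}, and part 2 via the mean-value factorization into $h^\varepsilon Y^\varepsilon$, \cref{lemma:stochasticcontrolledlinearvectorfields_Riemannintegral}, the product stability of \cref{prop:stambilityundercompositionlinearvectorfields}, and an induction that applies the level-$k$ statement to $(D_xg,D_xg')\in C_b^{\kappa-1}$ composed with $X^\zeta$. The ``Leibniz rule'' you single out as the main obstacle is precisely what the paper dispatches by invoking \cref{lemma:compositionwithfunctionsofpolynomialgrowth} with the bilinear map $h(x,y)=xy$, so your hand-rolled version via \cref{prop:stambilityundercompositionlinearvectorfields}(2) is just a repackaging of the same estimates rather than a genuinely different argument.
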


\begin{proof}

    \textit{1. } The assertion follows from a straightforward application of \cref{prop:compositionwithstochasticcontrolledvectorfields_compendium}, together with an interpolation argument (see \cref{rmk:interpolation}) that uses the estimates contained in the first part of \cref{thm:stabilityforRSDEs_compendium} . \\

    \textit{2. }  We argue by recursion on $k$.
    Let us first assume $k=1$ and let us denote by $\eta=\min\{(\kappa-3)\gamma,\gamma',\delta\}$.
    Let $\zeta^0 \in U$ and $l \in \mathbb{S}(\mathcal{U})$ be arbitrary. 
    For any $\varepsilon \in \R \setminus \{0\}$, we define \begin{equation*} \begin{aligned}
        (Y^\varepsilon,(Y^\varepsilon)') &:= \left( \frac{1}{\varepsilon}(X^{\zeta^0 + \varepsilon l} - X^{\zeta^0}), \frac{1}{\varepsilon} \big((X^{\zeta^0 + \varepsilon l})' -(X^{\zeta^0})' \big) \right).
    \end{aligned}
    \end{equation*}
    By construction, $\{(Y^\varepsilon,(Y^\varepsilon)')\}_{\varepsilon \in \R \setminus \{0\}}$ tends to $(Y,Y') := (\mathbf{D}_W^{\gamma,\gamma'}\mathscr{L}\text{-}\frac{\partial}{\partial l} X^{\zeta^0}, \mathbf{D}_W^{\gamma,\gamma'}\mathscr{L}\text{-}\frac{\partial}{\partial l} (X^{\zeta^0})')$ in $\mathbf{D}_W^{\gamma,\gamma'}\mathscr{L}(\R^d)$ as $\varepsilon \to 0$.
    Denoting by $(X^\varepsilon(\theta),X^\varepsilon(\theta)') =  ((1-\theta) X^{\zeta^0} + \theta X^{\zeta^0 + \varepsilon l}, (1-\theta) (X^{\zeta^0})' + \theta (X^{\zeta^0 + \varepsilon l})'),$ by means of the mean value theorem we can write \begin{align*}
        \frac{1}{\varepsilon} (Z^{\zeta^0 + \varepsilon l} - Z^{\zeta^0}) &= \int_0^1 D_xg(X^\varepsilon(\theta)) d\theta \ Y^\varepsilon =: h^\varepsilon Y^\varepsilon \\
        \frac{1}{\varepsilon} ((Z^{\zeta^0 + \varepsilon l})' - (Z^{\zeta^0})' ) &= \left( \int_0^1 D^2_{xx}g(X^\varepsilon(\theta)) (\cdot, X^\varepsilon(\theta)') d\theta + \int_0^1 D_xg'(X^\varepsilon(\theta)) d\theta \right) Y^\varepsilon + \int_0^1 D_xg(X^\varepsilon(\theta)) d\theta \ (Y^\varepsilon)' \\
        & =: (h^\varepsilon)'Y^\varepsilon + h^\varepsilon (Y^\varepsilon)'
    \end{align*}
    From \cref{lemma:stochasticcontrolledlinearvectorfields_Riemannintegral} and being $(D_xg, D_xg') \in \bigcap_{p \in [2,\infty)} \mathbf{D}_W^{2\delta}L_{p,\infty}C_b^{\kappa-1}(\R^d;\mathcal{L}(\R^d;V))$ by construction, it holds that $\{(h^\varepsilon,(h^\varepsilon)')\}_{\varepsilon \in \R \setminus \{0\}} \subseteq \bigcap_{p \in [2,\infty)} \mathbf{D}_W^{\lambda,\lambda'}L_{p,\infty}\mathcal{L}(\R^d;V)$, and it tends to $$(h,h'):= (D_xg(X^{\zeta^0}), D^2_{xx}g (X^{\zeta^0}) (X^{\zeta^0})' + D_xg'(X^{\zeta^0}))$$ in $\mathbf{D}_W^{\lambda,\eta}\mathscr{L}\mathcal{L}$ as $\varepsilon \to 0$.
    Whenever $\kappa \ne 3$, by interpolation  the previous convergence also holds by replacing $\eta$ with any $\eta' \in [0,\lambda')$. 
    From \cite[Proposition 3.3]{BCN24}, we know that $\{(h^\varepsilon Y^\varepsilon, (h^\varepsilon)'Y^\varepsilon + h^\varepsilon (Y^\varepsilon)')\}_{\varepsilon \in \R \setminus \{0\}} \subseteq \mathbf{D}_W^{\lambda,\lambda'}L_{p,\infty}(V)$ for any $p \in [2,\infty)$ and from \cref{prop:stambilityundercompositionlinearvectorfields} it is straightforward to conclude that \begin{equation*}
        \begin{aligned}
            &\| h^{\varepsilon^n} Y^{\varepsilon^n}, (h^{\varepsilon^n})'Y^{\varepsilon^n} + h^{\varepsilon^n} (Y^{\varepsilon^n})' ; hY, h'Y + hY' \|_{W;\lambda,\eta;p} 
            \to 0
        \end{aligned}
    \end{equation*}
    as $n \to +\infty$. Such a convergence also holds by replacing $\eta$ with any $\eta' \in [0,\lambda')$, by interpolation and whenever $\kappa \ne 3$.
    In other words, denoting by $(Y^\zeta,(Y^\zeta)') := (\mathbf{D}_W^{\gamma,\gamma'}\mathscr{L}\text{-}\frac{\partial}{\partial l} X^\zeta,\mathbf{D}_W^{\gamma,\gamma'}\mathscr{L}\text{-}\frac{\partial}{\partial l} (X^\zeta)')$ and by $(h^\zeta,(h^\zeta)') := (D_xg(X^\zeta), D^2_{xx}g(X^\zeta)(X^\zeta)'+D_xg'(X^\zeta))$ we proved that, for any $\eta' \in [0,\lambda')$, \begin{equation} \label{eq:compactformforthefirstderivative}
        \{(h^\zeta Y^\zeta, (h^\zeta)'Y^\zeta + h^\zeta (Y^\zeta)')  \}_{\zeta \in U}
    \end{equation} 
    is the $\mathbf{D}_W^{\lambda,\eta'}\mathscr{L}$-derivative of $\{(Z^\zeta,(Z^\zeta)'\}_{\zeta \in U}$. 
    From \cref{lemma:compositionwithfunctionsofpolynomialgrowth} it easily follows that such a derivative is also $\mathbf{D}_W^{\lambda,\eta'}\mathscr{L}$-continuous, provided that $\{(Y^\zeta,(Y^\zeta)')\}_{\zeta \in U}$ is $\mathbf{D}_W^{\gamma,\eta'}\mathscr{L}$-continuous.
    Both of the previous differentiability and continuity results also hold for $\eta = \lambda'$, if $\kappa=3$. \\
    Let us now assume that the assertion is proved for $k=k_0$ and that all the assumptions hold for $k=k_0+1$.
    Notice that $\{(h^\zeta Y^\zeta, (h^\zeta)'Y^\zeta + h^\zeta (Y^\zeta)')  \}_{\zeta \in U}$ defined as in \eqref{eq:compactformforthefirstderivative} is necessarily the $\mathbf{D}_W^{\lambda,\lambda'}\mathscr{L}$-derivative of $\{(Z^\zeta,(Z^\zeta)'\}_{\zeta \in U}$ in the (arbitrary) direction $l \in \mathbb{S}(\mathcal{U})$.   
    By assumption, we have that $\kappa \in (2+(k_0+1),3+(k_0+1)]$ and $\{(Y^\zeta,(Y^\zeta)')\}_{\zeta \in U}$ is $k_0$-times ($\mathbf{D}_W^{\gamma,\gamma'}\mathscr{L}(\R^d)$-continuously) $\mathbf{D}_W^{\gamma,\gamma'}\mathscr{L}(\R^d)$-differentiable. 
    Moreover, being  and $(D_xg,D_xg') \in \bigcap_{p \in [2,\infty)} \mathbf{D}_W^{2\delta} L_{p,\infty} C^{\kappa-1}_b(\R^d;\mathcal{L}(\R^d;V))$, the recursive assumption leads to the fact that $\{D_xg(X^\zeta),D^2_{xx}g(X^\zeta) (X^\zeta)' + D_xg'(X^\zeta)\}_{\zeta \in U}$ is $(k_0-1)$-times ($\mathbf{D}_W^{\lambda,\lambda'}\mathscr{L}(V)$-continuously) $\mathbf{D}_W^{\lambda,\lambda'}\mathscr{L}(V)$-differentiable, and all its $(k_0-1)$-th order $\mathbf{D}_W^{\lambda,\lambda'}$-derivatives are ($\mathbf{D}_W^{\lambda,\eta'}\mathscr{L}(V)$-continuously) $\mathbf{D}_W^{\lambda,\eta'}\mathscr{L}(V)$-differentiable, for any $\eta' \in [0,\lambda')$;
    while the case $\eta = \lambda'$ can be considered if $\kappa-1=3+k_0$.
    By applying \cref{lemma:compositionwithfunctionsofpolynomialgrowth} to the function $h:\mathcal{L}(\R^d;V) \times \R^d \to V, \ h(x,y) := xy$, we obtain the desired conclusion. 
\end{proof}

\begin{rmk} \label{lemma:identificationlinearvectorfields-linearvaluedroughpaths}
    The previous proposition directly yields continuity and differentiability results for a class of stochastic controlled linear vector fields. 
    Indeed, for any given finite dimensional Banach space $\bar{V}$, applying \cref{prop:stochasticcontrolledlinearvectorfields_properties} in the case $V=\mathcal{L}(\R^{d_2};\bar{V})$ shows that $\{(g(X^\zeta),D_xg(X^\zeta)(X^\zeta)' + g'(X^\zeta))\}_{\zeta \in U}$ is $\mathbf{D}_W^{\lambda,\eta'}\mathscr{L}\mathcal{L}$-continuous.
    Similarly, we can deduce results about its $\mathbf{D}_W^{\lambda,\eta'}\mathscr{L}\mathcal{L}$-differentiability. 
\end{rmk}

\begin{lemma} \label{lemma:stochasticcontrolledlinearvectorfields_Riemannintegral}
    Let $(\mathcal{U},|\cdot|)$ be a normed vector space and let $U \subseteq \mathcal{U}$ be a non-empty open subset.
    Let $W \in C^\alpha([0,T];\R^n)$, with $\alpha \in (0,1]$, let $\gamma,\gamma',\delta \in [0,1]$ and denote by $\lambda= \min\{\gamma,\delta\}$, $\lambda'=\min\{\gamma,\gamma',\delta\}$. 
    Let \begin{equation*}
        (g,g') \in \bigcap_{p \in [2,\infty)} \mathbf{D}_W^{2\delta} L_{p,\infty} C_b^\kappa(\R^d;\mathcal{L}(\R^{d_2};V)),
    \end{equation*}
    \textcolor{black}{for some $\kappa \in (2,3]$,}
    and let $\{(X^\zeta,(X^\zeta)\}_{\zeta \in U}$ be a $\mathbf{D}_W^{\gamma,\gamma'}\mathscr{L}(\R^d)$-continuous family of progressively measurable stochastic controlled rough paths such that \begin{equation*}
        \sup_{|\zeta|\le R} \|(X^\zeta,(X^\zeta)')\|_{\mathbf{D}_W^{\gamma,\gamma'}L_{p,\infty}} < +\infty \qquad \text{for any $R >0$ and for any $p \in [2,\infty)$}. 
        \end{equation*}
        Let $\zeta^0 \in U$ and $l \in \mathbb{S}(\mathcal{U})$ be fixed. For any $\varepsilon \in \R$, define \begin{align}
            (X^\varepsilon(\theta), X^\varepsilon(\theta)') &:= (1-\theta)(X^{\zeta^0},(X^{\zeta^0})')+\theta (X^{\zeta^0 + \varepsilon l},(X^{\zeta^0 + \varepsilon l})') \qquad \theta \in [0,1] \label{eq;definitionofX^varepsilon(theta)}\\
            (f^\varepsilon, (f^\varepsilon)')  &:= \left( \int_0^1 g(X^\varepsilon(\theta)) d\theta,  \int_0^1 D_xg(X^\varepsilon(\theta))X^\varepsilon(\theta)' + g'(X^\varepsilon(\theta)) d\theta \right) \notag
        \end{align}
        Then $\{(f^\varepsilon,(f^\varepsilon)')\}_{\varepsilon \in \R} \subseteq \bigcap_{p \in [2,\infty)} \mathbf{D}_W^{\lambda,\lambda'} L_{p,\infty}\mathcal{L}(\R^{d_2};V)$,  and \begin{equation*}
            \sup_{|\varepsilon|\le R} \left( \llbracket (f^\varepsilon,(f^\varepsilon)') \rrbracket_{\mathcal{L};\infty} + \llbracket (f^\varepsilon,(f^\varepsilon)') \rrbracket_{W;\lambda,\lambda';p,\infty;\mathcal{L}} \right) <+\infty \quad \text{for any $R>0$ and for any $p \in [2,\infty)$}. 
        \end{equation*}
        Moreover, it tends to $(f,f') := (g(X^{\zeta^0}), D_xg(X^{\zeta^0}) (X^{\zeta^0})' + g'(X^{\zeta^0}))$ in $\mathbf{D}_W^{\lambda,\eta'}\mathscr{L}\mathcal{L}(\R^{d_2};V)$ as $\varepsilon \to 0$ for any $\eta' \in [0,\lambda')$, and the convergence in $\mathbf{D}_W^{\lambda,\lambda'}\mathscr{L}\mathcal{L}(\R^{d_2};V)$ holds if $\kappa=3$. 
\end{lemma}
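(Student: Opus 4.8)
The plan is to reduce \cref{lemma:stochasticcontrolledlinearvectorfields_Riemannintegral} to \cref{prop:stochasticcontrolledlinearvectorfields_properties}, exactly as \cref{lemma:stochasticlinearvectorfields_Riemannintegral} reduces to \cref{prop:stochasticlinearvectorfields_properties} in the non-controlled case. The first observation is that the pair $(X^\varepsilon(\theta),X^\varepsilon(\theta)')$ defined in \eqref{eq;definitionofX^varepsilon(theta)} is, for each $\theta \in [0,1]$, a stochastic controlled rough path in $\mathbf{D}_W^{\gamma,\gamma'}\mathscr{L}(\R^d)$, being a convex combination of two such objects; moreover, using the uniform bound $\sup_{|\zeta| \le R}\|(X^\zeta,(X^\zeta)')\|_{\mathbf{D}_W^{\gamma,\gamma'}L_{p,\infty}} < +\infty$ one gets $\sup_{\theta \in [0,1], |\varepsilon| \le R}\|(X^\varepsilon(\theta),X^\varepsilon(\theta)')\|_{\mathbf{D}_W^{\gamma,\gamma'}L_{p,\infty}} < +\infty$ for every $R>0$ and $p \in [2,\infty)$ (for $\varepsilon$ small enough that $\zeta^0+\varepsilon l$ stays in a bounded neighbourhood of $\zeta^0$ inside $U$). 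First I would therefore fix $R>0$ so that $\zeta^0+\varepsilon l \in U$ for $|\varepsilon|\le R$.

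Next, for fixed $\theta$, applying \cref{prop:compositionwithstochasticcontrolledvectorfields_compendium} (the first bullet, with $\kappa \in (2,3]$ so $\lceil\kappa\rceil=3$; one uses the composition result after decreasing the regularity exponent to $(1,2]$ if needed, or directly since $(g,g')\in \mathbf{D}_W^{2\delta}L_{p,\infty}C_b^\kappa$ with $\kappa>2$) gives that
\[
    (g(X^\varepsilon(\theta)), D_xg(X^\varepsilon(\theta))X^\varepsilon(\theta)' + g'(X^\varepsilon(\theta))) \in \mathbf{D}_W^{\lambda,\lambda'}L_{p,\infty}(\mathcal{L}(\R^{d_2};V))
\]
with $\mathbf{D}_W^{\lambda,\lambda'}L_{p,\infty}$-norm bounded uniformly in $\theta \in [0,1]$ and $|\varepsilon| \le R$, by the quantitative estimate in \cref{prop:compositionwithstochasticcontrolledvectorfields_compendium} and the uniform bounds on $(g,g')$ and on $(X^\varepsilon(\theta),X^\varepsilon(\theta)')$. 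Then $(f^\varepsilon,(f^\varepsilon)')$, being the integral $\int_0^1(\cdot)\,d\theta$ of a $\theta$-continuous family in a fixed Banach space $\mathbf{D}_W^{\lambda,\lambda'}L_{p,\infty}(\mathcal{L}(\R^{d_2};V))$, again lies in that space with norm bounded by the supremum over $\theta$; this yields the first assertion and the uniform bound $\sup_{|\varepsilon|\le R}(\llbracket(f^\varepsilon,(f^\varepsilon)')\rrbracket_{\mathcal{L};\infty} + \llbracket(f^\varepsilon,(f^\varepsilon)')\rrbracket_{W;\lambda,\lambda';p,\infty;\mathcal{L}})<+\infty$ (the $\mathcal{L};\infty$-bound coming from $\|g\|$-type bounds which hold with $q=\infty$). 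Here one should be a little careful that the $L_{p,\infty}$ (i.e. $q=\infty$) integrability is preserved by $\theta$-integration, which it is since the essential sup in $\omega$ commutes with the $\theta$-integral up to Jensen/Minkowski.

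For the convergence claim, fix an arbitrary sequence $(\varepsilon^n)_n \subseteq \R$ with $\varepsilon^n \to 0$, and fix $p \in [2,\infty)$. The $\mathbf{D}_W^{\gamma,\gamma'}\mathscr{L}$-continuity of $\{(X^\zeta,(X^\zeta)')\}_\zeta$ at $\zeta^0$ gives $\|X^\varepsilon(\theta),X^\varepsilon(\theta)';X^{\zeta^0},(X^{\zeta^0})'\|_{W;\gamma,\gamma';p} \to 0$ as $\varepsilon\to 0$, uniformly in $\theta \in [0,1]$ (because $X^\varepsilon(\theta)-X^{\zeta^0} = \theta(X^{\zeta^0+\varepsilon l}-X^{\zeta^0})$, so the distance is $\le$ the $\theta=1$ distance). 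Now apply the second bullet of \cref{prop:compositionwithstochasticcontrolledvectorfields_compendium} — the local Lipschitz estimate for composition, valid for $\kappa \in (2,3]$ — to the controlled rough paths $(X^\varepsilon(\theta),X^\varepsilon(\theta)')$ and $(X^{\zeta^0},(X^{\zeta^0})')$ and the (single, deterministic-in-$\zeta$) vector field $(g,g')$, so that $\bar\beta=\beta$ and the vector-field difference terms vanish; this yields
\[
    \|g(X^\varepsilon(\theta)),\dots;g(X^{\zeta^0}),\dots\|_{W;\lambda,\lambda';p} \lesssim_M \|X^\varepsilon(\theta),X^\varepsilon(\theta)';X^{\zeta^0},(X^{\zeta^0})'\|_{W;\lambda,\lambda';p}
\]
with $M$ uniform in $\theta$ and $\varepsilon$. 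Integrating over $\theta$ and using Minkowski then gives $\|f^{\varepsilon^n},(f^{\varepsilon^n})';f,f'\|_{W;\lambda,\lambda';p} \to 0$ along any subsequence where the right side goes to zero — but it does go to zero: this is precisely the $\mathbf{D}_W^{\gamma,\gamma'}\mathscr{L}$-continuity applied with the \emph{lower} exponents $(\lambda,\lambda')$, for which it holds since $\lambda \le \gamma$, $\lambda' \le \gamma'$. Thus $(f,f') = \mathbf{D}_W^{\lambda,\lambda'}\mathscr{L}\mathcal{L}\text{-}\lim_{\varepsilon\to 0}(f^\varepsilon,(f^\varepsilon)')$ when $\kappa=3$. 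For $\kappa \in (2,3)$, one only gets convergence at the exponent $\eta = \min\{(\kappa-2)\gamma,\gamma',\delta\}$ directly (the relevant $\lambda'$ for $\kappa<3$ being strictly smaller), and the full range $\eta' \in [0,\lambda')$ is then recovered by the interpolation argument of \cref{rmk:interpolation}, using the uniform $\mathbf{D}_W^{\lambda,\lambda'}L_{p,\infty}$-bound on $(f^\varepsilon,(f^\varepsilon)')$ established above as the high-regularity bound and the just-proven convergence at exponent $\eta$ as the low-regularity convergence. The main obstacle is bookkeeping: keeping the exponents $(\lambda,\lambda')$ versus $\eta$ straight across the composition estimate, the $\theta$-integration, and the interpolation step, and making sure the $q=\infty$ integrability is never lost — none of this is conceptually deep, but it must be done carefully and matches verbatim the pattern already used in the proof of \cref{prop:stochasticcontrolledlinearvectorfields_properties}.
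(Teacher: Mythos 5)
Your proposal is correct and follows essentially the same route as the paper: uniform bounds via the first bullet of \cref{prop:compositionwithstochasticcontrolledvectorfields_compendium} together with conditional Minkowski under the $\theta$-integral, convergence at the exponent $\min\{(\kappa-2)\gamma,\gamma',\delta\}$ via the second (stability) bullet applied with the same fixed $(g,g')$, and interpolation (\cref{rmk:interpolation}) to recover all $\eta'\in[0,\lambda')$, with the case $\kappa=3$ giving $\lambda'$ directly. The only blemish is the displayed Lipschitz estimate, where the exponent on both sides should be $\min\{(\kappa-2)\gamma,\gamma',\delta\}$ rather than $\lambda'$; your subsequent prose already corrects this, so the argument stands.
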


\begin{proof}
    For simplicity, write $(f^\varepsilon,(f^\varepsilon)') = (\int_0^1 f^\varepsilon(\theta) d\theta, \int_0^1 f^\varepsilon(\theta)' d\theta)$.
    By construction, $(X^\varepsilon(\theta), X^\varepsilon(\theta)') \in \bigcap_{p \in [2,\infty)} \mathbf{D}_W^{\gamma,\gamma'} L_{p,\infty}(\R^d)$ for any $\varepsilon \in \R$, $\theta \in [0,1]$ and, for any $R >0$,  
    \begin{equation*} \begin{aligned}
        \sup_{|\varepsilon| \le R} \sup_{\theta \in [0,1]}  \|(X^\varepsilon(\theta), X^\varepsilon(\theta)')\|_{\mathbf{D}_W^{\gamma,\gamma'}L_{p,\infty}} 
        & \le 2 \sup_{|\zeta|\le |\zeta^0|+R} \|(X^{\zeta},(X^{\zeta})')\|_{\mathbf{D}_W^{\gamma,\gamma'}L_{p,\infty}} <+\infty.
    \end{aligned}
    \end{equation*}
    Notice that $\sup_{t \in [0,T]} \|X^\varepsilon_t(\theta)\|_\infty \le \llbracket (g,g') \rrbracket_{0;\infty} <+\infty$
    Hence, by \cref{prop:compositionwithstochasticcontrolledvectorfields_compendium} and by considering \cref{lemma:identificationlinearvectorfields-linearvaluedroughpaths} it holds that $(f^\varepsilon(\theta),f^\varepsilon(\theta)') \in \bigcap_{p \in [2,\infty)} \mathbf{D}_W^{\lambda,\lambda'}L_{p,\infty}\mathcal{L}(\R^{d_2};V)$. 
    Notice that, as a general fact, given any stochastic process $A(\theta)=(A_{s,t}(\theta))_{(s,t) \in \Delta_{[0,T]}}$ such that the dependence on $\theta$ is $\mathbb{P}$-a.s.\ continuous, \begin{equation*}
        \|\int_0^1 A_{s,t}(\theta) d\theta|\mathcal{F}_s\|_p \le \left( \int_0^1 \|A_{s,t}(\theta) d\theta|\mathcal{F}_s\|_p^p \right)^\frac{1}{p} \quad \mathbb{P}\text{-a.s}.
    \end{equation*}
    This allows us to conclude that, for any $R>0$ and for any $p \in [2,\infty)$, \begin{multline*}
        \sup_{|\varepsilon|\le R} \left( \llbracket (f^\varepsilon,(f^\varepsilon)') \rrbracket_{\mathcal{L};\infty} + \llbracket (f^\varepsilon,(f^\varepsilon)') \rrbracket_{W;\lambda,\lambda';p,\infty;\mathcal{L}}  \right) \\ \le \sup_{|\varepsilon| \le R} \sup_{\theta \in [0,1]}  \left( \llbracket (f^\varepsilon(\theta),f^\varepsilon(\theta)')
        \rrbracket_{\mathcal{L};\infty} + \llbracket (f^\varepsilon(\theta),f^\varepsilon(\theta)')
        \rrbracket_{W;\lambda,{\lambda'};p,\infty} \right) < +\infty.
    \end{multline*}
     From \cref{lemma:identificationlinearvectorfields-linearvaluedroughpaths} and \cref{prop:compositionwithstochasticcontrolledvectorfields_compendium} and being $\{(X^\zeta,(X^\zeta)')\}_{\zeta \in U}$ $\mathbf{D}_W^{\gamma,\gamma'}\mathscr{L}$-continuous, it follows that, for any $p \in [2,\infty)$ and for any arbitrary sequence $(\varepsilon^n)_n \subseteq \R \setminus \{0\}$ converging to zero, \begin{equation*}
         \llbracket f^{\varepsilon^n},(f^{\varepsilon^n})';f,f' \rrbracket_{\mathcal{L};p} + \llbracket f^{\varepsilon^n},(f^{\varepsilon^n})';f,f' \rrbracket_{W;\lambda,\eta;p} \to 0
     \end{equation*}
     as $n \to +\infty$, where $\eta = \min\{(\kappa-2)\gamma,\gamma',\delta\}$. 
    The conclusion follows by interpolation (see \cref{rmk:interpolation}), in a similar way to how it is done in the proof of \cref{prop:stochasticcontrolledlinearvectorfields_properties}.  
\end{proof}

\begin{prop} \label{prop:continuityanddifferentiability_forcingterm}
Let $\mathbf{W} \in \mathscr{C}^{\alpha}([0,T];\R^n)$ be a rough path, with $\alpha \in (\frac{1}{3},\frac{1}{2})$, and let $\gamma,\gamma' \in [0,1]$ be such that $\alpha + \gamma > \frac{1}{2}$ and $\alpha + \min \{\alpha, \gamma\} + \gamma' >1$.
    \begin{enumerate}
        \item Let $\mu \in \mathscr{L}(\R^d)$ and $\nu \in \mathscr{L}(\R^{d \times m})$ be two stochastic processes. Let $(\phi,\phi') \in \mathbf{D}_W^{\gamma,\gamma'}\mathscr{L}(\R^{d \times n})$ with $\sup_{t \in [0,T]} \|\phi_t\|_p < +\infty$ for any $p \in [2,\infty)$. 
        For any $t \in [0,T]$ define \begin{align*}
            F_t &:= \int_0^t \mu_r dr + \int_0^t \nu_r dB_r + \int_0^t (\phi_r,\phi'_r) d\mathbf{W}_r \\
            F_t' &:= \phi_t.
        \end{align*}
        Then $(F,F') \in \mathbf{D}_W^{\alpha,\gamma}\mathscr{L}(\R^d)$ and, for any $p > \frac{2}{1-2\alpha}$, \begin{equation*}
            \|(F,F')\|_{\mathbf{D}_W^{\alpha,\gamma}L_p} \lesssim_{\alpha,\gamma,\gamma',p,T,|\mathbf{W}|_\alpha} \E \left(\int_0^T|\mu_r|^p dr\right)^\frac{1}{p} + \E \left(\int_0^T|\nu_r|^p dr \right)^\frac{1}{p} + \sup_{t \in [0,T]} \|\phi_t\|_p + \|(\phi,\phi')\|_{\mathbf{D}_W^{\gamma,\gamma'}L_p}.
        \end{equation*}
        \item Let $(\mathcal{U},\rho)$ be a metric space and let $U \subseteq \mathcal{U}$ be non-empty, and let $\{\mathbf{W}^\zeta=(W^\zeta,\mathbb{W}^\zeta)\}_{\zeta \in U}$ be a $\mathscr{C}^\alpha$-continuous family of rough paths.
        Let $\{\mu^\zeta\}_{\zeta \in U}$ be a $\mathscr{L}(\R^d)$-continuous family of stochastic processes and let $\{\nu^\zeta\}_{\zeta \in U}$ be $\mathscr{L}(\R^{d \times m})$-continuous.
        Let $\{(\phi^\zeta,(\phi^\zeta)')\}_{\zeta \in U}$ be $\{\mathbf{D}_{W^\zeta}^{\gamma,\gamma'}\mathscr{L}(\R^{d\times n})\}_\zeta$-continuous and assume that, for any $\zeta \in U$ and for any $p \in [2,\infty)$, $\sup_{t \in [0,T]} \|\phi^\zeta_t\|_p <+\infty$.
        For any $\zeta \in U$, define \begin{equation*}
            (F^\zeta,(F^\zeta)') := \left(\int_0^\cdot \mu^\zeta_r dr + \int_0^\cdot \nu^\zeta_r dB_r + \int_0^\cdot (\phi^\zeta_r,(\phi^\zeta)'_r) d\mathbf{W}^\zeta_r, \phi^\zeta \right).
        \end{equation*} 
        Then $\{(F^\zeta,(F^\zeta)')\}_{\zeta \in U}$ is $\{\mathbf{D}_{W^\zeta}^{\alpha,\gamma}\mathscr{L}(\R^d)\}_\zeta$-continuous. 
        \item Let $k \in \mathbb{N}_{\ge 1}$ and assume $(\mathcal{U},|\cdot|)$ is a normed vector space.
        Assume that $\{\mu^\zeta\}_{\zeta \in U}$ and $\{\nu^\zeta\}_{\zeta \in U}$ are both $k$-times ($\mathscr{L}$-continuously) $\mathscr{L}$-differentiable. 
        Assume that $\{(\phi^\zeta,(\phi^\zeta)')\}_{\zeta \in U}$ is $k$-times ($\mathbf{D}_W^{\gamma,\gamma'}\mathscr{L}$-continuously) $\mathbf{D}_W^{\gamma,\gamma'}\mathscr{L}$-differentiable.
        Then $\{(F^\zeta,(F^\zeta)')\}_{\zeta \in U}$ is $k$-times ($\mathbf{D}_W^{\alpha,\gamma}\mathscr{L}$-continuously) $\mathbf{D}_W^{\alpha,\gamma}\mathscr{L}$-differentiable.
    In particular, for any $\zeta \in U$ and for any $l \in \mathbb{S}(\mathcal{U})$, \begin{equation*} \begin{aligned}
         &\left(\mathbf{D}_W^{\alpha,\gamma}\mathscr{L}\text{-}\frac{\partial}{\partial l}F^\zeta,\mathbf{D}_W^{\alpha,\gamma}\mathscr{L}\text{-}\frac{\partial}{\partial l}(F^\zeta)'\right) = \\
         &= \left(\int_0^\cdot \mathscr{L}\text{-}\frac{\partial}{\partial l} \mu^\zeta_r dr + \int_0^\cdot \mathscr{L}\text{-}\frac{\partial}{\partial l} \nu^\zeta_r dB_r + \int_0^\cdot \left( \mathbf{D}_W^{\gamma,\gamma'}\mathscr{L}\text{-}\frac{\partial}{\partial l} \phi^\zeta_r, \mathbf{D}_W^{\gamma,\gamma'}\mathscr{L}\text{-}\frac{\partial}{\partial l} (\phi^\zeta)'_r \right) d\mathbf{W}_r,\mathbf{D}_W^{\gamma,\gamma'}\mathscr{L}\text{-}\frac{\partial}{\partial l} \phi^\zeta \right).
    \end{aligned}
    \end{equation*}
    \end{enumerate}
\end{prop}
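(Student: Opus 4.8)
The plan is to split $F$ into its three summands and estimate each against the pieces of the $\mathbf D_W^{\alpha,\gamma}L_p$-seminorm. For the Lebesgue term $L_t:=\int_0^t\mu_r\,dr$ one has $\delta L_{s,t}=\int_s^t\mu_r\,dr$ with vanishing Gubinelli derivative, so H\"older's inequality in time gives $\|\delta L_{s,t}\|_p\le|t-s|^{1-1/p}\,\E(\int_0^T|\mu_r|^p\,dr)^{1/p}$; since $p>\tfrac{2}{1-2\alpha}$ forces $1-\tfrac1p\ge\alpha$ (and, in the relevant regime $\gamma\le\alpha$, also $1-\tfrac1p\ge\alpha+\gamma$), this controls both the $\alpha$-H\"older norm and the remainder contribution of $L$. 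For the It\^o term $N_t:=\int_0^t\nu_r\,dB_r$, conditional Burkholder--Davis--Gundy yields $\|\|\delta N_{s,t}\mid\mathcal F_s\|_p\|_p\lesssim_p\big\|\big(\int_s^t|\nu_r|^2dr\big)^{1/2}\big\|_p\le|t-s|^{1/2-1/p}\E(\int_0^T|\nu_r|^pdr)^{1/p}$, which is again $\alpha$-H\"older precisely because $p>\tfrac{2}{1-2\alpha}$, while $\E_s\delta N_{s,t}=0$ so $N$ contributes nothing to the remainder. The rough term is handled by Proposition~\ref{prop:roughstochastiintegral_compendium}, which gives $(\int_0^\cdot(\phi_r,\phi'_r)d\mathbf W_r,\phi)\in\mathbf D_W^{\alpha,\min\{\alpha,\gamma\}}L_p$ together with the quantitative bounds implicit in its defining local estimates. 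Since the first two summands have zero Gubinelli derivative, $F'=\phi$; and $R^F_{s,t}=\int_s^t\mu_r\,dr+\int_s^t\nu_r\,dB_r+\big(\int_s^t(\phi_r,\phi'_r)d\mathbf W_r-\phi_s\delta W_{s,t}\big)$, whose conditional expectation is estimated by combining the three bounds above with $|\mathbb W_{s,t}|\lesssim|\mathbf W|_\alpha|t-s|^{2\alpha}$ and the hypotheses relating $\alpha,\gamma,\gamma'$. Summing yields $(F,F')\in\mathbf D_W^{\alpha,\gamma}\mathscr L$ and the stated inequality.

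\textbf{Part 2.} First I would establish the stability (local Lipschitz) counterpart of Part~1. Fixing an accumulation point $\zeta^0$ and a sequence $\zeta^n\to\zeta^0$, the $\mathscr C^\alpha$-continuity of $\{\mathbf W^\zeta\}$ and the $\{\mathbf D_{W^\zeta}^{\gamma,\gamma'}\mathscr L\}_\zeta$-continuity of $\{(\phi^\zeta,(\phi^\zeta)')\}$ imply that $|\mathbf W^{\zeta^n}|_\alpha$, $\|(\phi^{\zeta^n},(\phi^{\zeta^n})')\|_{\mathbf D_{W^{\zeta^n}}^{\gamma,\gamma'}L_p}$ and $\sup_t\|\phi^{\zeta^n}_t\|_p$ are bounded uniformly in $n$ (convergent sequences are bounded, and $\sup_t\|\cdot\|_p$ is built into Definition~\ref{def:stochasticcontrolledroughpaths}). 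The Lebesgue and It\^o differences are treated exactly as in Part~1 applied to $\mu^{\zeta^n}-\mu^{\zeta^0}$ and $\nu^{\zeta^n}-\nu^{\zeta^0}$ (the Brownian motion does not vary), producing the first two error terms. For the difference of rough stochastic integrals across the \emph{different} drivers $\mathbf W^{\zeta^n}$ and $\mathbf W^{\zeta^0}$ I would invoke the joint local Lipschitz dependence of the rough stochastic integral on the rough path and on the stochastic controlled integrand---the analogue, obtained by the same stochastic sewing estimate, of the classical continuity of the rough integral, cf.\ Proposition~\ref{prop:roughstochastiintegral_compendium} and \cite{FHL21}---which yields a contribution $\lesssim\rho_\alpha(\mathbf W^{\zeta^n},\mathbf W^{\zeta^0})+\sup_t\|\phi^{\zeta^n}_t-\phi^{\zeta^0}_t\|_p+\|\phi^{\zeta^n},(\phi^{\zeta^n})';\phi^{\zeta^0},(\phi^{\zeta^0})'\|_{W^{\zeta^n},W^{\zeta^0};\gamma,\gamma';p}$ with constant uniform in $n$. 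The continuity hypotheses force all these terms to $0$, giving $\{\mathbf D_{W^\zeta}^{\alpha,\gamma}\mathscr L\}_\zeta$-continuity. This stability estimate, and in particular the uniformity of constants, is the main technical obstacle of the proposition; everything else is bookkeeping.

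\textbf{Part 3.} Here $\mathbf W$ is fixed and the rough stochastic integral is linear in its integrand, which is the key structural fact. Hence for $\zeta^0\in U$, $l\in\mathbb S(\mathcal U)$ and $\varepsilon\ne0$ the difference quotient $\tfrac1\varepsilon\big(F^{\zeta^0+\varepsilon l}-F^{\zeta^0},(F^{\zeta^0+\varepsilon l})'-(F^{\zeta^0})'\big)$ is \emph{itself} a forcing term of the type in Part~1, namely the one built from $\tfrac1\varepsilon(\mu^{\zeta^0+\varepsilon l}-\mu^{\zeta^0})$, $\tfrac1\varepsilon(\nu^{\zeta^0+\varepsilon l}-\nu^{\zeta^0})$ and $\tfrac1\varepsilon\big(\phi^{\zeta^0+\varepsilon l}-\phi^{\zeta^0},(\phi^{\zeta^0+\varepsilon l})'-(\phi^{\zeta^0})'\big)$. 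By hypothesis these converge, as $\varepsilon\to0$, to $\mathscr L\text{-}\tfrac{\partial}{\partial l}\mu^{\zeta^0}$, $\mathscr L\text{-}\tfrac{\partial}{\partial l}\nu^{\zeta^0}$ and $\big(\mathbf D_W^{\gamma,\gamma'}\mathscr L\text{-}\tfrac{\partial}{\partial l}\phi^{\zeta^0},\mathbf D_W^{\gamma,\gamma'}\mathscr L\text{-}\tfrac{\partial}{\partial l}(\phi^{\zeta^0})'\big)$ in $\mathscr L$, $\mathscr L$ and $\mathbf D_W^{\gamma,\gamma'}\mathscr L$ respectively, so applying Part~2 with the constant family $\mathbf W^\zeta\equiv\mathbf W$ shows the difference quotients converge in $\mathbf D_W^{\alpha,\gamma}\mathscr L$ to the forcing term built from these limits, which is exactly the claimed expression for $\mathbf D_W^{\alpha,\gamma}\mathscr L\text{-}\tfrac{\partial}{\partial l}(F^{\zeta^0},(F^{\zeta^0})')$; a further application of Part~2 gives the $\mathbf D_W^{\alpha,\gamma}\mathscr L$-continuity of this derivative when the data are continuously differentiable. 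The step $k\ge2$ then follows by induction: the first $\mathbf D_W^{\alpha,\gamma}\mathscr L$-derivative of $\{(F^\zeta,(F^\zeta)')\}$ is again a forcing term whose defining data are $(k-1)$-times (continuously) differentiable by assumption, so the inductive hypothesis applies to it, completing the recursion.
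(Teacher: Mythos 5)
Your proposal is correct and follows essentially the same route as the paper: Hölder-in-time and (conditional) BDG bounds for the Lebesgue and Itô parts under $p>\tfrac{2}{1-2\alpha}$, \cref{prop:roughstochastiintegral_compendium} for the rough part, the stochastic-sewing characterization of the rough stochastic integral to compare integrals driven by different rough paths in Part 2, and the observation that difference quotients are again forcing terms (plus induction) in Part 3. The only difference is presentational: where you invoke the "joint local Lipschitz dependence" of the rough stochastic integral, the paper spells this out by estimating the difference germ $\Xi^n$ and appealing to the uniqueness statement in the stochastic sewing lemma, which is exactly the argument you are gesturing at.
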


\begin{proof}
    \textit{1. } From the estimates on rough stochastic integrals in \cref{prop:roughstochastiintegral_compendium}, for any $p \in [2,\infty)$ it holds that that \begin{equation*}
        \left\| \left(\int_0^\cdot (\phi_r,\phi'_r) d\mathbf{W}_r, \phi \right) \right\|_{\mathbf{D}_W^{\alpha,\gamma}L_p} \lesssim_{\alpha,\gamma,\gamma',p,T,|\mathbf{W}|_\alpha} \sup_{t \in [0,T]} \|\phi_t\|_p + \|\phi,\phi'\|_{\mathbf{D}_W^{\gamma,\gamma'}L_p}. 
    \end{equation*}
    Moreover, a simple application of H\"older's and BDG inequalities shows that, for any $(s,t) \in \Delta_{[0,T]}$,  \begin{equation*}
        \begin{aligned}
            \left\|\int_s^t \mu_r dr \right\|_p &= \E \left( \left|\int_s^t \mu_r dr \right|^p \right)^\frac{1}{p} \le |t-s|^\frac{p-1}{p} \E \left(\int_0^T |\mu_r|^p dr \right)^\frac{1}{p}
        \end{aligned}
    \end{equation*} and \begin{equation*}
        \begin{aligned}
            \left\|\int_s^t \nu_r dB_r \right\|_p &= \E \left( \left|\int_s^t \nu_r dB_r \right|^p \right)^\frac{1}{p} \lesssim_p \E \left( \left(\int_s^t |\nu_r|^2 dr \right)^\frac{p}{2} \right)^\frac{1}{p} \le |t-s|^\frac{p-2}{2p} \E \left(\int_0^T |\nu_r|^p dr \right)^\frac{1}{p}.
        \end{aligned}
    \end{equation*}
    Under the assumption $p > \frac{2}{1-2\alpha}$, we have $\frac{p-1}{p} > \alpha + \gamma$ and $\frac{p-2}{2p} > \alpha$. Notice that this is possible only if $\alpha \ne \frac{1}{2}$.
    The previous three estimates are enough to prove the assertion, if one observes that $\E_s(R^F_{s,t}) = \E_s(\int_s^t \mu_r dr) + \E_s(R^\phi_{s,t})$.\\
    
    \textit{2. }  Let $\zeta^0 \in U$ be an accumulation point, and let $(\zeta^n)_{n \ge 1} \subseteq U \setminus \{\zeta^0\}$ such that $\rho(\zeta^n,\zeta^0) \to 0$ as $n \to +\infty$. 
    When taking the difference $F^{\zeta^n}-F^{\zeta^0}$, estimating the term $I^n := \int_0^\cdot (\phi^{\zeta^n}_r,(\phi^{\zeta^n})'_r) d\mathbf{W}^{\zeta^n}_r - \int_0^\cdot (\phi^{\zeta^0}_r,(\phi^{\zeta^0})'_r) d\mathbf{W}^{\zeta^0}_r$ requires some attention.
    Recalling the construction of the rough stochastic integral (see \cref{prop:roughstochastiintegral_compendium}), for any $p \in [2,\infty)$ and for any $(s,t) \in \Delta_{[0,T]}$ it holds that \begin{equation*}
        \|\delta I^n_{s,t} - \Xi^n_{s,t}\|_p \lesssim |t-s|^{\alpha+\gamma} \quad \text{and} \quad \|\E_s(\delta I^n_{s,t} - \Xi^n_{s,t})\|_p \lesssim |t-s|^{\alpha+\min\{\alpha,\gamma\}+\gamma'},
    \end{equation*} 
    where $\Xi^n_{s,t}:= \phi^{\zeta^n}_s \delta W^{\zeta^n}_{s,t} + (\phi^{\zeta^n})'_s \mathbb{W}^{\zeta^n}_{s,t} - \phi^{\zeta^0}_s \delta W^{\zeta^0}_{s,t} - (\phi^{\zeta^0})'_s \mathbb{W}^{\zeta^0}_{s,t}$.
    By the uniqueness statement in the stochastic sewing lemma (see e.g.\ \cite{LE20}), we deduce that $I^n$ is the unique processes characterized by $\Xi^n$ and we can use the estimates therein contained to have \begin{equation*} \begin{aligned}
        \|\delta I^n_{s,t}\|_p &\le \|\delta I^n_{s,t} - \Xi^n_{s,t}\|_p + \|\Xi^n_{s,t}\|_p \\
        &\lesssim_{\alpha,\gamma,\gamma',p,T,M} \left(\|\phi^{\zeta^n},(\phi^{\zeta^n})';\phi^{\zeta^0},(\phi^{\zeta^0})')\|_{W^{\zeta^n},W^{\zeta^0};\gamma,\gamma';p} + \rho_\alpha(\mathbf{W}^{\zeta^n},\mathbf{W}^{\zeta^0})\right) |t-s|^{\alpha+\gamma}
    \end{aligned}
    \end{equation*} and \begin{equation*} 
        \|\E_s(\delta I^n_{s,t})\|_p \lesssim_{\alpha,\gamma,\gamma',p,T,M} \left(\|\phi^{\zeta^n},(\phi^{\zeta^n})';\phi^{\zeta^0},(\phi^{\zeta^0})')\|_{W^{\zeta^n},W^{\zeta^0};\gamma,\gamma';p} + \rho_\alpha(\mathbf{W}^{\zeta^n},\mathbf{W}^{\zeta^0})\right) |t-s|^{\alpha+\min\{\alpha,\gamma\}+\gamma'},
    \end{equation*}
    where $M$ is any positive constant such that $\sup_{n \ge 0} ( \sup_{t \in [0,T]} \|\phi_t^{\zeta^n}\|_p + \|(\phi^{\zeta^n},(\phi^{\zeta^n})')\|_{\mathbf{D}_{W^{\zeta^n}}L_p} + |\mathbf{W}^{\zeta^n}|_\alpha ) \le M.$    
    Let now $p \in (\frac{1}{1-2\alpha},\infty)$ be arbitrary.
    Notice that, by construction, $F_0^\zeta=0$ for any $\zeta \in U$. 
    By linearity of the integrals and by the previous considerations, for any $n \ge 1$ we can write that \begin{equation*} 
        \begin{aligned}
            &\|F^{\zeta^n},(F^{\zeta^n})';F^{\zeta^0},(F^{\zeta^0})'\|_{W^{\zeta^n},W^{\zeta^0};\alpha,\gamma;p} = \left\| \int_\cdot^\cdot \mu^{\zeta^n}_r - \mu_r^{\zeta^0} dr \right\|_{\alpha+\gamma;p} + \left\| \int_\cdot^\cdot \nu^{\zeta^n}_r - \nu^{\zeta^0}_r dr \right\|_{\alpha;p} + \\
            & \quad + \left\| \int_0^\cdot (\phi^{\zeta^n}_r,(\phi^{\zeta^n})'_r) d\mathbf{W}^{\zeta^n}_r, \phi^{\zeta^n} ; \int_0^\cdot (\phi^{\zeta^0}_r,(\phi^{\zeta^0})'_r) d\mathbf{W}^{\zeta^0}_r, \phi^{\zeta^0}\right\|_{W^{\zeta^n},W^{\zeta^0};\alpha,\gamma;p} \\
            & \lesssim_{\alpha,\gamma,\gamma',p,T,|\mathbf{W}|_\alpha,M} \E \left(\int_0^T|\mu^{\zeta^n}_r-\mu^{\zeta^0}_r|^p dr\right)^\frac{1}{p} + \E \left(\int_0^T|\nu^{\zeta^n}_r-\nu^{\zeta^0}_r|^p dr \right)^\frac{1}{p} + \\
            & \qquad + \|\phi^{\zeta^n},(\phi^{\zeta^n})';\phi^{\zeta^0},(\phi^{\zeta^0})')\|_{W^{\zeta^n},W^{\zeta^0};\gamma,\gamma';p} + \rho_\alpha(\mathbf{W}^{\zeta^n},\mathbf{W}^{\zeta^0}),
        \end{aligned}
    \end{equation*}
    and the right-hand side tends to 0 as $n \to +\infty$, by assumption. \\
    
    \textit{3. }  We argue recursively on $k$. Let us first assume $k=1$. Let $\zeta^0 \in U$ and $l \in \mathbb{S}(\mathcal{U})$ be fixed. 
    For any $\varepsilon \in \R \setminus \{0\}$ we can write 
    \begin{align*}
            \Xi^\varepsilon :=& \frac{1}{\varepsilon} (F^{\zeta^0 + \varepsilon l} - F^{\zeta^0}) = \\
            =& \int_0^\cdot \frac{1}{\varepsilon}(\mu^{\zeta^0 + \varepsilon l}_r-\mu^{\zeta^0}_r) dr + \int_0^\cdot \frac{1}{\varepsilon} (\nu^{\zeta^0 + \varepsilon l}_r-\nu^{\zeta^0}_r) dr + \int_0^\cdot \frac{1}{\varepsilon} \left((\phi^{\zeta^0 + \varepsilon l},(\phi^{\zeta^0 + \varepsilon l})') - (\phi^{\zeta^0},(\phi^{\zeta^0})')\right) d\mathbf{W}_r \\
            (\Xi^\varepsilon)' :=& \frac{1}{\varepsilon} \big( (F^{\zeta^0 + \varepsilon l})'- (F^{\zeta^0})'\big) = \frac{1}{\varepsilon} (\phi^{\zeta^0 + \varepsilon l} - \phi^{\zeta^0})
    \end{align*}
    For any $\zeta \in U$, define \begin{equation*}
        (\Xi^\zeta,(\Xi^\zeta)') := \left(\int_0^\cdot \mathscr{L}\text{-}\frac{\partial}{\partial l} \mu^{\zeta}_r dr + \int_0^\cdot \mathscr{L}\text{-}\frac{\partial}{\partial l} \nu^\zeta_r dB_r + \int_0^\cdot \left( \mathbf{D}_W^{2\alpha'}\mathscr{L}\text{-}\frac{\partial}{\partial l} \phi^\zeta_r, \mathbf{D}_W^{2\alpha'}\mathscr{L}\text{-}\frac{\partial}{\partial l} (\phi^\zeta)'_r \right) d\mathbf{W}_r,\mathbf{D}_W^{2\alpha'}\mathscr{L}\text{-}\frac{\partial}{\partial l} \phi^\zeta \right). 
    \end{equation*}
    Let $(\varepsilon^n)_n \subseteq \R \setminus \{0\}$ be any arbitrary sequence converging to 0. 
    Then by reasoning as before, it follows that, for any $p \in (\frac{1}{1-2\alpha},\infty)$ and for any $n \in \mathbb{N}$, \begin{equation*}
        \begin{aligned}
            &\|\Xi^{\varepsilon^n},(\Xi^{\varepsilon^n})';\Xi^{\zeta^0},(\Xi^{\zeta^0})'\|_{W;\alpha,\gamma;p} \\
            &\lesssim_{\alpha,\gamma,\gamma',p,T,|\mathbf{W}|_\alpha} \E \left( \int_0^T \left| \frac{1}{\varepsilon}(\mu^{\zeta^0 + \varepsilon l}_r-\mu^{\zeta^0}_r) - \mathscr{L}\text{-}\frac{\partial}{\partial l} \mu^{\zeta^0}_r \right|^p dr \right)^\frac{1}{p} + \E \left( \int_0^T \left| \frac{1}{\varepsilon}(\nu^{\zeta^0 + \varepsilon l}_r-\nu^{\zeta^0}_r) - \mathscr{L}\text{-}\frac{\partial}{\partial l} \nu^{\zeta^0}_r \right|^p dr \right)^\frac{1}{p} + \\
            &+ \left\|\frac{1}{\varepsilon} (\phi^{\zeta^0 + \varepsilon l} - \phi^{\zeta^0}), \frac{1}{\varepsilon} ((\phi^{\zeta^0 + \varepsilon l})' - (\phi^{\zeta^0})' ) ; \mathbf{D}_W^{\gamma,\gamma'}\mathscr{L}\text{-}\frac{\partial}{\partial l} \phi^{\zeta^0}_r, \mathbf{D}_W^{\gamma,\gamma'}\mathscr{L}\text{-}\frac{\partial}{\partial l} (\phi^{\zeta^0})'_r \right\|_{W;\gamma,\gamma';p}.
        \end{aligned}
    \end{equation*}
    The right-hand side is going to 0 as $n \to +\infty$, by assumption. 
    Moreover, from what we proved in 1.\, it is easy to deduce that $\{(\Xi^\zeta,(\Xi^\zeta)')\}_{\zeta \in U}$ is $\mathbf{D}_W^{\alpha,\gamma}\mathscr{L}$-continuous, provided that the continuity assumptions on the integrands processes are satisfied. \\
    Let us now assume that the assertion is proved for $k=k_0$ and that all the assumptions hold for $k=k_0+1$. 
    By the recursive assumption, it is immediate to conclude that, for any $l \in \mathbb{S}(\mathcal{U})$, $\{(\Xi^\zeta,(\Xi^\zeta)')\}_{\zeta \in U}$ is ($\mathbf{D}_W^{\alpha,\gamma}\mathscr{L}$-continuously) $\mathbf{D}_W^{\alpha,\gamma}\mathscr{L}$-differentiable.
\end{proof}

\section{Well-posedness of rough PDEs} \label{section:backwardroughPDEs}
Let $T>0$ and let $\mathbf{W} = (W,\mathbb{W}) \in \mathscr{C}_g^{\alpha} ([0,T];\R^n)$ be a weakly geometric rough path, with $\alpha \in (\frac{1}{3},\frac{1}{2}]$.
Given $g : \R^d \to \mathbb{R}$, we aim at proving an existence-and-uniqueness result for the following terminal value problem:
\begin{equation} \label{eq:roughPDE}
    \begin{cases}
        -du_t = L_t u_t \ dt + \Gamma_t u_t \ d\mathbf{W}_t \qquad t \in [0,T] \\
        u_T = g
    \end{cases}.
\end{equation}
The solution is a map $u:[0,T]\times \R^d \to \R$.
The (respectively, second and first order)  time-dependent differential operators $L_t$ and $\Gamma_t = ((\Gamma_t)_1, \dots , (\Gamma_t)_n)$ are defined, given $f:\R^d \to \R$ and $x \in \R^d$, by
\begin{align*}
  L_t f  (x) & = \frac{1}{2}  \sum_{i,j=1}^d \sum_{a=1}^m \sigma_a^i(t,x)\sigma_a^j(t,x) \frac{\partial^2 f}{\partial x^j \partial x^i}(x) + \sum_{i = 1}^d{b^i}(t,x)  \frac{\partial f}{\partial x^i} (x) +c(t,x)f(x) \\
  (\Gamma_t)_\mu f (x) 
  &= \sum_{i=1}^d \beta_\mu^i(t,x) \frac{\partial f}{\partial
  x^i}(x) + \gamma_\mu(t,x)f(x) \qquad \mu=1,\dots,n  
\end{align*}
for some functions \begin{align*}
    b, \sigma_1, \dots, \sigma_m, \beta_1, \ldots, \beta_n : [0,T] \times \mathbb{R}^d \to \mathbb{R}^{d} \qquad \text{and} \qquad 
    c , \gamma_1,\dots,\gamma_n, : [0,T]\times\R^d \to \R .
\end{align*} 
We denote by $b^i$ the $i$-th component of $b$ (similarly for all the other maps).
From now on we use Einstein's repeated indices summation. Let \begin{equation*}
    \beta'_{11},\dots, \beta'_{nn}:[0,T] \times \R^d \to \R^d \qquad \text{and} \qquad \gamma'_{11},\dots,\gamma'_{nn}:[0,T] \times \R^d \to \R . 
\end{equation*}
The formal identities $\beta_\mu(t,x) - \beta_\mu(s,x) \approx \beta'_{\mu \nu}(s,x) \delta W^\nu_{s,t}$ and $\gamma_\mu(t,x) - \gamma_\mu(s,x) \approx \gamma'_{\mu \nu}(s,x) \delta W^\nu_{s,t}$
justify the definition of 
\begin{equation*}
    (\Gamma_t')_{\mu \nu} f (x) = \sum_{i=1}^d (\beta_{\mu \nu}')^i(t,x) \frac{\partial f}{\partial
  x^i}(x) + \gamma'_{\mu \nu}(t,x)f(x) \qquad \mu, \nu =1,\dots,n .
\end{equation*}

\begin{defn} \label{def:solutiontobacwardroughPDEs} 
  A regular solution to \eqref{eq:roughPDE} is a map $u \in C^{0,2} ([0,T] \times \R^d;\R)$ \footnote{For $\kappa \in \mathbb{N}_{\ge 1}$, we denote by $C^{0,\kappa}([0,T] \times \R^d: \R)$ ($C_b^{0,\kappa}([0,T] \times \R^d; \R)$) the space of functions $f=f(t,x)$ such that $f(t,\cdot) \in C^\kappa(\R^d;\R)$ for any $t \in [0,T]$, and $f,D_xf,\dots,D^\kappa_{x\dots x}f \in C^0([0,T] \times \R^d;\R) \ (C_b^0([0,T] \times \R^d;\R)) $} such that 
  \begin{itemize}
    \item[i)] $u (T, x) = g (x)$ for any $x \in \R^d$;
    \item[ii)] there exist $\alpha',\alpha'' \in [0,1]$ with $\alpha + \min\{\alpha,\alpha'\} + \alpha'' >1$ and a constant $C>0$ such that,
    for any $(s,t)\in \Delta_{[0,T]}$ and for any $\mu\in \{1,\dots,n\}$ \footnote{The reason for the presence of the minus sign in front of $\Gamma'$ is due to the backward-in-time nature of the equation. It is essentially due to the fact that in \cref{assumptions} we consider $\beta$ and $\gamma$ time-controlled by $\beta'$ and $\gamma'$, respectively, in a forward sense. A clear understanding of this phenomenon comes by looking at the proof of \cref{thm:existenceRPDE}. For an overview on backward rough integration and backward rough PDEs we refer to \cite[Section 5.4]{FRIZHAIRER} and \cite[Remark 12.4]{FRIZHAIRER}.} , \begin{equation*} \begin{aligned}
        &\sup_{x \in \R^d} |\big((\Gamma_s)_\mu (\Gamma_s)_\nu - (\Gamma_s')_{\mu \nu}\big) u_s(x) - \big((\Gamma_t)_\mu (\Gamma_t)_\nu - (\Gamma_t')_{\mu \nu} \big) u_t(x)| \le C|t-s|^{\alpha'} \qquad \nu=1,\dots,n \\
        &\sup_{x \in \R^d}|(\Gamma_t)_\mu u_t (x) - (\Gamma_s)_\mu u_s (x) + \big((\Gamma_t)_\mu (\Gamma_t)_\nu - (\Gamma'_t)_{\mu \nu} \big) u_t(x) \ \delta W_{s,t}^\nu| \le C |t-s|^{\alpha'+\alpha''}; 
    \end{aligned}
    \end{equation*}
    \item[iii)] for any $(s,t)\in \Delta_{[0,T]}$ and for any $x \in \R^d$, \begin{equation} \label{eq:Davieexpansion_solutionRPDE}
         u_s(x) - u_t(x) = \int_s^t L_r u_r (x) dr + (\Gamma_t)_\mu u_t (x) \delta W_{s,t}^\mu + \big((\Gamma_t)_\mu (\Gamma_t)_\nu - (\Gamma_t')_{\mu \nu}  \big) u_t(x)\mathbb{W}_{s,t}^{\mu \nu} + u^\natural_{s,t}(x)
    \end{equation}
    with $\sup_{x \in \R^d} |u^\natural_{s,t}(x)| = o(|t-s|)$ as $|t-s| \to 0$. 
  \end{itemize}
\end{defn}

It is convenient to explicitly write $(\Gamma_t)_\mu (\Gamma_t)_\nu f (x) = (\Gamma_t)_\mu ((\Gamma_t)_\nu f (\cdot) )(x) $ in coordinates as follows: \begin{equation} \label{eq:explicitGammaGamma}
    \begin{aligned}
        (\Gamma_t)_\mu (\Gamma_t)_\nu f (x) 
        &= \beta_\mu^j(t,x) \frac{\partial \beta_\nu^i}{\partial x^j}(t,x) \frac{\partial f}{\partial x^i}(x) + \beta_\mu^j(t,x) \beta_\nu^i(t,x) \frac{\partial^2 f}{\partial x^j \partial x^i}(x) + \beta^j_\mu(t,x) \frac{\partial \gamma_\nu}{\partial x^j}(t,x) f(x) + \\
        & \quad + \beta^j_\mu(t,x) \gamma_\nu (t,x) \frac{\partial f}{\partial x^j}(x) + \gamma_\mu (t,x) \beta^i_\nu(t,x)  \frac{\partial f}{\partial x^i}(x) + \gamma_\mu (t,x) \gamma_\nu(t,x) f(x). 
    \end{aligned}
\end{equation}

\begin{prop}
    Let $u:[0,T] \times \R^d \to \R$ be any function satisfying condition ii) of \cref{def:solutiontobacwardroughPDEs}. Then, for any $(t,x) \in [0,T] \in \R^d$ and along any sequence of partitions $\pi$ of $[t,T]$ whose mesh size tends to zero, there exists \begin{equation*}
        \int_t^T (\Gamma_r u_r(x), (\Gamma_r \Gamma_r - \Gamma_r') u_r(x)) d\mathbf{W}_r := \lim_{|\pi|\to 0} \sum_{[s,\tau] \in \pi} (\Gamma_\tau)_\mu u_\tau
        (x) \delta W^\mu_{s,\tau} + ((\Gamma_\tau)_\mu (\Gamma_\tau)_\nu - (\Gamma'_\tau)_{\mu \nu}) u_\tau(x) \mathbb{W}^{\mu \nu}_{s,\tau}.
    \end{equation*}
    In particular condition iii) of the previous definition can be equivalently replaced by \begin{itemize}
        \item[iii')] for any $t \in [0,T]$ and for any $x \in \R^d$ \begin{equation} \label{eq:RPDEinintegralform}
        u_t(x) = g(x) + \int_t^T L_r u_r (x) dr + \int_t^T (\Gamma_r u_r (x), (\Gamma_r \Gamma_r - \Gamma'_r) u_r(x)) d\mathbf{W}_r
    \end{equation} 
    \end{itemize}
    and the quantity $u^\natural_{s,t}(x)$ in fact satisfies $\sup_{x \in \R^d}|u^\natural_{s,t}(x)| \lesssim |t-s|^{\alpha + \min\{\alpha,\alpha'\}+\alpha''}$ for any $(s,t) \in \Delta_{[0,T]}$. 
\end{prop}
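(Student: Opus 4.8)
The plan is to recognize that condition ii) is precisely the statement that the two-parameter object $\Xi_{s,t}(x) := (\Gamma_t)_\mu u_t(x)\,\delta W^\mu_{s,t} + \big((\Gamma_t)_\mu(\Gamma_t)_\nu - (\Gamma'_t)_{\mu\nu}\big)u_t(x)\,\mathbb{W}^{\mu\nu}_{s,t}$ is the compensated increment of a deterministic controlled rough path, so that the claimed limit is nothing but the (deterministic) rough integral against $\mathbf{W}$ obtained from the classical sewing lemma. Concretely, fix $(t,x)$ and set $Z_s := (\Gamma_s)_\mu u_s(x)$ viewed as a path $[t,T]\to\R^n$ (indexed by $\mu$), with Gubinelli derivative $Z'_s := \big((\Gamma_s)_\mu(\Gamma_s)_\nu - (\Gamma'_s)_{\mu\nu}\big)u_s(x) \in \R^{n\times n}$. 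The first estimate in ii) says $Z'$ is $\alpha'$-Hölder (uniformly in $x$), and the second says $R^Z_{s,t} := \delta Z_{s,t} + Z'_t\,\delta W_{s,t}$ — note the $+$ sign, reflecting the backward-in-time convention flagged in the footnote — satisfies $|R^Z_{s,t}| \lesssim |t-s|^{\alpha'+\alpha''}$. Hence $(Z,Z')$ is a controlled rough path in the backward sense, and the integral $\int_t^T (\Gamma_r u_r(x), (\Gamma_r\Gamma_r-\Gamma'_r)u_r(x))\,d\mathbf{W}_r$ is defined as the limit of the indicated Riemann–Stieltjes-type sums via the deterministic rough sewing lemma.

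First I would verify the hypotheses of the sewing lemma for the germ $A_{s,\tau} := Z_\tau\,\delta W_{s,\tau} + Z'_\tau\,\mathbb{W}_{s,\tau}$ (all evaluated at the fixed $x$). Using Chen's relation for $\mathbf{W}$ (part c) of \cref{def:roughpath}) together with the defining relation $\delta Z_{u,\tau} = -Z'_\tau\,\delta W_{u,\tau} + R^Z_{u,\tau}$, a direct computation gives $\delta A_{s,u,\tau} = A_{s,\tau} - A_{s,u} - A_{u,\tau} = R^Z_{u,\tau}\,\delta W_{s,u} - \delta Z'_{u,\tau}\,\mathbb{W}_{s,u} + (\text{lower order})$, which is controlled by $|\tau - s|^{\alpha + \min\{\alpha,\alpha'\}+\alpha''}$ with exponent strictly greater than $1$ by the standing assumption $\alpha + \min\{\alpha,\alpha'\} + \alpha'' > 1$ in ii). Therefore the sewing lemma applies: the sums $\sum_{[s,\tau]\in\pi} A_{s,\tau}$ converge as $|\pi|\to 0$ to a limit independent of the partition sequence, and the limit $I_{t,T}$ satisfies $|I_{s,t} - A_{s,t}| \lesssim |t-s|^{\alpha+\min\{\alpha,\alpha'\}+\alpha''}$. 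This is exactly the asserted existence of $\int_t^T(\cdots)\,d\mathbf{W}_r$, and the last displayed bound gives the improved estimate $\sup_x|u^\natural_{s,t}(x)| \lesssim |t-s|^{\alpha+\min\{\alpha,\alpha'\}+\alpha''}$ once we identify $u^\natural$ with the sewing remainder.

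The equivalence of iii) and iii') is then bookkeeping. Given iii), sum the Davie expansion \eqref{eq:Davieexpansion_solutionRPDE} over a partition of $[t,T]$: the Lebesgue term telescopes to $\int_t^T L_r u_r(x)\,dr$ (using continuity of $r\mapsto L_r u_r(x)$, which follows from $u \in C^{0,2}$ and continuity of the coefficients), the rough terms $A_{s,\tau}$ converge to $\int_t^T(\Gamma_r u_r(x),(\Gamma_r\Gamma_r-\Gamma'_r)u_r(x))\,d\mathbf{W}_r$ by the first part, the left side telescopes to $u_t(x) - u_T(x) = u_t(x) - g(x)$ by i), and $\sum u^\natural_{s,\tau}(x) \to 0$ since $\sup_x|u^\natural_{s,\tau}(x)| = o(|\tau - s|)$ and hence the sum over a partition is $o(1)$. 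Rearranging gives \eqref{eq:RPDEinintegralform}. Conversely, given iii'), subtract the identity at $s$ from the one at $t$ to recover $\delta u_{s,t}(x)$ as an integral, then use the local expansion of the rough integral (the sewing bound $|I_{s,t} - A_{s,t}| \lesssim |t-s|^{\alpha+\min\{\alpha,\alpha'\}+\alpha''}$) and the fact that $\int_s^t L_r u_r(x)\,dr = L_t u_t(x)(t-s) + o(|t-s|)$ to read off \eqref{eq:Davieexpansion_solutionRPDE} with $u^\natural_{s,t}(x)$ of the claimed order, which is in particular $o(|t-s|)$.

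I do not anticipate a serious obstacle here: this is a ``deterministic shadow'' of \cref{prop:roughstochastiintegral_compendium}, and the only point requiring a little care is the sign convention — the controlled-path structure is with respect to $\delta Z_{s,t} \approx -Z'_t\,\delta W_{s,t}$ rather than the usual $+Z'_s\,\delta W_{s,t}$, so one is really integrating a backward controlled rough path against $\mathbf{W}$ (equivalently, running time backwards and using the standard theory, as in \cite[Section 5.4]{FRIZHAIRER}). One must also check that all estimates are uniform in $x \in \R^d$, which is immediate since the constant $C$ in condition ii) is uniform in $x$ by hypothesis, and the sewing constant depends only on $C$, $|\mathbf{W}|_\alpha$ and the Hölder exponents. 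The weak-geometricity of $\mathbf{W}$ is not needed for this proposition (it will matter later, for the Feynman–Kac identification), so I would not invoke it here.
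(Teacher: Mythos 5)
Your proposal is correct and follows essentially the same route as the paper: apply the (deterministic) sewing lemma to the germ $A_{s,\tau}(x) = (\Gamma_\tau)_\mu u_\tau(x)\,\delta W^\mu_{s,\tau} + ((\Gamma_\tau)_\mu(\Gamma_\tau)_\nu - (\Gamma'_\tau)_{\mu\nu})u_\tau(x)\,\mathbb{W}^{\mu\nu}_{s,\tau}$, compute $\delta A$ via Chen's relation, and bound it using condition ii) with an exponent exceeding $1$, uniformly in $x$. The paper's proof records only this sewing estimate and leaves the bookkeeping for the equivalence of iii) and iii') implicit, which you spell out correctly via telescoping the Davie expansion and the local sewing bound, so there is nothing further to flag.
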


\begin{proof}
    The proof is essentially a consequence of the sewing lemma (see \cite[Lemma 4.2]{FRIZHAIRER}). Let $t \in [0, T]$ and, for any $x \in \R^d$ and for any $s, \tau \in [0, t]$, define
  \begin{equation*}
        A_{s,\tau}(x) := (\Gamma_\tau)_\mu u_\tau(x) \delta W_{s,\tau}^\mu + ((\Gamma_\tau)_\mu \Gamma_\nu + (\Gamma'_\tau)_{\mu\nu}) u_\tau(x) \mathbb{W}_{s,\tau}^{\mu,\nu} \in \R. 
    \end{equation*}
  Recalling Chen's relation for rough paths, for any $s, r, \tau \in [0, t]$ we have
  \begin{align*}
    A_{s, \tau}(x) - A_{s, r}(x) - A_{r,\tau}(x) & =
     ((\Gamma_\tau)_\mu u_\tau(x) - 
    (\Gamma_r)_\mu u_r(x))  \delta W^\mu_{r,\tau} +\\
    & \quad + ( (\Gamma_\tau)_\mu (\Gamma_\tau)_\nu u_\tau (x) - (\Gamma'_\tau)_{\nu \mu} u_\tau(x) ) (\mathbb{W}^{\mu \nu}_{s, r} + \delta W^\mu_{s,r} \delta W^\nu_{r,\tau}) + \\
    & \quad - ( (\Gamma_s)_\mu (\Gamma_s)_\nu u_s(x) - (\Gamma'_s)_{\nu \mu} u_s(x)) \mathbb{W}^{\mu \nu}_{s, r} .
  \end{align*}
  From condition (ii) of we deduce that $ \sup_{x \in \R^d}| A_{s, v}(x) - A_{s,u} (x) - A_{u, v}(x) | \le C  | \mathbf{W}
  |_{\alpha} |t - s|^{\alpha + \min\{\alpha,\alpha'\}+\alpha''}$.
\end{proof}

In what follows, we are making use of a sort of time-shift in the rough driver of the equation, as illustrated in the following
\begin{prop} \label{prop:shiftingaroughpath}
    Let $\mathbf{W}=(W,\mathbb{W})$ be an $\alpha$-H\"older (weakly geometric) rough path, with $\alpha \in (\frac{1}{3},\frac{1}{2}]$, and let $s \in  [0,T]$ be arbitrary. Let us define \begin{equation*}
        W^s:[0,T] \to \R^n, \quad u \mapsto W^s_u := \begin{cases}
            W_{s+u} & \text{if $u \le T-s$} \\
            W_T & \text{otherwise}
        \end{cases}
    \end{equation*} and \begin{equation*}
        \mathbb{W}^s:[0,T]^2 \to \R^n \otimes \R^n, \quad (u,v) \mapsto \mathbb{W}^s_{u,v} := \begin{cases}
            \mathbb{W}_{s+u,s+v} & \text{if $u,v \le T-s$} \\
            \mathbb{W}_{s+u,T} & \text{if $u \le T-s < v$} \\
            \mathbb{W}_{T,s+v} & \text{if $v \le T-s < u$} \\
            \mathbb{W}_{T,T}=0 & \text{otherwise}
        \end{cases}.
    \end{equation*}
    Then $\mathbf{W}^s := (W^s,\mathbb{W}^s)$ is an $\alpha$-H\"older (weakly geometric) rough path with $|\mathbf{W}^s|_\alpha \le |\mathbf{W}|_\alpha$. Moreover, for any $\alpha' \in (0,\alpha)$ the family $\{\mathbf{W}^s\}_{s \in [0,T]}$ is $\mathscr{C}^{\alpha'}$-continuous in the sense of \cref{def:convergenceforroughpaths}. 
\end{prop}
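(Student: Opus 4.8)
## Proof Proposal

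The plan is to verify the three defining properties of an $\alpha$-rough path (Definition~\ref{def:roughpath}) for the shifted pair $\mathbf{W}^s = (W^s, \mathbb{W}^s)$ directly from the definitions, exploiting that $W^s$ and $\mathbb{W}^s$ are, in the ``active'' region $u,v \le T-s$, literally increments/two-parameter values of $\mathbf{W}$ translated in time, and are ``frozen'' (constant, resp.\ zero) past $T-s$. The three regularity/algebraic checks are then essentially bookkeeping over the cases appearing in the piecewise definition.

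First I would check the H\"older bounds. For $W^s$: if both $u,v \le T-s$ then $\delta W^s_{u,v} = \delta W_{s+u,s+v}$, so $|\delta W^s_{u,v}| \le |\delta W|_\alpha |v-u|^\alpha$; if $u \le T-s < v$ then $\delta W^s_{u,v} = W_T - W_{s+u} = \delta W_{s+u, T}$ and $|T - (s+u)| = (T-s) - u \le v - u$, so the same bound holds; if $T-s < u \le v$ then $\delta W^s_{u,v} = 0$. Hence $|\delta W^s|_\alpha \le |\delta W|_\alpha$. For $\mathbb{W}^s$ one argues identically case by case: in each of the four cases $\mathbb{W}^s_{u,v}$ equals $\mathbb{W}_{a,b}$ for some $a \le b$ in $[0,T]$ with $b - a \le v - u$ (e.g.\ in the case $v \le T-s < u$ one has $\mathbb{W}^s_{u,v} = \mathbb{W}_{T, s+v}$ with $|s+v - T| = v - (T-s) \le v-u$), or it is $0$; so $|\mathbb{W}^s|_{2\alpha} \le |\mathbb{W}|_{2\alpha}$, and therefore $|\mathbf{W}^s|_\alpha \le |\mathbf{W}|_\alpha$.

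Next I would verify Chen's relation $\mathbb{W}^s_{u,v} - \mathbb{W}^s_{u,r} - \mathbb{W}^s_{r,v} = \delta W^s_{u,r} \otimes \delta W^s_{r,v}$ for $u \le r \le v$. This splits according to where $u, r, v$ fall relative to $T-s$. When all three are $\le T-s$ it is exactly Chen's relation for $\mathbf{W}$ at the points $s+u, s+r, s+v$. When all three are $> T-s$, both sides vanish. In the mixed cases one uses the definitions together with Chen's relation for $\mathbf{W}$ and the fact that increments of $W$ over degenerate intervals $[T,T]$ vanish; e.g.\ if $u \le r \le T-s < v$, the left side is $\mathbb{W}_{s+u, T} - \mathbb{W}_{s+u, s+r} - \mathbb{W}_{s+r, T} = \delta W_{s+u, s+r} \otimes \delta W_{s+r, T} = \delta W^s_{u,r} \otimes \delta W^s_{r,v}$, as required. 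The remaining mixed case $u \le T-s < r \le v$ is analogous, and the weak-geometricity identity $\mathrm{Sym}(\mathbb{W}^s_{u,v}) = \tfrac12 \delta W^s_{u,v} \otimes \delta W^s_{u,v}$ follows in each case from the corresponding identity for $\mathbf{W}$ (when active) or trivially (when frozen).

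Finally, for the $\mathscr{C}^{\alpha'}$-continuity of $\{\mathbf{W}^s\}_{s\in[0,T]}$, I would first establish, for $s_1 \le s_2$, the uniform estimate $\sup_{u}|\delta(W^{s_1} - W^{s_2})_{0,u}| = \sup_u |W^{s_1}_u - W^{s_2}_u| \lesssim |\delta W|_\alpha |s_2 - s_1|^\alpha$ (again case-checking in $u$: for $u \le T - s_2$ the difference is $W_{s_1+u} - W_{s_2+u} = \delta W_{s_1+u, s_2+u}$; for $u$ large both are close to $W_T$), and analogously a uniform bound $\sup_{u,v}|\mathbb{W}^{s_1}_{u,v} - \mathbb{W}^{s_2}_{u,v}| \lesssim |\mathbf{W}|_\alpha |s_2-s_1|^\alpha$. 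Combined with the uniform $\alpha$-H\"older bound $\sup_s |\mathbf{W}^s|_\alpha \le |\mathbf{W}|_\alpha$ proved above, the interpolation principle of Proposition~\ref{prop:interpolation} (applied to $s \mapsto W^s$ as a path into $C([0,T];\R^n)$, and similarly to the second level) upgrades uniform convergence to convergence in the $\alpha'$-H\"older (resp.\ $2\alpha'$-H\"older) norm for every $\alpha' < \alpha$, which is precisely $\rho_{\alpha'}(\mathbf{W}^{s_n}, \mathbf{W}^{s_0}) \to 0$ whenever $s_n \to s_0$. I expect the main (though still modest) obstacle to be organizing the case analysis for Chen's relation cleanly; the uniform continuity estimate near the ``cutoff'' $u \approx T-s$ also warrants a little care, since there one must bound quantities like $W_{s_1+u} - W_T$ by the H\"older norm times a power of $|s_2 - s_1|$ using $|s_1 + u - T| \le |s_2 - s_1|$.
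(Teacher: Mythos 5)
Your proposal is correct and follows essentially the same route as the paper: the H\"older bounds, Chen's relation and weak geometricity for $\mathbf{W}^s$ are checked case by case across the cutoff $T-s$, and the $\mathscr{C}^{\alpha'}$-continuity is obtained exactly as in the paper by combining the uniform bound $\sup_s|\mathbf{W}^s|_\alpha\le|\mathbf{W}|_\alpha$ with uniform-in-$(u,v)$ estimates of order $|s_1-s_2|^\alpha$ (the second-level one handled via Chen's relation) and then interpolating via \cref{prop:interpolation}. Only a cosmetic slip: in the case $v\le T-s<u$ the relevant distance is $(T-s)-v\le u-v$, not $v-(T-s)\le v-u$.
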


\begin{proof}
   It is straightforward to see that, for any $s \in [0,T]$, $|\delta W^s|_\alpha \le |W|_\alpha$, $|\mathbb{W}^s|_{2\alpha} \le |\mathbb{W}|_{2\alpha}$ and that the Chen's relation $\mathbb{W}^s_{u,v} - \mathbb{W}^s_{u,r} - \mathbb{W}^s_{r,v} = \delta W^s_{u,r} \otimes \delta W^s_{r,v}$ holds for any $u,r,v \in [0,T]$.
   Moreover, $\mathbf{W}^s$ is clearly weakly geometric if $\mathbf{W}$ is weakly geometric.
   Let us now consider any arbitrary sequence $(s^n)_n$ in $[0,T]$, converging to $s$ as $n \to +\infty$. 
   It is clear that $\sup_{n \in \mathbb{N}} |\mathbf{W}^{s^n}|_\alpha \le |\mathbf{W}|_\alpha$.  
    Therefore the last assertion of the proposition follows by interpolation (see \cref{prop:interpolation}), if we are able to show that \begin{equation*}
        \sup_{u,v \in [0,T]} \left(|\delta W^{s^n}_{u,v} - \delta W^s_{u,v}| + |\mathbb{W}^{s^n}_{u,v} - \mathbb{W}^s_{u,v}| \right) \to 0 \quad  \text{as $n \to +\infty$}.
    \end{equation*}
    Let $n \in \mathbb{N}$. Without loss of generality we can assume $T-s^n \le T-s$, otherwise we exchange $s^n$ with $s$ in all the following computations. Let $u,v \in [0,T]$ be arbitrary. We only consider the case $\max\{u,v\} \le T-s^n$. The proof of the other cases follows in analogous manner. Uniformly in $u,v$ it holds that \begin{equation*}
                |\delta W^{s^n}_{u,v} - \delta W^s_{u,v}| = |\delta W_{s^n+u,s^n+v} - \delta W_{s+u,s+v}| = |\delta W_{s+v,s^n+v} - \delta W_{s+u,s^n+u}| \le 2 |\delta W|_\alpha |s^n-s|^\alpha
            \end{equation*} and, using Chen's relation, \begin{equation*}
                \begin{aligned}
                    |\mathbb{W}^{s^n}_{u,v} - \mathbb{W}^s_{u,v}| &= |\mathbb{W}_{s^n+u,s^n+v} - \mathbb{W}_{s+u,s+v}| \le  |\mathbb{W}_{s^n+u,s^n+v} - \mathbb{W}_{s^n+u,s+v}| + |\mathbb{W}_{s^n+u,s+v} - \mathbb{W}_{s+u,s+v}| \\
                    &= |\mathbb{W}_{s+v,s^n+v} + \delta W_{s^n+u,s+v} \otimes \delta W_{s+v,s^n+v}| + |\mathbb{W}_{s^n+u,s+u} + \delta W_{s^n+u,s+u} \otimes \delta W_{s+v,s^n+v}| \\
                    &\le 2 |\mathbb{W}|_{2\alpha} |s^n-s|^{2\alpha} + 8T^\alpha |\delta W|_\alpha |s^n-s|^\alpha.  
                \end{aligned}
            \end{equation*}
\end{proof}
 
\begin{assumptions} \label{assumptions} 
    Let $\lambda \in (\frac{1}{\alpha},3]$, and consider $\delta,\eta \in [0,\alpha]$ and $\kappa,\theta \in [4,5]$ such that \footnote{At first reading, take $\lambda=3$, $\kappa=\theta=5$ and $\delta=\eta=\alpha$} \begin{equation*}
        \alpha + \min\{\delta,\eta\} > \frac{1}{2} \quad \text{and} \quad \alpha + \min\{\delta,\eta\} + \min\{(\lambda-2)\alpha,(\min\{\kappa,\theta\}-4)\alpha,\delta,\eta\}>1.
    \end{equation*}
   Let the following hold: 
    \begin{itemize}
        \item $g \in C^\lambda_b(\R^d;\R)$;
        \item $b \in C^0([0,T];C^2_b(\R^d;\R^d)) \cap \mathcal{B}([0,T];C^\lambda_b(\R^d;\R^d))$ ; 
        \item the map $\sigma=(\sigma_1,\dots,\sigma_m):[0,T] \times \R^d \to \mathcal{L}(\R^m;\R^d) \equiv \R^{d \times m}$ defined by $\sigma (t,x) y := \sigma_j(t,x) y^j, \ y \in \R^m$ belongs to $C^0([0,T];C^2_b(\R^d;\mathcal{L}(\R^m;\R^d))) \cap \mathcal{B}([0,T];C^\lambda_b(\R^d;\mathcal{L}(\R^m;\R^d))) $ ;  
        \item$c \in C^0([0,T];C^2_b(\R^d;\R)) \cap \mathcal{B}([0,T];C^\lambda_b(\R^d;\R))$;
        \item the maps $\beta=(\beta_1,\dots,\beta_n):[0,T] \times \R^d \to \mathcal{L}(\R^n;\R^d) \equiv \R^{d \times n}$ and $\beta':[0,T] \times \R^d \to \mathcal{L}(\R^n;\mathcal{L}(\R^n;\R^d))$ defined via $\beta'(t,x) y z := \beta'_{\mu \nu}(t,x) y^\nu z^\mu, \ y,z \in \R^n$ are such that $(\beta,\beta') \in \mathscr{D}_W^{2\delta} C_b^\kappa $ ;
        \item $(\gamma,\gamma') \in \mathscr{D}_W^{2\eta} C_b^\theta $ .
    \end{itemize}
  
\end{assumptions}

\begin{thm} \label{thm:mainresult_wellposednessroughPDEs}
   Under \cref{assumptions}. Let $B=(B_t)_{t \in [0,T]}$ be a Brownian motion on a complete filtered probability space $(\Omega,\mathcal{F},\{\mathcal{F}_t\}_{t \in [0,T]},\mathbb{P})$.
   For any $(s,x) \in [0,T] \times \R^d$, denote by $X^{s,x}$ the solution of the following rough SDE \begin{equation}\label{eq:RSDE_backwardroughPDE_main}
        X^{s,x}_t = x + \int_0^t b (s+r, X_r^{s,x}) dr + \int_0^t \sigma (s+r, X_r^{s,x}) dB_r + \int_0^t (\beta,\beta')(s+r,X^{s,x}_r) d\mathbf{W}^s_r,
    \end{equation} 
    where $(\beta,\beta')(s+r,X^{s,x}_r) := (\beta (s+r, X_r^{s,x}),D_x \beta (s+r, X_r^{s,x}) \beta (s+r, X_r^{s,x}) + \beta'(s+r,X^{s,x}_r))$ and $\mathbf{W}^s$ is defined as in \cref{prop:shiftingaroughpath}.     
    Define
    \begin{equation} \label{eq:definitionofu_mainresult} 
      u(s,x) := \E \left(g(X_{T-s}^{s,x}) e^{\int_0^{T-s} c(s+r,X^{s,x}_r) dr + \int_0^{T-s} (\gamma,\gamma')(s+r,X^{s,x}_r) d\mathbf{W}^s_r} \right).
    \end{equation}
    Then $$u \in C^{0,2}_b([0,T] \times \R^d ; \R) \cap \mathcal{B}([0,T];C^\lambda_b(\R^d;\R))  \cap C^{\alpha}([0,T];C^2_b(\R^d;\R))$$
    and it is the unique regular solution to $-du_t = L_t u_t + \Gamma_tu_t d\mathbf{W}_t$ in the sense of \cref{def:solutiontobacwardroughPDEs}  .
\end{thm}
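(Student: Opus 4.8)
The plan is to split the proof of \cref{thm:mainresult_wellposednessroughPDEs} into three parts: (a) regularity of the map $u$ defined by the Feynman--Kac formula \eqref{eq:definitionofu_mainresult}; (b) verification that $u$ is a regular solution of the backward rough PDE \eqref{eq:roughPDE}; and (c) uniqueness. For part (a), I would fix $(s,x)$ and regard the exponential-weighted functional
\begin{equation*}
    \Phi(s,x) = \E\left(g(X^{s,x}_{T-s}) e^{\int_0^{T-s} c(s+r,X^{s,x}_r)\,dr + \int_0^{T-s} (\gamma,\gamma')(s+r,X^{s,x}_r)\,d\mathbf{W}^s_r}\right)
\end{equation*}
as a parameter-dependent expectation with parameter $\zeta = (s,x)$. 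The first task is integrability of the exponential term: I would invoke the John--Nirenberg-type estimates of \cref{section:exponentialterm} (referenced in the introduction) to ensure $\E(e^{p\int (\gamma,\gamma')d\mathbf{W}^s})<\infty$ for all $p$, so that the functional is well-defined and all moments are finite. Then $x$-regularity (up to order $\lambda$, hence in particular $C^2_b$) and joint continuity in $(s,x)$ follow from \cref{thm:differentiability_RSDEs} and \cref{thm:continuity_RSDEs} applied to the rough SDE \eqref{eq:RSDE_backwardroughPDE_main}, using \cref{prop:shiftingaroughpath} for the $\mathscr{C}^{\alpha'}$-continuity of $s \mapsto \mathbf{W}^s$, combined with the composition and integration results of \cref{section:compositionandintegration_technicalresults} (especially \cref{prop:continuityanddifferentiability_forcingterm} and \cref{lemma:compositionwithfunctionsofpolynomialgrowth} for the integral functionals inside the exponential); the $\alpha$-Hölder-in-time bound for $u(\cdot,x)$ in $C^2_b$ comes from the a priori estimates in \cref{thm:wellposednessandaprioriestimatesforRSDEs} together with the shift estimate $|\mathbf{W}^s|_\alpha \le |\mathbf{W}|_\alpha$ and an interpolation argument as in \cref{prop:interpolation}.

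For part (b), I would use the Markov-type property of solutions to rough SDEs (developed in \cref{section:functionalsandmarkovproperty}) to write, for $0\le s \le s+h \le T$,
\begin{equation*}
    u(s,x) = \E\left( u(s+h, X^{s,x}_h) \, e^{\int_0^h c(s+r,X^{s,x}_r)dr + \int_0^h (\gamma,\gamma')(s+r,X^{s,x}_r)d\mathbf{W}^s_r} \right),
\end{equation*}
i.e.\ a tower/flow property for the weighted semigroup. Then I would apply a rough It\^o formula to the process $t \mapsto u(s+t, X^{s,x}_t) \exp\{\int_0^t c\,dr + \int_0^t (\gamma,\gamma')\,d\mathbf{W}^s_r\}$ on $[0,h]$; the Brownian part and the drift produce the $L_t u$ term (using that $u(s+\cdot,\cdot)\in C^{0,2}$ and the SDE coefficients are $C^2_b$ in space), while the rough part, after expanding the product of the controlled rough path $\beta(X)$ with the controlled ``vector field'' $\Gamma u$ and accounting for the second-order ($\mathbb{W}$) correction and the time-controlled structure $\beta_t \approx \beta_s + \beta'_s \delta W$, $\gamma_t \approx \gamma_s + \gamma'_s\delta W$, produces exactly the $(\Gamma_t\Gamma_t - \Gamma_t')u_t$ coefficient against $\mathbb{W}$ and $\Gamma_t u_t$ against $\delta W$, with the minus sign in front of $\Gamma'$ arising from the backward (shifted) nature of the driver — this is the computation alluded to in the footnote of \cref{def:solutiontobacwardroughPDEs}. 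Taking conditional expectations $\E_0$ kills the martingale terms and the $o(|t-s|)$-in-conditional-expectation remainder of the rough stochastic integral, which yields the Davie expansion \eqref{eq:Davieexpansion_solutionRPDE}; the required estimate $\sup_x|u^\natural_{s,t}(x)| = o(|t-s|)$ (indeed $\lesssim |t-s|^{\alpha+\min\{\alpha,\alpha'\}+\alpha''}$) and condition (ii) follow from the $C^\lambda_b$-in-space and $C^\alpha$-in-time regularity of $u$ established in part (a) together with \cref{assumptions} on $\beta,\beta',\gamma,\gamma'$.

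For part (c), uniqueness, I would argue directly (``rough It\^o calculus'', as advertised in the introduction, rather than by duality): given any regular solution $v$ of \eqref{eq:roughPDE}, fix $(s,x)$ and apply the rough It\^o formula to $t \mapsto v(s+t, X^{s,x}_t)\exp\{\int_0^t c\,dr + \int_0^t (\gamma,\gamma')\,d\mathbf{W}^s_r\}$ on $[0,T-s]$; using that $v$ satisfies the PDE in the Davie sense, the $dt$ and $d\mathbf{W}^s$ contributions cancel exactly (the PDE says $-dv = Lv\,dt + \Gamma v\,d\mathbf{W}$, which matches the drift and rough terms produced by the It\^o formula), leaving a true martingale plus higher-order remainder; taking expectations and using $v(T,\cdot)=g$ gives $v(s,x) = \E(g(X^{s,x}_{T-s})\exp\{\cdots\}) = u(s,x)$. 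I expect part (b) — carrying out the rough-It\^o expansion of the product and correctly bookkeeping the $\mathbb{W}$-terms and the sign convention coming from the time-shift $\mathbf{W}^s$ — to be the main obstacle, since it requires combining the stochastic Davie expansion for $X^{s,x}$ (\cref{def:solutionRSDEs_compendium}), the composition-with-controlled-vector-fields estimates (\cref{prop:compositionwithstochasticcontrolledvectorfields_compendium}), and the rough stochastic integral characterization (\cref{prop:roughstochastiintegral_compendium}) in a single Taylor expansion to second order, and making sure the leftover terms are genuinely $o(|t-s|)$ after conditioning; the integrability of the exponential weight in part (a), while conceptually handled by the John--Nirenberg machinery of \cref{section:exponentialterm}, is the other delicate point that underpins everything.
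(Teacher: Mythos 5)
Your proposal follows essentially the same route as the paper: regularity of $u$ via the parameter-dependence results of \cref{section:parameterdependence} combined with the John--Nirenberg exponential estimates and \cref{prop:shiftingaroughpath}; existence via the Markov-type identity of \cref{lemma:markovpropertyforRSDEs} together with the rough It\^o formula \cref{prop:roughItoformula} applied to $e^{I^{s,x}}u_t(X^{s,x})$ (and to the differentiated Feynman--Kac representation for condition (ii)); and uniqueness by applying the same rough It\^o expansion to an arbitrary regular solution $v$ composed with the flow and exponential weight, cancelling against the Davie expansion of $v$ and showing the resulting map is constant in time. This matches the paper's proofs of \cref{prop:regularityofu}, \cref{thm:existenceRPDE} and \cref{thm:uniqueness}, so the proposal is correct and takes the same approach.
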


\begin{proof}
    The space-time regularity of $u$ is established in \cref{prop:regularityofu} using the results results on parameter dependent rough SDEs from \cref{section:parameterdependence}.
    Existence is proved in \cref{thm:existenceRPDE}, with  \cref{prop:roughItoformula} and \cref{lemma:markovpropertyforRSDEs} as the main tools.  
    Uniqueness follows  from and \cref{thm:uniqueness}.  
\end{proof}

\begin{rmk}
    In our notation, the parameter $s$ in $X^{s,x}$ represents a time-shift in the coefficients and the rough driver, not the initial time. A more familiar formulation may be $$u(s,x) = \E\left(g(\bar{X}^{s,x}_T) e^{\int_s^T c_r(\bar{X}_r^{s,x}) dr + \int_s^T (\gamma,\gamma') (r,\bar{X}_r^{s,x}) d\mathbf{W}_r}\right),$$ 
    being $\bar{X}^{s,x}=(\bar{X}_t^{s,x})$ the solution of \begin{equation*}
        X_t = x + \int_s^t b(r,X_r) dr + \int_s^T \sigma(r,X_r) dB_r + \int_s^T (\beta,\beta')(r,X_r) d\mathbf{W}_r \qquad t \in [s,T].
    \end{equation*}
    We prefer to work with $X^{s,x}$ defined as in \eqref{eq:RSDE_backwardroughPDE_main}, since it treats $s$ as a parameter, allowing us to apply the parameter dependence theory we previously developed in this paper.
    The two approaches are consistent if one considers \cref{prop:independenceontheBrownianmotion} and performs a time-change in the integrals (as explained, for example, in the proof of \cref{prop:flowpropertyfroRSDEs}.2 below).
    Indeed, with the notation introduce in Section \ref{section:functionalsandmarkovproperty} it is possible to show that $Y=(Y_t := \bar{X}^{s,x}_{s+t})_{t \in [0,T-s]}$ belongs to $RSDE(s,x,B^s,\{\mathcal{F}^s_t\}_{t \in [0,T-s]})$,
    where $B^s_t:= B_{s+t} - B_s$ and $\mathcal{F}^s_t := \mathcal{F}_{s+t}$ (cf.\ \cref{prop:flowpropertyfroRSDEs}). 
\end{rmk}

\begin{rmk} \label{rmk:independenceontheBrownianmotion}
    As a consequence of \cref{prop:independenceontheBrownianmotion} below, the rough PDE solution $u$ given in \eqref{eq:definitionofu_mainresult} is uniquely determined by the functions $b,\sigma\sigma^T,\beta,\beta',c,\gamma,\gamma'$ and $g$. 
    As already pointed out in \cite[Remark 1.9.11]{KRYLOV}, the definition of $u(s,x)$ does not change if we take another filtered probability space and/or if we replace the Brownian motion with a $\bar{m}$-dimensional Brownian motion ($\bar{m} \ne m$) and take another coefficient $\bar{\sigma}:[0,T] \times \R^d \to \R^{d \times \bar{m}}$, provided that $\bar{\sigma}\bar{\sigma}^T \equiv \sigma \sigma^T$.  
\end{rmk}

\begin{prop} \label{prop:regularityofu}
    Let $u=u(s,x)$ be defined as in \eqref{eq:definitionofu_mainresult}. 
    Then under \cref{assumptions} \begin{equation*}
        u \in C_b^{0,2}([0,T] \times \R^d;\R) \cap \mathcal{B}([0,T], C^\lambda_b(\R^d;\R)) \cap C^\alpha([0,T];C^2_b(\R^d;\R)). 
    \end{equation*}
\end{prop}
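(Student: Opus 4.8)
The strategy is to read off the space--time regularity of $u(s,x)$ defined in \eqref{eq:definitionofu_mainresult} directly from the differentiability and continuity theory for parameter-dependent rough SDEs developed in \cref{section:parameterdependence}, by viewing $(s,x)$ as the parameter $\zeta \in [0,T] \times \R^d$. First I would fix notation: for each $(s,x)$, $X^{s,x}$ solves \eqref{eq:RSDE_backwardroughPDE_main}, a rough SDE driven by $\mathbf{W}^s$ (the time-shifted rough path of \cref{prop:shiftingaroughpath}) with coefficients $b(s+\cdot,\cdot)$, $\sigma(s+\cdot,\cdot)$, $(\beta,\beta')(s+\cdot,\cdot)$, all evaluated with a time shift. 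One then writes $u(s,x)= \E\big(\Phi^{s,x}\big)$ where $\Phi^{s,x} := g(X^{s,x}_{T-s})\exp\{E^{s,x}_{T-s}\}$ and $E^{s,x}_t := \int_0^t c(s+r,X^{s,x}_r)dr + \int_0^t(\gamma,\gamma')(s+r,X^{s,x}_r)d\mathbf{W}^s_r$. The key point is \cref{prop:convergenceundermean}: $\mathscr{L}$-continuity (resp.\ $k$-fold $\mathscr{L}$-differentiability) of a family of random variables passes to continuity (resp.\ differentiability) of its expectation, with the derivative commuting with $\E$.

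The core of the argument is then organized in three layers. \emph{First}, separate the $x$-dependence (genuine differentiability, using $\mathbf{W}^s$ fixed for fixed $s$) from the $s$-dependence (for which the rough path $\mathbf{W}^s$ itself varies, so only $\mathscr{C}^{\alpha'}$-continuity — \cref{prop:shiftingaroughpath} — and hence $\{\mathbf{D}_{W^s}^{\alpha',\delta'}\mathscr{L}\}_s$-continuity from \cref{thm:continuity_RSDEs} is available). For the $x$-variable: with $s$ fixed, the coefficients satisfy \cref{assumptions} with $(\beta,\beta')\in \mathscr{D}_W^{2\delta}C_b^\kappa$, $\kappa\in[4,5]$, so \cref{thm:differentiability_RSDEs} (with $\zeta = x$, $k=2$, requiring $\kappa \in (2+k,3+k]=(4,5]$) gives that $\{(X^{s,x},\beta(s+\cdot,X^{s,x}))\}_x$ is twice $\mathbf{D}_{W^s}^{\alpha,\delta'}\mathscr{L}$-differentiable, with $\delta'$ slightly below $\delta$, and the derivatives solve explicit linear rough SDEs. \emph{Second}, propagate this through the exponential functional: $E^{s,x}$ is built from a Lebesgue integral and a rough stochastic integral of composed vector fields, so \cref{prop:continuityanddifferentiability_forcingterm} (parts 1--3) together with \cref{prop:stochasticcontrolledlinearvectorfields_properties} and \cref{lemma:compositionwithfunctionsofpolynomialgrowth} show $E^{s,x}$ inherits twice-$\mathscr{L}$-differentiability in $x$; then $\exp$ and multiplication by $g(X^{s,x}_{T-s})$ are handled by \cref{lemma:compositionwithfunctionsofpolynomialgrowth} and, crucially, by the exponential-integrability estimates of \cref{section:exponentialterm} (referenced via \cref{section:exponentialterm}) to guarantee all the $L^p$-norms that appear are finite — $\exp$ has polynomial-growth derivatives only after one controls $\E(e^{pE^{s,x}_t})<\infty$. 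Evaluation at the final time $t=T-s$ is a bounded linear operation on the controlled rough path, so $x\mapsto \Phi^{s,x}$ is twice $\mathscr{L}(\R)$-differentiable; \cref{prop:convergenceundermean}(2) yields $u(s,\cdot)\in C^2$. The $C^\lambda_b$-regularity in $x$ (i.e.\ $D^2_{xx}u(s,\cdot)\in C^{\lambda-2}$) follows the same route but keeping track of the extra $(\lambda-2)$-Hölder regularity of $g\in C^\lambda_b$ and of the coefficients, propagated through the a priori estimates; boundedness of $u$ and its $x$-derivatives up to order $2$ is uniform in $s$ because all the relevant seminorms in \cref{thm:wellposednessandaprioriestimatesforRSDEs} are bounded uniformly in $s$ (by $|\mathbf{W}^s|_\alpha \le |\mathbf{W}|_\alpha$ and the time-shift invariance of the sup-norms of the coefficients).

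\emph{Third}, for continuity in $(s,x)$ jointly and the $\alpha$-Hölder continuity in $s$: combining the $C^2_b$-bound in $x$ (uniform in $s$) with $\mathscr{L}$-continuity in $s$ gives joint continuity, so $u\in C^{0,2}_b$. For $C^\alpha([0,T];C^2_b)$ one needs a quantitative estimate $\|u(s,\cdot)-u(s',\cdot)\|_{C^2_b}\lesssim |s-s'|^\alpha$: this comes from the stability estimate \cref{thm:stabilityforRSDEs_compendium} applied to $X^{s,x}$ and $X^{s',x}$, whose right-hand side contains $\rho_\alpha(\mathbf{W}^s,\mathbf{W}^{s'})$ plus the differences of the (time-shifted) coefficients. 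Here $\rho_\alpha(\mathbf{W}^s,\mathbf{W}^{s'})$ is only $O(|s-s'|^{\alpha'})$ for $\alpha'<\alpha$ by the proof of \cref{prop:shiftingaroughpath}, \emph{not} $O(|s-s'|^\alpha)$ — so the genuinely $\alpha$-Hölder (rather than $\alpha'$-Hölder) bound has to be extracted more carefully, using the Davie-type expansion \eqref{eq:Davieexpansion_solutionRPDE} and the fact that the $C^2_b$-norm of a solution path over a short interval $[s,s']$ is itself $O(|s-s'|^\alpha)$ by the local expansion, rather than brute stability. \textbf{This last point — obtaining the sharp exponent $\alpha$ in $C^\alpha([0,T];C^2_b)$ in the presence of a rough, only $\alpha'$-continuously-varying driver $\mathbf{W}^s$, together with ensuring the exponential term does not destroy integrability — is the main obstacle}; everything else is a systematic, if lengthy, bookkeeping exercise chaining the composition/integration lemmas of \cref{section:compositionandintegration_technicalresults} with \cref{thm:differentiability_RSDEs}, \cref{thm:continuity_RSDEs}, and \cref{prop:convergenceundermean}. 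I would handle it by writing $u(s,x)-u(s',x)$ via the rough It\^o/Feynman--Kac identity (anticipating \cref{prop:roughItoformula}) so that the increment is expressed directly in terms of $\int_{s}^{s'}$-type integrals of the already-controlled integrands, where the $\alpha$-Hölder scaling is manifest.
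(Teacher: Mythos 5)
Your proposal is correct and follows essentially the same route as the paper: treat $(s,x)$ as a parameter, get twice $\mathbf{D}_{W^s}^{\alpha,\delta'}\mathscr{L}$-differentiability in $x$ and $\{\mathbf{D}_{W^s}^{\alpha,\delta'}\mathscr{L}\}_s$-continuity in $s$ from \cref{thm:differentiability_RSDEs} and \cref{thm:continuity_RSDEs}, push this through $I^{s,x}$ and the exponential using \cref{prop:continuityanddifferentiability_forcingterm} and the John--Nirenberg integrability of \cref{section:exponentialterm}, conclude via \cref{prop:convergenceundermean}, and obtain the $(\lambda-2)$-H\"older bound on $D^2_{xx}u(s,\cdot)$ from the Lipschitz-in-$x$ stability estimates; you also correctly identify that the sharp $C^\alpha([0,T];C^2_b)$ estimate cannot come from $\rho_\alpha(\mathbf{W}^s,\mathbf{W}^{s'})$. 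For that last step the paper argues slightly more simply than your rough-It\^o sketch: it uses the flow/Markov identity of \cref{lemma:markovpropertyforRSDEs} to write $u_s(x)$ in terms of the solution restarted from $X^{s,x}_{t-s}$ driven by the same $\mathbf{W}^t$, so the increment $|u_t(x)-u_s(x)|$ is controlled by $\|X^{s,x}_{t-s}-x\|_2$, $\|e^{I^{s,x}_{t-s}}-1\|_2$ and stability in the initial condition, each manifestly $O(|t-s|^\alpha)$ — the same idea as yours, implemented without invoking \cref{prop:roughItoformula}.
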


\begin{proof} 
    Notice that, in general, equation \eqref{eq:RSDE_backwardroughPDE_main} only makes sense on the time interval $[0,T-s]$. 
    Redefining the coefficients by $b(t,x)=b(T,x)$, $\sigma(t,x)=\sigma(T,x)$ and $\beta(t,x)=\beta(T,x)$ if $t \ge T$, we can always assume it to be defined for any $t \in [0,T]$.
    We also extend $c(t,x) = c(T,x)$ and $\gamma(t,x) = \gamma(T,x)$ for $t \ge T$. 
    For any given $s \in [0,T]$, we define $b^s(t,x) := b(s+t,x)$ and a similar definition holds for $\sigma^s$, $c^s$, $\beta^s$ and $\gamma^s$. 
    We set $(\beta^s)' (t,x) := \beta'(s+t,x)$ and $(\gamma^s)' (t,x) := \gamma'(s+t,x)$. 
    We start by studying the continuity and differentiability properties of the family $\{X^{s,x}\}_{(s,x) \in [0,T] \times \R^d}$. 
    Under \cref{assumptions} it is straightforward to see that $\{b^s\}_{s \in [0,T]}$is a $\mathscr{L}C^0_b$-continuous family of deterministic bounded Lipschitz vector fields with $\|b^s \|_{Lip} \le \sup_{t \in [0,T]} |D_xb(t,\cdot)|_\infty$ uniformly in $s$. Moreover, $\{D_xb^s\}_{s \in [0,T]}$ and $\{D^2_{xx}b^s\}_{s \in [0,T]}$ are $\mathscr{L}C^0_b$-continuous. 
    The previous considerations apply if we replace $b^s$ by $\sigma^s$ and $c^s$. 
    On the other hand, it is also easy to see that, for any $s \in [0,T]$, $(\beta^s,(\beta^s)')$ is a (deterministic) controlled vector field in $\mathscr{D}_{W^s}^{2\delta}C_b^\kappa(\R^d;\R^{d \times n})$ with \begin{equation*}
        [(\beta^s,(\beta^s)')]_\kappa + [(\beta^s,(\beta^s)')]_{W^s;2 \delta ; \kappa} \le [(\beta,\beta')]_\kappa + [(\beta,\beta')]_{W;2 \delta ; \kappa} < +\infty . 
    \end{equation*}
    Analogously, one shows that $(\gamma^s,(\gamma^s)') \in \mathscr{D}_{W^s}^{2\eta}C_b^\theta(\R^d;\R^{n})$.
    \textcolor{black}{Moreover, by assumption and for any $(s^n)_{n \ge 1} \subseteq [0,T]$ with $|s^n - s| \to_{n \to +\infty} 0$,  
    \begin{equation*}
        \sup_{t \in [0,T]} |\beta^{s^n}(t,\cdot) - \beta^{s}(t,\cdot)|_{C^4_b} + |(\beta^{s^n})'(t,\cdot) - (\beta^{s})'(t,\cdot)|_{C^{3}_b} \to 0 \qquad \text{as $n \to +\infty$}.
    \end{equation*}
    By interpolation (see \cref{rmk:interpolation}) this implies that $[\beta^{s^n}-\beta^{s}]_{{\delta'};4} + [(\beta^{s^n})'-(\beta^{s})']_{{\delta'};3} + [R^{\beta^{s^n}}-R^{\beta^{s}}]_{{2\delta'};3} \to 0$ as $n \to +\infty$, for any $\delta' \in [0,\delta)$, and consequently that $\{(\beta^s,(\beta^s)')\}_{s \in [0,T]}$ is $\{\mathbf{D}_{W^s}^{2\delta'}\mathscr{L}C^4_b\}_s$-continuous. 
    Arguing in a very similar way, we deduce that $\{(\gamma^s,(\gamma^s)')\}_{s \in [0,T]}$ is $\{\mathbf{D}_{W^s}^{2\eta'}\mathscr{L}C^4_b\}_s$-continuous, for any $\eta' \in [0,\eta)$. }
    From \cref{prop:shiftingaroughpath} we know that $\{\mathbf{W}^s\}_{s \in [0,T]}$ is $\mathscr{C}^{\bar{\alpha}}$-continuous, for any $\bar{\alpha} \in (0,\alpha)$, and $\sup_{s \in [0,T]} |\mathbf{W}^s|_\alpha \le |\mathbf{W}|_\alpha$.
    Without loss of generality we can assume $\alpha \ne \frac{1}{2}$, let $\{\mathbf{W}^s\}_{s \in [0,T]}$ be $\mathscr{C}^{{\alpha}}$-continuous, and choose $\delta',\eta'$ so that $\alpha + \min\{\delta',\eta'\}> \frac{1}{2}$ and $\alpha + \min\{\delta',\eta'\} + \min\{(\lambda-2)\alpha, (\min\{\kappa,\theta\}-4)\alpha, \delta',\eta'\} >1 $.
    Indeed, notice that a great majority of the results concerning continuity and differentiability for (possibly linear) rough SDEs in \cref{section:parameterdependence} were stated for $\alpha$-rough paths with $\alpha \ne \frac{1}{2}$ (for instance, cf.\ \cref{thm:differentiability_RSDEs}). However, from the rest of the proof one can realize that such restrictions on the parameters do not cause any loss of generality in the statement of the proposition.

    From \cref{thm:wellposednessandaprioriestimatesforRSDEs} it follows that, for any $(s,x) \in [0,T] \times \R^d$, $X^{s,x}$ is well-defined as the unique $L_{p,\infty}$-integrable solution to \eqref{eq:RSDE_backwardroughPDE_main} for any $p \in [2,\infty)$. 
     From the 
     a priori estimate in \cref{thm:wellposednessandaprioriestimatesforRSDEs} we have \begin{equation} \label{eq:aprioriestimateforRSDEin(s,x)}
        \sup_{(s,x) \in [0,T] \times \R^d} \|(X^{s,x},\beta^s(X^{s,x}))\|_{\mathbf{D}_{W^s}^{\alpha,\delta}L_{p,\infty}} <+\infty \qquad \text{for any $p \in [2,\infty)$}
    \end{equation}
    and, from \cref{thm:stabilityforRSDEs_compendium}, \begin{equation} \label{eq:stabilityestimatesforRSDEinx}
        \|X^{s,x},\beta^s(X^{s,x});X^{s,y},\beta^s(X^{s,y})\|_{W^s;\alpha,\delta;p} \lesssim |x-y|,
    \end{equation} for any $x,y \in \R^d$, for any $s \in [0,T]$ and for any $p \in [2,\infty)$,
    where the implicit constant can be chosen as uniform in $s \in [0,T]$ and $x,y \in \R^d$. By \cref{thm:continuity_RSDEs}, $\{(X^{s,x},\beta^s(X^{s,x})\}_{(s,x) \in [0,T] \times \R^d}$ is $\{\mathbf{D}_{W^s}^{\alpha, \delta} \mathscr{L}\}_s$-continuous and, for any $s \in [0,T]$, $\{(X^{s,x},\beta^s(X^{s,x})\}_x$ is twice $\mathbf{D}_{W^s}^{\alpha, \delta} \mathscr{L}$-continuously $\mathbf{D}_{W^s}^{\alpha, \delta} \mathscr{L}$-differentiable by \cref{thm:differentiability_RSDEs}.

    Moreover, \cref{prop:stochasticcontinuousvectorfields_properties} shows that $\{c^s(X^{s,x})\}_{(s,x) \in [0,T] \times \R^d}$ is $\mathscr{L}(\R)$-continuous, and by \cref{prop:stochasticlinearvectorfields_properties} $\{c^s(X^{s,x})\}_{x \in \R^d}$ is twice $\mathscr{L}$-continuously $\mathscr{L}$-differentiable for any fixed $s \in [0,T]$. 
    From \cref{prop:stochasticcontrolledlinearvectorfields_properties} it is possible to deduce that $$\{ (\gamma^s,(\gamma^s)')(X^{s,x}) := (\gamma^s(X^{s,x}),D_x\gamma^s(X^{s,x}) \beta^s(X^{s,x}) + (\gamma^s)'(X^{s,x}))\}_{(s,x) \in [0,T] \times \R^d}$$ is $\{\mathbf{D}_{W^s}^{\eta',\min\{\delta'\eta'\}} \mathscr{L}\}_s$-continuous and that $\{(\gamma^s(X^{s,x}),(\gamma^s,(\gamma^s)')(X^{s,x}))\}_{x \in \R^d}$ is twice $\mathbf{D}_{W^s}^{\eta',\bar{\eta}} \mathscr{L}(\R^{d \times n})$-continuously $\mathbf{D}_{W^s}^{\eta',\bar{\eta}} \mathscr{L}(\R^{d \times n})$-differentiable, where $\bar{\eta} \in [0,\min\{\eta',\delta'\})$ can be chosen so that $\alpha + \eta' + \bar{\eta} >1$. 
    We can therefore deduce that \begin{equation} \label{eq:definitionofI}
        (I^{s,x}, (I^{s,x})') := \left(\int_0^\cdot c^s(r,X^{s,x}_r) dr + \int_0^\cdot (\gamma^s,(\gamma^s)')(r,X^{s,x}_r) d\mathbf{W}^s_r, \ \gamma^s(X^{s,x}) \right) 
    \end{equation}
    is well defined as a stochastic controlled rough path in $\mathbf{D}_{W^s}^{\alpha,\eta}\mathscr{L}(\R)$, for any $(s,x) \in [0,T] \times \R^d$. 
    The a priori bound in statement \textit{1.}\ of \cref{prop:continuityanddifferentiability_forcingterm} and \eqref{eq:aprioriestimateforRSDEin(s,x)} show that \begin{equation*} \begin{aligned}
        \sup_{(s,x) \in [0,T] \times \R^d}\|(I^{s,x}, (I^{s,x})')\|_{\mathbf{D}_{W^s}^{\alpha,\eta}L_{p,\infty}}
        < +\infty,
    \end{aligned} 
    \end{equation*}
    for any $p \in [2,\infty)$, while the proof of statement \textit{2.}\ of \cref{prop:continuityanddifferentiability_forcingterm} and \eqref{eq:stabilityestimatesforRSDEinx} show instead that, for any $x,y \in \R^d$, 
    \begin{equation*} \begin{aligned}
        &\|I^{s,x}, (I^{s,x})';I^{s,y}, (I^{s,y})'\|_{W^s;\alpha,\eta;p}
        \lesssim |x-y|
    \end{aligned}
    \end{equation*}
    where the implicit constant is uniform in $s,x,y$.
    From \cref{prop:continuityanddifferentiability_forcingterm} we deduce that $\{I^{s,x},(I^{s,x})'\}_{(s,x) \in [0,T] \times \R^d}$ is $\{\mathbf{D}_{W^s}^{\alpha,\eta'}\mathscr{L}(\R)\}_s$-continuous, and by applying \cref{prop:continuityanddifferentiability_forcingterm} it is possible to show that $\{(I^{s,x}, (I^{s,x})')\}_{x \in \R^d}$ is twice $\mathbf{D}_{W^s}^{\alpha,\eta'} \mathscr{L}(\R)$-continuously $\mathbf{D}_{W^s}^{\alpha,\eta'} \mathscr{L}(\R)$-differentiable, for any fixed $s \in [0,T]$. 
    Arguing as in the proof of \cref{prop:propertiesoftheexponential}, one can use the two previous inequalities to show that, for any $p \in [2,\infty)$, \begin{equation} \label{eq:aprioriestimatesontheexponential}
        \sup_{(s,x) \in [0,T] \times \R^d} \|(e^{I^{s,x}},(I^{s,x})'e^{I^{s,x}})\|_{\mathbf{D}_{W^s}^{\alpha,\eta}L_p} < +\infty
    \end{equation} and \begin{equation} \label{eq:stabilityoftheexponentialinx}
        \|e^{I^{s,x}},(I^{s,x})'e^{I^{s,x}};e^{I^{s,y}},(I^{s,y})'e^{I^{s,y}}\|_{W^s;\alpha,\eta;p} \lesssim |x-y|.
    \end{equation}
    
    \

    We now prove that $u \in C^{0,2}([0,T] \times \R^d;\R) \cap \mathcal{B}([0,T]; C^\lambda_b(\R^d;\R))$, which by definition is equivalent to show that the following four properties are satisfied: 
    \begin{itemize}
        \item[a)] $u:[0,T] \times \R^d \to \R$ is continuous and (globally) bounded;
        \item[b)] for any $i \in \{1,\dots,d\}$, there exists $\frac{\partial u}{\partial x^i}:[0,T] \times \R^d \to \R$ and it is continuous and bounded;
        \item[c)] for any $i,j \in \{1,\dots,d\}$, there exists $\frac{\partial^2 u}{\partial x^j \partial x^i}:[0,T] \times \R^d \to \R$ and it is continuous and bounded; 
        \item[d)] uniformly in $s \in [0,T]$,  \begin{equation*}
            \left[\frac{\partial^2 u}{\partial x^j \partial x^i}(s,\cdot) \right]_{\lambda-2} := \sup_{x,y \in \R^d, x \ne y} \frac{1}{|x-y|^{\lambda-2}} \left| \frac{\partial^2 u}{\partial x^j \partial x^i}(s,x) - \frac{\partial^2 u}{\partial x^j \partial x^i}(s,y) \right| <+\infty.
        \end{equation*}
    \end{itemize}
    
    a) \ To prove that $u:[0,T] \times \R^d \to \R$ is continuous, we show that the family of random variables $\{g(X^{s,x}_{T-s}) e^{I^{s,x}_{T-s}}\}_{(s,x) \in [0,T] \times \R^d}$ is $\mathscr{L}(\R)$-continuous and we apply \cref{prop:convergenceundermean}.
    Let $(s^0,x^0) \in [0,T] \times \R^d$ and let $((s^n,x^n))_{n\ge 1} \subseteq [0,T] \times \R^d \setminus \{(s^0,x^0)\}$ be any arbitrary sequence with $|s^n-s^0| + |x^n-x^0| \to 0$ as $n \to +\infty$.
    For any $p \in [2,\infty)$, \begin{equation} \label{eq:differentiabilityasarandomvariable}
        \begin{aligned}
            &\|X^{s^n,x^n}_{T-s^n} - X^{s^0,x^0}_{T-s^0}\|_p \le \|X^{s^n,x^n}_{T-s^n} - X^{s^n,x^n}_{T-s^0}\|_p + \|X^{s^n,x^n}_{T-s^0} - X^{s^0,x^0}_{T-s^0}\|_p \\
            &\le \|(X^{s^n,x^n},\beta^{s^n}(X^{s^n,x^n})\|_{W^{s^n};\alpha,\delta';p} |s^n-s^0|^{\alpha}  +\|X^{s^n,x^n},\beta^{s^n}(X^{s^n,x^n}); X^{s^0,x^0},\beta^{s^n}(X^{s^0,x^0})\|_{W^{s^n},W^{s^0};\alpha,\delta';p}
        \end{aligned}
    \end{equation}
    and the right-hand side tends to zero as $n \to +\infty$. 
    Similarly, one shows that $\{I^{s,x}_{T-s}\}_{(s,x) \in [0,T] \times \R^d}$ is a $\mathscr{L}(\R)$-continuous family of random variables, which implies that $\{e^{I^{s,x}_{T-s}}\}_{(s,x) \in [0,T] \times \R^d}$ is $\mathscr{L}(\R)$-continuous, by \cref{prop:continuityanddifferentiabilityofexponentials}. 
    Applying \cref{prop:stochasticlinearvectorfields_properties} to the function $(x,t) \mapsto g(x)y$ and considering \cref{prop:convergenceundermean}, this proves that $u \in C^0([0,T] \times \R^d;\R)$.
    By construction, $u$ is also globally bounded. Indeed, \begin{equation*}
        \begin{aligned}
            |u(s,x)| &\le |g|_\infty \E(|e^{I^{s,x}_{T-s}}|) \le |g|_\infty \|\delta e^{I^{s,x}}\|_{\alpha;1} T^\alpha
        \end{aligned}
    \end{equation*} 
    and the right-hand side is uniformly bounded in $(s,x) \in [0,T] \times \R^d$ by \eqref{eq:aprioriestimatesontheexponential}. \\

    b) \ Note that, for any fixed $s \in [0,T]$, the family $\{g(X^{s,x}_{T-s}) e^{I^{s,x}_{T-s}}\}_{x \in \R^d}$ is twice $\mathscr{L}$-continuously $\mathscr{L}$-differentiable.
    Denote by $e_1,\dots,e_d$ the elements of the canonical basis of $\R^d$, which in particular belong to $\mathbb{S}(\R^d)$.  
    Given any $i \in \{1,\dots,d\}$ and denoting by $Y^{s,x,i}=\mathbf{D}_W^{\alpha,\delta'}\mathscr{L}\text{-}\frac{\partial}{\partial e_i} X^{s,x}$, it holds that the latter is the solution of the following linear rough SDE: \begin{equation} \label{eq:equationforY}
        \begin{aligned}
            Y^{s,x,i}_t &= e_i + \int_0^t D_xb^s(r,X_r^{s,x}) Y^{s,x,i}_r dr + \int_0^t D_x\sigma^s(r,X_r^{s,x}) Y^{s,x,i}_r dB_r + \\
            & \quad + \int_0^t (D_x\beta^s(r,X^{s,x}_r), D^2_{xx}\beta^s(r,X^{s,x}_r)\beta^s(r,X_r^{s,x}) + D_x(\beta^s)'(r,X^{s,x}_r))Y^{s,x,i}_r d \mathbf{W}^s_r \\
            &=: e_i + \int_0^t G_r^{s,x} Y_r^{s,x,i} dr + \int_0^t S_r^{s,x} Y_r^{s,x,i} dB_r + \int_0^t (f^{s,x}_r, (f^{s,x})'_r) Y_r^{s,x,i} d\mathbf{W}^s_r.
        \end{aligned}
    \end{equation}
    From \cref{prop:convergenceundermean}  \begin{equation*}
        \frac{\partial}{\partial x^i} u(s,x) 
        = \E \left( \frac{\partial g}{\partial x^k} (X^{s,x}_{T-s}) (Y^{s,x,i}_{T-s})^k e^{I^{s,x}_{T-s}} + g(X^{s,x}_{T-s}) \left( \mathscr{L}\text{-}\frac{\partial}{\partial e_i} I^{s,x}_{T-s}  \right) e^{I^{s,x}_{T-s}}  \right),
    \end{equation*}
    where $(Y^{s,x,i}_{T-s})^k$ denotes the $k$-th component of the $\R^d$-valued random vector $Y^{s,x,i}_{T-s}$ and  \begin{multline*} 
        \mathscr{L}\text{-} \frac{\partial}{\partial e_i} {I^{s,x}_{T-s}} =  \int_0^{T-s} D_x c^s(r,X^{s,x}_r) Y^{s,x,i}_r  dr + \\ + \int_0^{T-s} \big(D_x \gamma^s(r,X^{s,x}_r), D^2_{xx}\gamma^s(r,X^{s,x}_r) \beta^s(r,X^{s,x}_r) + D_x(\gamma^s)'(r,X^{s,x}_r)\big) Y_r^{s,x,i}  d\mathbf{W}^s_r . 
    \end{multline*}
    Boundedness of $\frac{\partial u}{\partial x^i}$ essentially follows from \eqref{eq:aprioriestimateforRSDEin(s,x)} and \eqref{eq:aprioriestimatesontheexponential}, as already pointed out for $u$. We already know that $\{g(X^{s,x}_{T-s})\}_{(s,x) \in [0,T] \times \R^d}$, $\{D_xg(X^{s,x}_{T-s})\}_{(s,x)}$ and $\{e^{I^{s,x}_{T-s}}\}_{(s,x)}$ are $\mathscr{L}$-continuous, hence the continuity of $\frac{\partial u}{\partial x^i}$ as a map $[0,T] \times \R^d \to \R$ if we show that $\{Y^{s,x,i}_{T-s}\}_{(s,x)\in [0,T] \times \R^d}$ and $\{\mathscr{L}\text{-}\frac{\partial}{\partial e_i}I^{s,x}_{T-s}\}_{(s,x) \in [0,T] \times \R^d}$ are $\mathscr{L}$-continuous. From \cref{prop:stochasticlinearvectorfields_properties} and \cref{rmk:identificationstochasticlinearvectorfields}, $\{G^{s,x}\}_{(s,x) \in [0,T] \times \R^d}$ is $\mathscr{L}\mathcal{L}$-continuous, and the same holds for $\{S^{s,x}\}_{(s,x) \in [0,T] \times \R^d}$.
    Being $\{(D_x\beta^s,D_x(\beta^s)')\}_{s \in [0,T]}$ $\{\mathbf{D}_{W^s}^{2\delta'}\mathscr{L}C^3_b\}_s$-continuous by construction, by applying \cref{prop:stochasticcontrolledlinearvectorfields_properties}.1 we can deduce that $\{(f^{s,x},(f^{s,x})')\}_{(s,x) \in [0,T] \times \R^d}$ is $\{\mathbf{D}_{W^s}^{2\delta'}\mathscr{L}\mathcal{L}\}_s$-continuous. 
    Hence, by \cref{prop:continuity_linearRSDEs} we have that $\{(Y^{s,x,i},f^{s,x}Y^{s,x,i})\}_{(s,x)}$ is $\{\mathbf{D}_{W^s}^{\alpha,\delta'}\mathscr{L}\}_s$-continuous.     
    Similarly, it is possible to show that $\{D_xc^s(X^{s,x})\}_{(s,x) \in [0,T] \times \R^d}$ is $\mathscr{L}\mathcal{L}(\R^d;\R)$-continuous and that $\{(D_x\gamma^s,D_x(\gamma^s)')(X^{s,x})\}_{(s,x) \in [0,T] \times \R^d}$ is $\{\mathbf{D}_{W^s}^{2\eta'}\mathscr{L}\}_s$-continuous. Hence $\{(\mathscr{L}\text{-}\frac{\partial}{\partial e_i} I^{s,x},D_x\gamma^s(X^{s,x}) Y^{s,x,i})\}_{(s,x) \in [0,T] \times \R^d}$ is $\{\mathbf{D}_{W^s}^{\alpha,\min\{\eta',\delta'\}}\mathscr{L}\}_s$-continuous by \cref{prop:continuityanddifferentiability_forcingterm}. \\

    c) \ For any $i,j \in \{1,\dots,d\}$ denote by $Z^{s,x,i,j}= \mathbf{D}_W^{\alpha,\delta'}\mathscr{L}\text{-}\frac{\partial}{\partial e_j} Y^{s,x,i}$. The latter solves the following linear rough SDE: \begin{equation*}
        \begin{aligned}
            Z^{s,x,i,j}_t 
            &= F^{s,x,i,j}_t + \int_0^t G_r^{s,x} Z_r^{s,x,i,j} dr + \int_0^t S_r^{s,x} Z_r^{s,x,i,j} dB_r + \int_0^t (f^{s,x}_r, (f^{s,x})'_r) Z_r^{s,x,i,j} d\mathbf{W}^s_r, 
        \end{aligned}
    \end{equation*}
    where \begin{equation*} \begin{aligned}
        F^{s,x,i,j}_t &= \int_0^t D^2_{xx}b^s(r,X_r^{s,x}) Y^{s,x,j}_r  Y^{s,x,i}_r dr + \int_0^t D^2_{xx}\sigma^s(r,X_r^{s,x}) Y^{s,x,j}_r Y^{s,x,i}_r dB_r + \int_0^t (\phi^{s,x,i,j}_r,(\phi^{s,x,i,j})'_r) d\mathbf{W}^s_r \\
        \phi^{s,x,i,j}_t &= D^2_{xx}\beta^s(t,X^{s,x}_t)Y^{s,x,j}_t Y^{s,x,i}_t \\
        (\phi^{s,x,i,j})'_t &= D^3_{xxx}\beta^s(t,X^{s,x}_t) \beta^s(t,X_t^{s,x}) Y^{s,x,j}_t Y^{s,x,i}_t + D^2_{xx}\beta^s(t,X^{s,x}_t) D_x\beta^s(t,X^{s,x}_t) Y^{s,x,j}_t Y^{s,x,i}_t + \\
        & \quad + D^2_{xx}\beta^s(t,X^{s,x}_t) Y^{s,x,j}_t D_x\beta^s(t,X^{s,x}_t) Y^{s,x,i}_t +  D^2_{xx}(\beta^s)'(t,X^{s,x}_t) Y^{s,x,j}_t Y^{s,x,i}_t . 
    \end{aligned}
    \end{equation*}
    Analogously as before, one deduces that there exists \begin{equation} \label{eq:secondderivativeofu} \begin{aligned}
        \frac{\partial ^2 u}{\partial x^j \partial x^i} (s,x) 
        &= \E \bigg( \frac{\partial^2 g}{\partial x^l \partial x^k}(X^{s,x}_{T-s}) (Y^{s,x,i}_{T-s})^k (Y^{s,x,i}_{T-s})^l e^{I^{s,x}_{T-s}} + \frac{\partial g}{\partial x^k} (X^{s,x}_{T-s}) (Z^{s,x,i,j}_{T-s})^k e^{I^{s,x}_{T-s}} + \\
        & \quad + \frac{\partial g}{\partial x^k} (X^{s,x}_{T-s}) (Y^{s,x,i}_{T-s})^k \left( \mathscr{L}\text{-}\frac{\partial}{\partial e_j} I^{s,x}_{T-s}  \right) e^{I^{s,x}_{T-s}} + \frac{\partial g}{\partial x^k}(X^{s,x}_{T-s}) (Y^{s,x,j})^k \left( \mathscr{L}\text{-}\frac{\partial}{\partial e_i} I^{s,x}_{T-s}  \right) e^{I^{s,x}_{T-s}} + \\
        & \quad + g(X^{s,x}_{T-s}) \left( \mathscr{L}\text{-}\frac{\partial}{\partial e_j} \mathscr{L}\text{-}\frac{\partial}{\partial e_i} I^{s,x}_{T-s}  \right) e^{I^{s,x}_{T-s}} + g(X^{s,x}_{T-s}) \left( \mathscr{L}\text{-}\frac{\partial}{\partial e_i} I^{s,x}_{T-s}  \right) \left( \mathscr{L}\text{-}\frac{\partial}{\partial e_j} I^{s,x}_{T-s}  \right) e^{I^{s,x}_{T-s}} \bigg)
    \end{aligned}
    \end{equation}
    and that it is globally bounded in $(s,x)$. 
    To prove the continuity of $\frac{\partial^2u}{\partial x^j \partial x^i}:[0,T] \times \R^d \to \R$, we are left to show that $\{Z^{s,x,i,j}_{T-s}\}_{(s,x)\in [0,T] \times \R^d}$ and $\{\mathscr{L}\text{-}\frac{\partial}{\partial e_j} \mathscr{L}\text{-}\frac{\partial}{\partial e_i} I^{s,x}_{T-s}\}_{(s,x)\in [0,T] \times \R^d}$ are $\mathscr{L}$-continuous. 
    Taking into account \cref{prop:continuity_linearRSDEs}, we just need to show that $\{F^{s,x,i,j}\}_{(s,x) \in [0,T] \times \R^d}$ is $\{\mathbf{D}_W^{\alpha,\delta'}\mathscr{L}\}_s$-continuous, which is a trivial consequence of \cref{prop:continuityanddifferentiability_forcingterm}, and to apply \cref{prop:continuityanddifferentiability_forcingterm} for the second family of random variables. \\
    
    d) \ We are left to prove that $[\frac{\partial^2 u}{\partial x^j \partial x^i}(s,\cdot)]_{\lambda-2}<+\infty$, uniformly in $s \in [0,T]$.
    To do so, we just combine \eqref{eq:stabilityestimatesforRSDEinx} and \eqref{eq:stabilityoftheexponentialinx} with \cref{thm:stabilityforlinearRSDEs_compendium} and the local Lipschitz estimates on integrals deduced, for instance, in the proof of \cref{prop:continuityanddifferentiability_forcingterm}.2.
    Let $s \in [0,T]$ be fixed and let $x,y \in \R^d$.
    It holds that, for any arbitrary $p \in [2,\infty)$, \begin{equation*}
        \begin{aligned}
            \E \left( \int_0^T |G^{s,x}_r - G^{s,y}_r|^p \right)^\frac{1}{p} \le |D_x b|_\infty \E \left( \int_0^T |X^{s,x}_r - X^{s,y}_r|^p \right) \le |D_x b|_\infty T \sup_{t \in [0,T]} \|X_t^x - X_t^y\|_p \lesssim |x-y|.
        \end{aligned}
    \end{equation*}
    A similar conclusion holds replacing $b$ with $\sigma$. 
    In addition, \cref{prop:compositionwithstochasticcontrolledvectorfields_compendium} leads to \begin{equation*}
        \|f^{s,x},(f^{s,x})';f^{s,y},(f^{s,y})'\|_{W^s;2\delta';p} \lesssim \|X^{s,x},\beta^s(X^{s,x});X^{s,y},\beta^s(X^{s,y})\|_{W^s;2\delta';p} \lesssim |x-y|.
    \end{equation*}
    We can therefore conclude (cf.\ \cref{thm:stabilityforlinearRSDEs_compendium}) that \begin{equation*}
        \|Y^{s,x,i},f^{s,x} Y^{s,x,i}; Y^{s,y,i},f^{s,y} Y^{s,y,i}\|_{W^s;\alpha,\delta';p} \lesssim |x-y|.
    \end{equation*}
    Notice that $\|\phi^{s,x,i,j}_r,(\phi^{s,x,i,j})'_r ; \phi^{s,y,i,j}_r,(\phi^{s,y,i,j})'_r\|_{W^s;2\delta';p} \lesssim |x-y|$ and, due to our assumptions on $b$, \begin{equation*}
        \begin{aligned}
            \E \left( \int_0^T |D_{xx} b^s(r,X_r^{s,x}) - D^2_{xx} b^s(r,X^{s,x}_r)|^p dr \right)^\frac{1}{p} &\le \sup_{t \in [0,T]} [D^2_{xx}b(t,\cdot)]_{\lambda-2} \E \left( \int_0^T |X_r^{s,x}- X^{s,x}_r|^{(\lambda-2)p} dr \right)^\frac{1}{p} \\
            &\lesssim_T [D^2_{xx}b(t,\cdot)]_{\lambda-2} \sup_{t \in [0,T]} \|X_t^{s,x}- X^{s,x}_t\|_{(\lambda-2)p}^{\lambda-2}  \lesssim |x-y|^{\lambda-2}.
        \end{aligned}
    \end{equation*}
    A similar conclusion holds if we replace $b$ with $\sigma$. 
    An application of H\"older's inequality is sufficient to show that \begin{equation*}
        \|F^{s,x,i,j},\phi^{s,x,i,j}; F^{s,y,i,j},\phi^{s,y,i,j} \|_{W^s;\alpha,\delta';p} \lesssim |x-y|^{\lambda-2} \qquad \text{for any $p \in [2,\infty)$}. 
    \end{equation*}
    This implies that (cf.\ \cref{thm:stabilityforlinearRSDEs_compendium}) \begin{equation*}
        \|Z^{s,x,i,j}, (Z^{s,x,i,j})' ; Z^{s,y,i,j}, (Z^{s,y,i,j})'\|_{W^s;\alpha,\delta';p} \lesssim |x-y|^{\lambda-2}.
    \end{equation*}
    Analogous estimates show that \begin{equation*}
        \bigg\|\mathscr{L}\text{-}\frac{\partial}{\partial e_i}I^{s,x},\left(\mathscr{L}\text{-}\frac{\partial}{\partial e_i}I^{s,x}\right)'; \mathscr{L}\text{-}\frac{\partial}{\partial e_i}I^{s,y}, \left(\mathscr{L}\text{-}\frac{\partial}{\partial e_i}I^{s,y}\right)'  \bigg\|_{W^s;\alpha,\min\{\eta',\delta'\};p} \lesssim |x-y|^{\lambda-2},
    \end{equation*}
    ans the same holds by replacing $\mathscr{L}\text{-}\frac{\partial}{\partial e_i}I^{s,x}$ with $\mathscr{L}\text{-}\frac{\partial}{\partial e_j}\mathscr{L}\text{-}\frac{\partial}{\partial e_i}I^{s,x}$.
    Considering \eqref{eq:secondderivativeofu} and the fact that $D^2_{xx}g(\cdot)$ is $(\lambda-2)$-H\"older continuous on $\R^d$ by assumption, the last statement of the assertion is proved.\\

    To conclude the proof, we show that $u \in C^{\alpha}([0,T];C^2_b(\R^d;\R))$ using the results of \cref{section:functionalsandmarkovproperty}.
    Notice that $u_t(\cdot) \in C^2_b(\R^d;\R)$ for any $t \in [0,T]$.
    Let $0 \le s \le t \le T$ be arbitrary.
    Let $Y$ be defined as in \eqref{eq:definitionofY_flowproperty} and let $J=(J_\tau)_{\tau \in [0,T-(t-s)]}$ be defined by \begin{equation*}
        J_\tau := \int_0^\tau c^t(r,Y_r) dr + \int_0^\tau (\gamma^t(r,Y_r),D_x\gamma^t(r,Y_r)\beta^t(r,Y_r)) d\mathbf{W}^t_r. 
    \end{equation*}
    By means of \cref{lemma:markovpropertyforRSDEs} it holds that
    \begin{equation*}
        \begin{aligned}
            &|u_t(x) -u_s(x)| 
            = \left| \E \left( g(X_{T-t}^{t,x})e^{I^{t,x}_{T-t}} - e^{I^{s,x}_{t-s}} g(Y_{T-t})e^{J_{T-t}} \right) \right| \\
            &\le \E \left( \left| g(X_{T-t}^{t,x})e^{I^{t,x}_{T-t}} \left(1 - e^{I^{s,x}_{t-s}}\right) \right| \right) + \E \left( \left| e^{I^{s,x}_{t-s}} g(X_{T-t}^{t,x}) \left(e^{I^{t,x}_{T-t}} - e^{J_{T-t}}  \right) \right| \right) \\
            & \quad  + \E\left( \left| e^{I^{s,x}_{t-s}} \left(g(X_{T-t}^{t,x}) - g(Y_{T-t})\right)e^{J_{T-t}} \right| \right) \lesssim_M \|e^{I^{s,x}_{t-s}}-1\|_2 + \| e^{I^{t,x}_{T-t}}-e^{J_{t-t}}\|_2 + \|X^{t,x}_{T-t} - Y_{T-t}\|_2 
        \end{aligned}
    \end{equation*}
    where $M$ is any positive constant (independent on $x$) such that $|g|_{C^1_b} +  \|\delta e^{I^{t,x}_\cdot}\|_{\alpha;4} + \|\delta e^{I^{s,x}_\cdot}\|_{\alpha;4} + \|\delta e^{J_\cdot}\|_{\alpha;4} \le M$.
    Notice that $\|e^{I^{s,x}_{t-s}}-1\|_2 \le \|\delta e^{I^{s,x}_\cdot}\|_{\alpha;4} |t-s|^\alpha$. 
    Moreover, from \cref{thm:stabilityforRSDEs_compendium} and uniformly in $x \in \R^d$ \begin{equation*}
         \sup_{r \in [0,T]} \|X^{t,x}_r - Y_r\|_2 \lesssim \|X_{t-s}^{s,x}-x\|_2 \le \|\delta X^{s,x}\|_{\alpha;2,\infty} |t-s|^\alpha.
    \end{equation*} 
    Similarly, \begin{equation*}
        \sup_{r \in [0,T]} \| e^{I^{t,x}_r}-e^{J_r}\|_2 \lesssim \|X^{t,x},\beta^t(X^{t,x});Y,\beta^t(Y)\|_{W^t;\alpha,\delta';2} \lesssim \|x-X^{s,x}_{t-s}\|_2 \lesssim |t-s|^\alpha. 
    \end{equation*}
    Indeed, recall that $Y$ solves the same rough SDE as $X^{t,x}$, but with $X^{s,x}_{t-s}$ as initial condition. 
    We have therefore showed that \begin{equation*}
        \sup_{x \in \R^d} |u_t(x) - u_s(x)| \lesssim |t-s|^\alpha. 
    \end{equation*}
    Arguing in a very similar way, one can prove the same for the functions $\frac{\partial u}{\partial x^i}$ and $\frac{\partial^2 u}{\partial x^j \partial x^i}$. 
\end{proof}

\begin{thm}[Existence] \label{thm:existenceRPDE}
    Let $u=u(s,x)$ be defined as in \eqref{eq:definitionofu_mainresult}. 
    Then, under \cref{assumptions}, $u$ is a regular solution of \  $-d u_t = L_t u_t \, dt + \Gamma_t u_t \, d\mathbf{W}_t$ \ in the sense of \cref{def:solutiontobacwardroughPDEs}. 
\end{thm}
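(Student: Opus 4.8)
The plan is to verify the three defining properties i)--iii) of \cref{def:solutiontobacwardroughPDEs}; the regularity $u\in C^{0,2}$ has already been secured in \cref{prop:regularityofu}. Property i) is immediate: with $s=T$ the rough SDE \eqref{eq:RSDE_backwardroughPDE_main} is trivial, $X^{T,x}_0=x$ and $I^{T,x}_0=0$, so $u(T,x)=\E(g(x))=g(x)$. The heart of the argument is a rough It\^o expansion of the Feynman--Kac functional along the diffusion, combined with the Markov property.

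First I would record, for every $0\le s\le t\le T$ and $x\in\R^d$, the identity
$$u(s,x)=\E\!\left(e^{I^{s,x}_{t-s}}\,u\big(t,X^{s,x}_{t-s}\big)\right),$$
which follows from \cref{lemma:markovpropertyforRSDEs} by conditioning the inner Feynman--Kac representation of $u(t,\cdot)$ on the trajectory up to time $t-s$, exactly as in the estimate at the end of the proof of \cref{prop:regularityofu}. Next, fixing $t$, I would apply the rough It\^o formula \cref{prop:roughItoformula} to the process $\tau\mapsto\Psi\big(I^{s,x}_\tau,X^{s,x}_\tau\big)$, $\Psi(i,y):=e^{i}u_t(y)$, on $[0,t-s]$; since $u_t\in C^\lambda_b$ with $\lambda\ge 4$ and the exponential weight has all moments uniformly in the parameters (by the estimates \eqref{eq:aprioriestimatesontheexponential} together with \cref{section:exponentialterm}), this is legitimate and produces a Davie-type expansion
$$\Psi\big(I^{s,x}_{t-s},X^{s,x}_{t-s}\big)=u_t(x)+\int_0^{t-s}\!e^{I^{s,x}_r}\big(L_{s+r}u_t\big)(X^{s,x}_r)\,dr+\int_0^{t-s}\!(\cdots)\,dB_r+\int_0^{t-s}\!\big(\Phi_r,\Phi'_r\big)\,d\mathbf{W}^s_r,$$
where $\Phi_0=D_xu_t(x)\,\beta(s,x)+u_t(x)\,\gamma(s,x)$ and $\Phi'_0$ is the associated Gubinelli derivative, read off from the leading rough coefficients $(\gamma(s+\cdot,X^{s,x}),\beta(s+\cdot,X^{s,x}))$ and their forward-in-time Gubinelli derivatives $D_x\beta\,\beta+\beta'$, $D_x\gamma\,\beta+\gamma'$ --- this is precisely where $\beta',\gamma'$ enter, through the controlled structures $(\beta,\beta')\in\mathscr{D}_W^{2\delta}C_b^\kappa$, $(\gamma,\gamma')\in\mathscr{D}_W^{2\eta}C_b^\theta$ of \cref{assumptions}.

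Then I would take expectations and use the Markov identity. The It\^o integral drops; for the rough stochastic integral, since $\Phi_0,\Phi'_0$ are deterministic, \cref{prop:roughstochastiintegral_compendium} gives $\E\big(\int_0^{t-s}(\Phi_r,\Phi'_r)d\mathbf{W}^s_r\big)=\Phi_0\,\delta W_{s,t}+\Phi'_0\,\mathbb{W}_{s,t}+O(|t-s|^{1+\varepsilon})$; and replacing $u_t$ by $u_{s+r}$ in the drift (an error $O(|t-s|^{1+\alpha})$ since $u\in C^\alpha([0,T];C^2_b)$) together with continuity of the integrands shows $\E\big(\int_0^{t-s}e^{I^{s,x}_r}(L_{s+r}u_t)(X^{s,x}_r)dr\big)=\int_s^t L_ru_r(x)\,dr+o(|t-s|)$, uniformly in $x$ thanks to the uniform-in-$x$ bounds \eqref{eq:aprioriestimateforRSDEin(s,x)} and \eqref{eq:aprioriestimatesontheexponential}. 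The remaining step is algebraic: to rewrite $\Phi_0\,\delta W_{s,t}+\Phi'_0\,\mathbb{W}_{s,t}$ in terms of the time-$t$ operators, I would use $\beta(t,x)-\beta(s,x)=\beta'(t,x)\delta W_{s,t}+O(|t-s|^{2\delta})$ and $\gamma(t,x)-\gamma(s,x)=\gamma'(t,x)\delta W_{s,t}+O(|t-s|^{2\eta})$ (with $\alpha+2\delta,\alpha+2\eta>1$ under \cref{assumptions}) together with the weak geometricity $\mathbb{W}_{s,t}+\mathbb{W}_{s,t}^\top=\delta W_{s,t}\otimes\delta W_{s,t}$: the $\delta W\otimes\delta W$-correction produced in passing from $\Phi_0$ to $(\Gamma_tu_t)(x)$ combines with $\Phi'_0\mathbb{W}_{s,t}$ to yield exactly $\big((\Gamma_t)_\mu(\Gamma_t)_\nu-(\Gamma'_t)_{\mu\nu}\big)u_t(x)\,\mathbb{W}^{\mu\nu}_{s,t}$, the minus sign in front of $\Gamma'$ arising because $\beta,\gamma$ are controlled forward in time while the equation runs backward. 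Collecting terms gives \eqref{eq:Davieexpansion_solutionRPDE} with $\sup_x|u^\natural_{s,t}(x)|=o(|t-s|)$, i.e.\ property iii).

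Finally, property ii): the same bookkeeping shows that $t\mapsto(\Gamma_t)_\mu u_t(x)$ and $t\mapsto\big((\Gamma_t)_\mu(\Gamma_t)_\nu-(\Gamma'_t)_{\mu\nu}\big)u_t(x)$ are time-controlled by $W$ with the stated Gubinelli derivative, the H\"older exponents being $\alpha'=\min\{\alpha,\delta,\eta\}$ and a suitable $\alpha''$; alternatively these bounds follow directly from $u\in C^\alpha([0,T];C^2_b)\cap\mathcal B([0,T];C^\lambda_b)$, the space-regularity $\kappa,\theta\ge 4$ of $\beta,\gamma$, and their controlled structures in time. I expect the main obstacle to be this last algebraic step --- carrying out the index-level computation of $\Phi'_0$ so that the $\Gamma\Gamma$-terms assemble correctly and the $-\Gamma'$ sign is reproduced --- while a secondary technical point is justifying the rough It\^o formula and the interchange of limits in the presence of the unbounded exponential weight, for which the exponential estimates of \cref{section:exponentialterm} are used.
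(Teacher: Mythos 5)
Your proposal follows essentially the same route as the paper's proof: the Markov identity of \cref{lemma:markovpropertyforRSDEs}, the rough It\^o formula \cref{prop:roughItoformula} applied to $e^{I^{s,x}}u_t(X^{s,x})$, taking expectations (It\^o term vanishes, the rough integral's conditional expectation is controlled at $s=0$), the drift replacement via $u\in C^\alpha([0,T];C^2_b)$, and finally weak geometricity together with the controlled-in-time structure of $(\beta,\beta')$, $(\gamma,\gamma')$ to move coefficients from time $s$ to time $t$ and produce the $-\Gamma'$ correction. One caveat: your ``alternative'' justification of condition ii) from $u\in C^\alpha([0,T];C^2_b)\cap\mathcal B([0,T];C^\lambda_b)$ alone would not suffice for the second inequality there (that regularity only gives an $O(|t-s|^{\alpha})$ bound on $\Gamma_tu_t-\Gamma_su_s$, not the required $O(|t-s|^{\alpha'+\alpha''})$ after subtracting the Gubinelli term); as in the paper, one must derive Davie-type expansions of $u_s-u_t$ and $D_xu_s-D_xu_t$ from the Markov identity by differentiating under the expectation, which is precisely your primary ``same bookkeeping'' route, so the argument stands.
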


\begin{proof} 
    We prove that $u$ satisfies all the requirements of \cref{def:solutiontobacwardroughPDEs}. 
    By \cref{prop:regularityofu}, $u \in C^{0,2}([0,T] \times \R^d; \R)$ and, by construction, $u_T(x) = g(x)$ for any $x \in \R^d$. 
    Let $(s,t) \in \Delta_{[0,T]}$ and $x \in \R^d$ be arbitrary. 
    Recall that $u \in C^\alpha([0,T];C^2_b(\R^d;\R))$. Hence, under our assumptions on $\beta,\beta'$ and $\gamma,\gamma'$ and looking at \eqref{eq:explicitGammaGamma}, it is easy to deduce that, for any $\mu,\nu=1,\dots,n$ and uniformly in $x$,
    \begin{equation*}
        |((\Gamma_s)_\mu (\Gamma_s)_\nu - (\Gamma'_s)_{\mu \nu}) u_s (x) - ((\Gamma_t)_\mu (\Gamma_t)_\nu - (\Gamma_t')_{\mu \nu}) u_t (x)| \lesssim |t-s|^{\min\{\alpha,\delta,\eta\}}.
    \end{equation*} 
    Moreover, for any $\mu = 1,\dots,n$ we can write \begin{equation*}
        \begin{aligned}
            (\Gamma_s)_\mu u_s(x) - (\Gamma_t)_\mu u_t(x) &= (\beta_\mu^i(s,x) - \beta^i_\mu(t,x)) \frac{\partial u_t}{\partial x^i}(x) + \beta^i_\mu(t,x) \left( \frac{\partial u_s}{\partial x^i}(x) - \frac{\partial u_t}{\partial x^i}(x) \right)+ \\
            & \quad - (\gamma_\mu(t,x) - \gamma_\mu(s,x)) u_t(x) +  \gamma_\mu(t,x) (u_s(x)-u_t(x)) + P^1_{s,t}(x) = \\
            &= - (\beta_{\mu \nu}')^i(t,x) \delta W_{s,t}^{\nu}   \frac{\partial u_t}{\partial x^i}(x) + \beta^i_\mu(t,x) \left( \frac{\partial u_s}{\partial x^i}(x) - \frac{\partial u_t}{\partial x^i}(x) \right) + \\
            & \quad - \gamma'_{\mu \nu}(t,x) \delta W^\nu_{s,t} u_t(x) + \gamma_\mu(t,x) (u_s(x)-u_t(x)) + P^2_{s,t}(x)
        \end{aligned}
    \end{equation*}
    with $\sup_{x \in \R^d}|P^i_{s,t}(x)| \lesssim |t-s|^{\alpha + \min\{\delta,\eta\}}$ ($i=1,2$). 
    From \cref{lemma:markovpropertyforRSDEs}  $u_s(x) = \E \left(e^{I^{s,x}_{t-s}} \ u_t(X_{t-s}^{s,x}) \right)$, which leads to \begin{equation} \label{eq:tobemultipliedbygamma} \begin{aligned}
        u_s(x) - u_t(x) &= \E\left( e^{I^{s,x}_{t-s}} \ u_t(X_{t-s}^{s,x}) - u_t(x) \right) = \\
        &= \left( \gamma_\nu(s,x) u_t(x) + \beta_\nu^j(s,x) \frac{\partial u_t}{\partial x^j}(x) \right) \delta W_{s,t}^\nu + \E\left(R^{e^{I^{s,x}} u(X^{s,x})}_{0,t-s} \right) \\
        &= \left( \gamma_\nu(t,x) u_t(x) + \beta_\nu^j(t,x) \frac{\partial u_t}{\partial x^j}(x) \right) \delta W_{s,t}^\nu + Q_{s,t}(x) 
    \end{aligned}
    \end{equation} 
    with $\sup_{x \in \R^d} |Q_{s,t}(x)| \lesssim |t-s|^{\alpha+\min\{\delta,\eta\}}$. 
    The previous identities hold because $u_t(\cdot) \in C^2_b(\R^d;\R)$, $e^{I^{s,x}_0}=1$ and, by \cref{prop:compositionwithstochasticcontrolledvectorfields_compendium} and \cref{prop:propertiesoftheexponential}, $$(e^{I^{s,x}}u_t(X^{s,x}), \gamma^s(X^{s,x}) e^{I^{s,x}}u_t(X^{s,x}) + e^{I^{s,x}} D_x u_t(X^{s,x}) \beta^s(X^{s,x})) \in \mathbf{D}_{W^s}^{\alpha,\min\{\delta,\eta\}}\mathscr{L}(\R). $$ 
    Recall that by construction $|\E(R^{e^{I^{s,x}} u(X^{s,x})}_{0,t-s})| \le \E(|\E_0(R^{e^{I^{s,x}} u(X^{s,x})}_{0,t-s})|) \lesssim |t-s|^{\alpha+\min\{\delta,\eta\}}$.
    From \cref{prop:convergenceundermean} we get that, for any $i=1,\dots,d$, \begin{equation*} \begin{aligned}
        &\frac{\partial}{\partial x^i} u_s(x) = \frac{\partial}{\partial x^i} \E\left( e^{I^{s,x}_{t-s}} \ u_t(X_{t-s}^{s,x}) \right) = \E\left(  e^{I^{s,x}_{t-s}} \ D_x u_t(X_{t-s}^{s,x}) Y_{t-s}^{s,x,i} + \mathscr{L}\text{-}\frac{\partial }{\partial e_i} I^{s,x}_{t-s} \ e^{I^{s,x}_{t-s}} \ u_t(X_{t-s}^{s,x}) \right),
    \end{aligned}
    \end{equation*}
    where $e_i$ denotes the $i$-th element of the canonical basis of $\R^d$, $Y^{s,x,i}$ is defined as in \eqref{eq:equationforY} and \begin{equation*} \begin{aligned}
        \mathscr{L}\text{-}\frac{\partial }{\partial e_i} I^{s,x} &= \int_0^\cdot D_x c(s+r,X_r^{s,x}) Y^{s,x,i}_r dr + \\
        & \quad +  \int_0^\cdot (D_x\gamma(s+r,X_r^{s,x}), D^2_{xx} \gamma(s+r,X_r^{s,x}) \beta(s+r,X_r^{s,x}) + D_x \gamma' (s+r,X^{s,x}_r) ) Y^{s,x,i}_r d\mathbf{W}^s_r.
    \end{aligned} 
    \end{equation*}
    Similarly as before one can show that, for any $i=1,\dots,d$, \begin{equation*} \label{eq:tomultiplybyBeta}\begin{aligned}
         &\frac{\partial u_t}{\partial x^i}(x) - \frac{\partial u_s}{\partial x^i}(x) = \E \left(e^{I^{s,x}_{t-s}} \ D_x u_t(X_{t-s}^{s,x}) Y_{t-s}^{s,x,i} + \mathscr{L}\text{-}\frac{\partial }{\partial e_i} I^{s,x}_{t-s} \ e^{I^{s,x}_{t-s}} \ u_t(X_{t-s}^{s,x}) - \frac{\partial}{\partial x^i} u_t(x) \right) = \\
         &\quad = \left( \gamma_\nu (s,x) \frac{\partial u_t}{\partial x^i}(x) + \frac{\partial^2 u_t}{\partial x^k \partial x^i}(x) \beta^k_\nu (s,x) + \frac{\partial u_t}{\partial x^j}(x) \frac{\partial \beta^j_\nu}{\partial x^i}(s,x) + \frac{\partial \gamma_\nu}{\partial x^i}(s,x) u_t(x) \right) \delta W_{s,t}^\nu + S^1_{s,t}(x) \\
         &\quad = \left( \gamma_\nu (t,x) \frac{\partial u_t}{\partial x^i}(x) + \frac{\partial^2 u_t}{\partial x^k \partial x^i}(x) \beta^k_\nu (t,x) + \frac{\partial u_t}{\partial x^j}(x) \frac{\partial \beta^j_\nu}{\partial x^i}(t,x) + \frac{\partial \gamma_\nu}{\partial x^i}(t,x) u_t(x) \right) \delta W_{s,t}^\nu + S^2_{s,t}(x)
     \end{aligned}
     \end{equation*} 
     with $\sup_{x \in \R^d} |S^i_{s,t}(x)| \lesssim |t-s|^{\alpha+\min\{(\lambda-2)\alpha,\delta,\eta\}} \ (i=1,2)$ . 
    The previous identity hold because $D_x u_t (\cdot) \in C^{\lambda-1}_b$ with $\lambda-1 \in (1,2]$, $Y^{s,x,i}_0 = e_i$ and since $e^{I^{s,x}} \ D_x u_t(X^{s,x}) Y^{s,x,i} + \mathscr{L}\text{-}\frac{\partial }{\partial e_i} I^{s,x} \ e^{I^{s,x}} \ u_t(X^{s,x})$ can be proven to be stochastically controlled.
     We therefore deduce that, for any arbitrary $\mu=1,\dots,n$ and uniformly in $x \in \R^d$, \begin{equation*}
         |(\Gamma_t)_\kappa u_t(x) - (\Gamma_s)_\kappa u_s(x) - ((\Gamma_t)_\kappa (\Gamma_t)_\mu - (\Gamma_t')_{\mu \nu}) u_t(x) \delta W^\mu_{s,t}| \lesssim |t-s|^{\min\{(\lambda-2)\alpha,\delta,\eta\}}. 
     \end{equation*}
     Notice that, from \ref{assumptions}, $\alpha + \min\{\delta,\eta\} + \min\{(\lambda-2)\alpha,\delta,\eta\} >1$. That is, condition ii) of \cref{def:solutiontobacwardroughPDEs} is satisfied with $\alpha'= \min\{\delta,\eta\}$ and $\alpha''=\min\{(\lambda-2)\alpha,2\delta,2\eta\}$. \\

     The final step of this proof consists in showing that $u$ satisfies the Davie-type expansion stated in \cref{def:solutiontobacwardroughPDEs}.iii. 
     It is useful to recall that, for any $(t_1,t_2) \in \Delta_{[0,T]}$ and for any $i=1,\dots,d$, we can write \begin{equation} \label{eq:davideexpansionofX}
        \begin{aligned}
            (X^{s,x}_{t_2})^i &= (X^{s,x}_{t_1})^i + \int_{t_1}^{t_2} b^i(s+r,X_r^{s,x}) dr + \int_{t_1}^{t_2} \sigma_\nu^i(s+r,X_r^{s,x}) dB_r^\nu +  \beta^i_\mu (s+t_1,X_{t_1}^{s,x}) (\delta W^s_{t_1,t_2})^\mu + \\
            &\quad + \left(\frac{\partial \beta_\nu^i}{\partial x^j} (s+t_1,X_{t_1}^{s,x}) \beta_\mu^j(s+t_1,X_{t_1}^{s,x}) + (\beta_{\nu \mu}')^i(s+t_1,X^{s,x}_{t_1}) \right)(\mathbb{W}^s_{t_1,t_2})^{\mu \nu} + (X^{s,x,\natural}_{t_1,t_2})^i,  
        \end{aligned}
    \end{equation}
    with $\|X^{s,x,\natural}_{t_1,t_2}\|_p \lesssim |t-s|^{\alpha + \delta}$ and $\|\E_{t_1}(X^{s,x,\natural}_{t_1,t_2})\|_p \lesssim |t-s|^{\alpha + 2\delta}$, for any $p \in [2,\infty)$. 
    While from \cref{prop:propertiesoftheexponential} it also follows that \begin{equation} \label{eq:davieexpansionofexponential}
        \begin{aligned}
            e^{I^{s,x}_{t_2}} - e^{I^{s,x}_{t_1}} &= \int_{t_1}^{t_2} c(s+r,X^{s,x}_r) e^{I_r^{s,x}} dr + \gamma_\mu(s+t_1,X_{t_1}^{s,x}) e^{I^{s,x}_{t_1}} \delta W^\mu_{t_1,t_2} +  \bigg( \frac{\partial \gamma_\nu}{\partial x^j}(s+t_1,X_{t_1}^{s,x}) \beta_\mu^j(s+t_1,X_{t_1}^{s,x}) + \\
            &\quad + \gamma_{\nu \mu}'(s+t_1,X^{s,x}_{t_1}) + \gamma_\mu(s+t_1,X_{t_1}^{s,x}) \gamma_\nu(s+t_1,X_{t_1}^{s,x}) \bigg) e^{I^{s,x}_{t_1}} \mathbb{W}^{\mu \nu}_{t_1,t_2} + e^{s,x,\natural}_{t_1,t_2}.
        \end{aligned}
    \end{equation}
    Here $ \|e^{s,x,\natural}_{t_1,t_2}\|_p \lesssim |t-s|^{\alpha + \eta}$ and $\|\E_{t_1}(e^{s,x,\natural}_{t_1,t_2})\|_p \lesssim |t-s|^{\alpha + \eta + \min\{\delta,\eta\}}$, for any $p \in [2,\infty)$.
     Recalling that $u_t(\cdot) \in C^\lambda_b(\R^d;\R)$ and by applying \cref{prop:roughItoformula} to the product process $e^{I^{s,x}_\cdot}u_t(X^{s,x}_\cdot)$ we have \begin{equation} \label{eq:roughItoformula_existence}
        \begin{aligned}
            &e^{I^{s,x}_{t-s}}u_t(X^{s,x}_{t-s}) - u_t(x) = \\
            & = \int_0^{t-s} \underbrace{c(s+r,X_r^{s,x}) e^{I^{s,x}_r} u_t(X_s^{s,x}) + e^{I^{s,x}_r}\frac{\partial u_t}{\partial x^i}(X_r^{s,x}) b^i(s+r,X_r^{s,x}) + e^{I^{s,x}_r}\frac{1}{2} \frac{\partial^2 u_t}{\partial x^j \partial x^i} (\sigma^i_a \sigma^j_a)(s+r,X_r^{s,x})}_{= e^{I^{s,x}_r}  L_{s+r} u_t(X_r^{s,x})} \ dr + \\
            & \quad  + \int_0^{t-s} e^{I^{s,x}_r} \frac{\partial u_t}{\partial x^i} (X_r^{s,x}) \sigma^i_a (s+r,X^{s,x}_r) dB^a_r + \bigg(\gamma_\mu(s,x) u_t(x) + \frac{\partial u_t}{\partial x^i}(x) \beta^i_\mu(s,x)  \bigg) \delta (W^s_{0,t-s})^\mu + \\
            & \quad  + \bigg( \frac{\partial \gamma_\nu}{\partial x^i}(s,x) \beta^i_\mu(s,x) u_t(x) + \gamma'_{\nu \mu} (s,x) u_t(x) + \gamma_\mu(s,x) \gamma_\nu (s,x) u_t(x) + \gamma_\mu(s,x) \frac{\partial u_t}{\partial x^i}(x) \beta^i_\nu(s,x) + \\
            & \qquad  + \frac{\partial u_t}{\partial x^i}(x) \beta^i_\mu(s,x) \gamma_\nu (s,x) +  \frac{\partial u_t}{\partial x^i}(x) (\beta'_{\nu \mu})^i(s,x) + \frac{\partial u_t}{\partial x^i}(x) \frac{\partial \beta^i_\nu}{\partial x^j}(s,x) \beta^j_\mu (s,x) + \\
            & \qquad  + \frac{\partial^2 u_t}{\partial x^j \partial x^i}(x) \beta^i_\mu(s,x) \beta^j_\mu(s,x) \bigg) (\mathbb{W}^s_{0,t-s})^{\mu \nu} + P^{s,x\natural}_{0,t-s}, 
        \end{aligned}
    \end{equation}
    where $\|P^{s,x,\natural}_{0,t-s}\|_p \lesssim |t-s|^{\alpha + \alpha'}$ and $\|\E_0(P^{s,x,\natural}_{0,t-s})\|_p \lesssim |t-s|^{\alpha + \alpha' + \min\{\alpha',\alpha''\}}$, for any $p \in [2,\infty)$. 
    The conclusion follows by taking the expectation of the previous identity, up to slightly rearrange some terms. 
    Similarly as before, $|\E(P^{s,x,\natural}_{0,t-s})| \lesssim |t-s|^{\alpha + \alpha' + \min\{\alpha',\alpha''\}}$, and due to the martingale property of the It\^o integral $\E \big(\int_0^{t-s} e^{I_r^{s,x}} \frac{\partial u_t}{\partial x^i}(X_r^{s,x}) \sigma_a^i(s+r,X_r^{s,x}) dB^a_r\big) =0$ (notice that $e^{I^{s,x}_\cdot} \frac{\partial u_t}{\partial x^i} (X_\cdot^{s,x}) \sigma (s+\cdot,X^{s,x}_\cdot) \in L^2(\Omega \times [0,T];\R^{m})$).
     Moreover, we can write $\int_0^{t-s} e^{I^{s,x}_r}  L_{s+r} u_t(X_r^{s,x}) dr$ as $\int_s^{t} e^{I^{s,x}_{r-s}}  L_{r} u_t (X_{r-s}^{s,x}) dr$ and deduce that   \begin{equation} \label{eq.Lebesgueintegralisalittleo}
        \begin{aligned}
            & \left|\E \left( \int_s^t e^{I^{s,x}_{r-s}} L_r u_t (X_{r-s}^{s,x}) dr \right) -\int_s^t L_r u_r(x) dr  \right|  \\
            &\le \E \left( \left| \int_s^t (e^{I^{s,x}_{r-s}}-1) L_r u_t (X_{r-s}^{s,x}) dr \right| \right) + \E \left( \left| \int_s^t L_r u_t (X_{r-s}^{s,x}) - L_r u_t (x) dr \right| \right) + \E \left( \left| \int_s^t L_r u_t (x) - L_r u_r(x) dr \right| \right) \\
            &\le |u_t(\cdot)|_{C^2_b} \left(|b|_{C^1_b} + |D_x\sigma|^2_{C^2_b} \right) \left( \| \delta e^{I^{s,x}}\|_{\alpha;1} +  \|\delta X^{s,0,x}\|_{\alpha;1,\infty} + [u]_\alpha \right) |t-s|^{1+\alpha},
        \end{aligned}
    \end{equation}
    The proof is concluded since, from the assumptions on $\beta,\beta'$ and $\gamma,\gamma'$,
    \begin{equation*}
         \sup_{x \in \R^d}|(\Gamma_s)_\mu (\Gamma_s)_\nu u_t(x) \mathbb{W}_{s,t}^{\mu \nu} - (\Gamma_t)_\mu (\Gamma_t)_\nu u_t(x) \mathbb{W}_{s,t}^{\mu \nu}| \lesssim |t-s|^{2\alpha + \min\{\delta,\eta\}}.
     \end{equation*} and, being $\mathbf{W}$ is weakly geometric, \begin{equation*} \begin{aligned}
        &\left(\gamma_\mu(s,x) u_t(x) +  \beta^i_\mu(s,x) \frac{\partial u_t}{\partial x^i}(x)  \right) \delta W_{s,t}^\mu - \left(\gamma_\mu(t,x) u_t(x) +  \beta^i_\mu(t,x) \frac{\partial u_t}{\partial x^i}(x)  \right) \delta W_{s,t}^\mu = \\
        &\quad =  - \left(\gamma'_{\mu \nu}(t,x) \delta W^\nu_{s,t} u_t(x) + (\beta'_{\mu \nu})^i(t,x) \delta W^\nu_{s,t} \frac{\partial u_t}{\partial x^i}(x) \right)  \delta W_{s,t}^\mu   + T^1_{s,t}(x) = \\
        & \quad =  - \left(\gamma'_{\mu \nu}(t,x)  u_t(x) + (\beta'_{\mu \nu})^i(t,x)  \frac{\partial u_t}{\partial x^i}(x) \right)  (\mathbb{W}_{s,t}^{\nu \mu} + \mathbb{W}^{\mu \nu}_{s,t}) + T^2_{s,t}(x)   
    \end{aligned} 
    \end{equation*}
    with $\sup_{x \in \R^d} |T^i_{s,t}(x)| \lesssim |t-s|^{\alpha + \min\{2\delta,2\eta\}}, \ (i=1,2)$.
\end{proof}

\begin{prop} \label{prop:robustnessinW}
    Let $g,b,\sigma,\beta,c$ and $\gamma$ be fixed as in \cref{assumptions}. 
    For any weakly geometric $\alpha$-rough path $\mathbf{W}$, denote by $u^\mathbf{W}=u^\mathbf{W}(s,x)$ the solution to $-du_t = L_t u_t dt + \Gamma_t u_t d\mathbf{W}_t$ defined as in \eqref{eq:definitionofu_mainresult}. 
    Then the map \begin{equation*}
        \mathscr{C}_g^\alpha([0,T];\R^n) \ni \mathbf{W} \longmapsto u^\mathbf{W} \in C^{0,2}_b([0,T]\times \R^d;\R)
    \end{equation*}
    is locally Lipschitz continuous. 
\end{prop}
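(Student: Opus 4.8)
The plan is to retrace the proof of \cref{prop:regularityofu}, upgrading every qualitative $\mathscr{L}$-continuity statement occurring there into a \emph{quantitative} local Lipschitz estimate in $\rho_\alpha(\mathbf{W},\bar{\mathbf{W}})$. Fix $R>0$ and weakly geometric rough paths $\mathbf{W},\bar{\mathbf{W}}\in\mathscr{C}^\alpha_g([0,T];\R^n)$ with $|\mathbf{W}|_\alpha+|\bar{\mathbf{W}}|_\alpha\le R$, and for each $(s,x)$ write $X^{s,x},\bar X^{s,x}$ for the solutions of \eqref{eq:RSDE_backwardroughPDE_main} driven by $\mathbf{W}^s$ and $\bar{\mathbf{W}}^s$ respectively, with $I^{s,x},\bar I^{s,x}$, $Y^{s,x,i},\bar Y^{s,x,i}$ and $Z^{s,x,i,j},\bar Z^{s,x,i,j}$ the associated exponents and first/second order derivative processes, defined as in the proof of \cref{prop:regularityofu}. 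As there, we may assume $\alpha\neq\tfrac12$ (replacing $\alpha$ by some $\alpha'<\alpha$, which is harmless since $\rho_{\alpha'}\lesssim_T\rho_\alpha$), fix $p$ large, and work with slightly reduced exponents $\delta'\in[0,\delta)$, $\eta'\in[0,\eta)$. Two elementary observations are recorded first: straight from the formulas defining $W^s,\mathbb{W}^s$ in \cref{prop:shiftingaroughpath} one has $\rho_\alpha(\mathbf{W}^s,\bar{\mathbf{W}}^s)\le\rho_\alpha(\mathbf{W},\bar{\mathbf{W}})$ for every $s\in[0,T]$; and, since $\beta,\beta',\gamma,\gamma'$ do not depend on the driver, the deterministic controlled vector fields $(\beta^s,(\beta^s)')$, $(\gamma^s,(\gamma^s)')$ may be viewed as controlled either by $W^s$ or by $\bar W^s$ (after the reduction $\delta\rightsquigarrow\delta'$) with $\llbracket\beta^s,(\beta^s)';\beta^s,(\beta^s)'\rrbracket_{W^s,\bar W^s;2\delta';\kappa}\lesssim_R\rho_\alpha(\mathbf{W},\bar{\mathbf{W}})$, because $R^{\beta^s}-\bar R^{\beta^s}=(\beta^s)'\,\delta(W^s-\bar W^s)$, and likewise for $\gamma$.

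\textbf{Step 1: stability of the rough SDE and of the exponential.} Apply \cref{thm:stabilityforRSDEs_compendium} with unchanged coefficients $b^s,\sigma^s,(\beta^s,(\beta^s)')$ and initial datum $x$ but drivers $\mathbf{W}^s$ versus $\bar{\mathbf{W}}^s$; the uniform bound $M$ is supplied by \eqref{eq:aprioriestimateforRSDEin(s,x)}, and using the observation above one obtains, uniformly in $(s,x)$ and for every $p$,
\begin{equation*}
\|X^{s,x},\beta^s(X^{s,x});\bar X^{s,x},\beta^s(\bar X^{s,x})\|_{W^s,\bar W^s;\alpha,\delta';p}\lesssim_{R,p}\rho_\alpha(\mathbf{W},\bar{\mathbf{W}}).
\end{equation*}
The controlled rough path $(I^{s,x},(I^{s,x})')$ of \eqref{eq:definitionofI} is built from $X^{s,x}$ by composition with $c^s,(\gamma^s,(\gamma^s)')$ (\cref{prop:compositionwithstochasticcontrolledvectorfields_compendium}) followed by Lebesgue and rough integration; retracing the estimates of the proof of \cref{prop:continuityanddifferentiability_forcingterm}.2 — which already carry the $\rho_\alpha$ term stemming from the stochastic sewing construction behind \cref{prop:roughstochastiintegral_compendium} — together with the previous display gives $\|I^{s,x},(I^{s,x})';\bar I^{s,x},(\bar I^{s,x})'\|_{W^s,\bar W^s;\alpha,\eta';p}\lesssim_{R,p}\rho_\alpha(\mathbf{W},\bar{\mathbf{W}})$, and feeding this into the exponential estimates of \cref{section:exponentialterm} (\cref{prop:propertiesoftheexponential}, \cref{prop:continuityanddifferentiabilityofexponentials}) yields the same bound for $\|e^{I^{s,x}},(I^{s,x})'e^{I^{s,x}};e^{\bar I^{s,x}},(\bar I^{s,x})'e^{\bar I^{s,x}}\|_{W^s,\bar W^s;\alpha,\eta';p}$, in particular $\|e^{I^{s,x}_{T-s}}-e^{\bar I^{s,x}_{T-s}}\|_p\lesssim_{R,p}\rho_\alpha(\mathbf{W},\bar{\mathbf{W}})$.

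\textbf{Step 2: $u$ and its derivatives.} From $u^{\mathbf{W}}(s,x)-u^{\bar{\mathbf{W}}}(s,x)=\E\big(g(X^{s,x}_{T-s})e^{I^{s,x}_{T-s}}-g(\bar X^{s,x}_{T-s})e^{\bar I^{s,x}_{T-s}}\big)$, adding and subtracting $g(X^{s,x}_{T-s})e^{\bar I^{s,x}_{T-s}}$ and using $|g|_{C^1_b}$, the uniform bound on $\|e^{I^{s,x}_{T-s}}\|_p$ and Step 1 — exactly as in the argument establishing a) in the proof of \cref{prop:regularityofu} — gives $\sup_{(s,x)}|u^{\mathbf{W}}(s,x)-u^{\bar{\mathbf{W}}}(s,x)|\lesssim_R\rho_\alpha(\mathbf{W},\bar{\mathbf{W}})$. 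For the first derivative, $Y^{s,x,i}$ solves the linear rough SDE \eqref{eq:equationforY}, whose coefficients $G^{s,x}=D_xb^s(X^{s,x})$, $S^{s,x}=D_x\sigma^s(X^{s,x})$, $(f^{s,x},(f^{s,x})')$, and whose trivial forcing term and fixed initial datum $e_i$ depend on the driver only through $X^{s,x}$. Apply the local Lipschitz estimate \cref{thm:stabilityforlinearRSDEs_compendium}: its right-hand side is $\rho_\alpha(\mathbf{W}^s,\bar{\mathbf{W}}^s)$ plus the distances between the two families of coefficients, and the latter are controlled by $\|X^{s,x},\beta^s(X^{s,x});\bar X^{s,x},\beta^s(\bar X^{s,x})\|_{W^s,\bar W^s;\alpha,\delta';2p}$ — via the Lipschitzianity of $D_xb,D_x\sigma$ for $G,S$ and via \cref{prop:compositionwithstochasticcontrolledvectorfields_compendium} (together with the observation on the controlledness distance of $(D_x\beta^s,D_x(\beta^s)')$) for $(f^{s,x},(f^{s,x})')$ — hence by $\rho_\alpha(\mathbf{W},\bar{\mathbf{W}})$ thanks to Step 1. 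This yields $\|Y^{s,x,i}_{T-s}-\bar Y^{s,x,i}_{T-s}\|_p\lesssim_R\rho_\alpha(\mathbf{W},\bar{\mathbf{W}})$ and, retracing the proof of \cref{prop:continuityanddifferentiability_forcingterm}.2 for the forcing-type process $\mathscr{L}\text{-}\tfrac{\partial}{\partial e_i}I^{s,x}$, the analogous bound for its terminal value; inserting both into the explicit formula for $\tfrac{\partial}{\partial x^i}u^{\mathbf{W}}(s,x)$ from b) of the proof of \cref{prop:regularityofu} and arguing as above gives the Lipschitz bound for $\tfrac{\partial}{\partial x^i}u^{\mathbf{W}}-\tfrac{\partial}{\partial x^i}u^{\bar{\mathbf{W}}}$. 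The second derivative is one further level down: $Z^{s,x,i,j}$ solves a linear rough SDE whose coefficients again depend on the driver only through $X^{s,x}$ and whose forcing term $F^{s,x,i,j}$ is built from $X^{s,x},Y^{s,x,i},Y^{s,x,j}$, so combining Step 1, the $Y$-estimate just obtained, \cref{prop:continuityanddifferentiability_forcingterm} and \cref{thm:stabilityforlinearRSDEs_compendium} gives the stability of $Z^{s,x,i,j}$ and of the second-order exponent derivative in $\rho_\alpha(\mathbf{W},\bar{\mathbf{W}})$; fed into \eqref{eq:secondderivativeofu} this produces the Lipschitz bound for $\tfrac{\partial^2}{\partial x^j\partial x^i}u^{\mathbf{W}}-\tfrac{\partial^2}{\partial x^j\partial x^i}u^{\bar{\mathbf{W}}}$. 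Summing the three bounds yields $\|u^{\mathbf{W}}-u^{\bar{\mathbf{W}}}\|_{C^{0,2}_b}\lesssim_R\rho_\alpha(\mathbf{W},\bar{\mathbf{W}})$.

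\textbf{Main difficulty.} No single estimate is hard; the work is in the bookkeeping. First, one must be careful that the deterministic controlled vector fields $(\beta^s,(\beta^s)'),(\gamma^s,(\gamma^s)')$ — whose time-controlledness is phrased relative to $W$ — can be compared to themselves regarded as controlled by $\bar W^s$ with a controlledness distance of exactly order $\rho_\alpha(\mathbf{W},\bar{\mathbf{W}})$, which forces the mild reductions $\delta\rightsquigarrow\delta'$, $\eta\rightsquigarrow\eta'$ and the passage $\alpha\rightsquigarrow\alpha'$ needed to apply \cref{thm:stabilityforlinearRSDEs_compendium} (valid only for $\alpha\neq\tfrac12$ and $p>\tfrac{1}{1-2\alpha}$). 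Second, one must chain the stability estimates through the cascade $X^{s,x}\rightsquigarrow I^{s,x},e^{I^{s,x}}\rightsquigarrow (Y^{s,x,i},\mathscr{L}\text{-}\tfrac{\partial}{\partial e_i}I^{s,x})\rightsquigarrow Z^{s,x,i,j}$, checking at each level that the coefficients and forcing terms of the relevant (linear) rough SDE, or the integrands of the relevant forcing term, depend on the driver only through objects already controlled at the previous level, so that the compounded constants remain finite and uniform in $(s,x)$ — exactly as the corresponding constants were uniform in $(s,x)$ in \cref{prop:regularityofu}.
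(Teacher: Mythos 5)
Your proposal is correct and follows essentially the same route as the paper's proof: bound $|u^{\mathbf{W}}-u^{\bar{\mathbf{W}}}|_\infty$ by combining the rough SDE stability estimate of \cref{thm:stabilityforRSDEs_compendium} (uniform in $(s,x)$ via \eqref{eq:aprioriestimateforRSDEin(s,x)}), the forcing-term argument of \cref{prop:continuityanddifferentiability_forcingterm}.2 for $I^{s,x}$, and the mean-value/exponential-integrability estimates of \cref{section:exponentialterm}, then treat $D_xu$ and $D^2_{xx}u$ through the explicit formulas from the proof of \cref{prop:regularityofu} together with the local Lipschitz estimates for linear rough SDEs in \cref{thm:stabilityforlinearRSDEs_compendium}. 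Your additional bookkeeping (the bound $\rho_\alpha(\mathbf{W}^s,\bar{\mathbf{W}}^s)\le\rho_\alpha(\mathbf{W},\bar{\mathbf{W}})$, the cross-driver controlledness of $(\beta^s,(\beta^s)')$, and the reductions $\delta\rightsquigarrow\delta'$, $\alpha\rightsquigarrow\alpha'$) only makes explicit what the paper's shorter proof leaves implicit.
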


\begin{proof}
    Let $R>0$ and let $\mathbf{W}, \mathbf{Z} \in \mathscr{C}^\alpha_g([0,T];\R^n)$ be a pair of weakly geometric rough paths such that $|\mathbf{W}^n|_\alpha,|\mathbf{Z}^n|_\alpha \le R$. 
    Recall that \begin{equation} \label{eq:uniformcontinuity}
        |u^{\mathbf{W}} - u^{\mathbf{Z}}|_{C^{0,2}_b} := |u^{\mathbf{W}} - u^{\mathbf{Z}}|_\infty + |D_xu^{\mathbf{W}} - D_xu^{\mathbf{Z}}|_\infty + |D^2_{xx}u^{\mathbf{W}} - D^2_{xx}u^{\mathbf{Z}}|_\infty.
    \end{equation} 
    Let $(s,x) \in [0,T] \times \R^d$ be fixed. 
    With an obvious notation, let us denote by $X^{s,x;\mathbf{W}}$ the solution to equation \eqref{eq:RSDE_backwardroughPDE_main} driven by $\mathbf{W}^s$ and by $X^{s,x;\mathbf{Z}}$ the solution to the same equation but driven by $\mathbf{Z}^s$. 
    Similarly, we define $I^{s,x;\mathbf{W}}$ and $I^{s,x;\mathbf{Z}}$ (see \eqref{eq:definitionofI}). 
    It holds \begin{equation*}
        \begin{aligned}
            |u^{\mathbf{W}}(s,x) - u^{\mathbf{Z}}(s,x)| 
            &\le |D_xg|_\infty \E \left(|X^{s,x;\mathbf{W}}_{T-s}-X^{s,x;\mathbf{Z}}_{T-s}| e^{I^{s,x;\mathbf{W}}_{T-s}}\right) + |g|_\infty \E \left(\big|e^{I^{s,x;\mathbf{W}}_{T-s}}-e^{I^{s,x;\mathbf{Z}}_{T-s}}\big|\right). 
        \end{aligned}
    \end{equation*}
    From \cref{thm:stabilityforRSDEs_compendium} we have that $\|X^{s,x;\mathbf{W}}_{T-s}-X^{s,x;\mathbf{Z}}_{T-s}\|_p \lesssim_R \rho_\alpha(\mathbf{W},\mathbf{Z})$ for any $p \in [2,\infty)$ and uniformly in $(s,x)$.
    From the estimate in \cref{prop:continuityanddifferentiability_forcingterm}.1 we have that $\sup_n \|I^{s,x;\mathbf{W}}\|_p < +\infty$ for any $p \in [2,\infty)$, and arguing as in the proof of \cref{prop:continuityanddifferentiability_forcingterm}.2 we obtain $\|I^{s,x;\mathbf{W}}-I^{s,x;\mathbf{Z}}\|_p \lesssim \rho_\alpha(\mathbf{W},\mathbf{Z})$ for any $p \in [2,\infty)$ and for an implicit constant that depends on $R$ but not on $s$ nor $x$. 
    Arguing as in the proof of \cref{prop:continuityanddifferentiabilityofexponentials}.1, we can show that $\|e^{I^{s,x;\mathbf{W}}_{T-s}}-e^{I^{s,x;\mathbf{Z}}_{T-s}}\|_p \lesssim_R \rho_\alpha(\mathbf{W},\mathbf{Z})$ for any $p \in [2,\infty)$ and uniformly in $s,x$. 
    Thus, we conclude that \begin{equation*}
        |u^{\mathbf{W}} - u^{\mathbf{Z}}|_\infty := \sup_{(s,x) \in [0,T] \times \R^d} |u^{\mathbf{W}}(s,x) - u^{\mathbf{Z}}(s,x)| \lesssim_R \rho_\alpha(\mathbf{W},\mathbf{Z}). 
    \end{equation*}
    In the proof of \cref{prop:regularityofu} we deduce explicit expressions for higher order space-derivatives of a solution $u$. Hence we can argue as before - also using local Lipschitz estimates for linear rough SDEs contained in \cref{thm:stabilityforlinearRSDEs_compendium} - to conclude the proof of the assertion. 
\end{proof}

The following version of the rough It\^o formula is needed in the proof of the previous existence result and to prove uniqueness (cf.\ \cref{thm:uniqueness} below).
It easily follows from the rough It\^o formula for functions of polynomial growth developed in \cite[Proposition 2.16]{BCN24}. 
With respect to the original formulation in \cite[Theorem 4.13]{FHL21}, here the $p$-integrability of the rough It\^o process $X$ is not a priori fixed, and the use functions of polynomial growth (i.e.\ not globally bounded) allows to derive a rough product formula.

\begin{prop}[Rough It\^o's formula] \label{prop:roughItoformula}
Let $\mathbf{W}=(W,\mathbb{W})$ be a weakly geometric $\alpha$-rough path, with $\alpha \in (\frac{1}{3},\frac{1}{2}]$, and let $B$ be an $m$-dimensional Brownian motion on $(\Omega,\mathcal{F},\{\mathcal{F}_t\}_{t \in [0,T]},\mathbb{P})$ complete filtered probability space. 
Let $\kappa \in (\frac{1}{\alpha},3]$ and let $F \in C^\kappa(\R^{d_1};\R^{d_2})$ such that \begin{equation*}
    |D_x F(x)| + \dots + |D_{x\dots x}^{\lfloor \kappa \rfloor} F(x)| \le K(1 + |x|^k), 
\end{equation*} 
for some $K>0$, $k \ge 0$ and for any $x \in \R^{d_1}$. 
Let $(\phi,\phi') \in \mathbf{D}_W^{\gamma,\gamma'}\mathscr{L}(\mathcal{L}(\R^n;\R^{d_1}))$ be a stochastic controlled rough path for some $\gamma \in [0,\alpha], \gamma' \in [0,1]$ with $\alpha + \gamma > \frac{1}{2}$ and $\alpha + \gamma + \min\{(\kappa-2)\alpha,\gamma,\gamma'\}>1$, and such that $\sup_{t \in [0,T]} \|\phi_t\|_p < +\infty$ for any $p \in [2,\infty)$. Let $\mu \in \mathscr{L}(\R^{d_1})$, $\nu \in \mathscr{L}(\R^{d_1 \times m})$.
Let $X=(X_t)_{t \in [0,T]}$ be an continuous and adapted $\R^{d_1}$-valued stochastic process such that \begin{equation} \label{eq:roughItoprocess}
    X_t = X_0 + \int_0^t \mu_r dr + \int_0^t \nu_r dB_r + \int_0^t (\phi_r,\phi_r') d\mathbf{W}_r \qquad t \in [0,T].
\end{equation}
Then \begin{equation} \label{eq:controllabilityinroughItoformula}
    (D_xF(X) \phi,D_xF(X)\phi'+ D^2_{xx}F(X)(\phi, \phi)) \in \mathbf{D}_W^{\gamma,\min\{(\kappa-2)\gamma,\gamma'\}}\mathscr{L}(\mathcal{L}(\R^n;\R^{d_2}))
\end{equation} and, $\mathbb{P}$-a.s. and for any $t \in [0,T]$, \begin{equation*} \begin{aligned}
    F(X_t) &= F(X_0) + \int_0^t D_xF(X_r) \mu_r + \frac{1}{2} \text{tr}\left( D^2_{xx}F(X_r) \nu_r \nu_r^\top \right) dr + \int_0^t D_xF(X_r) \nu_r dB_r + \\
    & \quad + \int_0^t (D_xF(X_r) \phi,D_xF(X_r)\phi'_r+ D^2_{xx}F(X_r) (\phi_r, \phi_r))d\mathbf{W}_r.
\end{aligned} 
\end{equation*}
Equivalently, for any $s \le t$, for any $i=1,\dots,d_2$ and $\mathbb{P}$-almost surely, it holds that \begin{equation*} \begin{aligned}
    F^i(X_t) &= F^i(X_s) + \int_s^t \left( \frac{\partial F^i}{\partial x^j} (X_r) \mu_r^j + \frac{1}{2} \sum_{a=1}^m \frac{\partial^2 F^i}{\partial x^k \partial x^j} (X_r) (\nu_r)_a^j (\nu_r)_a^k \right) dr + \int_s^t \frac{\partial F^i}{\partial x^j} (X_r) (\nu_r)^j_a dB^a_r + \\
    &\quad + \frac{\partial F^i}{\partial x^j} (X_s) (\phi_r)^j_a \delta W_{s,t}^a + \left(\frac{\partial F^i}{\partial x^j} (X_s) (\phi_r')^j_{ab}  + \frac{\partial^2 F^i}{\partial x^k \partial x^j} (X_s) (\phi_r)^j_a (\phi_r)^k_b \right)\mathbb{W}_{s,t}^{ab} + F^{\natural,i}_{s,t},
\end{aligned}
\end{equation*}
where $\|F^{\natural,i}_{s,t}\|_p \lesssim_K |t-s|^{\alpha + \gamma}$ and $\|\E(F^{\natural,i}_{s,t} \mid \mathcal{F}_s)\|_p \lesssim_K |t-s|^{\alpha + \gamma + \min\{(\kappa-2)\alpha,\gamma,\gamma'\}}$ for any $p \in [2,\infty)$. 
\end{prop}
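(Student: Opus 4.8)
The plan is to reduce the statement to the rough It\^o formula for polynomial-growth functions, \cite[Proposition 2.16]{BCN24} (itself an extension of \cite[Theorem 4.13]{FHL21}), the only discrepancy being that there the $L^p$-integrability of the rough It\^o process is part of the data, whereas here $X$ is prescribed only through the differential $dX=\mu\,dt+\nu\,dB+(\phi,\phi')\,d\mathbf{W}$. So first I would recover that integrability from the hypotheses: by \cref{prop:roughstochastiintegral_compendium} the rough-integral term $\int_0^\cdot(\phi_r,\phi_r')\,d\mathbf{W}_r$ lies in $\mathbf{D}_W^{\alpha,\min\{\alpha,\gamma\}}L_p$ for every $p\in[2,\infty)$ -- its assumptions hold since $\alpha+\gamma>\tfrac12$ and, using $\gamma\le\alpha$, $\alpha+\min\{\alpha,\gamma\}+\gamma'=\alpha+\gamma+\gamma'\ge\alpha+\gamma+\min\{(\kappa-2)\alpha,\gamma,\gamma'\}>1$ -- while the drift and It\^o parts are $L^p$-bounded by H\"older's and the BDG inequality. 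Together with $X_0\in\mathscr{L}(\R^{d_1})$ (implicit already for the right-hand side of the claimed identity to take values in $\mathscr{L}$) this gives $\sup_{t\in[0,T]}\|X_t\|_p<\infty$ for every $p$, and in fact $(X-X_0,\phi)\in\mathbf{D}_W^{\alpha,\gamma}\mathscr{L}(\R^{d_1})$ by \cref{prop:continuityanddifferentiability_forcingterm}.1 (when $\alpha<\tfrac12$; for $\alpha=\tfrac12$ only the weaker bound $\sup_t\|X_t\|_p<\infty$ is actually needed below).

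Next, with $p\in[2,\infty)$ fixed but arbitrary, I would apply \cite[Proposition 2.16]{BCN24} to the now-$L^p$-integrable rough It\^o process $X$ and to the polynomial-growth function $F\in C^\kappa$, $\kappa\in(\tfrac1\alpha,3]$: this yields at once the controllability assertion \eqref{eq:controllabilityinroughItoformula} and the displayed It\^o/Davie expansion, with the relevant remainders estimated at integrability $p$ and polynomially in $\sup_t\|X_t\|_{p'}$. Letting $p$ range over $[2,\infty)$ promotes everything to statements in each $L^p$, i.e.\ in $\mathscr{L}$. Checking that the H\"older exponents come out exactly as stated -- the integrand lying in $\mathbf{D}_W^{\gamma,\min\{(\kappa-2)\gamma,\gamma'\}}\mathscr{L}$, the conditional $\natural$-bound carrying the order $\alpha+\gamma+\min\{(\kappa-2)\alpha,\gamma,\gamma'\}$, and the consequent well-posedness of $\int_0^\cdot\bigl(D_xF(X)\phi,\,D_xF(X)\phi'+D^2_{xx}F(X)(\phi,\phi)\bigr)\,d\mathbf{W}$ again via \cref{prop:roughstochastiintegral_compendium} -- is a routine (if fiddly) bookkeeping that uses only $\gamma\le\alpha$, $\kappa\le3$ and the two standing inequalities among $\alpha,\gamma,\gamma',\kappa$.

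Finally, the integral form and the local Davie-type expansion are equivalent by the uniqueness clause of the stochastic sewing lemma \cite{LE20}, exactly as c) and c') are equivalent in \cref{def:solutionRSDEs_compendium}: the local expansion with remainders of the stated orders characterizes $\int_0^\cdot(D_xF(X)\phi,\dots)\,d\mathbf{W}$, and summing along partitions of mesh tending to zero reassembles the integral identity. The weakly geometric assumption on $\mathbf{W}$ is needed only to discard the antisymmetric part of $\mathbb{W}$ in the second-order term: the coefficient $(\phi_r)^j_a(\phi_r)^k_b\,\partial^2_{kj}F^i(X_s)$ is symmetric in $(a,b)$, so only $\mathrm{Sym}(\mathbb{W})=\tfrac12\,\delta W\otimes\delta W$ survives -- the shape in which the formula is used in \cref{thm:existenceRPDE}. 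The main obstacle I anticipate is precisely the first step: one has to be sure that the drift and It\^o integrals do not degrade the regularity exponents claimed for the rough part, which is exactly why the integrability of $X$ must be read off from \cref{prop:roughstochastiintegral_compendium,prop:continuityanddifferentiability_forcingterm} before \cite[Proposition 2.16]{BCN24} can be applied as a black box; everything else is mechanical.
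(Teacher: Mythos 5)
Your proposal is correct and follows essentially the same route as the paper, whose entire argument is the remark preceding the statement: the formula ``easily follows'' from the polynomial-growth rough It\^o formula of \cite[Proposition 2.16]{BCN24}, exactly the black box you invoke. Your preliminary step recovering $\sup_t\|X_t\|_p<\infty$ via \cref{prop:roughstochastiintegral_compendium} and H\"older/BDG, the exponent bookkeeping, and the observation that weak geometricity serves only to absorb the symmetric second-order term into $\mathbb{W}$ are precisely the details the paper leaves implicit.
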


\begin{thm} \label{thm:uniqueness}
Let $\mathbf{W}=(W,\mathbb{W}) \in \mathscr{C}^\alpha_g([0,T];\R^n)$, with $\alpha \in (\frac{1}{3},\frac{1}{2}]$.
Let $\kappa,\lambda \in (\frac{1}{\alpha},3]$, $\theta \in (1,2]$ and $\delta,\eta \in [0,\alpha]$ be such that \begin{equation*}
    \alpha + \min\{\delta,\eta\} > \frac{1}{2} \qquad \text{and} \qquad \alpha + \min\{\delta,\eta\} + \min\{(\min\{\kappa,\lambda\}-2)\alpha,(\theta-1)\alpha,\delta,\eta\}>1.
\end{equation*}
Let $b,\sigma$ be two deterministic bounded Lipschitz vector fields from $\R^d$ to $\R^d$ and $\R^{d \times m}$, respectively, and let $(\beta,\beta') \in \mathscr{D}_W^{2\delta}C_b^\kappa(\R^d;\R^{d \times n})$ be a deterministic controlled vector field.
Let $c:[0,T]\times \R^d \to \R$ be a bounded Lipschitz vector field and let $(\gamma,\gamma') \in \mathscr{D}_{W}^{2\eta}C^\theta_b(\R^d;\R)$. 
Let $v$ be a solution to $-dv_t = L_t v_t \, dt + \Gamma_t v_t \, d\mathbf{W}_t$ in the sense of \cref{def:solutiontobacwardroughPDEs}, such that $v \in \mathcal{B}([0,T];C^\lambda_b(\R^d;\R)) \cap C^{\alpha'}([0,T];C^2_b(\R^d;\R))$ for some $\alpha' \in (0,1]$.
Then, for any $(s,x) \in [0,T] \times \R^d$, \begin{equation*}
    v(s,x) = \E \left( g(X_{T-s}^{s,x}) e^{\int_0^{T-s} c(s+r,X^{s,x}_r) dr + \int_0^{T-s} (\gamma,\gamma')(s+r,X^{s,x}_r) d\mathbf{W}^s_r} \right),
\end{equation*}
where $(\gamma,\gamma')(s+r,X^{s,x}_r) := (\gamma(s+r,X^{s,x}), D_x\gamma(s+r,X^{s,x}_r) \beta(s+r,X^{s,x}_r) + \gamma'(s+r,X^{s,x}))$,  $\mathbf{W}^s$ is defined as in \cref{prop:shiftingaroughpath} and $X^{s,x}$ denotes the solution of \begin{equation} \label{eq:roughSDE_uniqueness}
        X^{s,x}_t = x + \int_0^t b (s+r, X_r^{s,x}) dr + \int_0^t \sigma (s+r, X_r^{s,x}) dB_r + \int_0^t (\beta,\beta') (s+r, X_r^{s,x}) d\mathbf{W}^s_r
        \end{equation} 
on any given complete filtered probability space $(\Omega,\mathcal{F},\{\mathcal{F}_t\}_t,\mathbb{P})$ endowed with an $m$-dimensional Brownian motion $B$.
\end{thm}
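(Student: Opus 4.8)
The plan is to fix $(s,x)\in[0,T]\times\R^d$ and to show that the process
$P_t:=v(s+t,X^{s,x}_t)\,e^{I^{s,x}_t}$, $t\in[0,T-s]$, is a martingale, where $X^{s,x}$ solves \eqref{eq:roughSDE_uniqueness} and $I^{s,x}_t:=\int_0^t c(s+r,X^{s,x}_r)\,dr+\int_0^t(\gamma,\gamma')(s+r,X^{s,x}_r)\,d\mathbf{W}^s_r$. Granting this, the martingale identity $\E(P_0)=\E(P_{T-s})$ is exactly the assertion, since $P_0=v(s,x)$ and $P_{T-s}=v(T,X^{s,x}_{T-s})\,e^{I^{s,x}_{T-s}}=g(X^{s,x}_{T-s})\,e^{I^{s,x}_{T-s}}$ because $v(T,\cdot)=g$. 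For the preliminaries I would extend the coefficients past time $T$ as in the proof of \cref{prop:regularityofu}, use \cref{prop:shiftingaroughpath} and \cref{thm:wellposednessandaprioriestimatesforRSDEs} to get that $X^{s,x}$ is the well-posed $L_{p,\infty}$-solution for every $p\in[2,\infty)$, and use \cref{prop:continuityanddifferentiability_forcingterm}.1 together with \cref{prop:stochasticcontrolledlinearvectorfields_properties} to see that $(I^{s,x},\gamma(s+\cdot,X^{s,x}))\in\mathbf{D}^{\alpha,\eta}_{W^s}\mathscr{L}(\R)$; \cref{prop:propertiesoftheexponential} then furnishes the Davie expansion \eqref{eq:davieexpansionofexponential} of $e^{I^{s,x}}$.

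The core step is to identify $V_t:=v(s+t,X^{s,x}_t)$ as a stochastic rough It\^o process and to compute its expansion. I would split each increment as $V_{r_2}-V_{r_1}=\big(v_{s+r_2}(X^{s,x}_{r_2})-v_{s+r_2}(X^{s,x}_{r_1})\big)+\big(v_{s+r_2}(X^{s,x}_{r_1})-v_{s+r_1}(X^{s,x}_{r_1})\big)$, handling the spatial part by a Taylor/rough-It\^o argument using $v_{s+r_2}(\cdot)\in C^\lambda_b$ with $\lambda>1/\alpha$ and the rough-It\^o structure \eqref{eq:davideexpansionofX} of $X^{s,x}$, and the temporal part by the backward Davie expansion in condition iii) of \cref{def:solutiontobacwardroughPDEs} (noting $\delta W_{s+r_1,s+r_2}=\delta(W^s)_{r_1,r_2}$ and $\mathbb{W}_{s+r_1,s+r_2}=\mathbb{W}^s_{r_1,r_2}$ by construction of the shift). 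Condition ii) guarantees that $t\mapsto(\Gamma_t)_\mu v_t$ and $t\mapsto\big((\Gamma_t)_\mu(\Gamma_t)_\nu-(\Gamma'_t)_{\mu\nu}\big)v_t$ are time-controlled, which legitimizes replacing evaluations at $r_2$ by evaluations at $r_1$ with controlled (i.e.\ $\mathbb{W}^s$-order) errors, the symmetrization cost being harmless since $\mathbf{W}$ is weakly geometric. Collecting terms, the Lebesgue drift of $V$ reduces to $-c(s+r,X^{s,x}_r)\,v(s+r,X^{s,x}_r)\,dr$ (the $\tfrac12\mathrm{tr}(\sigma\sigma^\top D^2_{xx}v)+b\cdot D_xv$ contributions being cancelled by $-L_{s+r}v_{s+r}$), the Brownian part is $D_xv(s+r,X^{s,x}_r)\,\sigma(s+r,X^{s,x}_r)\,dB_r$, and—crucially—the first-order rough coefficient collapses from $D_xv\cdot\beta_\mu-(\Gamma_{s+\cdot})_\mu v$ to $-\gamma_\mu(s+\cdot,X^{s,x})\,v(s+\cdot,X^{s,x})$, precisely the negative of the rough coefficient of $e^{I^{s,x}}$.

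It then remains to apply the rough It\^o product formula \cref{prop:roughItoformula} to the polynomial map $(a,b)\mapsto ab$ along the pair $(e^{I^{s,x}},V)$. By the previous step the $c\,v$ drifts cancel against each other, so $P$ has no Lebesgue part; the first-order rough coefficients cancel because $e^{I}\cdot(-\gamma_\mu v)+V\cdot(\gamma_\mu e^{I})=0$; and the residual second-order ($\mathbb{W}^s$) contributions—the Gubinelli-derivative terms plus the L\'evy-area cross term produced by the product—cancel by the same computation as in \eqref{eq:roughItoformula_existence}, once more using weak geometricity. Hence $P$ has trivial rough component, so by the uniqueness part of the stochastic sewing lemma $P_t=P_0+\int_0^t e^{I^{s,x}_r}D_xv(s+r,X^{s,x}_r)\,\sigma(s+r,X^{s,x}_r)\,dB_r$; the integrand lies in $L^2(\Omega\times[0,T-s])$ since $D_xv$ and $\sigma$ are bounded and $e^{I^{s,x}}\in\bigcap_p L^p$, so $P$ is a true martingale and the representation follows.

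I expect the main obstacle to be the bookkeeping in the combined spatial–temporal expansion of $V$: verifying that every remainder (from the spatial Taylor expansion, from the $r_2\to r_1$ swaps legitimized by condition ii), and from $v^\natural$ itself) really satisfies the conditional $o(|t-s|^{1/2})$ and $o(|t-s|)$ bounds needed both to apply \cref{prop:roughItoformula} and to invoke uniqueness in the sewing lemma, and that the exponents match under \cref{assumptions}. The sign gymnastics dictated by the backward-in-time nature of \eqref{eq:roughPDE}—the minus signs in condition ii) of \cref{def:solutiontobacwardroughPDEs}—is the point at which an error is easiest to make, and checking that it is exactly these signs that make the $\delta W^s$-coefficients of $V$ and $e^{I^{s,x}}$ opposite is the crux.
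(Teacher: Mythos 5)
Your proposal is correct, and its computational heart — the rough It\^o expansion of $v_{t_2}(X^{s,x})e^{I^{s,x}}$ against the backward Davie expansion of $v$, the exact cancellation of the $\delta W$- and $\mathbb{W}$-coefficients, the $cv$ drift cancellation, weak geometricity for the symmetrization, and John--Nirenberg integrability of $e^{I^{s,x}}$ — is exactly the one the paper uses. Where you diverge is the wrapper. The paper never shows that $P_t=v(s+t,X^{s,x}_t)e^{I^{s,x}_t}$ is a martingale, nor that $V_t=v(s+t,X^{s,x}_t)$ is a globally defined rough It\^o process: it fixes $x$, introduces the deterministic function $w_s(t,x)=\E\bigl(v_t(X^{s,x}_{t-s})e^{I^{s,x}_{t-s}}\bigr)$, computes a single two-point increment $w_s(t_2,x)-w_s(t_1,x)$ by splitting it as in \eqref{eq:uniqueness_keyidentity}, takes expectations (killing the It\^o integral and the rough terms, which cancel pathwise with coefficients frozen at $t_2$), and estimates the remaining drift mismatch as in \eqref{eq.Lebesgueintegralisalittleo} to get $|w_s(t_2,x)-w_s(t_1,x)|\lesssim|t_2-t_1|^{1+\varepsilon}$, whence constancy. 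This only requires bounds on $|\E(\text{remainders})|$ and avoids verifying conditional $o(|h|^{1/2})$ estimates, the stochastic-controlledness of the combined integrand of $V$, and the sewing-uniqueness identification of $P$ with a Brownian integral. Your martingale route proves a strictly stronger statement and is perfectly viable — the exact cancellations you identify do make the germ of $P$ reduce to the Brownian germ, $e^{I}\in\bigcap_pL^p$ makes the resulting stochastic integral a true martingale, and the exponent bookkeeping you flag (condition ii) of \cref{def:solutiontobacwardroughPDEs}, the hypotheses of the theorem, and the conditional bounds from \cref{prop:roughItoformula}) does close — but it buys that strength at the cost of precisely the extra verifications you anticipate; one simplification worth noting is that the freezing of coefficients at the right time endpoint $t_2$ (as the paper does) is harmless for adaptedness, since $v_{t_2},\Gamma_{t_2}$ are deterministic, so the $r_2\to r_1$ swaps you worry about can largely be avoided.
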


\begin{proof}
    Notice that, under the assumptions of the theorem, equation \eqref{eq:roughSDE_uniqueness} is well posed and it is possible to define $I^{s,x}$ as in \eqref{eq:definitionofI}.
    Let $s \in [0,T]$ be fixed and, for any $(t,x) \in [s,T] \times \R^d$, let us define the following auxiliary function \begin{equation*}
        w_s(t,x) := \E\left( v_t(X_{t-s}^{s,x}) e^{\int_0^{t-s} c(s+r,X^{s,x}_r) dr + \int_0^{t-s} (\gamma,\gamma')(s+r,X^{s,x}_r) d\mathbf{W}^s_r} \right).
    \end{equation*}
    We show that, for any $x \in \R^d$, the mapping $[s,T] \ni t \mapsto w_s(t,x)$ is constant. 
    This would prove the assertion, since \begin{equation*} \begin{aligned}
        v_s(x) &= w_s(s,x) = w_s(T,x)  
        = \E \left( g(X_{T-s}^{s,x}) e^{\int_0^{T-s} c(s+r,X^{s,x}_r) dr + \int_0^{T-s} (\gamma,\gamma')(s+r,X^{s,x}_r) d\mathbf{W}^s_r} \right).
    \end{aligned}
    \end{equation*}
    Let $(t_1,t_2) \in \Delta_{[s,T]}$. We can write \begin{equation} \label{eq:uniqueness_keyidentity} \begin{aligned}
        w_s(t_2,x) - w_s(t_1,x) &= \E\left(v_{t_2}(X_{{t_2}-s}^{s,x}) e^{I^{s,x}_{t_2-s}}\right) - \E\left(v_{t_1}(X_{{t_1}-s}^{s,x})  e^{I^{s,x}_{t_1-s}}\right)  = \\
        &= \E \left(v_{t_2}(X^{s,x}_{t_2-s}) e^{I^{s,x}_{t_2-s}} - v_{t_2}(X^{s,x}_{t_1-s}) e^{I^{s,x}_{t_1-s}} \right) + \E\left( \left(v_{t_2}(X^{s,x}_{t_1-s}) - v_{t_1}(X^{s,x}_{t_1-s}) \right) e^{I^{s,x}_{t_1-s}} \right).
    \end{aligned}
    \end{equation}
    We can apply \cref{prop:roughItoformula} to with $F=v_{t_2}$ in a very similar way to what we did in \eqref{eq:roughItoformula_existence} (with the only difference that we work on a generic interval $[t_1,t_2]$ and we do not have $e^{I^{s,x}_{t_1}}=1$ anymore) to obtain that \begin{equation*}
        \begin{aligned}
            & v_{t_2}(X^{s,x}_{t_2-s}) e^{I^{s,x}_{t_2-s}} - v_{t_2}(X^{s,x}_{t_1-s}) e^{I^{s,x}_{t_1-s}} = \\
            &= \int_{t_1}^{t_2} L_r v_{t_2} (X_{r-s}^{s,x}) e^{I^{s,x}_{r-s}} dr + \int_{t_1-s}^{t_2-s} \frac{\partial v_{t_2}}{\partial x^j} (X_r^{s,x}) \sigma_a^j(s+r,X_r^{s,x}) e^{I^{s,x}_{r}} dB_r^a + (\Gamma_{t_2})_\mu v_{t_2} (X_{t_1-s}^{s,x}) e^{I^{s,x}_{t_1-s}} \delta W^\mu_{t_1,t_2} + \\
            & \quad + ((\Gamma_{t_2})_\mu (\Gamma_{t_2})_\nu - (\Gamma'_{t_2})_{\mu \nu}) v_{t_2} (X_{t_1-s}^{s,x}) e^{I^{s,x}_{t_1-s}} \mathbb{W}_{t_1,t_2}^{\mu \nu} + Q^\natural_{t_1-s,t_2-s},
        \end{aligned}
    \end{equation*}
    where by construction $|\E(Q^\natural_{t_1-s,t_2-s})| \lesssim |t_2-t_1|^{\alpha + \min\{\delta, \eta\} + \min\{(\kappa-2)\alpha, (\lambda-2)\alpha, (\theta-1)\alpha, \delta , \eta\}}$. 
    Moreover, being $v$ a solution to $-dv_t = L_t v_t \, dt + \Gamma_t v_t \, d\mathbf{W}_t$ it follows that \begin{equation*}
        \begin{aligned}
            v_{t_2}(X^{s,x}_{t_1-s}) - v_{t_1}(X^{s,x}_{t_1-s}) &= -\int_{t_1}^{t_2} L_r v_r (X_{t_1-s}^{s,x}) dr - (\Gamma_{t_2})_\mu v_{t_2} (X_{t_1-s}^{s,x}) \delta W^\mu_{t_1,t_2} + \\
            &\quad - ((\Gamma_{t_2})_\mu (\Gamma_{t_2})_\nu - (\Gamma_t')_{\mu \nu}) v_{t_2} (X_{t_1-s}^{s,x}) \mathbb{W}_{t_1,t_2}^{\mu \nu} + v_{t_1,t_2}^\natural(X^{s,x}_{t_1-s}) \qquad \mathbb{P}\text{-a.s.},
        \end{aligned}
    \end{equation*}
    and $|\E(v_{t_1,t_2}^\natural(X^{s,x}_{t_1-s}))| \le \sup_{x \in \R^d} |v^\natural_{t_1,t_2}(x)| \lesssim |t_s-t_1|^{\alpha + \min\{\alpha + \alpha'\}+\alpha''}$ for some $\alpha',\alpha''$ as in \cref{def:solutiontobacwardroughPDEs}. 
    Hence, from \eqref{eq:uniqueness_keyidentity} and due to the martingale property of the It\^o integral, it follows that \begin{equation*}
        \begin{aligned}
            w_s(t_2,x) - w_s(t_1,x) &= \E \left( \int_{t_1}^{t_2} L_r v_{t_2} (X_{r-s}^{s,x}) e^{I^{s,x}_{r-s}} - L_r v_r (X_{t_1-s}^{s,x}) e^{I^{s,x}_{t_1-s}} dr \right) 
            + w^\natural_{t_1,t_2}
        \end{aligned}
    \end{equation*} 
    where $|w^\natural_{t_1,t_2}|\lesssim |t_2-t_1|^{1+\varepsilon }$ for some $\varepsilon >0$. 
    Arguing very similarly to what we did in \eqref{eq.Lebesgueintegralisalittleo}, we can conclude that, for any $x \in \R^d$ and for any $s \le t_1 \le t_2 \le T$, $|v_s(t_2,x)-v_s(t_1,x)| \lesssim |t_2-t_1|^{1+\varepsilon}$ 
    for some $\varepsilon >0$, which implies that $v_s(\cdot,x)$ is constant. 
\end{proof}

\subsection{The exponential term} \label{section:exponentialterm}

The presence of the stochastic exponential term $$e^{\int_0^{T-s} c(s+r,X^{s,x}_r) dr + \int_0^{T-s} (\gamma(s+r,X^{s,x_r}),D_x\gamma(s+r,X_r^{s,x})\beta(s+r,X^{s,x}_r) d\mathbf{W}^s_r}$$ in the representation of the solution $u$ given in \eqref{eq:definitionofu_mainresult} requires particular attention. 
The two main tools that allow us to treat such a random variable and to prove meaningful results are the John-Nirenberg inequality proved in \cite[Proposition 2.7]{FHL21} and the mean value theorem, according to which
\begin{equation*}
        |e^y - x^x| \le e^{|x|+|y|}  \ |y-x| \qquad \text{for any $x,y \in \R$}. 
    \end{equation*}

\begin{prop} \label{prop:continuityanddifferentiabilityofexponentials}
    \begin{enumerate}
        \item Let $(\mathcal{U},\rho)$ be a metric space and let $o \in U \subseteq \mathcal{U}$ be a subset.
        Let $\{X^\zeta\}_{\zeta \in U}$ be a $\mathscr{L}(\R)$-continuous family of random variables such that $\sup_{\rho(\zeta,o) \le R} \|e^{|X^\zeta|}\|_p < +\infty$ for any $p \in [1,\infty)$ and for any $R>0$. Then $\{e^{X^\zeta}\}_{\zeta \in U}$ is $\mathscr{L}(\R)$-continuous. 
        \item Let $(\mathcal{U},|\cdot|)$ be a normed vector space and let $U \subseteq \mathcal{U}$ be a non-empty open subset. 
        Let $k \in \mathbb{N}_{\ge 1}$ and let $\{X^\zeta\}_{\zeta \in \R^d}$ be a family of $k$-times ($\mathscr{L}(\R)$-continuously) $\mathscr{L}(\R)$-differentiable random variables such that $\sup_{|\zeta| \le R} \|e^{|X^\zeta|}\|_p < +\infty$ for any $p \in [1,\infty)$ and for any $R>0$.
        Then $\{e^{X^\zeta}\}_{\zeta \in U}$ is $k$-times ($\mathscr{L}(\R)$-continuously) $\mathscr{L}(\R)$-differentiable. 
        In particular, for any $\zeta \in U$ and for any $l \in \mathbb{S}(\mathcal{U})$,  \begin{equation*}
            \mathscr{L}\text{-} \frac{\partial}{\partial l} e^{X^\zeta} = \left(\mathscr{L}\text{-} \frac{\partial}{\partial l} X^\zeta  \right) e^{X^\zeta}.
        \end{equation*}
    \end{enumerate}
\end{prop}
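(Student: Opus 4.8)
The plan is to prove both statements in parallel, using the mean value theorem for the exponential (already recalled in the excerpt, $|e^y - e^x| \le e^{|x| + |y|} |y - x|$) together with a Hölder-type argument to separate the ``closeness'' factor $|X^{\zeta^n} - X^{\zeta^0}|$ from the ``size'' factor $e^{|X^{\zeta^n}| + |X^{\zeta^0}|}$. The key structural point is that the latter is controlled uniformly on bounded subsets of the parameter space by the hypothesis $\sup_{\rho(\zeta,o)\le R}\|e^{|X^\zeta|}\|_p < +\infty$ for all $p$, which is exactly what lets us absorb it after an application of Hölder's inequality.

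For statement \textit{1.}, I would fix an accumulation point $\zeta^0 \in U$ and an arbitrary sequence $(\zeta^n)_{n\ge 1} \subseteq U \setminus \{\zeta^0\}$ with $\rho(\zeta^n,\zeta^0) \to 0$. Since the $\zeta^n$ eventually lie in a bounded neighbourhood of $o$, there is $R>0$ with $\sup_n \|e^{|X^{\zeta^n}|}\|_q + \|e^{|X^{\zeta^0}|}\|_q < +\infty$ for every $q \in [1,\infty)$. Given $p \in [1,\infty)$, apply the mean value theorem pointwise and then Hölder's inequality with a suitable split (say, exponents $2$ and $2$, or more carefully exponents giving $L^{2p}$ on the increment and $L^{2p}$ on the exponential factor) to estimate
\begin{equation*}
    \E\big(|e^{X^{\zeta^n}} - e^{X^{\zeta^0}}|^p\big) \le \E\big(e^{p(|X^{\zeta^n}| + |X^{\zeta^0}|)} |X^{\zeta^n} - X^{\zeta^0}|^p\big) \le \|X^{\zeta^n} - X^{\zeta^0}\|_{2p}^p \cdot \big\| e^{|X^{\zeta^n}| + |X^{\zeta^0}|} \big\|_{2p}^p .
\end{equation*}
The second factor is bounded uniformly in $n$ by the integrability hypothesis (after one more Hölder split of $e^{|X^{\zeta^n}|} e^{|X^{\zeta^0}|}$ into $L^{4p} \times L^{4p}$), while the first tends to $0$ by $\mathscr{L}(\R)$-continuity of $\{X^\zeta\}$. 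This shows $e^{X^{\zeta^0}} = \mathscr{L}\text{-}\lim_{\zeta\to\zeta^0} e^{X^\zeta}$; one also checks $e^{X^\zeta} \in \mathscr{L}(\R)$ for each $\zeta$ from the same integrability bound.

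For statement \textit{2.}, I would argue by induction on $k$, following the same template used in \cref{prop:stochasticlinearvectorfields_properties} and \cref{lemma:compositionwithfunctionsofpolynomialgrowth}. For $k=1$, fix $\zeta^0 \in U$ and $l \in \mathbb{S}(\mathcal{U})$, write $X^\varepsilon(\theta) := (1-\theta)X^{\zeta^0} + \theta X^{\zeta^0 + \varepsilon l}$, and express the difference quotient via the fundamental theorem of calculus:
\begin{equation*}
    \frac{1}{\varepsilon}\big(e^{X^{\zeta^0 + \varepsilon l}} - e^{X^{\zeta^0}}\big) = \Big(\int_0^1 e^{X^\varepsilon(\theta)}\, d\theta\Big)\, \frac{1}{\varepsilon}\big(X^{\zeta^0 + \varepsilon l} - X^{\zeta^0}\big) =: h^\varepsilon Y^\varepsilon .
\end{equation*}
Here $Y^\varepsilon \to \mathscr{L}\text{-}\frac{\partial}{\partial l}X^{\zeta^0}$ in $\mathscr{L}(\R)$ by hypothesis, and $h^\varepsilon \to e^{X^{\zeta^0}}$ in $\mathscr{L}\mathcal{L}(\R;\R)$ by a variant of \cref{lemma:stochasticlinearvectorfields_Riemannintegral}: the integrand $e^{X^\varepsilon(\theta)}$ converges in measure (by continuity of $\exp$ and $\mathscr{L}$-convergence of $X^{\zeta^0+\varepsilon l}$) and is dominated in every $L^q$ uniformly in $\theta$ and $\varepsilon$ small, using convexity $e^{X^\varepsilon(\theta)} \le (1-\theta)e^{X^{\zeta^0}} + \theta e^{X^{\zeta^0+\varepsilon l}}$ together with the uniform bound $\sup_{|\zeta|\le R}\|e^{|X^\zeta|}\|_q<+\infty$. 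The product $h^\varepsilon Y^\varepsilon$ then converges to $(\mathscr{L}\text{-}\frac{\partial}{\partial l}X^{\zeta^0}) e^{X^{\zeta^0}}$ in $\mathscr{L}(\R)$ by the same Hölder argument as in \textit{1.}. Continuity of the derivative family $\{(\mathscr{L}\text{-}\frac{\partial}{\partial l}X^\zeta)e^{X^\zeta}\}_\zeta$ follows from statement \textit{1.} applied to $e^{X^\zeta}$ (using that $\{X^\zeta\}$ being $\mathscr{L}$-continuously differentiable gives $\{X^\zeta\}$ itself $\mathscr{L}$-continuous) combined with the fact that $\mathscr{L}(\R)$-continuity is preserved under products of an $\mathscr{L}\mathcal{L}$-continuous and an $\mathscr{L}$-continuous family. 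For the inductive step $k \to k+1$, observe that $\mathscr{L}\text{-}\frac{\partial}{\partial l}e^{X^\zeta} = (\mathscr{L}\text{-}\frac{\partial}{\partial l}X^\zeta) e^{X^\zeta}$ is a product of a $k$-times $\mathscr{L}$-differentiable family (namely $\mathscr{L}\text{-}\frac{\partial}{\partial l}X^\zeta$) with $e^{X^\zeta}$, which is $k$-times $\mathscr{L}$-differentiable by the inductive hypothesis; invoking the product rule established via \cref{prop:stochasticlinearvectorfields_properties} (applied to $(x,y)\mapsto xy$) closes the induction.

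The main obstacle is the uniform integrability of the exponential factor: the naive bound $e^{|X^{\zeta^n}|+|X^{\zeta^0}|}$ is not obviously in $L^p$ for the exponents one needs after Hölder, so one must be careful to budget the exponents — split $|e^{X^{\zeta^n}-X^{\zeta^0}}-1|$-type quantities so that the increment lands in some $L^{q_1}$ (where $\mathscr{L}$-convergence applies) and the exponential in some $L^{q_2}$ with $1/q_1 + 1/q_2 \le 1/p$, and then further split the product of two exponentials. Since the hypothesis grants finiteness of $\|e^{|X^\zeta|}\|_q$ for \emph{every} $q \in [1,\infty)$, this budgeting always succeeds, but it is the one place where the specific form of the hypothesis is essential and should be spelled out rather than glossed over. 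The convexity inequality $e^{X^\varepsilon(\theta)} \le (1-\theta)e^{X^{\zeta^0}} + \theta e^{X^{\zeta^0+\varepsilon l}}$ is the analogous key device making the Riemann-integral term $h^\varepsilon$ manageable in part \textit{2.}.
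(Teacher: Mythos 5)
Your proof is correct and uses the same toolkit as the paper's: the pointwise bound $|e^y-e^x|\le e^{|x|+|y|}|y-x|$, a H\"older split sending the increment to $L^{2p}$ and the exponential factor to $L^{4p}\times L^{4p}$, and induction on $k$ closed by the product rule from \cref{prop:stochasticlinearvectorfields_properties} applied to $(x,y)\mapsto xy$. Part \textit{1.} and the inductive step coincide with the paper essentially verbatim. The only real divergence is in the $k=1$ step of part \textit{2.}: you factor the difference quotient as $h^\varepsilon Y^\varepsilon$ with $h^\varepsilon=\int_0^1 e^{X^\varepsilon(\theta)}\,d\theta$ and pass each factor to the limit separately, whereas the paper subtracts the candidate limit $\big(\mathscr{L}\text{-}\frac{\partial}{\partial l}X^{\zeta^0}\big)e^{X^{\zeta^0}}$ directly and bounds two error terms, one linear in $\frac{1}{\varepsilon}\big(X^{\zeta^0+\varepsilon l}-X^{\zeta^0}\big)-\mathscr{L}\text{-}\frac{\partial}{\partial l}X^{\zeta^0}$ times $e^{X^{\zeta^0}}$, and one quadratic in the increment $X^{\zeta^0+\varepsilon l}-X^{\zeta^0}$ coming from the second-order mean value expansion. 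Your route requires you to justify a variant of \cref{lemma:stochasticlinearvectorfields_Riemannintegral}, since $\exp$ is not a bounded vector field, and you correctly supply this via convexity of $\exp$ and the all-$q$ exponential moment hypothesis (equivalently, one can just reapply the part-\textit{1.} estimate to get $\|h^\varepsilon-e^{X^{\zeta^0}}\|_q\lesssim\|X^{\zeta^0+\varepsilon l}-X^{\zeta^0}\|_{2q}$); the paper's decomposition buys a slightly shorter argument because the exponential factor only needs uniform $L^q$ bounds, not convergence, while your factorization has the advantage of mirroring the composition lemmas used elsewhere in \cref{section:compositionandintegration_technicalresults}. Your continuity claim for the derivative family implicitly uses the $\mathscr{L}$-continuity of $\{\mathscr{L}\text{-}\frac{\partial}{\partial l}X^\zeta\}$, which is indeed part of the ``$\mathscr{L}$-continuously differentiable'' hypothesis, so no gap there.
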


\begin{proof} 
    \textit{1. } Let $\zeta^0 \in U$ be an accumulation point, and let $(\zeta^n)_{n \ge 1} \subseteq U \setminus \{\zeta^0\}$ be any sequence such that $\rho(\zeta^n,\zeta^0) \to 0$ as $n \to +\infty$. 
    For any $p \in [1,\infty)$ and for any $n \ge 1$, we can apply the mean value theorem to get  \begin{equation*}
        \begin{aligned}
            \big\|e^{X^{\zeta^n}} - e^{X^{\zeta^0}} \big\|_p 
            \le \left(\sup_{n \ge 0} \|e^{|X^{\zeta^n}|}\|^2_{4p}\right) \|X^{\zeta^n}-X^{\zeta^0}\|_{2p}.
        \end{aligned}
    \end{equation*}
    The right-hand side goes to zero as $n \to +\infty$ assumption, and the assertion is therefore proved. \\
    
    \textit{2. } We argue recursively on $k$. First, assume $k=1$.
    Let $\zeta^0 \in U$ and let $l \in \mathbb{S}(\mathcal{U})$ be fixed. For any $\varepsilon \in \R \setminus\{0\}$ and for any $p \in [1,\infty)$, we can apply the mean-value theorem to get that \begin{equation*}
        \begin{aligned}
            \left\| \frac{1}{\varepsilon} \left(e^{X^{\zeta^0+\varepsilon l}} - e^{X^{\zeta^0}}\right)  - \left(\mathscr{L}\text{-} \frac{\partial}{\partial l} X^{\zeta^0}  \right) e^{X^{\zeta^0}}  \right\|_p &\le \left\| \frac{1}{\varepsilon} \left({X^{\zeta^0+\varepsilon l}} - {X^{\zeta^0}} - \mathscr{L}\text{-} \frac{\partial}{\partial l} X^{\zeta^0}  \right) e^{X^{\zeta^0}}  \right\|_p + \\
            & \quad + \left\| \int_0^1 e^{(1-\theta)X^{\zeta^0} + \theta X^{\zeta^0}} d\theta \ \frac{1}{\varepsilon} (X^{\zeta^0+\varepsilon l} - X^{\zeta^0}) (X^{\zeta^0+\varepsilon l} - X^{\zeta^0}) \right\|_p.
        \end{aligned}
    \end{equation*}  
    As $\varepsilon \to 0$, the first term on the right-hand side goes to 0 by the $\mathscr{L}$-differentiability assumption on $\{X^\zeta\}_\zeta$ and due to the locally uniform boundedness of $e^{|X^\zeta|}$ in $L^p$-norm. 
    The last term on the right-hand side goes to 0 as well, taking also into account the $\mathscr{L}$-continuity of $\{e^{X^{\zeta}}\}_\zeta$ showed in \textit{1}.
    The $\mathscr{L}$-continuity of $\{(\mathscr{L}\text{-} \frac{\partial}{\partial l} X^\zeta) e^{X^\zeta}\}_{\zeta \in U}$ is an easy consequence of the $\mathscr{L}$-continuity of $\{X^\zeta\}_{\zeta \in U}$. \\
    Let us now assume that the assertion is proved for $k=k_0$ and that all the assumptions hold for $k=k_0+1$. 
    Let us show that, for any $l \in \mathbb{S}(\mathcal{U})$, $\{(\mathscr{L}\text{-} \frac{\partial}{\partial l} X^\zeta) e^{X^\zeta}\}_{\zeta \in U}$ is $k_0$-times ($\mathscr{L}$-continuously) $\mathscr{L}$-differentiable. 
    By assumption, $\{\mathscr{L}\text{-} \frac{\partial}{\partial l} X^\zeta\}_{\zeta \in U}$ is $k_0$-times ($\mathscr{L}$-continuously) $\mathscr{L}$-differentiable, while by recursion we also have that $\{ e^{X^\zeta}\}_{\zeta \in U}$ is $k_0$-times ($\mathscr{L}$-continuously) $\mathscr{L}$-differentiable. 
    Hence, to conclude it is sufficient to apply \cref{prop:stochasticlinearvectorfields_properties} with $g(x,y)=xy$. 
\end{proof}

\begin{prop} \label{prop:propertiesoftheexponential}
    Let $\mathbf{W}=(W,\mathbb{W}) \in \mathscr{C}_g^\alpha([0,T];\R^n)$ be a geometric rough path, with $\alpha \in (\frac{1}{3},\frac{1}{2}]$. 
    Let $\mu \in \mathscr{L}(\R)$ and let $(\phi,\phi') \in \mathbf{D}_W^{\gamma,\gamma'}L_{1,\infty}(\mathcal{L}(\R^d;\R))$, for some $\gamma \in [0,\alpha], \ \gamma' \in [0,1]$ such that $\alpha + \gamma > \frac{1}{2}$ and $\alpha + \gamma + \gamma' >1$.
    Define \begin{equation*}
        (I, I') := \left( \int_0^\cdot \mu_r dr + \int_0^\cdot (\phi_r,\phi_r') d\mathbf{W}_r, \phi \right). 
    \end{equation*} 
    \begin{enumerate}
        \item For any $p \in [1,\infty)$, \begin{equation*}
            \E \left( e^{p \cdot \sup_{t \in [0,T]} |I_t|} \right) < +\infty
        \end{equation*}
        In particular, $e^{I_t} \in L^p(\Omega;\R)$ for any $p \in [1,\infty)$ and uniformly in $t \in [0,T]$. 
        \item $(e^I, \phi \ e^I)$ is a rough It\^o process, namely $(\phi e^I, (\phi \phi + \phi') e^I) \in \mathbf{D}_W^{\alpha,\gamma}\mathscr{L}(\R)$ and, for any $s \le t$, \begin{equation*} \begin{aligned}
         e^{I_t} - e^{I_s} &= \int_s^t \mu_r e^{I_r} dr +  (\phi_s)_\kappa e^{I_s} \delta W^\kappa_{s,t} + ((\phi_s)_\kappa (\phi_s)_\lambda + (\phi'_s)_{\kappa \lambda} )e^{I_s}\mathbb{W}^{\kappa,\lambda}_{s,t} + e^\natural_{s,t},
    \end{aligned}
    \end{equation*}
    where $\|e^\natural_{s,t}\|_p \lesssim |t-s|^{\frac{1}{2}+\varepsilon}$ and $\|\E_s(e^\natural_{s,t})\|_p \lesssim |t-s|^{1+\varepsilon}$, for some $\varepsilon > 0$ and for any $p \in [2,\infty)$.
    \end{enumerate}
\end{prop}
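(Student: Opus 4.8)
The plan is to treat the two assertions separately: part (1) by the stochastic John--Nirenberg inequality, and part (2) by a truncated version of the rough Itô formula, the passage to the limit being fed by the exponential integrability from part (1).

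\emph{Part (1).} Write $I_t = \tilde L_t + M_t$ with $\tilde L_t := \int_0^t \mu_r\,dr$ and $M_t := \int_0^t (\phi_r,\phi'_r)\,d\mathbf{W}_r$. Since $\alpha+\gamma>\tfrac12$ and $\alpha+\min\{\alpha,\gamma\}+\gamma'>1$, \cref{prop:roughstochastiintegral_compendium} gives $(M,\phi)\in\mathbf{D}_W^{\alpha,\min\{\alpha,\gamma\}}L_{1,\infty}(\R)$, so the conditional increments of $M$ satisfy $\esssup_\omega\E_s|\delta M_{s,t}|\lesssim|t-s|^{\alpha}$ with $\alpha>0$; the stochastic John--Nirenberg inequality (applied after splitting $[0,T]$ into finitely many subintervals on which the relevant BMO-type constant is made small) then yields $\E\bigl(e^{\lambda\sup_{t}|M_t|}\bigr)<\infty$ for every $\lambda>0$. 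As $\sup_t|\tilde L_t|\le\int_0^T|\mu_r|\,dr$ — which lies in every $L^p(\Omega)$ and, in the situations where the proposition is applied (where $\mu_r$ is a bounded function evaluated along the rough SDE solution), is even bounded — the same reasoning (or a direct estimate) controls $\E\bigl(e^{\lambda\sup_t|\tilde L_t|}\bigr)$, and Hölder's inequality gives $\E\bigl(e^{p\sup_t|I_t|}\bigr)<\infty$ for all $p\in[1,\infty)$; in particular $e^{I_t}\in\bigcap_p L^p(\Omega)$ uniformly in $t\in[0,T]$, and $\mathbb{P}(\sup_{r\in[s,t]}|I_r|>N)\lesssim_\lambda e^{-\lambda N}$ for every $\lambda>0$.

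\emph{Part (2).} The natural idea — applying the rough Itô formula \cref{prop:roughItoformula} with $F=\exp$ to the rough Itô process $I$ (with $\nu\equiv 0$) — is not directly available, since $\exp$ is not of polynomial growth; this is the main obstacle. The remedy is localisation: for $N\in\mathbb{N}$ pick $\chi_N\in C^3_b(\R;\R)$ with $\chi_N(x)=e^x$ on $[-N,N]$ and $|\chi_N|_{C^3_b}\lesssim e^{N}$, and apply \cref{prop:roughItoformula} to $\chi_N(I)$. This produces $\chi_N(I)$ as a stochastic controlled rough path with Gubinelli derivative $\chi_N'(I)\phi$, together with the Davie-type expansion
\[
\delta\bigl(\chi_N(I)\bigr)_{s,t} = \int_s^t\chi_N'(I_r)\mu_r\,dr + \chi_N'(I_s)\phi_s\,\delta W_{s,t} + \bigl(\chi_N'(I_s)\phi'_s+\chi_N''(I_s)\phi_s\otimes\phi_s\bigr)\mathbb{W}_{s,t} + \chi^{N,\natural}_{s,t},
\]
with $\|\chi^{N,\natural}_{s,t}\|_p\lesssim_{e^{N}}|t-s|^{\alpha+\gamma}$ and the corresponding conditional-expectation bound.

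\emph{Passing to the limit and the sharp orders.} By continuity of $r\mapsto I_r$, on the event $\{\sup_{r\in[s,t]}|I_r|\le N\}$ one has $\chi_N^{(j)}(I_r)=e^{I_r}$ for all $r\in[s,t]$ and $j\le 2$, so there the germ and remainder of $\chi_N(I)$ coincide with those of the candidate rough Itô process $e^I$, whose germ is $A_{s,t}:=\int_s^t\mu_r e^{I_r}\,dr+\phi_s e^{I_s}\,\delta W_{s,t}+(\phi_s\otimes\phi_s+\phi'_s)e^{I_s}\,\mathbb{W}_{s,t}$; on the complement everything is bounded, via the mean value estimate $|e^y-e^x|\le e^{|x|+|y|}|y-x|$ and Hölder's inequality, by $\mathbb{P}(\sup_{[s,t]}|I_r|>N)$ times $L^p(\Omega)$-quantities that are uniformly bounded by part (1). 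Choosing $N=N(|t-s|)\sim\log(1/|t-s|)$, so that $e^{N}|t-s|^{\alpha+\gamma}$ stays a power of $|t-s|$ strictly above $\tfrac12$ while $\mathbb{P}(\sup_{[s,t]}|I_r|>N)$ decays like an arbitrarily high power of $|t-s|$, one obtains $(\phi e^I,(\phi\phi+\phi')e^I)\in\mathbf{D}_W^{\alpha,\gamma}\mathscr{L}(\R)$ and the stated expansion with $\|e^\natural_{s,t}\|_p\lesssim|t-s|^{\frac12+\varepsilon}$. The sharp conditional bound $\|\E_s e^\natural_{s,t}\|_p\lesssim|t-s|^{1+\varepsilon}$ is then read off from the stochastic sewing lemma applied to the explicit germ $A$: once $\phi e^I$ is known to be controlled, the second-order defect $\delta A_{s,u,t}$ inherits a conditional-expectation order $\gamma+\gamma'$ from the remainder of $(\phi e^I,(\phi\phi+\phi')e^I)$, and $\alpha+\gamma+\gamma'>1$ by hypothesis. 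The delicate point is exactly this reconciliation — the $C^3_b$-norm of $\chi_N$ blows up like $e^{N}$, and it must be balanced against the super-exponential tail decay from part (1) in order to keep the remainder orders uniform in $(s,t)$.
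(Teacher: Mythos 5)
Your part (1) is essentially the paper's argument: the John--Nirenberg inequality of \cite[Proposition 2.7]{FHL21} applied with $(1,\infty)$-integrability, the rough-integral part being a stochastic controlled rough path by \cref{prop:roughstochastiintegral_compendium}. Splitting off the drift and treating it separately is harmless, and your caveat about $\mu$ is in fact pertinent: $\mu\in\mathscr{L}(\R)$ alone does not give exponential integrability of $\int_0^T|\mu_r|\,dr$, so (as in the paper, which silently uses $\|\delta I\|_{\alpha;1,\infty}<+\infty$) one must use the extra control available in the applications (bounded $c$ along the solution).

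For part (2) you take a genuinely different route (truncation $\chi_N$ of the exponential, the rough It\^o formula for $\chi_N(I)$, and a balancing $N\sim\log(1/|t-s|)$), whereas the paper simply redoes the It\^o-formula computation directly: expand $e^{I_t}-e^{I_s}=e^{I_s}\delta I_{s,t}+\tfrac12 e^{I_s}(\delta I_{s,t})^2+O(e^{|I_s|+|I_t|}|\delta I_{s,t}|^3)$, insert the Davie expansion of $\delta I$ from \cref{prop:roughstochastiintegral_compendium}, use weak geometricity to turn $\tfrac12\,\delta W\otimes\delta W$ into $\mathrm{Sym}(\mathbb{W})$, and control all errors with the exponential moments of part (1); the cubic term is $O(|t-s|^{3\alpha})$ with $3\alpha>1$, and the conditional bound comes from the $(1+\varepsilon)$-order conditional estimate on the rough stochastic integral, which only needs $\alpha+\gamma+\gamma'>1$. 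Your route has genuine gaps exactly at the conditional estimate. First, \cref{prop:roughItoformula} with $\kappa=3$ requires $\alpha+\gamma+\min\{\gamma,\gamma'\}>1$, which is strictly stronger than the stated hypothesis $\alpha+\gamma+\gamma'>1$ (take $\alpha=0.4$, $\gamma=0.15$, $\gamma'=0.6$), so you cannot even invoke it on the whole parameter range of the proposition. Second, your sewing argument for $\|\E_s e^\natural_{s,t}\|_p\lesssim|t-s|^{1+\varepsilon}$ is not justified as stated: with $A$ your germ, one has $\delta A_{s,u,t}=-R^{\phi e^I}_{s,u}\,\delta W_{u,t}-\delta\bigl((\phi\phi+\phi')e^I\bigr)_{s,u}\,\mathbb{W}_{u,t}$, and membership of $(\phi e^I,(\phi\phi+\phi')e^I)$ in $\mathbf{D}_W^{\alpha,\gamma}$ only gives $\|\E_s\delta A_{s,u,t}\|_p\lesssim|t-s|^{2\alpha+\gamma}$, not order $\alpha+\gamma+\gamma'$; in the example above $2\alpha+\gamma=0.95<1$, so the "inherited order $\gamma+\gamma'$" claim fails and one must decompose $R^{\phi e^I}$ further (using $\E_\cdot R^\phi$ of order $\gamma+\gamma'$, the $(1+\varepsilon)$-order conditional remainder of $\int(\phi,\phi')\,d\mathbf{W}$, and the $3\alpha$-order cubic term), i.e.\ essentially the paper's direct computation. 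Third, even granting the germ estimates, reading bounds on $e^\natural_{s,t}=\delta(e^I)_{s,t}-A_{s,t}$ off the stochastic sewing lemma requires identifying $e^I-1$ with the process sewn from $A$; the uniqueness clause of stochastic sewing presupposes precisely the two bounds you are trying to prove, so as written this step is circular, and a Riemann-sum or stopping-time identification through the truncated formulas would have to be supplied. The $L^p$-bound $\|e^\natural_{s,t}\|_p\lesssim|t-s|^{\frac12+\varepsilon}$ and the controlledness of $(\phi e^I,(\phi\phi+\phi')e^I)$ do follow from your balancing argument, so the gap is confined to, but real at, the conditional estimate.
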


\begin{proof}
\textit{1. } Notice that, by construction, $\|\delta I\|_{\alpha;1,\infty} < +\infty$ and $I$ is $\mathbb{P}$-a.s.\ continuous, with $I_0 =0$.
Hence, from \cite[Proposition 2.7]{FHL21}, \begin{equation*}
    \E\left( e^{\lambda \sup_{t \in [0,T]} |\delta I_{0,t}|} \right) \le 2^{1 + (22\lambda \|\delta I\|_{\alpha;1,\infty})^\frac{1}{\alpha} T} \qquad \text{for any $\lambda>0$}.
\end{equation*}
\textit{2. } The proof of this fact is analogous to the proof of the rough It\^o formula in \cref{prop:roughItoformula}. 
Indeed, in that context the polynomial growth assumptions on $F$ and its derivatives can be replaced by the more general assumption $F(X_t),D_xF(X_t),\dots,D^{\lfloor \kappa \rfloor}_{x\dots x} F(X_t) \in L^p$ for any $p \in [2,\infty)$ and for any $t \in [0,T]$. 
For sake of completeness, we highlight the key points of the argument, which essentially relies on the mean value theorem. 
Indeed, for any $s \le t$ we can write \begin{equation*}
    |e^{I_t} - e^{I_s}| \le e^{|I_t|+|I_s|} (I_t-I_s).
\end{equation*}
From the fact that \begin{equation*}
    \begin{aligned}
        \phi_t e^{I_t} - \phi_s e^{I_s} - (\phi_s\phi_s + \phi'_s)e^{I_s} \delta W_{s,t} &= \phi_s(e^{I_t}-e^{I_s}-\delta \phi_{s,t} e^{\phi_s}) + \phi_s R^{\phi}_{s,t} e^{I_s}  + \delta \phi_{s,t} (e^{I_t}-e^{I_s}) + R^{\phi}_{s,t} e^{I_s} 
    \end{aligned}
\end{equation*}
one can easily deduce that $(\phi e^I, (\phi \phi + \phi') e^I) \in \mathbf{D}_W^{\alpha,\gamma}\mathscr{L}(\R)$. 
Moreover, \begin{equation*}
    \begin{aligned}
        e^{I_t} - e^{I_s} &= e^{I_s} \left(\int_s^t \mu_r dr + \int_s^t (\phi_r,\phi_r') d\mathbf{W}_r\right) + \frac{1}{2} e^{I_s} \left(\int_s^t (\phi_r,\phi_r') d\mathbf{W}_r \right)^2 + e^\natural_{s,t}
    \end{aligned}
\end{equation*}
and the conclusion follows arguing as in the proof of \cref{prop:roughItoformula}.
\end{proof}

\subsection{Functionals of solutions to rough SDEs} \label{section:functionalsandmarkovproperty}
According to \cref{thm:mainresult_wellposednessroughPDEs}, the solution $u$ to the linear bacward rough PDE \eqref{eq:roughPDE} admits a functional integral representation in terms of the solution to a suitable rough SDE. 
In \cref{prop:independenceontheBrownianmotion} we point out how such a representation does not depend on the underline Brownian motion.
In \cref{lemma:markovpropertyforRSDEs} we prove a useful identity that connects $u(s,\cdot)$ with $u(t,\cdot)$.
In doing so we also state and prove some flow properties and some measurability results about solutions to RSDEs (cf. \cref{prop:flowpropertyfroRSDEs} and \cref{lemma:measurabilityofRSDEsintheinitialcondition}, respectively). \\

Let $\mathbf{W}=(W,\mathbb{W}) \in \mathscr{C}^\alpha([0,T];\R^n)$ be a rough path, with $\alpha \in (\frac{1}{3},\frac{1}{2}]$, let $b:[0,T] \times \R^d \to \R^d$ and $\sigma:[0,T] \times \R^d \to \R^{d \times m}$ be two deterministic bounded Lipschitz vector fields, and let $(\beta,\beta') \in \mathscr{D}_W^{2\delta}C_b^\kappa$ be a deterministic controlled vector field, for some $\kappa \in (\frac{1}{\alpha},3]$ and $\delta \in [0,\alpha]$ with $\alpha + 2\delta > 1$. 
Let $t_0 \in [0,T]$.
Recall that a solution to the RSDE on $[t_0,T]$ starting from $\xi$, with coefficients $b,\sigma,(\beta,\beta')$ and driven by $\mathbf{W}$ is a quadruple \begin{equation*}
    (X,\xi,B,\{\mathcal{F}_t\}_{t \in [t_0,T]}),
\end{equation*}
where $\{\mathcal{F}_t\}_{t \in [t_0,T]}$ is a complete filtration on some complete probability space $(\Omega,\mathcal{F},\mathbb{P})$, $\xi$ is a $\mathcal{F}_{t_0}$-measurable $\R^d$-valued random variable, $B=(B_t)_{t \in [t_0,T]}$ is an $m$-dimensional Brownian motion on $(\Omega,\mathcal{F},\{\mathcal{F}_t\}_{t \in [t_0,T]},\mathbb{P})$ and $X$ is a continuous $\{\mathcal{F}_t\}_t$-adapted process solving  
\begin{equation*}
        X_t = \xi + \int_{t_0}^t b_{r}(X_r) dr + \int_{t_0}^t \sigma_{r} (X_r) dB_r + \int_{t_0}^t (\beta_{r},\beta'_r)(X_r) d\mathbf{W}_r
\end{equation*} 
in the sense of \cref{def:solutionRSDEs_compendium}, where $(\beta_{r},\beta'_r)(X_r) := (\beta_{r}(X_r), D_x\beta_{r}(X_r) \beta_{r}(X_r) + \beta'_r(X_r))$. 
In such a case we write $X \in RSDE(\xi,B,\{\mathcal{F}_t\}_{t \in [t_0,T]})$.
\begin{prop} \label{prop:independenceontheBrownianmotion}
    Let $c: [0,T] \times \R^d \to \R$ be bounded and continuous, and let $(\gamma,\gamma') \in \mathscr{D}_W^{2\eta}C_b^\lambda(\R^d;\R)$, for some $\eta \in [0,\alpha]$ and $\lambda \in (1,2]$ such that $\alpha + \eta > \frac{1}{2}$ and $\alpha + \eta + \min\{(\lambda-2)\alpha,\eta,\delta\}>1$.
    Let $(X,\xi,B,\{\mathcal{F}_t\}_{t \in [t_0,T]})$ and $(\bar{X},\bar{\xi},\bar{B},\{\bar{\mathcal{F}}_t\}_{t \in [t_0,T]})$ be two solutions on $(\Omega,\mathcal{F},\mathbb{P})$ and $(\bar{\Omega},\bar{\mathcal{F}},\bar{\mathbb{P}})$, respectively, and assume $Law(\xi)=Law(\bar{\xi})$.
    Then, for any non-negative measurable function $g: \R^d \to \R_{\ge 0}$ \footnote{By considering positive and negative parts, the result can be easily extended to any measurable function $g:\R^d \to \R$ satisfying \begin{equation*}
        \E\left( \Big| g(X_T) e^{\int_{t_0}^T c_r(X_r) dr + \int_{t_0}^T (\gamma_r,\gamma'_r) (X_r) d\mathbf{W}_r} \Big| \right), \  \bar{\E} \left( \Big| g(\bar{X}_T) e^{\int_{t_0}^T c_r(\bar{X}_r) dr + \int_{t_0}^T (\gamma_r,\gamma'_r) (\bar{X}_r) d\mathbf{W}_r} \Big| \right) < +\infty.
    \end{equation*}}, \begin{equation*}
        \E\left( g(X_T) e^{\int_{t_0}^T c_r(X_r) dr + \int_{t_0}^T (\gamma_r,\gamma'_r) (X_r) d\mathbf{W}_r} \right) = \bar{\E} \left( g(\bar{X}_T) e^{\int_{t_0}^T c_r(\bar{X}_r) dr + \int_{t_0}^T (\gamma_r,\gamma'_r) (\bar{X}_r) d\mathbf{W}_r} \right),
    \end{equation*}
    where $\E$ and $\bar{\E}$ denote the expectation on $(\Omega,\mathcal{F},\mathbb{P})$ and $(\bar{\Omega},\bar{\mathcal{F}},\bar{\mathbb{P}})$, respectively. 
\end{prop}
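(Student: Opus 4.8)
The plan is to reduce the statement to a uniqueness-in-law result for the rough SDE and then transfer this to the exponential functional by a limiting/continuity argument. First I would observe that, by standard rough SDE well-posedness (\cref{thm:wellposednessandaprioriestimatesforRSDEs}) applied separately on each probability space, both $X$ and $\bar{X}$ are the unique $L_{p,\infty}$-integrable solutions of their respective equations with initial data $\xi$ and $\bar{\xi}$. The key probabilistic input is that the joint law of the solution path together with the driving data is determined by $\mathrm{Law}(\xi)$, the deterministic rough path $\mathbf{W}$, and the coefficients. To make this precise I would first treat the case where $\xi$ (hence $\bar{\xi}$) is deterministic, equal to some $x \in \R^d$: here one can appeal to a Wong--Zakai / smooth-rough-path approximation. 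Choosing a sequence $\mathbf{W}^N$ of canonical lifts of smooth paths $W^N \to W$ with $\rho_\alpha(\mathbf{W}^N,\mathbf{W}) \to 0$, the corresponding RSDEs become genuine (random-coefficient-free) SDEs driven by $B$ and $\dot W^N$, for which uniqueness in law is classical (the law of the solution depends only on the coefficients and the initial condition, not on the particular Brownian motion realizing it — this is exactly \cite[Remark 1.9.11]{KRYLOV} and the surrounding It\^o theory). By \cref{thm:stabilityforRSDEs_compendium}, $X^N \to X$ and $\bar X^N \to \bar X$ in the appropriate $\mathbf{D}_W^{\alpha,\delta}L_{p,\infty}$-sense (in particular uniformly in time in $L^p$), so the laws of $X^N$ on $C^0([t_0,T];\R^d)$ converge weakly to the law of $X$, and likewise for $\bar X^N$; since $\mathrm{Law}(X^N) = \mathrm{Law}(\bar X^N)$ for every $N$, we get $\mathrm{Law}(X) = \mathrm{Law}(\bar X)$ as path-space measures.

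Next I would incorporate the exponential weight. Write $I_T := \int_{t_0}^T c_r(X_r)\,dr + \int_{t_0}^T (\gamma_r,\gamma_r')(X_r)\,d\mathbf{W}_r$ and similarly $\bar I_T$ for $\bar X$. By \cref{prop:roughstochastiintegral_compendium} and \cref{prop:compositionwithstochasticcontrolledvectorfields_compendium}, $I_T$ is (up to a $\mathbb{P}$-null set) a measurable functional $\Psi(X_\cdot)$ of the solution path alone — the rough integral against the fixed deterministic $\mathbf{W}$ is obtained as a limit of compensated Riemann sums built from $(\gamma,\gamma')(X_\cdot)$, hence is $\sigma(X_\cdot)$-measurable — and the same functional $\Psi$ represents $\bar I_T$ in terms of $\bar X_\cdot$. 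Therefore $g(X_T)e^{I_T} = \Phi(X_\cdot)$ and $g(\bar X_T)e^{\bar I_T} = \Phi(\bar X_\cdot)$ for the same measurable map $\Phi: C^0([t_0,T];\R^d) \to \R_{\ge 0}$, and since $\mathrm{Law}(X) = \mathrm{Law}(\bar X)$ the two expectations coincide. For the general (random) initial condition, I would condition on $\xi$ using the fact that $\xi$ is $\mathcal{F}_{t_0}$-measurable and $B$ (as a process on $[t_0,T]$) is independent of $\mathcal{F}_{t_0}$: writing $X^{x}$ for the solution started deterministically at $x$ on a canonical space, one has $\E(\Phi(X_\cdot)\mid \xi = x) = \E(\Phi(X^x_\cdot))$ a.s., so $\E(\Phi(X_\cdot)) = \int_{\R^d} \E(\Phi(X^x_\cdot))\,\mathrm{Law}(\xi)(dx)$, which depends on the law of $\xi$ only; the same identity holds on the barred space with $\mathrm{Law}(\bar\xi) = \mathrm{Law}(\xi)$, giving the claim. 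The non-negativity of $g$ lets me invoke monotone convergence / Tonelli freely so that all the integrals are well-defined in $[0,\infty]$ without a priori integrability; the footnote's extension to signed $g$ is then immediate by splitting into $g^+$ and $g^-$ under the stated integrability.

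The main obstacle I expect is making the measurability claim "$I_T = \Psi(X_\cdot)$ for a single Borel map $\Psi$ independent of the probability space" genuinely rigorous: the rough stochastic integral of \cref{prop:roughstochastiintegral_compendium} is constructed via stochastic sewing and is only characterized up to modification, and its definition involves conditional expectations $\E_s(\cdot)$ which are a priori space-dependent. The cleanest way around this is to note that, along any fixed deterministic sequence of partitions with mesh tending to zero, the (uncompensated-plus-compensator) Riemann--Stieltjes sums $\sum (\gamma_\tau,\gamma_\tau')(X_\tau)\,(\delta W_{s,\tau}, \mathbb{W}_{s,\tau})$ converge in $L^p$ (hence along a subsequence $\mathbb{P}$-a.s.) to $I_T$ — this is exactly the content of the a priori bounds in \cref{prop:roughstochastiintegral_compendium} together with $(\gamma(X),\ldots)\in\mathbf{D}_W^{\eta,\cdot}\mathscr{L}$ — and each such Riemann sum is a fixed continuous function of finitely many path values $X_{\tau_0},\ldots,X_{\tau_N}$; taking the $\mathbb{P}$-a.s. limit along a subsequence exhibits $I_T$ as a pointwise limit of a fixed sequence of continuous path-functionals, hence as a Borel functional $\Psi$ of $X_\cdot$ that does not reference the underlying space. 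The same fixed $\Psi$ then applies verbatim to $\bar X$, closing the argument. The remaining steps (weak convergence of laws from $L^p$-convergence, the conditioning identity for random $\xi$) are routine.
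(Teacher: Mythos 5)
Your second half --- exhibiting $\int_{t_0}^T(\gamma_r,\gamma_r')(X_r)\,d\mathbf{W}_r$ as a limit, along a fixed deterministic sequence of partitions, of Riemann sums that are Borel functionals of finitely many path values, and then transferring the identity of expectations by monotone/dominated convergence using $g\ge 0$ --- is exactly the paper's argument (the paper works with the approximants $f^n$ and a truncation $\psi_M(x)=e^x\mathbf{1}_{x\le M}+e^M\mathbf{1}_{x>M}$ of the exponential rather than with a single limiting functional $\Psi$, which also spares it the small issue you would face of extracting a subsequence along which the Riemann sums converge almost surely \emph{simultaneously} on both probability spaces). The genuine divergence, and the genuine gap, is in your first half. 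The paper does not re-derive uniqueness in law: it invokes it directly from \cite[Theorem 4.21]{FHL21}, which applies to rough SDEs driven by an arbitrary $\alpha$-rough path and with random initial data of prescribed law. Your Wong--Zakai replacement does not work at the stated level of generality, for two reasons. First, in this subsection $\mathbf{W}$ is only assumed to lie in $\mathscr{C}^\alpha([0,T];\R^n)$; a general (non-geometric) rough path is \emph{not} the $\rho_{\alpha'}$-limit of canonical lifts of smooth paths, since such limits are necessarily weakly geometric, so the approximating sequence $\mathbf{W}^N$ you start from need not exist. Second, even for geometric $\mathbf{W}$, the coefficient $(\beta,\beta')\in\mathscr{D}_W^{2\delta}C_b^\kappa$ is a vector field controlled \emph{by $W$} (its time dependence may be genuinely rough); to apply \cref{thm:stabilityforRSDEs_compendium} with driver $\mathbf{W}^N$ you need $(\beta^N,(\beta^N)')$ controlled by $W^N$ together with convergence of the controlled-vector-field distances $\llbracket \beta,\beta';\beta^N,(\beta^N)'\rrbracket_{W,W^N;\dots}$, and for smooth $W^N$ the approximating equation is a classical SDE only after also smoothing $\beta$ in time --- none of which is automatic, and none of which you address.

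A secondary point: your reduction of random initial data to deterministic ones by conditioning on $\xi$ requires a jointly measurable version $(\omega,x)\mapsto X^x_t(\omega)$ and a freezing argument; this machinery does exist in the paper (\cref{lemma:measurabilityofRSDEsintheinitialcondition} and its use in \cref{lemma:markovpropertyforRSDEs}), but it is not needed here, because the cited uniqueness-in-law theorem already yields equality of the expectations $\E(\varphi(X_\cdot)(\psi_M\circ f^n)(X_\cdot))=\bar\E(\varphi(\bar X_\cdot)(\psi_M\circ f^n)(\bar X_\cdot))$ for bounded measurable path functionals directly under $\mathrm{Law}(\xi)=\mathrm{Law}(\bar\xi)$. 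So the fix is simple: keep your measurability/approximation argument, but replace the Wong--Zakai derivation of uniqueness in law by the citation of \cite[Theorem 4.21]{FHL21} (or restrict your approximation argument to geometric $\mathbf{W}$ and time-smooth $\beta$, which is strictly weaker than the proposition as stated).
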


\begin{proof}
    In case $(\gamma,\gamma')=0$, the conclusion is a trivial application of the uniqueness-in-law result \cite[Theorem 4.21]{FHL21}. 
    The presence of the rough stochastic exponential $e^{\int_{t_0}^T(\gamma_r,\gamma'_r)(X_r) d\mathbf{W}_r}$ requires a bit of analysis, since the pathwise continuity of $X$ is not sufficient to define it as a random variable.  Indeed, we need to observe that $(\gamma(X), D_x\gamma(X) \beta(X))$ is a stochastic controlled rough path in $\mathbf{D}_W^{\alpha,\min\{\eta,\delta\}}\mathscr{L}(\mathcal{L}(\R^n,\R))$ and to take into account \cite[Proposition 2.7]{FHL21} to get exponential integrability.
    For any $M \in \mathbb{N}$, define $$\psi_M(x) := e^x \mathbf{1}_{x \le M} + e^M \mathbf{1}_{x > M}, \ x \in \R. $$ 
    The family $\{\psi_M\}_M$ is by construction an increasing sequence of non-negative continuous functions, converging to $\psi(x):=e^x$ as $M \to +\infty$. 
    By monotone convergence and denoting by $\varphi:C^0([0,T];\R^d) \to \R$ the map $\varphi(y_\cdot) := g(y(T)) e^{\int_{t_0}^T c_r(y(r)) dr}$, we can write \begin{equation*}
        \begin{aligned}
            \E\left(\varphi(X_\cdot) e^{\int_{t_0}^T (\gamma_r,\gamma_r')(X_r) d\mathbf{W}_r} \right) &= \lim_{M \to +\infty} \E\left(\varphi(X_\cdot) \  \psi_M \left(\int_{t_0}^T (\gamma_r,\gamma_r')(X_r) d\mathbf{W}_r\right) \right)
        \end{aligned}
    \end{equation*}
    Let $(\pi^n)_n$ be a sequence of partitions of $[t_0,T]$ with $|\pi^n| \to 0$ as $n \to +\infty$. 
    Up to passing to a subsequence, we have that \begin{equation*}
        \int_{t_0}^T (\gamma_r,\gamma_r')(X_r) d\mathbf{W}_r = \lim_{n \to +\infty} \sum_{[u,v] \in \pi^n} \gamma_u(X_u) \delta W_{u,v} + (D_x\gamma_u(X_u) \beta_u(X_u) + \gamma'_u(X_u)) \mathbb{W}_{u,v} =: \lim_{n \to +\infty} f^n(X_\cdot) 
    \end{equation*}
    $\mathbb{P}$-a.s., where $f^n:C^0([0,T];\R^d) \to \R$ is measurable and bounded.
    By dominated convergence \begin{equation*}
        \begin{aligned}
            \E\left(\varphi(X_\cdot) \  (\psi_M \circ f^n)  \left(X_\cdot \right) \right) \longrightarrow \E\left(\varphi(X_\cdot) \  \psi_M \left(\int_{t_0}^T (\gamma_r,\gamma_r')(X_r) d\mathbf{W}_r\right) \right) \quad \text{ as $n \to +\infty$, for any $M$.}
        \end{aligned}
    \end{equation*}
    By \cite[Theorem 4.21]{FHL21}, we deduce that  $\E\left(\varphi(X_\cdot) \  (\psi_M \circ f^n)  \left(X_\cdot \right) \right) = \E\left(\varphi(\bar{X}_\cdot) \  (\psi_M \circ f^n)  \left(\bar{X}_\cdot \right) \right)$ for any $n,M$. 
    The same argument applies to $\bar{X}_\cdot$, and we obtain that $\E\left(\varphi(\bar{X}_\cdot) \  (\psi_M \circ f^n)  \left(\bar{X}_\cdot \right) \right)$ converges, as $n \to +\infty$, to $\E(\varphi(\bar{X}_\cdot) \  \psi_M (\int_{t_0}^T (\gamma_r,\gamma_r')(\bar{X}_r) d\mathbf{W}_r) )$.
    By passing the latter to the limit as $M \to +\infty$ (using monotone convergence), the assertion is proved. 
\end{proof}

Let $s \in [0,T]$ be a shift parameter.
In a very similar way to what is done at the beginning of the proof of \cref{thm:existenceRPDE}, we can extend the definition of $b_t(\cdot), \sigma_t(\cdot), \beta_t(\cdot)$ and $\beta'_t(\cdot)$ for $t \in [0,+\infty)$. 
A solution $X$ to the "shifted" RSDE on the time interval $[t_0,T]$ and starting from $\xi$ is given by a (complete) filtered probability space $(\Omega,\mathcal{F},\{\mathcal{F}_t\}_{t \in [t_0,T_0]},\mathbb{P})$ endowed with a Brownian motion $B=(B_t)_{t \in [t_0,T]}$, a $\mathcal{F}_{t_0}$-measurable random variable $\xi$ and by a continuous $\{\mathcal{F}_t\}_t$-adapted stochastic process $X$ which solves
\begin{equation*}
        X_t = \xi + \int_{t_0}^t b_{s+r}(X_r) dr + \int_{t_0}^t \sigma_{s+r} (X_r) dB_r + \int_{t_0}^t (\beta_{s+r},\beta'_{s+r})(X_r) d\mathbf{W}^s_r \qquad t \in [t_0,T]
\end{equation*}
in the sense of \cref{def:solutionRSDEs_compendium}. 
Here $\mathbf{W}^s$ is the shifted rough path defined in \cref{prop:shiftingaroughpath}. 
In such a case we write $X \in RSDE(s,\xi,B,\{\mathcal{F}\}_{t \in [t_0,T]})$.
Due to strong uniqueness of rough SDEs under our assumptions on $b,\sigma,\beta$, it is possible to prove the following two flow properties. 
\begin{prop} \label{prop:flowpropertyfroRSDEs}
    Let $s \in [0,T]$ and let $t_0 \in [0,T]$ with be fixed. Let $(\Omega,\mathcal{F},\{\mathcal{F}_t\}_{t \in [t_0,T]},\mathbb{P})$ be a complete filtered probability space, and let $B=(B_t)_{t \in [t_0,T]}$ be an $m$-dimensional Brownian motion on $(\Omega,\mathcal{F},\{\mathcal{F}_t\}_{t \in [t_0,T]},\mathbb{P})$.
    Let $\xi \in L^0(\Omega,\mathcal{F}_{t_0};\R^d)$ and let $X \in RSDE(s,\xi,B, \{\mathcal{F}_t\}_{t \in [t_0,T]})$. 
    \begin{enumerate}
        \item Let $t_1 \in [t_0, T]$ and, for any $t \in [t_1,T_0]$, define $Y_t := X_t$. Then
        $Y \in RSDE(s,X_{t_1},\bar{B},\{\bar{\mathcal{F}}_t\}_{t \in [t_1,T]})$, where $\bar{\mathcal{F}}_t := \mathcal{F}_{t}$ and $\bar{B}_t := B_t - B_{t_1}$ for $t \in [t_1,T_0]$.
        \item For any $t \in [0,T- t_0]$, define $Z_t := X_{t_0+t}$. Then
        $Z \in RSDE(s+t_0,\xi,\tilde{B},\{\tilde{\mathcal{F}}_t\}_{t \in [0,T-t_0]})$, where $\tilde{\mathcal{F}}_t := \mathcal{F}_{t_0+t}$ and $\bar{B}_t := B_{t+t_0} - B_{t_0}$ for $t \in [0,T_0-t_0]$.
    \end{enumerate}
\end{prop}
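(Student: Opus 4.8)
The plan is to verify directly that the restricted (respectively, time-shifted) process satisfies Definition \ref{def:solutionRSDEs_compendium} for the stated data, so that both assertions reduce to checking that the three integrals and the Davie-type remainder behave well under restriction of the time interval and under a constant time shift. Strong uniqueness of the shifted rough SDEs (through Theorem \ref{thm:wellposednessandaprioriestimatesforRSDEs}) guarantees that the processes identified this way are indeed \emph{the} solutions, but the substance of the argument is the stability of the defining objects, not uniqueness.

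For part 1 I would first recall the standard fact that $\bar{B}_t = B_t - B_{t_1}$, $t \in [t_1,T]$, is an $m$-dimensional Brownian motion with respect to $\{\bar{\mathcal{F}}_t\}_{t \in [t_1,T]} = \{\mathcal{F}_t\}_{t \in [t_1,T]}$, and that $X_{t_1}$ is $\mathcal{F}_{t_1}$-measurable while $Y = X|_{[t_1,T]}$ is continuous and adapted. The Lebesgue part restricts trivially, $\int_{t_1}^t b_{s+r}(Y_r)\,dr = \int_{t_1}^t b_{s+r}(X_r)\,dr$; the It\^o part satisfies $\int_{t_1}^t \sigma_{s+r}(X_r)\,dB_r = \int_{t_1}^t \sigma_{s+r}(Y_r)\,d\bar{B}_r$ because $d\bar{B} = dB$ on $[t_1,T]$ and the lower limit is $t_1$. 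For the rough stochastic integral, the restriction of the stochastic controlled rough path $(\beta(X), D_x\beta(X)\beta(X)+\beta'(X))$ to $[t_1,T]$ is again a stochastic controlled rough path with respect to $\mathbf{W}^s|_{[t_1,T]}$ (all conditional $L_{p,q}$-norms there only involve $\mathcal{F}_u$ with $u \ge t_1$), and I would identify $\int_{t_1}^t (\beta_{s+r},\beta'_{s+r})(X_r)\,d\mathbf{W}^s_r$ with the rough stochastic integral of the restricted integrand by the uniqueness characterization in Proposition \ref{prop:roughstochastiintegral_compendium}: the restriction of the original integral satisfies the two defining local bounds on $\Delta_{[t_1,T]}$ with the filtration $\{\mathcal{F}_t\}_{t\ge t_1}$. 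Finally, restricting the Davie-type expansion of $X$ to $(u,v) \in \Delta_{[t_1,T]}$ yields the required expansion for $Y$, with remainder $Y^\natural_{u,v} = X^\natural_{u,v}$ inheriting $\|\|Y^\natural_{u,v}|\bar{\mathcal{F}}_u\|_p\|_q = o(|v-u|^{1/2})$ and $\|\E(Y^\natural_{u,v}\mid\bar{\mathcal{F}}_u)\|_q = o(|v-u|)$; condition b) of Definition \ref{def:solutionRSDEs_compendium} is inherited in the same way.

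Part 2 is the same verification composed with the change of variable $\rho = t_0 + r$. Here $\tilde{B}_t = B_{t_0+t} - B_{t_0}$ is a Brownian motion for $\{\tilde{\mathcal{F}}_t\} = \{\mathcal{F}_{t_0+t}\}$, $\xi$ is $\tilde{\mathcal{F}}_0$-measurable, and $Z = X_{t_0+\cdot}$ is continuous and adapted; the change of variable turns $\int_{t_0}^{t_0+t} b_{s+\rho}(X_\rho)\,d\rho$ into $\int_0^t b_{(s+t_0)+r}(Z_r)\,dr$ and, by the time-change property of the It\^o integral against the shifted Brownian motion, $\int_{t_0}^{t_0+t}\sigma_{s+\rho}(X_\rho)\,dB_\rho = \int_0^t \sigma_{(s+t_0)+r}(Z_r)\,d\tilde{B}_r$. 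The one point that deserves an explicit line is that the shift of the shifted rough path is again a shifted rough path: reading off the formulas in Proposition \ref{prop:shiftingaroughpath} one gets $(\mathbf{W}^s)^{t_0} = \mathbf{W}^{s+t_0}$, after extending the family $\mathbf{W}^\cdot$ to shift parameters larger than $T$ by the constant path $W_T$, exactly as the coefficients are extended at the start of the proof of Theorem \ref{thm:existenceRPDE}; one also checks that the time-shift of a stochastic controlled rough path relative to $\mathbf{W}^s$ is a stochastic controlled rough path relative to $\mathbf{W}^{s+t_0}$, with $Z^\natural_{u,v} = X^\natural_{t_0+u,t_0+v}$ and the conditional bounds transported through $\tilde{\mathcal{F}}_u = \mathcal{F}_{t_0+u}$. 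Invoking once more the uniqueness characterization of the rough stochastic integral identifies $\int_0^t (\beta_{(s+t_0)+r},\beta'_{(s+t_0)+r})(Z_r)\,d\mathbf{W}^{s+t_0}_r$ with $\int_{t_0}^{t_0+t}(\beta_{s+\rho},\beta'_{s+\rho})(X_\rho)\,d\mathbf{W}^s_\rho$, and collecting terms gives $Z \in RSDE(s+t_0,\xi,\tilde{B},\{\tilde{\mathcal{F}}_t\}_{t\in[0,T-t_0]})$. I expect the main obstacle to be purely bookkeeping around the rough stochastic integral: one must keep track of the fact that the change of filtration preserves all conditional $L_{p,q}$-estimates and that the clamping of $\mathbf{W}^s$ near $T$ is compatible with the composition $(\mathbf{W}^s)^{t_0}$, but no analytic input beyond Proposition \ref{prop:roughstochastiintegral_compendium} and the basic shift/time-change properties of It\^o and Lebesgue integrals is required.
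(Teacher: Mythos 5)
Your proof is correct and follows essentially the same route as the paper: both arguments verify directly that the restricted (resp.\ time-shifted) process satisfies the definition of a solution, using the elementary restriction/shift properties of the Lebesgue and It\^o integrals and the identity $(\mathbf{W}^s)^{t_0}=\mathbf{W}^{s+t_0}$. The only cosmetic difference is that the paper identifies the time-shifted rough stochastic integral by passing translated Riemann sums to the limit in probability, whereas you invoke the local characterization and uniqueness from \cref{prop:roughstochastiintegral_compendium}; both are valid.
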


\begin{proof}
    \textit{1.} \quad Let $t_1 \in [t_0,T]$. For any $t \in [t_1,T]$, $X_t$ is $\mathcal{F}_t$-measurable and it holds that \begin{equation*}
            \begin{aligned}
                X_t 
                &= X_{t_1} + \int_{t_1}^t b_{s+r}(X_r) + \int_{t_1}^t \sigma_{s+r} (X_r) dB^{t_0}_r + \int_{t_1}^t (\beta_{s+r},\beta'_{s+r})(X_r)) d\mathbf{W}^s_r.
            \end{aligned}
        \end{equation*}
        The conclusion follows on noting that $\int_{t_1}^t \sigma_{s+r} (X_r) dB^{t_0}_r = \int_{t_1}^t \sigma_{s+r} (X_r) dB^{t_1}_r $ for any $t \in [t_1,T]$, by the properties of It\^o integration. \\

        \textit{2.} \quad Let $t \in [0,T-t_0]$ be arbitrary. By construction, $Z_t$ is $\mathcal{F}_{t_0+t}$-measurable. 
        Given a partition $\pi$ of $[t_0,t_0+t]$, we denote by $\pi^{t_0}$ the partition of $[0,t]$ obtained by applying to every point the translation map $s \mapsto s-t_0$. It holds that \begin{equation*}
            \sum_{[u,v] \in \pi} \sigma_{s+u}(X_u) (B_v - B_u) = \sum_{[u,v] \in \pi^{t_0}} \sigma_{s+u+t_0}(X_{u+ t_0}) (B_{v+t_0} - B_{t_0} - B_{u+t_0} + B_{t_0})
        \end{equation*} and \begin{equation*}
            \begin{aligned}
                &\sum_{[u,v] \in \pi} \beta_{s+u} (X_u) \delta W^s_{u,v} + (D_x \beta_{s+u} (X_u) \beta_{s+u} (X_u) + \beta'_{s+u}(X_u)) \mathbb{W}_{u,v}^s =  \\
                &= \sum_{[u,v] \in \pi^{t_0}} \beta_{s+u+t_0} (X_{u+t_0}) \delta W^s_{u+t_0,v+t_0} + (D_x \beta_{s+u+t_0} (X_{u+t_0}) \beta_{s+u+t_0} (X_{u+t_0}) + \beta'_{s+u+t_0}(X_{u+t_0})) \mathbb{W}_{u+t_0,v+t_0}^s
            \end{aligned}
        \end{equation*}
        Passing to the limit in probability the two previous identities, it holds that  \begin{equation*}
            \begin{aligned}
                Y_t &= X_{t_0+t}=  \xi + \int_{t_0}^{t_0+t} b_{s+r}(X_r) + \int_{t_0}^{t_0+t} \sigma_{s+r} (X_r) dB^{t_0}_r + \int_{t_0}^{t_0+t} (\beta_{s+r},\beta'_{s+r})(X_r) d\mathbf{W}^s_r = \\
                &= \xi + \int_{0}^{t} b_{s+t_0+r}(Y_r) + \int_{0}^{t} \sigma_{s+t_0+r} (Y_r) d\tilde{B}_r + \int_{0}^{t} (\beta_{s+t_0+r},\beta'_{s+t_0+r})(Y_r) d\mathbf{W}^{s+t_0}_r. 
            \end{aligned}
        \end{equation*}
\end{proof}

\begin{lemma} \label{lemma:measurabilityofRSDEsintheinitialcondition}
  Let $s, t_0 \in [0,T]$ be fixed.
  For any $x \in \R^d$, let $X^{x} \in RSDE(s,x,B,\{\mathcal{F}_t\}_{t \in [t_0,T]})$ be a solution to the rough SDE on $(\Omega,\mathcal{F},\mathbb{P})$, starting from $x$. 
  For any $t \in [t_0,T]$, let us define \begin{equation} \label{eq:definitionoffiltration}
      \mathcal{F}_t^{t_0} := \sigma (B_r - B_{t_0}, r \in [t_0, t]).
  \end{equation}
  \begin{enumerate}
    \item For any $t \in [t_0, T]$, define $f_t : \Omega \times
    \mathbb{R}^d \rightarrow \mathbb{R}^d$ to be the map \begin{equation*}
        f_t (\omega, x) := X_t^{x} (\omega).
    \end{equation*}
    Then $f_t$ is $(\mathcal{F}_t^{t_0}
    \otimes \mathcal{B} (\mathbb{R}^d), \mathcal{B}
    (\mathbb{R}^d))$-measurable, for any $t \in [t_0, T]$.
    \item Let $(\gamma,\gamma') \in \mathscr{D}_W^{2\eta}C_b^\lambda(\R^d;\R)$ be any deterministic controlled vector field, for some $\eta \in [0,\alpha]$ and $\lambda \in (1,2]$ such that $\alpha + \eta > \frac{1}{2}$ and $\alpha + \eta + \min\{(\lambda-2)\alpha,\eta,\delta\}>1$.
    For any $t \in [t_0, T]$, let $I^t : \Omega \times
    \mathbb{R}^d \rightarrow \mathbb{R}$ be the map defined as
    \[ I^t (\omega, x) := \left( \int_{t_0}^t (\gamma_{s + r}(X_r^{x}), D_x \gamma_{s+r}(X_r^{x}) \beta_{s+r}(X^x_r) + \gamma'_{s+r}(X_r^{x}))
        d \mathbf{W}_r^s \right) (\omega) . \]
    Then $I^t$ is $(\mathcal{F}_t^{t_0}
    \otimes \mathcal{B} (\mathbb{R}^d), \mathcal{B} (\mathbb{R}))$-measurable, for any $t \in [t_0, T]$.
  \end{enumerate}
\end{lemma}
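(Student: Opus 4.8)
The plan is to prove both measurability statements by exhibiting the relevant maps as pointwise limits (along suitable subsequences of partitions) of maps that are manifestly jointly measurable, and then invoking that pointwise limits of measurable maps are measurable. The key structural input is the strong uniqueness of rough SDEs under \cref{assumptions}, which guarantees that the quantities $X_t^x(\omega)$ and $I^t(\omega,x)$ depend on the noise only through the Brownian increments on $[t_0,t]$ (the rough driver $\mathbf{W}^s$ being deterministic), so that the natural candidate $\sigma$-field is $\mathcal{F}_t^{t_0}$ rather than the full $\mathcal{F}_t$.

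For part 1, I would first reduce to the case $t_0 = 0$ via \cref{prop:flowpropertyfroRSDEs}.2 (replacing $s$ by $s+t_0$ and $B$ by $\tilde B$), so that one only has to track the $\sigma$-field generated by the driving Brownian motion over $[0,t]$. Then I would build a Picard iteration scheme: set $X_t^{x,(0)} := x$, and recursively
\begin{equation*}
    X_t^{x,(k+1)} = x + \int_0^t b_{s+r}(X_r^{x,(k)})\, dr + \int_0^t \sigma_{s+r}(X_r^{x,(k)})\, dB_r + \int_0^t (\beta_{s+r},\beta'_{s+r})(X_r^{x,(k)})\, d\mathbf{W}^s_r .
\end{equation*}
Each iterate is jointly measurable in $(\omega,x)$: the Lebesgue integral and the It\^o integral are constructed as limits in probability of Riemann/left-point sums, each of which is a finite combination of the previous iterate evaluated at partition points times Brownian increments (hence $\mathcal{F}_t^0 \otimes \mathcal{B}(\R^d)$-measurable by the inductive hypothesis), and the rough stochastic integral likewise is an $L^p$-limit of compensated Riemann sums built from the previous iterate and the deterministic $\mathbf{W}^s$. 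Passing to a subsequence so that the convergence $X^{x,(k)} \to X^{x}$ holds $\mathbb{P}$-a.s.\ (and, by the a priori estimates of \cref{thm:wellposednessandaprioriestimatesforRSDEs}, uniformly in $x$ on compacts with the needed integrability) yields $f_t(\omega,x) = X_t^x(\omega)$ as an a.s.\ pointwise limit of $\mathcal{F}_t^0 \otimes \mathcal{B}(\R^d)$-measurable maps, hence measurable after completing. Undoing the reduction gives the claim for general $t_0$.

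For part 2, with the joint measurability of $(\omega,x) \mapsto X_r^x(\omega)$ in hand (uniformly enough in $r$, using that $X^{\cdot,x}$ has a continuous modification jointly in $(r,x)$ by \cref{thm:stabilityforRSDEs_compendium} and Kolmogorov), one observes that $(\gamma^s(X_r^x), D_x\gamma^s(X_r^x)\beta^s(X_r^x) + (\gamma^s)'(X_r^x))$ is a stochastic controlled rough path whose increments are jointly measurable, so the compensated Riemann sums
\begin{equation*}
    \sum_{[u,v]\in\pi^n} \gamma_{s+u}(X_u^x)\,\delta W^s_{u,v} + \big(D_x\gamma_{s+u}(X_u^x)\beta_{s+u}(X_u^x) + \gamma'_{s+u}(X_u^x)\big)\,\mathbb{W}^s_{u,v}
\end{equation*}
are $\mathcal{F}_t^{t_0}\otimes\mathcal{B}(\R^d)$-measurable in $(\omega,x)$. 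By \cref{prop:roughstochastiintegral_compendium} these sums converge in $L^p$ to $I^t(\omega,x)$, and along a subsequence of partitions the convergence is $\mathbb{P}\otimes(\text{suitable reference measure in }x)$-a.s., hence $I^t$ is a pointwise a.s.\ limit of jointly measurable maps and therefore measurable (upon completion). The main obstacle I anticipate is the bookkeeping needed to make ``joint measurability survives the limit'' fully rigorous: one must ensure the a.s.\ convergence can be taken simultaneously for (almost) every $x$, which requires a version of the solution that is jointly continuous/measurable in $(r,x,\omega)$ — this is where the quantitative stability estimates in $x$ from \cref{thm:stabilityforRSDEs_compendium} together with Kolmogorov's continuity theorem do the essential work, and where one should be careful that the exceptional null set does not depend on $x$.
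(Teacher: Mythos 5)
Your proposal follows essentially the same route as the paper: part 1 via Picard iterates whose joint $(\mathcal{F}_t^{t_0}\otimes\mathcal{B}(\R^d))$-measurability is propagated inductively through Riemann-sum approximations of the Lebesgue, It\^o and rough stochastic integrals, and part 2 via compensated Riemann sums built from the map of part 1, with measurability preserved in the limits. The extra apparatus you invoke (reduction to $t_0=0$, Kolmogorov continuity in $x$, and especially completing with respect to a reference measure in the $x$-variable) is not needed and the completion step would actually deliver a weaker conclusion than the stated product-measurability; the paper simply passes to limits in probability along partitions (and in $N$ for the Picard scheme) for each fixed $x$.
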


\begin{proof} 
    \textit{1.} \  Let $s,t_0 \in [0,T]$ be fixed. Recall that $X^{x}$ can always be seen as the limit of
    suitable Picard-type approximations (cf.\ \cite[Proof of Theorem 4.6]{FHL21}).
    We denote by $X^{N,x}$ the $N$-th
    Picard's iteration and, for any $t \in [t_0,T]$, we define $f^{0}_t(\omega,x) := X^{0,x} = x$ and \begin{align*}
        f_t^{N+1}(\omega,x) &:= X^{N+1,x} = x + \int_{t_0}^t b_{s + r} (f^N_r(\omega,x)) dr  + \left( \int_{t_0}^t \sigma_{s + r} (f^{N}_r(\cdot,x)) dB_r\right) (\omega) + \\
        & \quad + \left( \int_{t_0}^t (\beta_{s+r}(f^N(\cdot,x)), D_x \beta_{s+r}(f^N(\cdot,x)) \beta_{s+r} (f^{N}(\cdot,x)) + \beta'_{s+r}(f^N(\cdot,x)) ) d\mathbf{W}^s_r \right)(\omega) \qquad N \ge 1
    \end{align*}
    By induction on $N$, we prove that $f_t^N$ is $(\mathcal{F}_t^{t_0}\otimes \mathcal{B} (\mathbb{R}^d), \mathcal{B}(\mathbb{R}^d))$-measurable, for any $t \in [t_0,T]$.
    This allows us prove the assertion, being by
    construction
    \[ f_t (\omega, x) = \lim_{N \to + \infty} f^N_t(\omega, x) . \]
    Measurability is trivial in the case $N = 0$.
    Assume that, for a certain $N_0 \in \mathbb{N}$ and for any $t \in [t_0, T]$, $f^{N_0}_t$ is $(\mathcal{F}_t^{t_0} \otimes \mathcal{B} (\mathbb{R}^d), \mathcal{B}(\mathbb{R}^d))$-measurable.
    Notice that we can write \begin{align*}
        &\int_{t_0}^t (\beta_{s+r}(f_r^{N_0}(\cdot,x)), D_x \beta_{s+r}(f_r^{N_0}(\cdot,x)) \beta_{s+r} (f_r^{N_0}(\cdot,x)) + \beta_{s+r}'(f_r^{N_0}(\cdot,x)) ) d\mathbf{W}^s_r = \\
        & \quad = \lim_{|\pi| \to 0} \sum_{[u, v] \in \pi} \beta_{s+u} (f_u^{N_0}(\cdot,x)) \delta W^s_{u, v} + (D_x \beta_{s + u}
       (f_u^{N_0}(\cdot,x)) \beta_{s + u} (f_u^{N_0}(\cdot,x)) + \beta_{s+u}'(f_u^{N_0}(\cdot,x)))  \mathbb{W}^s_{u, v},
    \end{align*}
    where the limit is in probability. Similarly \begin{align*}
        \int_{t_0}^t \sigma_{s + r} (f^{N_0}_r(\cdot,x)) dB_r &= \lim_{| \pi | \to 0} \sum_{[u, v] \in \pi} \sigma_{s + u}
       (f_u^{N_0}(\cdot,x)) (B_v - B_u) \\
       \int_{t_0}^t b_{s + r} (f^{N_0}_r(\cdot,x)) dr &= \lim_{|\pi | \to 0} \sum_{[u, v] \in \pi} b_{s + u} (f_u^{N_0}(\cdot,x)) (v - u)
    \end{align*}
    Hence, we conclude that that $f_t^{N_0+1}$ is $(\mathcal{F}_t^{t_0} \otimes
    \mathcal{B} (\mathbb{R}^d), \mathcal{B} (\mathbb{R}^d))$-measurable. \\
    
    \textit{2.} \ This fact trivially comes from the previous one.
    Indeed, we
    have that, for any $t \in [t_0, T]$, $I^t (\omega, x)$ is
    approximated (in probability) by a sequence of Riemann-type sums of the
    form \begin{equation*}
        \sum_{[u, v] \in \pi} \gamma_{s+u} (f_u(\omega,x)) \delta W^s_{u,v} + (D_x \gamma_{s+u} \left( {f_u(\omega,x)} \right) \beta_{s + u}(f_u(\omega,x)) + \gamma'_{s+u} (f_u(\omega,x))) \mathbb{W}^s_{u, v} .
    \end{equation*}
\end{proof}

\begin{prop}\label{lemma:markovpropertyforRSDEs}
    Let $c$ and $(\gamma,\gamma')$ as in \cref{prop:independenceontheBrownianmotion}.
    Let $g:\R^d \to \R$ be a bounded Borel-measurable function.
    For any $(s,x) \in [0,T] \times \R^d$, define \begin{equation*}
        u(s,x) := \E \left( g(X^{s,x}_{T-s}) e^{\int_0^{T-s} c_{s+r}(X_r^{s,x}) dr + \int_0^{T-s} (\gamma_{s+r},\gamma'_{s+r})(X_r^{s,x}) d\mathbf{W}_r^s} \right),
    \end{equation*} 
    where $X^{s,x} \in RSDE(s,x,B,\{\mathcal{F}_t\}_{t \in [0,T]})$, for any given complete filtered probability space $(\Omega,\mathcal{F},\{\mathcal{F}_t\}_{t \in [0,T]},\mathbb{P})$ endowed with an $m$-dimensional Brownian motion $B=(B_t)_{t \in [0,T]}$. 
   Then, for any $s \le t$, \begin{equation*}
        u(s,x) = \E \left( e^{\int_0^{t-s} c_{s+r}(X_r^{s,x}) dr + \int_0^{t-s} (\gamma_{s+r},\gamma'_{s+r})(X_r^{s,x}) d\mathbf{W}_r^s} \  u(t,X^{s,x}_{t-s}) \right).
    \end{equation*}
\end{prop}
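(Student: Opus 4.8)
The plan is to prove the identity by conditioning on $\mathcal{F}_{t-s}$ and combining the flow and measurability properties of rough SDEs established above with the Brownian-independence principle of \cref{prop:independenceontheBrownianmotion}. First I would write $I^{s,x}_{[a,b]} := \int_a^b c_{s+r}(X^{s,x}_r)\,dr + \int_a^b (\gamma_{s+r},\gamma'_{s+r})(X^{s,x}_r)\,d\mathbf{W}^s_r$; by additivity of the Lebesgue integral and of the rough stochastic integral (the latter via the uniqueness part of stochastic sewing behind \cref{prop:roughstochastiintegral_compendium}) one has $I^{s,x}_{[0,T-s]}=I^{s,x}_{[0,t-s]}+I^{s,x}_{[t-s,T-s]}$, with $I^{s,x}_{[0,t-s]}$ being $\mathcal{F}_{t-s}$-measurable. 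All the exponentials here lie in every $L^p$ by the John--Nirenberg estimate of \cref{prop:propertiesoftheexponential}, after checking via \cref{prop:compositionwithstochasticcontrolledvectorfields_compendium} that $(\gamma(X^{s,x}),D_x\gamma(X^{s,x})\beta^s(X^{s,x})+\gamma'(X^{s,x}))$ is a stochastic controlled rough path. The tower property then gives
\[
u(s,x)=\E\Big(e^{I^{s,x}_{[0,t-s]}}\,\E\big[g(X^{s,x}_{T-s})\,e^{I^{s,x}_{[t-s,T-s]}}\mid\mathcal{F}_{t-s}\big]\Big),
\]
so it suffices to identify the inner conditional expectation with $u(t,X^{s,x}_{t-s})$.

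For that, for each $y\in\R^d$ I would let $\hat X^{y}=(\hat X^y_\rho)_{\rho\in[t-s,T-s]}$ solve the shift-$s$ rough SDE on $[t-s,T-s]$ started at $y$ and driven by $(B_\rho-B_{t-s})_\rho$. By \cref{lemma:measurabilityofRSDEsintheinitialcondition} applied with $t_0=t-s$, the maps $(\omega,y)\mapsto\hat X^y_{T-s}(\omega)$ and $(\omega,y)\mapsto I^{\hat X^y}_{[t-s,T-s]}(\omega)$ (the Lebesgue part being jointly measurable by the same argument) are measurable with respect to $\mathcal{F}^{t-s}_{T-s}\otimes\mathcal{B}(\R^d)$, where $\mathcal{F}^{t-s}_{T-s}=\sigma(B_\rho-B_{t-s}:\rho\in[t-s,T-s])$ is independent of $\mathcal{F}_{t-s}$. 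By \cref{prop:flowpropertyfroRSDEs}.1 the process $(X^{s,x}_\rho)_{\rho\in[t-s,T-s]}$ solves this same rough SDE with $\mathcal{F}_{t-s}$-measurable initial datum $X^{s,x}_{t-s}$, so by strong uniqueness of rough SDEs (\cref{thm:wellposednessandaprioriestimatesforRSDEs}) the pair $(X^{s,x}_{T-s},I^{s,x}_{[t-s,T-s]})$ coincides a.s.\ with the above measurable functional evaluated at $y=X^{s,x}_{t-s}$. The substitution (``freezing'') rule for conditional expectations of a functional of an $\mathcal{F}^{t-s}_{T-s}$-measurable object at an $\mathcal{F}_{t-s}$-measurable point then yields
\[
\E\big[g(X^{s,x}_{T-s})\,e^{I^{s,x}_{[t-s,T-s]}}\mid\mathcal{F}_{t-s}\big]=\Big(\E\big[g(\hat X^y_{T-s})\,e^{I^{\hat X^y}_{[t-s,T-s]}}\big]\Big)\Big|_{y=X^{s,x}_{t-s}}.
\]
Finally, the time shift of \cref{prop:flowpropertyfroRSDEs}.2 (setting $\check X^y_\tau:=\hat X^y_{(t-s)+\tau}$ and using $(\mathbf{W}^s)^{t-s}=\mathbf{W}^t$ together with $b_{s+((t-s)+\tau)}=b_{t+\tau}$, etc.) shows $\check X^y\in RSDE(t,y,\check B,\{\check{\mathcal{F}}_\tau\}_{\tau\in[0,T-t]})$ and turns the right-hand side into $\E\big[g(\check X^y_{T-t})\,e^{\int_0^{T-t}c_{t+\tau}(\check X^y_\tau)\,d\tau+\int_0^{T-t}(\gamma_{t+\tau},\gamma'_{t+\tau})(\check X^y_\tau)\,d\mathbf{W}^t_\tau}\big]$, which by \cref{prop:independenceontheBrownianmotion} equals $u(t,y)$. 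Substituting $y=X^{s,x}_{t-s}$ and returning to the displayed identity for $u(s,x)$ completes the proof.

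The step I expect to be the main obstacle is the freezing argument for the rough stochastic integral: one must be sure that $(X^{s,x}_{T-s},I^{s,x}_{[t-s,T-s]})$ really is obtained by plugging $X^{s,x}_{t-s}$ into a jointly measurable map of the post-$(t-s)$ Brownian increments --- which is exactly what \cref{lemma:measurabilityofRSDEsintheinitialcondition} provides --- and that the exponential factor is integrable enough to legitimise the substitution. If convenient this last point is handled as in the proof of \cref{prop:independenceontheBrownianmotion}, first reducing to $g\ge0$ by positive/negative parts and then truncating $e^{w}$ by $e^{w}\wedge e^{M}$ and passing to the limit by monotone convergence.
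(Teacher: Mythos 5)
Your proposal is correct and follows essentially the same route as the paper: split the exponential additively, condition via the tower property, use \cref{lemma:measurabilityofRSDEsintheinitialcondition} to write the post-$(t-s)$ functional as a jointly measurable map of the later Brownian increments, freeze at $y=X^{s,x}_{t-s}$ by independence, and identify the resulting expectation with $u(t,y)$ via the flow/time-shift properties and \cref{prop:independenceontheBrownianmotion}. The only (harmless) differences are that you condition on the ambient $\mathcal{F}_{t-s}$ instead of the Brownian $\sigma$-field $\mathcal{F}^0_{t-s}$ used in the paper, and you perform the freezing before the time shift rather than after.
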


\begin{proof}
    Let $s \le t$ and let $x \in \R^d$ be fixed. 
    For $t_0 \in [0,T]$ and $\tau \in [t_0,T]$, define $\mathcal{F}^{t_0}_\tau := \sigma(B_r - B_{t_0}, \ r \in [t_0,\tau])$.
    Properties of the exponential function and a simple time-change in the integrals show that \begin{equation*} \begin{aligned}
        &e^{\int_0^{T-s} c_{s+r}(X_r^{s,x}) dr + \int_0^{T-s} (\gamma_{s+r},\gamma'_{s+r})(X_r^{s,x}) d\mathbf{W}_r^s} = \\
        &= e^{\int_0^{t-s} c_{s+r}(X_r^{s,x}) dr + \int_0^{t-s} (\gamma_{s+r},\gamma'_{s+r})(X_r^{s,x}) d\mathbf{W}_r^s} \cdot e^{\int_{0}^{T-t} c_{t+r}(X_{(t-s)+r}^{s,x}) dr + \int_{0}^{T-t} (\gamma_{t+r},\gamma'_{t+r})(X_{(t-s)+r}^{s,x}) d\mathbf{W}_r^s}.
    \end{aligned}
    \end{equation*}
    By \cref{lemma:measurabilityofRSDEsintheinitialcondition} we get that $e^{\int_0^{t-s} c_{s+r}(X_r^{s,x}) dr + \int_0^{t-s} (\gamma_{s+r},\gamma'_{s+r})(X_r^{s,x}) d\mathbf{W}_r^s}$ is $\mathcal{F}_{t-s}^0$-measurable.
    By the tower property of conditional expectations, we have \begin{equation*}
        \begin{aligned}
            &u(s,x) = \\
            &= \E \left(e^{\int_0^{t-s} c_{s+r}(X_r^{s,x}) dr + \int_0^{t-s} (\gamma_{s+r},\gamma'_{s+r})(X_r^{s,x}) d\mathbf{W}_r^s} \E \left( g(Y_{T-t}) e^{\int_{0}^{T-t} c_{t+r}(Y_r) dr + \int_{0}^{T-t} (\gamma_{t+r},\gamma'_{t+r})(Y_r) d\mathbf{W}_r^s} \mid \mathcal{F}^0_{t-s} \right) \right),            
        \end{aligned}
    \end{equation*}
    where $Y_\tau := X^{s,x}_{(t-s)+\tau}$. 
    By combining the two results of \cref{prop:flowpropertyfroRSDEs}, we observe that \begin{equation} \label{eq:definitionofY_flowproperty}
        Y \in RSDE(t,X^{s,x}_{t-s},B^\star,\{\mathcal{F}^\star_\tau\}_{\tau \in [0,T-(t-s)]}),
    \end{equation} 
    where $B^\star_\tau:= B_{(t-s)+\tau} - B_{(t-s)}$ and $\mathcal{F}^\star_\tau := \mathcal{F}_{(t-s) + \tau}$, for any $\tau \in [0,T-(t-s)]$.
    By applying \cref{lemma:measurabilityofRSDEsintheinitialcondition}, it is possible to deduce that there exists a $(\mathcal{F}^{t-s}_{T-s} \otimes \mathcal{B}(\R^d),\mathcal{B}(\R^d))$-measurable map $\varphi: \Omega \times \R^d \to \R$ such that \begin{equation*}
        g(Y_{T-t}) e^{\int_{0}^{T-t} c_{t+r}(Y_r) dr + \int_{0}^{T-t} (\gamma_{t+r},\gamma'_{t+r})(Y_r) d\mathbf{W}_r^s} = \varphi(\omega,X^{s,x}_{t-s}(\omega)).
    \end{equation*}
    Such a map is precisely given by \begin{equation*}
        \varphi(\omega,y) := g(Z^y_{T-t}(\omega)) e^{\int_0^{T-t} c_{t+r}(Z_r^y(\omega)) dr + (\int_0^{T-t} (\gamma_{t+r},\gamma'_{t-r})(Z^y_r) d\mathbf{W}^t_r)(\omega)},
    \end{equation*}
    where $Z^y \in RSDE(t,y,{B}^\star,\{\mathcal{F}^\star_\tau\}_{\tau \in [0,T-(t-s)]})$.
    Recalling that $\mathcal{F}^{t-s}_{T-s}$ is independent from $\mathcal{F}^0_{t-s}$, by the properties of conditional expectation we conclude that \begin{equation}
        \E \left( g(Y_{T-t}) e^{\int_{0}^{T-t} c_{t+r}(Y_r) dr + \int_{0}^{T-t} (\gamma_{t+r},\gamma'_{t+r})(Y_r) d\mathbf{W}_r^s} \mid \mathcal{F}^0_{t-s} \right) = \E(\varphi(\cdot,y))|_{y = X_{t-s}^{s,x}},
    \end{equation}
    and the quantity on the right-hand side is $u(t,X_{t-s}^{s,x})$ by definition. 
\end{proof}

\printbibliography

\end{document}